\title{Geometric Constructions of Mod $p$ Cohomology Operations}
\author{Herng Yi Cheng}
\begin{document}

\maketitle

\begin{abstract}
    The Brown Representability Theorem implies that cohomology operations can be represented by continuous maps between Eilenberg-Maclane spaces. These Eilenberg-Maclane spaces have explicit geometric models as spaces of cycles on round spheres and spaces of relative cycles on unit disks, due to the Almgren Isomorphism Theorem. A.~Nabutovsky asked what maps between spaces of cycles represent the Steenrod squares.

    In this work we answer this question by constructing maps with explicit formulas from spaces of cycles on spheres to spaces of relative cycles on disks that represent all Steenrod squares, as well as all Steenrod powers and Bockstein homomorphisms on mod $p$ cohomology, for all primes $p$.
\end{abstract}

\section{Introduction}

In this work we will give explicit geometric constructions that represent mod $p$ cohomology operations, chief among them the Steenrod powers and the Bockstein homomorphisms.

For the rest of this article, let $p$ be a given prime, and regard all homology and cohomology to be in $\ZZ_p$ coefficients unless otherwise indicated. By a \emph{mod $p$ cohomology operation} we mean, for some fixed $m, n \geq 0$, a family of homomorphisms $\theta_X : H^m(X) \to H^n(X)$ for each space $X$ which are natural with respect to pullbacks along continuous maps between spaces. The cup powers $\alpha \mapsto \alpha^p$ are immediate examples of mod $p$ cohomology operations. Other examples include the \emph{Bockstein homomorphisms} $\beta : H^n(X) \to H^{n+1}(X)$ for each $n \geq 1$, which are defined as the connecting homomorphisms in the long exact sequence of cohomology groups that arises from the short exact sequence of coefficient groups $0 \to \ZZ_p \to \ZZ_{p^2} \to \ZZ_p \to 0$. Basically, the Bockstein homomorphisms answers the following question about each mod $p$ cocycle: if we treat it as a mod $p^2$ cochain, how many times is its coboundary divisible by $p$?

One final important family of mod $p$ cohomology operations are the \emph{Steenrod powers} $P^i : H^n(X) \to H^{n + 2i(p-1)}(X)$ for $n, i \geq 0$, which were introduced by N.~E.~Steenrod \cite{Steenrod_Squares,Steenrod_ReducedPowers_A,Steenrod_ReducedPowers_B}. Note that $P^0 = \id$. When $p = 2$, $P^i$ and $\beta \circ P^i$ are called \emph{Steenrod squares} and are written using an alternate notation as $\Sq^{2i}$ and $\Sq^{2i+1}$ respectively. As a result, $\Sq^1 = \beta$. The Steenrod powers are characterized by a list of algebraic axioms as explained in \cite[Section~4.L]{Hatcher_AlgTop}, and can be constructed using the equivariant cohomology of $\ZZ_p$ group actions, which we explain in \cref{sec:SteenrodPowersEquivCohom}. On the other hand, it has been challenging to provide geometric intuition for them. When $X$ is a closed manifold and $\alpha \in H^n(X)$ has its Poincar\'e dual represented by an embedded submanifold $\Sigma \subset X$, in some sense $P^i(\alpha)$ detects twisting in the normal bundle of $\Sigma$.

The Steenrod powers and Bockstein homomorphisms generate the mod $p$ cohomology operations, in the sense that every such operation is the $\ZZ_p$-linear combination of cup products of compositions of Steenrod powers and Bockstein homomorphisms. Thus we may restrict our study to the Steenrod powers and Bockstein homomorphisms.

\subsection{Brown representability and the main theorems}

To find geometric representations of the Bockstein homomorphisms and Steenrod powers, our starting point is the \emph{Brown Representability Theorem}, which gives isomorphisms $H^n(X) \iso [X, K(\ZZ_p,n)]$, where $X$ is a cell complex, $[X,Y]$ consists of the homotopy classes of pointed continuous maps $X \to Y$ for some choice of basepoints on $X$ and $Y$, and $K(\ZZ_p,n)$ denotes an Eilenberg-Maclane space. Under this isomorphism, each pointed continuous map $a : X \to K(\ZZ_p,n)$ corresponds to $a^*(\iota_n)$, where $\iota_n \in H^n(K(\ZZ_p,n))$ is the \emph{fundamental cohomology class}. Inspired by this, we will geometrically represent $\alpha \in H^n(X)$ by choosing a space $Y \whe K(\ZZ_p,n)$, where $\whe$ denotes weak homotopy equivalence, so that $Y$ has an explicit geometry. (In contrast, even though $K(\ZZ_p,n)$ can always be represented by a cell complex, the number of cells it has in each dimension is not even precisely known.) Then we will construct a ``geometrically nice'' continuous map $a : X \to Y$ such that $a^*(\iota_n) = \alpha$. (Since $Y \whe K(\ZZ_p,n)$, $\iota_n$ may also be considered as the fundamental cohomology class of $Y$.) Such a map $a : X \to Y$ will be called a \emph{Brown representative} for $\alpha$.

Our choice for $Y$ comes from the field of Geometric Measure Theory: when $M$ is a compact Riemannian manifold with a submanifold $N$, let $\Z{k}(M, N)$ denote the space of \emph{mod $p$ integral (relative) $k$-cycles in $M$.} Intuitively, the elements of this space can be thought of as ``limits'' of mod $p$ singular chains in $M$ whose singular simplices are Lipschitz, and whose boundaries are supported in $N$. We will write $\Z{k}(M)$ to mean $\Z{k}(M, \emptyset)$. This set can be given several natural topologies. A common one is the \emph{flat topology} which is induced by the \emph{flat metric} $\Fl$, where $\Fl(S,T)$ is roughly the area of the smallest ``filling'' of $S - T$. (By a filling of $S - T$, we mean a chain whose boundary is $S - T$.) However, the maps between spaces of cycles that we will construct, including the Cartesian product map $T \mapsto T \times T$, will not be continuous in the flat topology. For this reason, we will instead use the \emph{inductive limit topology} \cite[(1.9)]{Almgren_HomotopyGroupsIntegralCycles}, which is a refinement of the flat topology.

These spaces of cycles have a group structure akin to that of the groups of singular cycles, and as a result they are weakly homotopy equivalent to products of Eilenberg-Maclane spaces \cite[Corollary~4K.7]{Hatcher_AlgTop}. In fact, we will prove the weak homotopy equivalences $K(\ZZ_p,n) \whe \Z{k}(\Sp{n+k}) \whe \Z{k}(\D^{n+k}, \partial\D^{n+k})$ for all $k \geq 0$. More generally, the following identity holds:
\begin{align}
    \label{eq:AlmgrenIsoFMetric}
    \pi_i(\Z{k}(M, \partial M)) \iso H_{i+k}(M, \partial M) && \text{for all } k \geq 0 \text{ and } i \geq 1.
\end{align}
This is an analogue of the \emph{Almgren Isomorphism Theorem}, the same statement for similar spaces of cycles \cite{Almgren_HomotopyGroupsIntegralCycles,GuthLiokumovich_ParamIneq}. It can be viewed as a generalization of the Dold-Thom Theorem. Thus we may find Brown representatives of cohomology classes which are nice maps to spaces of cycles, where the degree of the class is equal to the codimension of the cycle in the ambient sphere or disk.

\begin{remark}
    \Cref{eq:AlmgrenIsoFMetric} follows from the Almgren Isomorphism Theorem for spaces of cycles with the flat topology and the fact that on such a space of cycles, the inductive limit topology is weakly homotopy equivalent to the flat topology. This was stated in \cite{Almgren_HomotopyGroupsIntegralCycles} for spaces of cycles with $\ZZ$ coefficients, but we have not been able to find a proof in the literature. Nevertheless, these weak homotopy equivalences follow immediately from the approximation theorems in \cite{GuthLiokumovich_ParamIneq}, which allow continuous families of cycles parametrized by compact sets to be homotoped to families of cycles that are bounded in mass.
\end{remark}

Consider the following examples of Brown representatives for the generators $H^1(\Sp1)$ and $H^1(\RP^2)$, for $p = 2$. The former is a map $a : \Sp1 \to \Z1(\Sp2)$ whose formula, if we parametrize $\Sp1$ as $[-1,1]$, is $a(t) = \{t\} \times \RR^2 \cap \Sp2$. (It is well-defined because $a(-1)$ and $a(1)$ are both points, which are equal to zero as 1-cycles.) It gives a 1-parameter family of circles that ``sweep out'' $\Sp2$, starting from 0 and ending at 0 (see \cref{fig:BrownRepExamples}(a)). The latter is a map $\RP^2 \to \Z2(\D^3, \partial \D^3)$. If $\RP^2$ is considered as the space of lines through the origin in $\RR^3$, then the map has formula $\ell \mapsto \ell^\perp \cap \D^3$ (see \cref{fig:BrownRepExamples}(b)--(c)).

\begin{figure}[h]
    \centering
    \begin{tabular}{cc}
        (a) & (b) \hspace{13em} (c) \\
        \includegraphics[height=4.5cm]{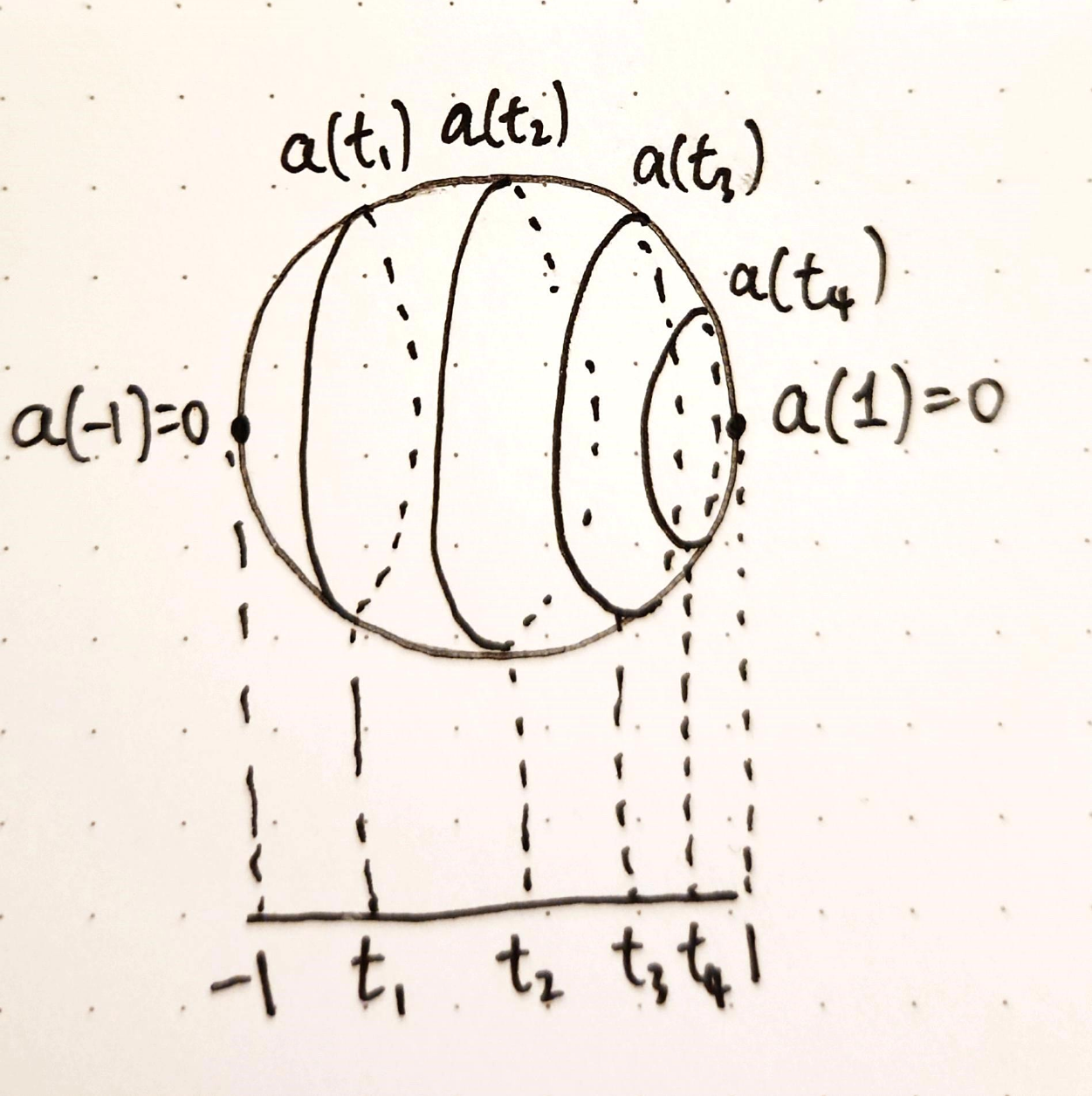} &
        \includegraphics[height=4.5cm]{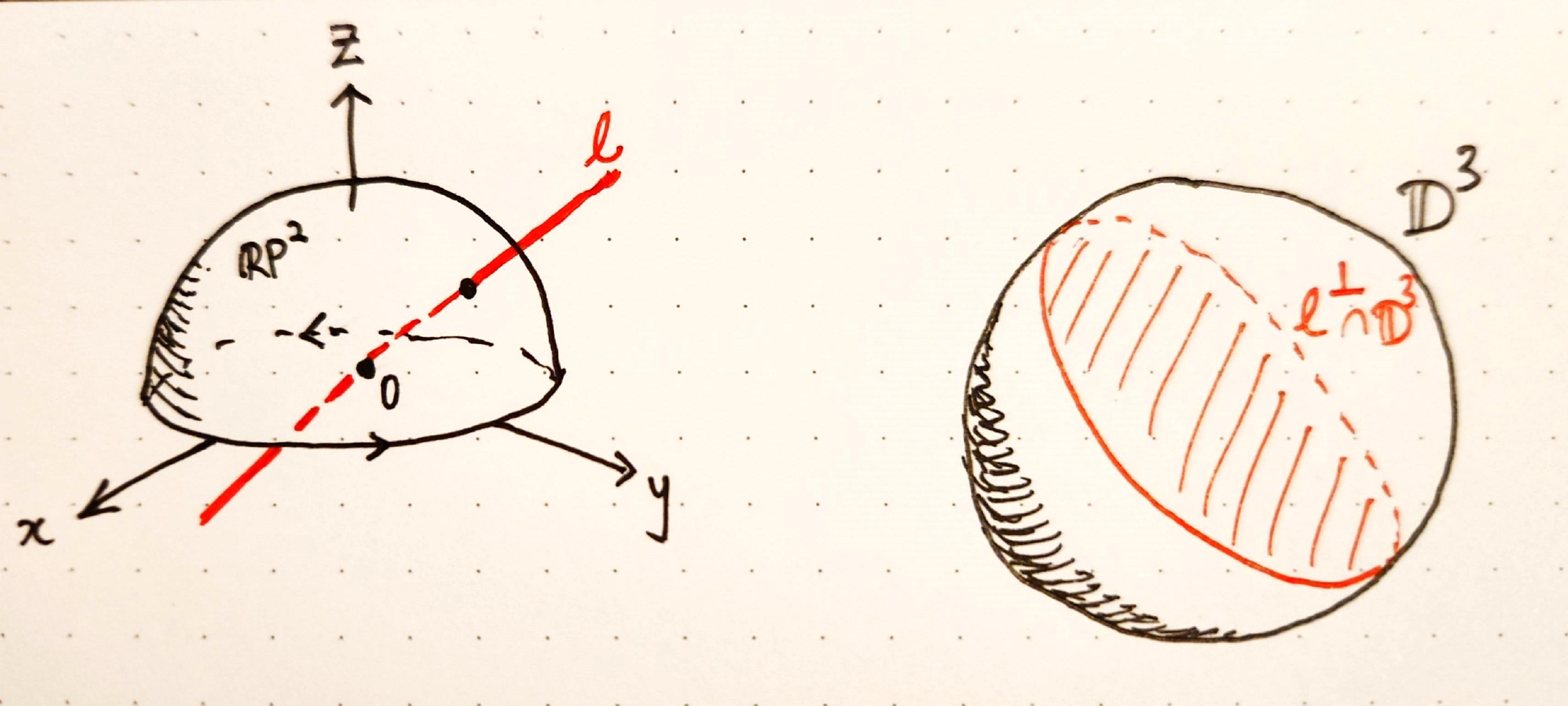}
    \end{tabular}
    
    \caption{(Suppose that $p = 2$.) (a) A Brown representative $a : \Sp1 \to \Z1(\Sp2)$ of the generator of $H^1(\Sp1)$, where $\Sp1$ is parametrized as $[-1,1]$. (b) $\RP^2$ can be visualized as the upper hemisphere of $S^2$, with antipodal points on the equator identified. Each point on the upper hemisphere corresponds to a line $\ell$ through the origin. (c) A Brown representative of the generator of $H^1(\RP2)$ sends $\ell \in \RP^2$ to $\ell^\perp \cap \D^3 \in \Z2(\D^3, \partial\D^3)$.}
    \label{fig:BrownRepExamples}
\end{figure}

Going a step further, the Brown representability theorem also implies that the set of mod $p$ cohomology operations $H^m(-) \to H^n(-)$ bijects with $[K(\ZZ_p,m), K(\ZZ_p,n)]$. A map $f : K(\ZZ_p,m) \to K(\ZZ_p,n)$ corresponds to the cohomology operation $\phi$ so that applying $\phi$ corresponds to composing with $f$: if a class $\alpha \in H^m(X)$ has Brown representative $a : X \to K(\ZZ_p,m)$, then $\phi(\alpha) \in H^n(X)$ has Brown representative $f \circ a$. This is equivalent to the identity $f^*(\iota_n) = \phi(\iota_m)$. In this work, we have geometrically represented the cohomology operation $\phi : H^m(-) \to H^n(-)$, where $\phi$ could be a Bockstein homomorphism or a Steenrod power, by choosing spaces of cycles $Y \whe K(\ZZ_p,m)$ and $Y' \whe K(\ZZ_p,n)$ and constructing geometrically nice maps $f : Y \to Y'$ such that $f^*(\iota_n) = \phi(\iota_m)$. We call $f$ a \emph{Brown representative for $\phi$.} 

We have constructed Brown representatives for the Bockstein homomorphisms as follows. The domains of these maps are $\Z0(\Sp{n})$, which consists of $\ZZ_p$-linear combinations of points in $\Sp{n}$ whose coefficients should sum to zero. (This condition makes it a connected space.) The codomains are $\Z0(\D^n, \partial\D^n)$, which consist of $\ZZ_p$-linear combinations of points in $\D^n$, but with no further restrictions on their coefficients. As these are cycles relative to the boundary, any points on the boundary are considered to vanish. For every set $X$, $\ZZ_p$ acts on the Cartesian power $X^p$ by cyclic permutations which are generated by $(x_1, \dotsc, x_p) \mapsto (x_2, \dotsc, x_p, x_1)$. Let $X^p/\ZZ_p$ denote the quotient by this action.

\begin{theorem}
    \label{thm:Bockstein}
    For each $n \geq 1$, the Bockstein homomorphism $\beta : H^n(-) \to H^{n+1}(-)$ has a Brown representative
    \begin{equation}
        \label{eq:Bockstein}
        \begin{aligned}
            &b : \Z0(\Sp{n}) \to \Z0(\D^{n+1}, \partial\D^{n+1})
            \\
            &b(x_1 + \dotsb + x_k) = \sum_{[(i_1,\dotsc,i_p)] \in \{1,\dotsc,k\}^p/\ZZ_p} \frac{x_{i_1} + \dotsb + x_{i_p}}{p},
        \end{aligned}
    \end{equation}
    where each $x_i \in \Sp{n}$ and the fraction denotes the barycenter of $x_{i_1}, \dotsc, x_{i_p}$, considered as points in $\RR^{n+1}$.

    In particular, when $p = 2$, $b(x_1 + \dotsb + x_k)$ is the sum of the midpoints of every unordered pair $\{x_i,x_j\}$.
\end{theorem}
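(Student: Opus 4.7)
The plan is to show that $b^*(\iota_{n+1})=\beta(\iota_n)$ in $H^{n+1}(\Z{0}(\Sp{n});\ZZ_p)$. Using the weak equivalences $\Z{0}(\Sp{n})\whe K(\ZZ_p,n)$ and $\Z{0}(\D^{n+1},\partial\D^{n+1})\whe K(\ZZ_p,n+1)$ together with Brown representability,
\[
[\Z{0}(\Sp{n}),\,\Z{0}(\D^{n+1},\partial\D^{n+1})]\;\iso\;H^{n+1}(K(\ZZ_p,n);\ZZ_p)\;\iso\;\ZZ_p\cdot\beta(\iota_n),
\]
so the homotopy class of $b$ is pinned down by a single scalar $c\in\ZZ_p$ with $b^*(\iota_{n+1})=c\,\beta(\iota_n)$, and the task reduces to showing $c=1$.

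First I would verify that $b$ is a well-defined continuous map. The formula is symmetric in $x_1,\dots,x_k$, and the only genuine check is that $b(\sigma+p\cdot y)=b(\sigma)$ in $\Z{0}(\D^{n+1},\partial\D^{n+1})$ for any $y\in\Sp{n}$. The new orbits in $\{1,\dots,k+p\}^p/\ZZ_p$ split into constant orbits among the $p$ new indices (whose barycenters all equal $y\in\Sp{n}=\partial\D^{n+1}$ and thus vanish in the relative cycle space), free orbits among new indices, and mixed orbits. Because $p$ is prime, every non-constant orbit has trivial stabilizer and therefore contains exactly $p$ ordered $p$-tuples whose barycenters agree; grouping these together shows their total contribution is a multiple of $p$ and vanishes mod $p$. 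Continuity in the inductive limit topology is immediate because $b$ is a polynomial in boundedly many points.

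Next I would recognize $b$ as the composition
\[
b\;=\;\mathrm{bary}_*\circ P_p,\qquad \Z{0}(\Sp{n})\xrightarrow{P_p}\Z{0}\bigl((\Sp{n})^p/\ZZ_p\bigr)\xrightarrow{\mathrm{bary}_*}\Z{0}(\D^{n+1},\partial\D^{n+1}),
\]
where $P_p(\sigma)$ is the cyclic $p$-th power of $\sigma$ and $\mathrm{bary}(x_1,\dots,x_p)=(x_1+\cdots+x_p)/p$ is the barycenter map (which lands in $\D^{n+1}$ by the triangle inequality and touches $\partial\D^{n+1}$ only on the fixed-point diagonal). This exhibits $b$ as a geometric incarnation of the $\ZZ_p$-equivariant $p$-fold power through which Steenrod classifies mod $p$ cohomology operations by the cohomology of $B\ZZ_p$, as recalled in \cref{sec:SteenrodPowersEquivCohom}.

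Finally, I would pin down $c$ by promoting the factorization to the Borel construction: lift $P_p$ through $E\ZZ_p\times_{\ZZ_p}(\Sp{n})^p$ and compare $\mathrm{bary}$ to the canonical map appearing in the equivariant construction. The barycenter map has a unique generic fiber over the origin of $\D^{n+1}$, whose non-diagonal stratum sits in codimension exactly $n+1$; under Steenrod's decomposition of the extended-power cohomology this stratum selects precisely the $\beta(\iota_n)$-component with multiplicity one, yielding $c=1$. The \emph{main obstacle} is this final step: the Borel-construction comparison requires careful bookkeeping of signs and normalizations to confirm the coefficient is exactly $1$ (rather than $0$ or a nontrivial multiple). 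As a fallback that avoids the Borel machinery, one can verify $c=1$ by direct computation on a test space $X=\RP^{n+1}$ for $p=2$, or a mod $p$ lens space for odd $p$: on such an $X$ an explicit Brown representative $a:X\to\Z{0}(\Sp{n})$ of a class $\alpha\in H^n(X)$ with $\beta(\alpha)\ne 0$ can be written down, and applying the formula for $b$ to $a$ yields a family of 0-cycles in $\D^{n+1}$ whose homological class can be read off and identified with $\beta(\alpha)$.
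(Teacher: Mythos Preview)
Your overall framing is correct and matches the paper: since $H^{n+1}(K(\ZZ_p,n))\cong\ZZ_p$, one has $b^*(\iota_{n+1})=c\,\beta(\iota_n)$ and the whole proof reduces to computing $c$. Your fallback test-space approach is precisely what the paper does. But you have not actually executed either computation of $c$, and that computation is the entire content of the proof.

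Your primary route via the Borel construction is too vague to be a proof as written. The sentence ``this stratum selects precisely the $\beta(\iota_n)$-component with multiplicity one'' is an assertion of the conclusion; nothing in your sketch controls the codimension-$(n{+}1)$ normalization, and the paper does not take this route. Your fallback is the right idea, but ``applying the formula for $b$ to $a$ yields a family of $0$-cycles whose homological class can be read off and identified with $\beta(\alpha)$'' is exactly the step that requires work. The paper carries this out for $n=1$ using the mapping cone $X$ of the degree-$p$ self-map of $\Sp{1}$ (so $\RP^2$ when $p=2$, a Moore space $M(\ZZ_p,1)$ for odd $p$, not a lens space as you suggest), with an explicit map $g:X\to\Z{0}(\Sp{1})$. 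After computing $\beta(g^*\iota_1)$ on the $2$-cell via the snake lemma, the evaluation of $(b\circ g)^*\iota_2$ requires two genuine ingredients you omit: first, a geometric argument that over $X^1$ all barycenters stay a definite distance from $\partial\D^2$, so $b\circ g$ descends to a map $\Sp{2}\to\Z{0}(\D^2,\partial\D^2)$; second, the orbit count showing the number of non-diagonal orbits in $(\ZZ_p)^p/\ZZ_p$ is $(p^p-p)/p=p^{p-1}-1\equiv -1\pmod p$, which together with an orientation sign yields $c=1$. The case $n\geq 2$ is then obtained by suspending this argument. Without these steps you have an outline, not a proof.
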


An example for $b$ when $n = 1$ are illustrated for $p = 2$ in \cref{fig:Bockstein}(a)--(b), which shows how it computes a sum of midpoints. A similar example for $n = 1$ and $p = 3$ in \cref{fig:Bockstein}(c)--(d) requires more explanation. When the input to $b$ is a mod 3 cycle $x_1 + x_2 + x_3$ (see \cref{fig:Bockstein}(c)), the sum in \cref{eq:Bockstein} is indexed by the elements of $\{1,2,3\}^3/\ZZ_3$, which are 3-tuples of indices with cyclic permutations identified. One representative from each equivalence class is listed below:
\begin{equation*}
\begin{array}{cccc}
    (1,1,1) & (1,1,2) & (1,2,2) & (1,2,3) \\
    (2,2,2) & (2,2,3) & (2,3,3) & (3,2,1) \\
    (3,3,3) & (3,3,1) & (3,1,1)
\end{array}
\end{equation*}
For each $(i,j,k)$ listed above, the output of $b$ contains the point $\frac13(x_i + x_j + x_k)$. The points corresponding to $(i,i,i)$ lie on the boundary and vanish. Both $(1,2,3)$ and $(3,2,1)$ correspond to the point $\frac13(x_1 + x_2 + x_3)$, so that point has multiplicity 2. The points in the resulting mod 3 0-cycle trisect the edges of the triangle $x_1x_2x_3$, and also mark its barycenter (see \cref{fig:Bockstein}(d)).

\begin{figure}[h]
    \centering
    \begin{tabular}{cc}
        (a) & (b)
        \\
        \includegraphics[height=5cm]{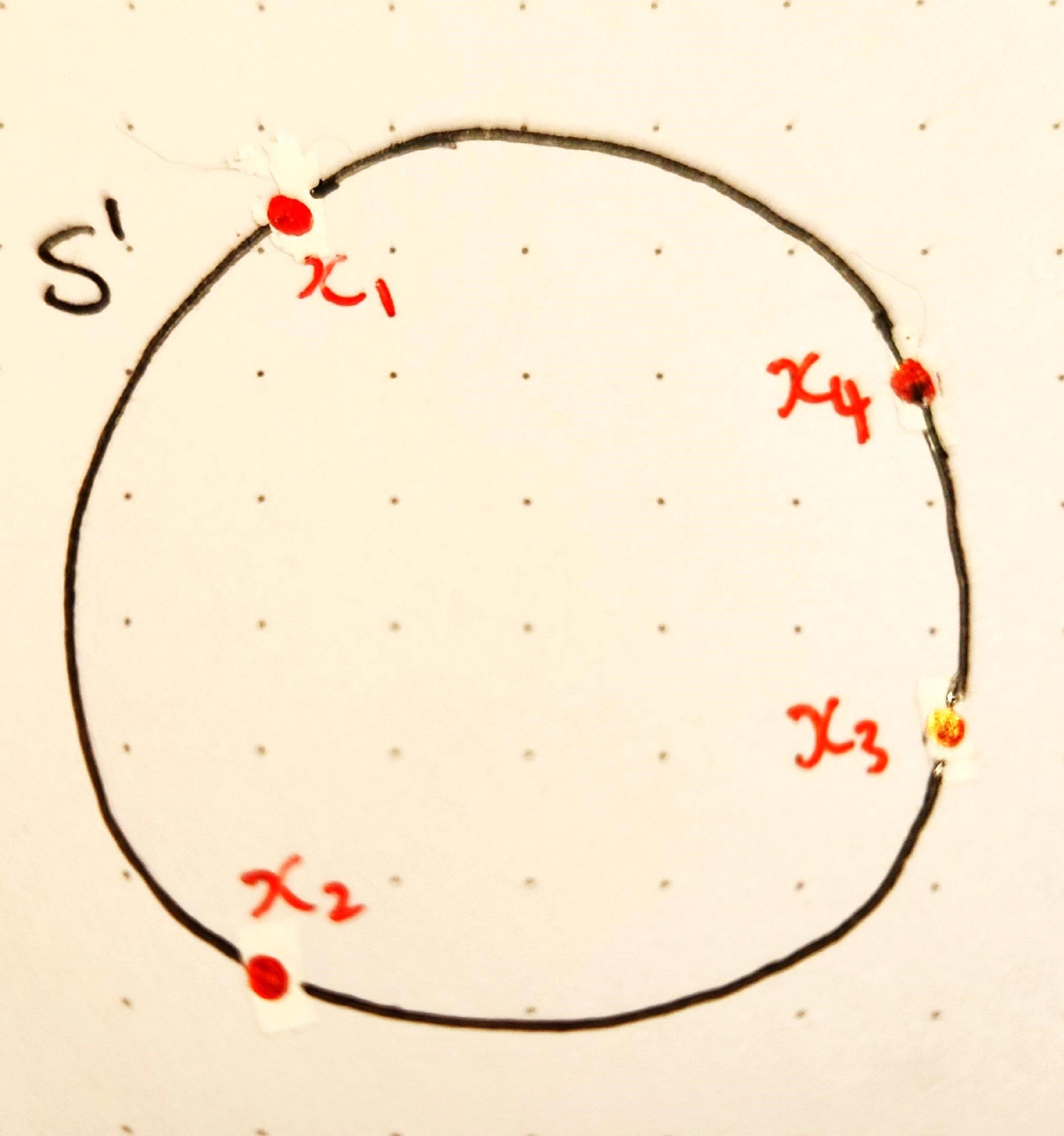} &
        \includegraphics[height=5cm]{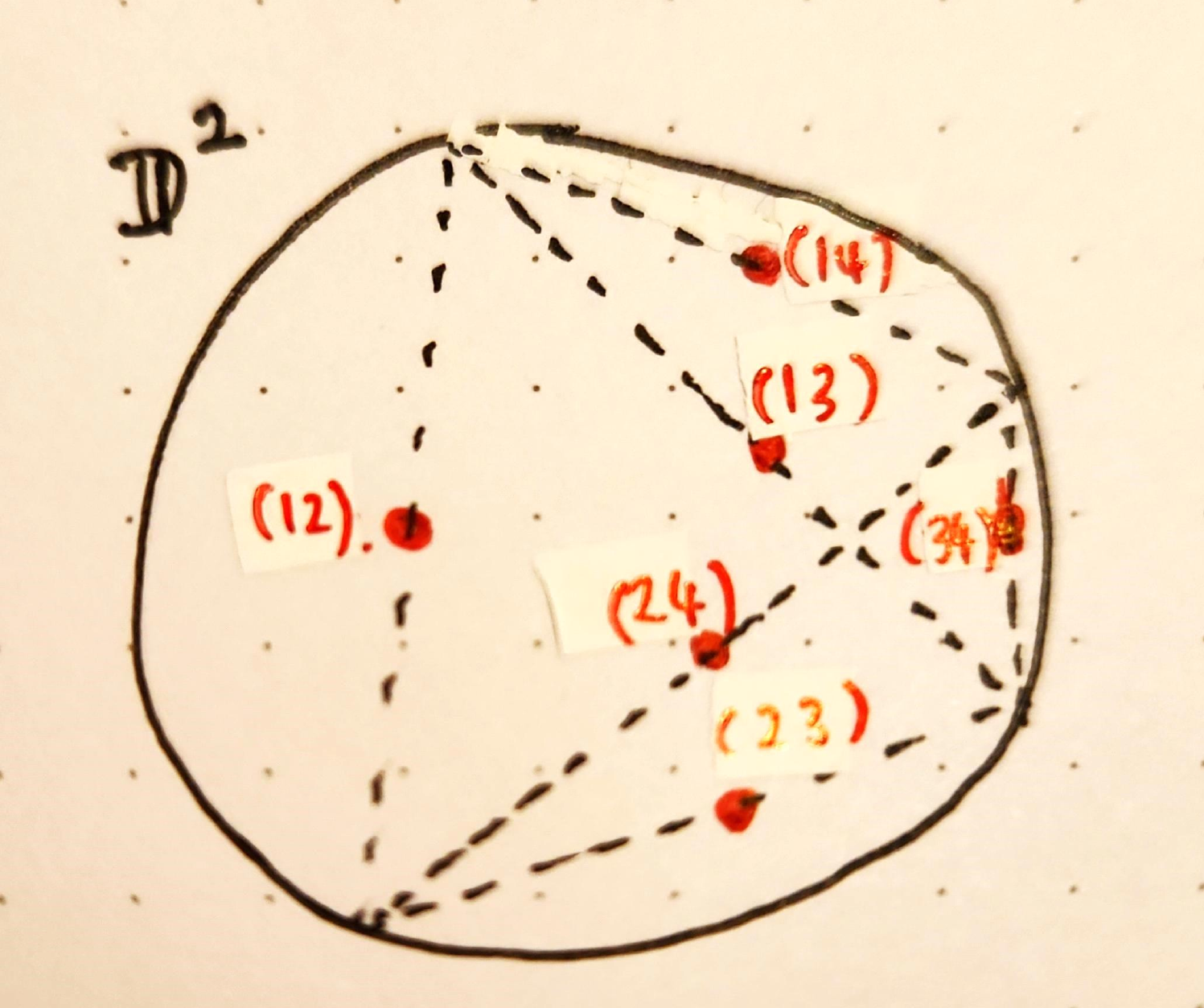}
        \\
        (c) & (d)
        \\
        \includegraphics[height=5cm]{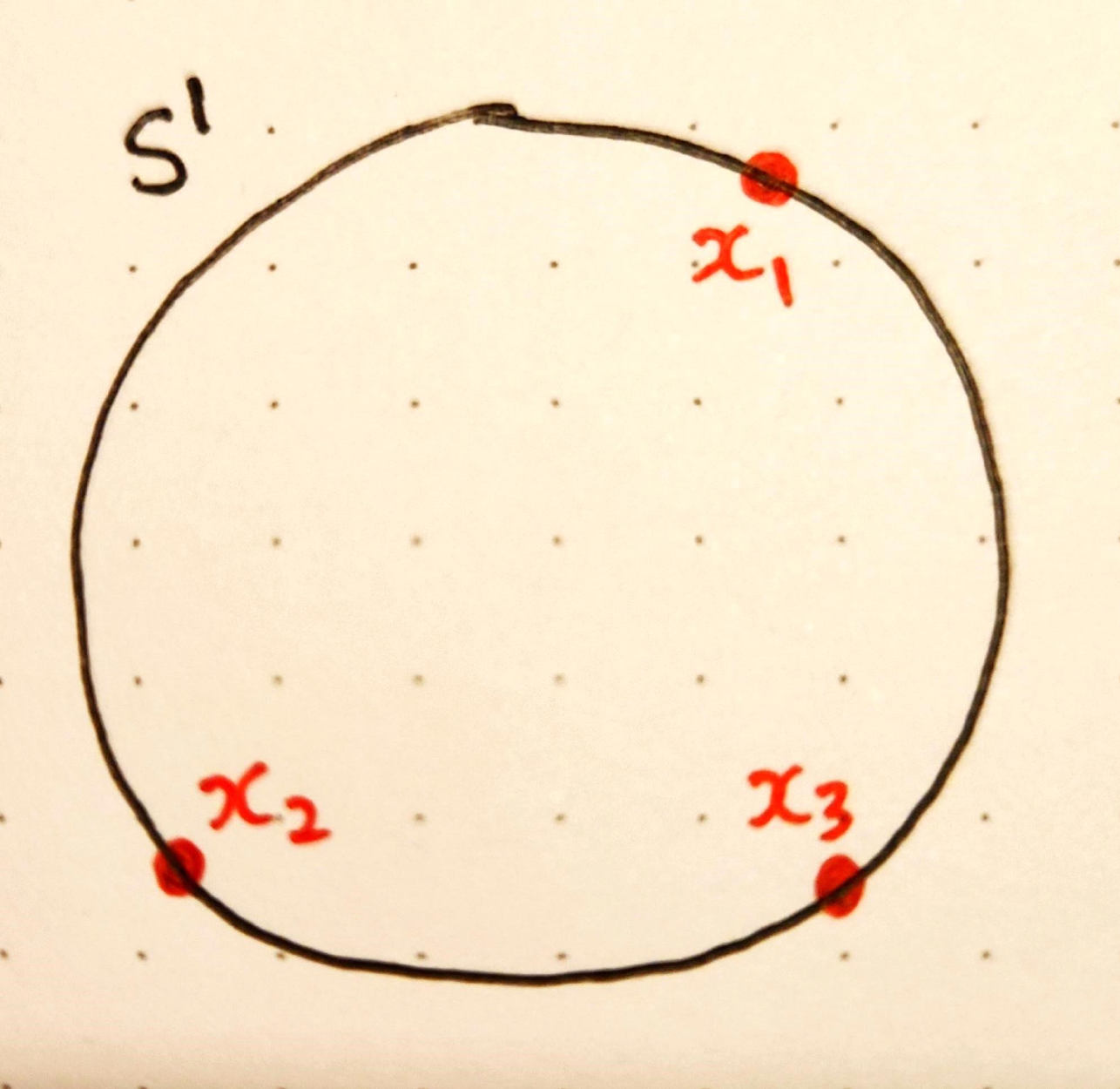} &
        \includegraphics[height=5cm]{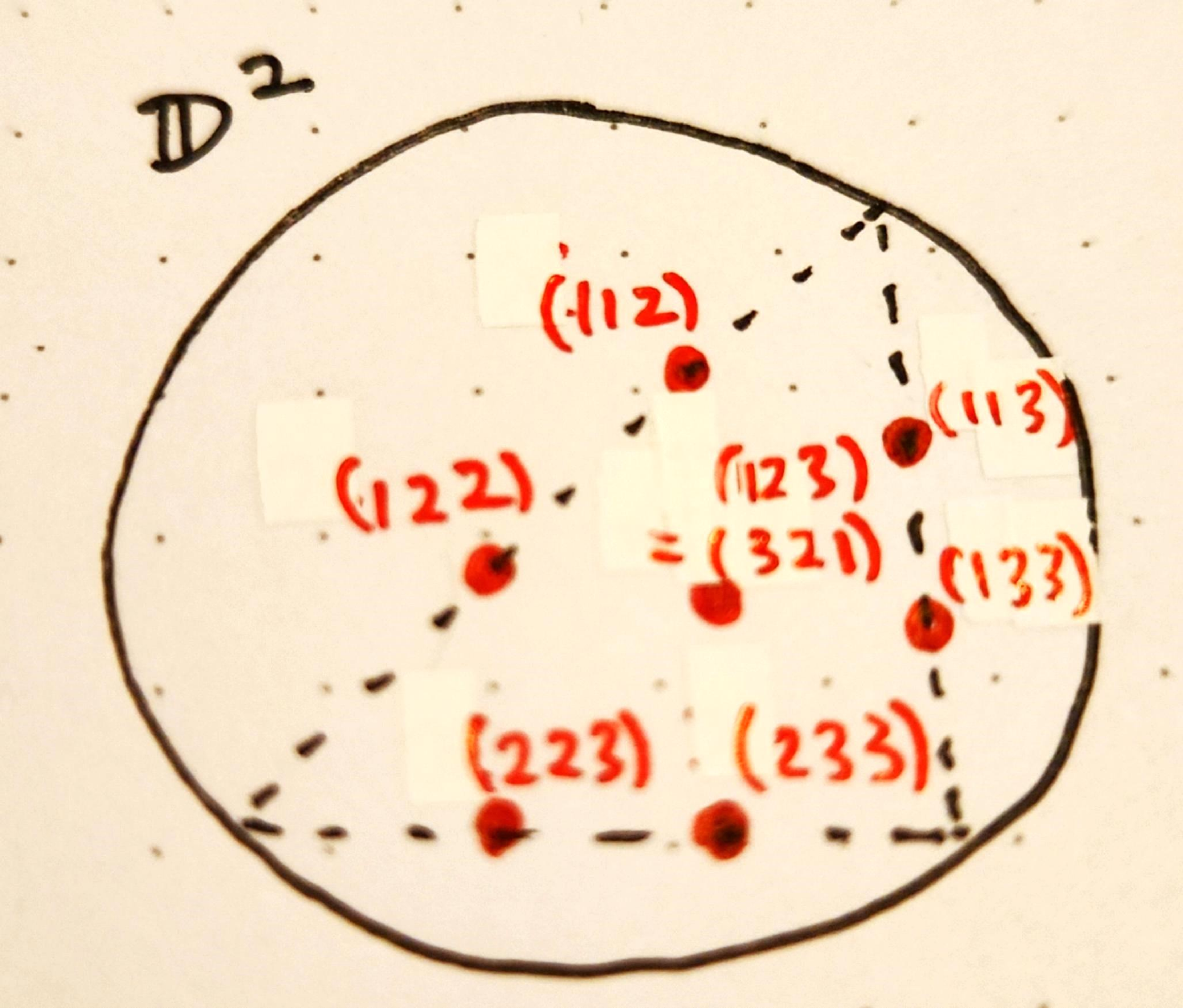}
    \end{tabular}
    
    \caption{Brown representatives $b : \Z0(\Sp1) \to \Z0(\D^2, \partial\D^2)$ of the Bockstein homomorphism $\beta : H^1(-) \to H^2(-)$ for $p = 2$ in (a)--(b) and $p = 3$ in (c)--(d). (a) The mod 2 0-cycle $x_1 + \dotsb + x_4$ shown as red points. (b) $b(x_1 + \dotsb + x_4)$ shown as red points, where $(ij)$ denotes the midpoint $\frac12(x_1 + x_j)$. (c) The mod 3 0-cycle $x_1 + x_2 + x_3$ shown as red points. (b) $b(x_1 + x_2 + x_3)$ shown as red points, where $(ijk)$ denotes the barycenter $\frac13(x_1 + x_j + x_k)$. The point $(123) = (321)$ has multiplicity 2.}
    \label{fig:Bockstein}
\end{figure}

The action of $\ZZ_p$ by cyclic permutations is fundamental to the construction of the Steenrod powers \cite{Steenrod_CohomOps}, so naturally it also plays a central role in the Brown representatives that we construct for Steenrod powers. Roughly speaking, every Brown representative of a Steenrod power and Bockstein homomorphism whose domain is $\Z{k}(\Sp{n})$ is encapsulated in a single \emph{cyclic product map}, denoted by $\cyc$, that takes a cycle $T \in \Z{k}(\Sp{n})$ and returns $T^p/\ZZ_p$. Brown representatives for individual Steenrod powers $P^i$ and the Bockstein homomorphism can be obtained by composing $\cyc$ with other maps.

To explain $\cyc$ in more detail, observe that there is an inclusion $(\Sp{n})^p \hookrightarrow \Sp{p(n+1)-1}$ where $\Sp{p(n+1)-1}$ is viewed as a sphere of radius $\sqrt{p}$. Since $\Sp{p(n+1)-1} \subset (\RR^{n+1})^p$, $\ZZ_p$ acts on it by cyclic permutations, with fixed points the diagonal $\Delta = \{(x,\dotsc,x) : x \in \Sp{n}\}$. Deleting the diagonal from $\Sp{p(n+1)-1}$ and then taking the quotient by the action yields a familiar space: there is a homeomorphism
\begin{align}
    \label{eq:SphereQuotientLensBundle}
    h : (\Sp{p(n+1)-1} \setminus \Delta)/\ZZ_p \to L_n \times \itr \D^{n+1} && \text{where } L_n = \begin{cases}
        \RP^{(p-1)(n+1)-1} & p = 2
        \\
        \Lens_p((1,\dotsc,\frac{p-1}2)^{n+1}) & p > 2,
    \end{cases}
\end{align}
and $\itr$ denotes the interior. $L_n$ is a general lens space specified with the notation of \cite[p.~144]{Hatcher_AlgTop}. The notation $(1,\dotsc,\frac{p-1}2)^{n+1}$ simply means that the tuple of indices $(1,\dotsc,\frac{p-1}2)$ is repeated $n+1$ times.

\begin{theorem}
    \label{thm:SteenrodPowers}
    For each prime $p$, $i \geq 0$, and $0 \leq k < n$. Let $m = n - k$. Then there exists a cyclic product map,
    \begin{equation}
        \label{eq:CycProdMap_Intuitive}
    \begin{aligned}
        &\cyc : \Z{k}(\Sp{n}) \to \Z{pk}(L_n \times \D^{n+1}, L_n \times \partial\D^{n+1})
        \\
        &\cyc(T) = \overline{h((T^p \setminus \Delta)/\ZZ_p)},
    \end{aligned}
    \end{equation}
    from which Steenrod powers and Bockstein homomorphisms may be derived. More precisely, $P^i : H^m(-) \to H^{m + 2i(p-1)}(-)$ and $\beta \circ P^i : H^m(-) \to H^{m+2i(p-1)+1}(-)$ have Brown representatives
    \begin{align*}
        T \mapsto \bigcup_{x \in \cyc(T)} g_{pk+m+2i(p-1)}(x) &&\text{and}&& T \mapsto \bigcup_{x \in \cyc(T)} g_{pk+m+2i(p-1)+1}(x) && \text{respectively,}
    \end{align*}
    where each $g_q : L_n \times \D^{n+1} \to \Z{a}(\D^{a+q}, \partial\D^{a+q})$ is a Brown representative for a generator of $H^q(L_n \times \D^{n+1}, L_n \times \partial\D^{n+1})$ for some $a \geq 0$ that may depend on $q$.

    (The maps $g_q$ have explicit formulas, as explained in \cref{rem:ProjLensSpacesCohom_BrownReps}.)
\end{theorem}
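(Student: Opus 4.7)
The plan is to proceed in three stages: first to verify that the map $\cyc$ is well defined and continuous, then to identify the cohomology of its target via the Almgren correspondence \eqref{eq:AlmgrenIsoFMetric}, and finally to match the resulting pullbacks with Steenrod powers and Bockstein homomorphisms via the classical equivariant cohomology construction.

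The first step is largely technical. For $T \in \Z{k}(\Sp{n})$, the Cartesian power $T^p$ is a mod $p$ cycle of dimension $pk$ in $(\Sp{n})^p \subset \Sp{p(n+1)-1}$, and the $\ZZ_p$-action is free away from the diagonal $\Delta$. Hence $(T^p \setminus \Delta)/\ZZ_p$ is an honest $pk$-cycle in the open manifold $(\Sp{p(n+1)-1} \setminus \Delta)/\ZZ_p$, which the homeomorphism $h$ identifies with $L_n \times \itr\D^{n+1}$. Taking closure in $L_n \times \D^{n+1}$ pushes any material lost at $\Delta$ into the relative boundary $L_n \times \partial\D^{n+1}$, producing a bona fide relative cycle, and continuity in the inductive limit topology follows because $\cyc$ is assembled from Cartesian product, a free finite-group quotient, and the Lipschitz homeomorphism $h$, all of which respect uniform mass bounds on compact families. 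In the same stage I would identify the cohomology of the target: since $\D^{n+1}$ is contractible, the K\"unneth isomorphism gives $H_\ast(L_n \times \D^{n+1}, L_n \times \partial \D^{n+1}) \cong H_{\ast-n-1}(L_n)$, so by \eqref{eq:AlmgrenIsoFMetric} the target is weakly homotopy equivalent to a product of Eilenberg-Maclane spaces indexed by the shifted cohomology of $L_n$. Brown representatives $g_q$ of the generators are constructed in the spirit of \cref{fig:BrownRepExamples}(c) and \cref{thm:Bockstein}, with explicit formulas to appear in \cref{rem:ProjLensSpacesCohom_BrownReps}.

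The crux of the proof, and what I expect to be the main obstacle, is showing that $\cyc$ realizes the classical Borel-construction definition of Steenrod powers. Given $\alpha \in H^m(X)$ with Brown representative $a : X \to \Z{k}(\Sp{n})$, the diagonal power $x \mapsto a(x) \times \dotsb \times a(x)$ is $\ZZ_p$-equivariant and represents the external $p$-th power $\alpha^{\boxtimes p}$. Excising $\Delta$ before quotienting and then applying $h$ amounts to factoring this equivariant map through a finite-skeletal model of the Borel construction $X \times_{\ZZ_p} E\ZZ_p$, with $L_n$ playing the role of a suitable skeleton of $B\ZZ_p$. The standard formula recalled in \cref{sec:SteenrodPowersEquivCohom} then expands the restriction of $\alpha^{\boxtimes p}$ to the diagonal, in the K\"unneth basis of $H^\ast(X) \otimes H^\ast(L_n)$, as $\sum_i P^i(\alpha) \otimes e_i + \sum_i \beta P^i(\alpha) \otimes e'_i$, where $e_i, e'_i$ are the standard generators of $H^\ast(L_n)$ in degrees $2i(p-1)$ and $2i(p-1)+1$. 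Under the Almgren correspondence, pulling back $g_q$ for $q = pk + m + 2i(p-1)$ or $q = pk + m + 2i(p-1) + 1$ isolates these coefficients, and the union $\bigcup_{x \in \cyc(T)} g_q(x)$ is the geometric incarnation of that pullback as a cycle. The delicate point will be checking that the hypothesis $k < n$ places all relevant degrees within the cohomological range in which $L_n$ faithfully detects $B\ZZ_p$, so that every Steenrod power in the asserted range is genuinely recorded rather than truncated away.
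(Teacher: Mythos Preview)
Your outline has the right overall architecture, but it underestimates the two places where the paper does essentially all of its work.

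First, your Step 1 contains a genuine error: the homeomorphism $h$ is \emph{not} Lipschitz. Its differential at a point at distance $\varepsilon$ from the diagonal has a singular value of order $1/\varepsilon$ in the directions orthogonal to $\tilde\Delta$ (this is exactly \cref{lem:StretchingAwayFromDiagonal}), so the Jacobian of $h$ restricted to $T^p$ blows up as you approach $\Delta$. Consequently neither ``taking closure pushes material into the relative boundary'' nor ``all ingredients respect uniform mass bounds'' is automatic. The paper spends all of \cref{sec:CyclicProductMap} on this: \cref{thm:CartPowerMassNearDiagonal} shows, via a delicate combinatorial decomposition of $R^p$ near $\Delta$ for polyhedral $R$, that $\M(h_\sharp(R^p\res\Delta_r))$ stays bounded; \cref{prop:CycProdMap_Existence_PolyCycles} then uses the Compactness Theorem to define the closure; and continuity (\cref{prop:CycProdMap_Continuous}) requires the equivariant isoperimetric inequality of \cref{thm:InvariantFilling}, because to control $\Fl(\cyc(R_1),\cyc(R_2))$ one needs a $\ZZ_p$-invariant filling of $R_1^p - R_2^p$ whose mass is controlled before one can push it through $h$ and descend to the quotient. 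None of this follows from general nonsense about Lipschitz maps and free quotients.

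Second, your Step 3 is morally correct but skips the mechanism by which the paper actually links $\cyc$ to the equivariant construction. The paper does not directly identify $\cyc$ with a skeletal Borel model; instead it first builds a Brown representative $f_m$ for $\Gamma(\alpha)$ on $\Sp\infty\times_{\ZZ_p}(\Sp{m})^{\smsh p}$ by abstract obstruction theory (\cref{prop:ExternalOperation_Sphere}), glues it to obtain $F^n_k$ on the cycle space (\cref{prop:ExternalOperation_CycleSpace}), and then shows in \cref{thm:CyclicFactorInExternalOperation} that restricting to the diagonal and applying the K\"unneth extraction factors through $\cyc$ composed with a gluing $\Psi_j$. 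The step you do not mention, and which is essential, is \cref{prop:GluingAsBrownRep}: a priori $\Psi_j$ could represent any class of the right degree in the product of Eilenberg--MacLane spaces, but because it is a \emph{gluing} it is forced to be a scalar multiple of a single fundamental class $\iota_j$, and \cref{prop:ExternalOperation_Sphere_OrbitSpaceMap} pins down that the scalar is nonzero. Without this, your claim that ``the union $\bigcup_{x\in\cyc(T)} g_q(x)$ is the geometric incarnation of that pullback'' is an assertion rather than an argument.
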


\begin{remark}
    By a Brown representative of a generator of $H^q(L_n \times \D^{n+1}, L_n \times \partial\D^{n+1})$, we mean a Brown representative of the corresponding cohomology generator of the quotient of $L_n \times \D^{n+1}$ by its boundary. This quotient is the Thom space of a trivial real vector bundle over $L_n$, and the Brown representative can be chosen in the form of a map from $L_n \times \D^{n+1}$ to a space of cycles, so that the map vanishes over the boundary.
\end{remark}

\begin{remark}
    The map $h$ is induced by multiplying with a $p(n+1) \times p(n+1)$ matrix which is a real analogue of a Discrete Fourier Transform matrix. This arises from the fact that the action of $\ZZ_p$ on $(\RR^{n+1})^p$ gives a representation of $\ZZ_p$ which splits into a direct sum of $n+1$ trivial representations and, when $p$ is odd, $\frac{p-1}2(n+1)$ 2-dimensional representations. Each 2-dimensional representation is a rotation by a multiple of $2\pi/p$. The trivial representations correspond to the $\D^{n+1}$ factor, while the other representations correspond to the lens space factor.
\end{remark}

\begin{remark}
    For simplicity, the formulas in \cref{thm:SteenrodPowers} manipulate currents as thoough they were sets, by identifying currents with their supports. Strictly speaking, the operations of unions and closure are not well-defined on currents. This theorem has been stated rigorously in \cref{thm:SteenrodPowers_Formal}.
\end{remark}

Let us illustrate the map $\cyc$ for $p = 2$, $k = 0$ and $n = 1$. In this case, we have $\cyc : \Z0(\Sp1) \to \Z0(\RP^1 \times \D^2, \RP^1 \times \partial\D^2)$. Suppose that the input to $\cyc$ is a 0-cycle in $\Sp1$ made of the vertices of a square (see \cref{fig:CyclicProductMap}(a)). Then if we consider $\RP^1$ to be the space of lines through the origin in $\RR^2$, the output of $\cyc$ will be $\sum_{1 \leq i < j \leq 4} (\ell_{ij}, \frac12(x_i + x_j))$, where each $x_i$ is treated as a point in $\RR^2$ and $\ell_{ij} = \vspan\{x_i - x_j\}$ (see \cref{fig:CyclicProductMap}(b)--(c)).

\begin{figure}[h]
    \centering
    \begin{tabular}{ccc}
        (a) & (b) & (c) \\
        \includegraphics[height=5cm]{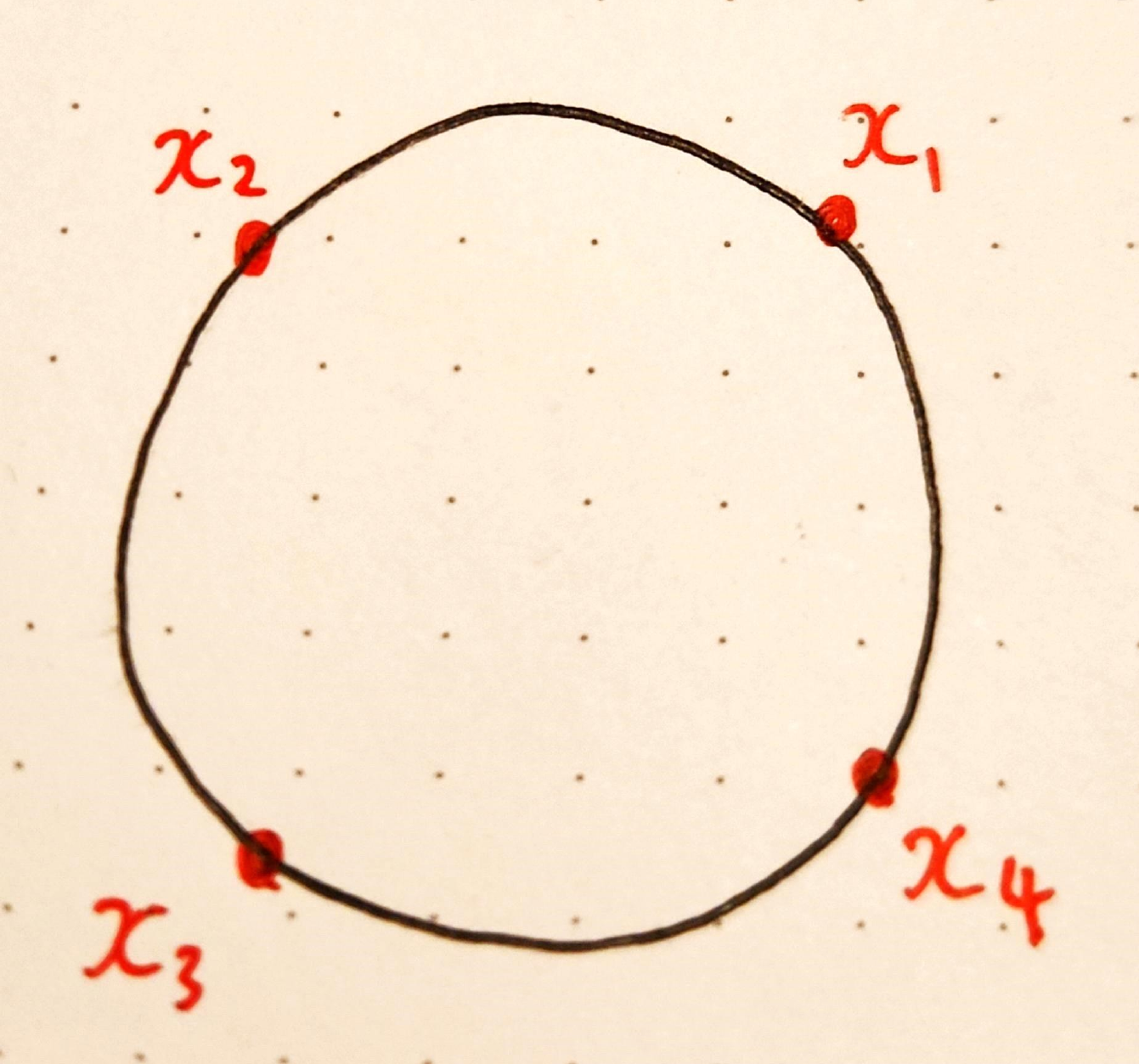} &
        \includegraphics[height=5cm]{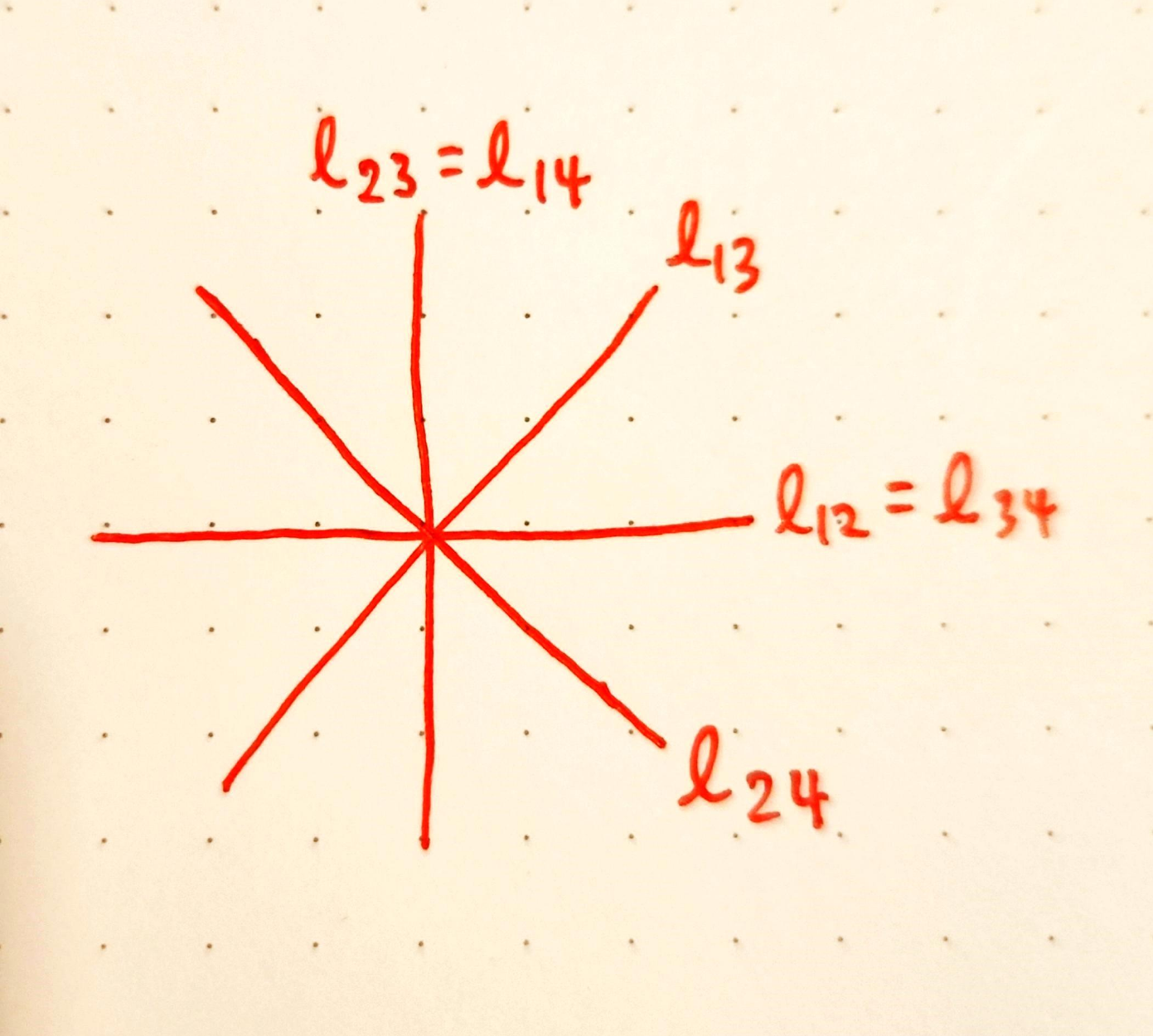} &
        \includegraphics[height=5cm]{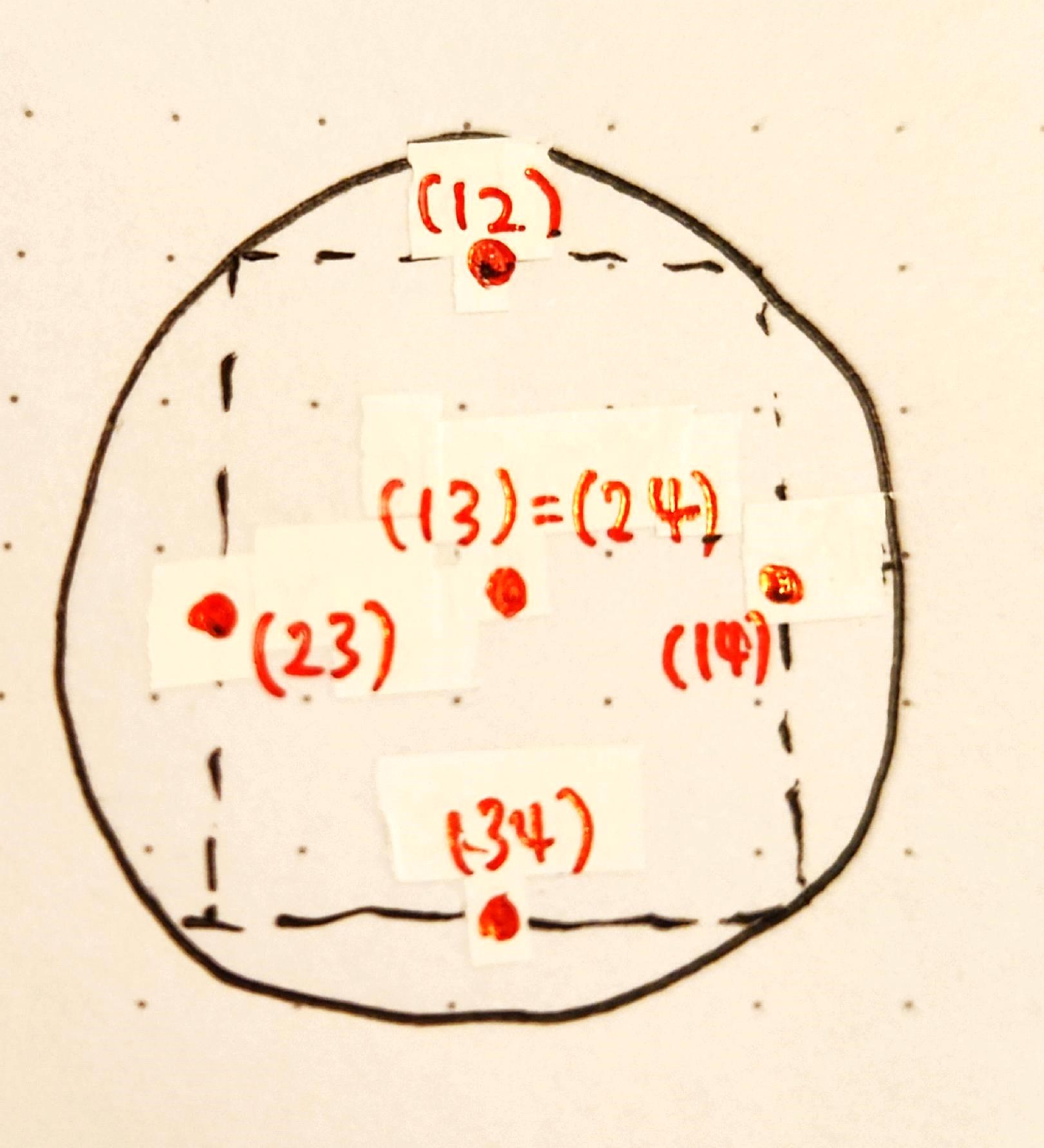}
    \end{tabular}
    
    \caption{A Brown representative $\cyc$ for the total Steenrod power, when $p = 2$, $k = 0$, and $n = 1$. The input 0-cycle $x_1 + \dotsb + x_4$, made of the vertices of a square, is shown as red points in (a). The output $\cyc(x_1 + \dotsb + x_4)$ is a sum of pairs $(\ell_{ij}, \frac12(x_i + x_j))$, where $\ell_{ij} = \vspan\{x_i - x_j\}$ is a line in $\RR^2$ shown in (b), and $\frac12(x_i + x_j)$ is a point in $\D^2$ that is shown in red and labeled as $(ij)$ in (c).}
    \label{fig:CyclicProductMap}
\end{figure}

When $p = 2$, the formulas for the Brown representatives simplify further:

\begin{theorem}
    \label{thm:SteenrodSquares}
    When $p = 2$, there exists a choice for the $g_q$'s in \cref{thm:SteenrodPowers} that gives the following Brown representative $\sq^i$ of $\Sq^i : H^m(-) \to H^{m+i}(-)$:
    \begin{equation}
        \label{eq:SteenrodSquare}
    \begin{aligned}
        &\sq^i : \Z{k}(\Sp{n}) \to \Z{2k + n(k+i-1)}(\D^{(n+1)(k+i)}, \partial\D^{(n+1)(k+i)})
        \\
        &\sq^i(T) = \bigcup_{\substack{
            (x,y) \in (T^2 \setminus \Delta)/\ZZ_2
            \\
            \ell = \vspan\{x - y\} \subset \RR^{n+1}
        }} \left( (\ell^\perp)^{k+i-1} 
        \times 
        \left\{ \frac{x + y}2 \right\} \right) \cap \D^{(n+1)(k+i)}.
    \end{aligned}
    \end{equation}

    In particular, when $T$ is a ``planar cycle,'' i.e. $T = V \cap \Sp{n}$ for some $(k+1)$-dimensional affine subspace $V \subset \RR^{n+1}$, then \cref{eq:SteenrodSquare} simplifies to
    \begin{equation}
        \sq^i(V \cap \Sp{n})
        = \bigcup_{\ell = \text{line through origin parallel to } V} \left( (\ell^\perp)^{k+i-1} \times (\ell^\perp \cap V) \right) \cap \D^{(n+1)(k+i)}.
    \end{equation}
\end{theorem}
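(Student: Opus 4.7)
The strategy is to specialize \cref{thm:SteenrodPowers} to $p = 2$ with an explicit choice of the auxiliary maps $g_q$, and then compute the resulting composition in closed form. Since $\Sq^{2j} = P^j$ and $\Sq^{2j+1} = \beta \circ P^j$, both families of Brown representatives supplied by \cref{thm:SteenrodPowers} apply. Direct substitution into the index formulas $pk + m + 2j(p-1)$ and $pk + m + 2j(p-1) + 1$ with $p = 2$ and $m = n - k$ yields $q = n + k + i$ independently of the parity of $i$, so a single family $g_{n+k+i}$ suffices for both cases. Moreover, for $p = 2$ the homeomorphism $h$ in \cref{eq:SphereQuotientLensBundle} is (up to normalization) the orthogonal diagonalization of the coordinate swap on $\RR^{n+1} \oplus \RR^{n+1}$: on pairs it takes the clean form
\[
    h(x, y) \;=\; \bigl(\vspan\{x - y\},\; \tfrac{1}{2}(x + y)\bigr),
\]
with the line factor corresponding to the $(-1)$-eigenspace and the disk factor to the $(+1)$-eigenspace.

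To produce $g_{n+k+i}$, apply the Thom isomorphism to reduce the problem to constructing a Brown representative for the generator $\alpha^{k+i-1}$ of $H^{k+i-1}(\RP^n;\ZZ_2)$, where $\alpha \in H^1(\RP^n)$ is the standard generator. The class $\alpha$ itself is represented by the map $\ell \mapsto \ell^\perp \cap \D^{n+1}$ depicted in \cref{fig:BrownRepExamples}(c); iterating via the external product yields a representative $\ell \mapsto (\ell^\perp)^{k+i-1} \cap \D^{(n+1)(k+i-1)}$ for $\alpha^{k+i-1}$. Appending a disk factor to realize the Thom class then gives
\[
    g_{n+k+i}(\ell, z) \;=\; \bigl((\ell^\perp)^{k+i-1} \times \{z\}\bigr) \cap \D^{(n+1)(k+i)},
\]
which indeed vanishes on $\RP^n \times \partial\D^{n+1}$ in the relative sense. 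Substituting the formula for $h$ into $g_{n+k+i}$ and summing over the unordered pairs $(x,y) \in (T^2 \setminus \Delta)/\ZZ_2$ that make up $\cyc(T)$ reproduces exactly the union in \cref{eq:SteenrodSquare}.

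For the planar simplification, suppose $T = V \cap \Sp{n}$ and $x, y \in T$ are distinct. The identity $|x|^2 = |y|^2 = 1$ implies $(x+y) \cdot (x-y) = 0$, so the midpoint $\tfrac{1}{2}(x + y)$ is perpendicular to $\ell = \vspan\{x - y\}$; membership in $V$ is automatic, so the midpoint lies in $\ell^\perp \cap V$, while $\ell$ lies in the linear subspace parallel to $V$. Conversely, every such $\ell$ and every $z \in \ell^\perp \cap V$ with $|z| \leq 1$ arise from the pair $z \pm \sqrt{1 - |z|^2}\,\hat\ell \in T$. Reindexing the union in \cref{eq:SteenrodSquare} by $\ell$ rather than by $(x,y)$ and collapsing the union of singletons $\{\tfrac12(x+y)\}$ into the full intersection $\ell^\perp \cap V$ yields the simplified formula.

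The main obstacle lies in the second step: verifying that the iterated external-product map $\ell \mapsto (\ell^\perp)^{k+i-1} \cap \D^{(n+1)(k+i-1)}$ represents precisely $\alpha^{k+i-1}$, rather than some other class of the same degree. This depends on a compatibility statement between cup products in $H^*(\RP^n;\ZZ_2)$ and Cartesian products of Brown representatives valued in spaces of cycles, which will be established in \cref{rem:ProjLensSpacesCohom_BrownReps}. Once this compatibility is in hand, the rest of the argument reduces to unwinding the definitions using the explicit form of $h$ and the formula for $g_{n+k+i}$.
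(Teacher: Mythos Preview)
Your proposal is correct and follows essentially the same approach as the paper: specialize \cref{thm:SteenrodPowers} to $p=2$, write $h(x,y)=(\vspan\{x-y\},\tfrac12(x+y))$ explicitly, build $g_q$ from the Brown representative $\ell\mapsto\ell^\perp\cap\D^{n+1}$ of $\omega_1$ via the cup-product and Thom-class constructions of \cref{lem:ProjLensCohomGens}, \cref{cor:CupProd_BrownRep}, and \cref{lem:ThomSpace_BrownRep}, and then verify the planar simplification by the orthogonality $(x+y)\cdot(x-y)=0$ together with its converse. Your orthogonality argument for the midpoint is in fact slightly cleaner than the paper's Pythagorean-theorem phrasing, but the content is identical.
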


Let us illustrate the map $\sq^i$ for planar cycles when $k = 1$, $n = 2$, and $i \geq 0$. Suppose that $V$ is a plane in $\RR^3$ that is parallel to the $xz$-plane and intersects $\Sp2$ in a circle (see \cref{fig:SqPlanarCycle})(a)). Then $\sq^i(V \cap \Sp2)$ is the union of a family of linear subspaces of $\RR^{(n+1)(k+i)}$ (intersected with $\D^{(n+1)(k+i)}$). The family is parametrized by lines $\ell$ through the origin in $\RR^3$ that are parallel to $V$. When $\ell$ is the $z$-axis (see \cref{fig:SqPlanarCycle})(a)), the corresponding linear subspace is the Cartesian product of $k + i - 1$ copies of $\ell^\perp$, namely the $xy$-plane, and a copy of $\ell^\perp \cap V$.

\begin{figure}[h]
    \centering
    \begin{tabular}{cc}
        (a) & (b) \\
        \includegraphics[height=4cm]{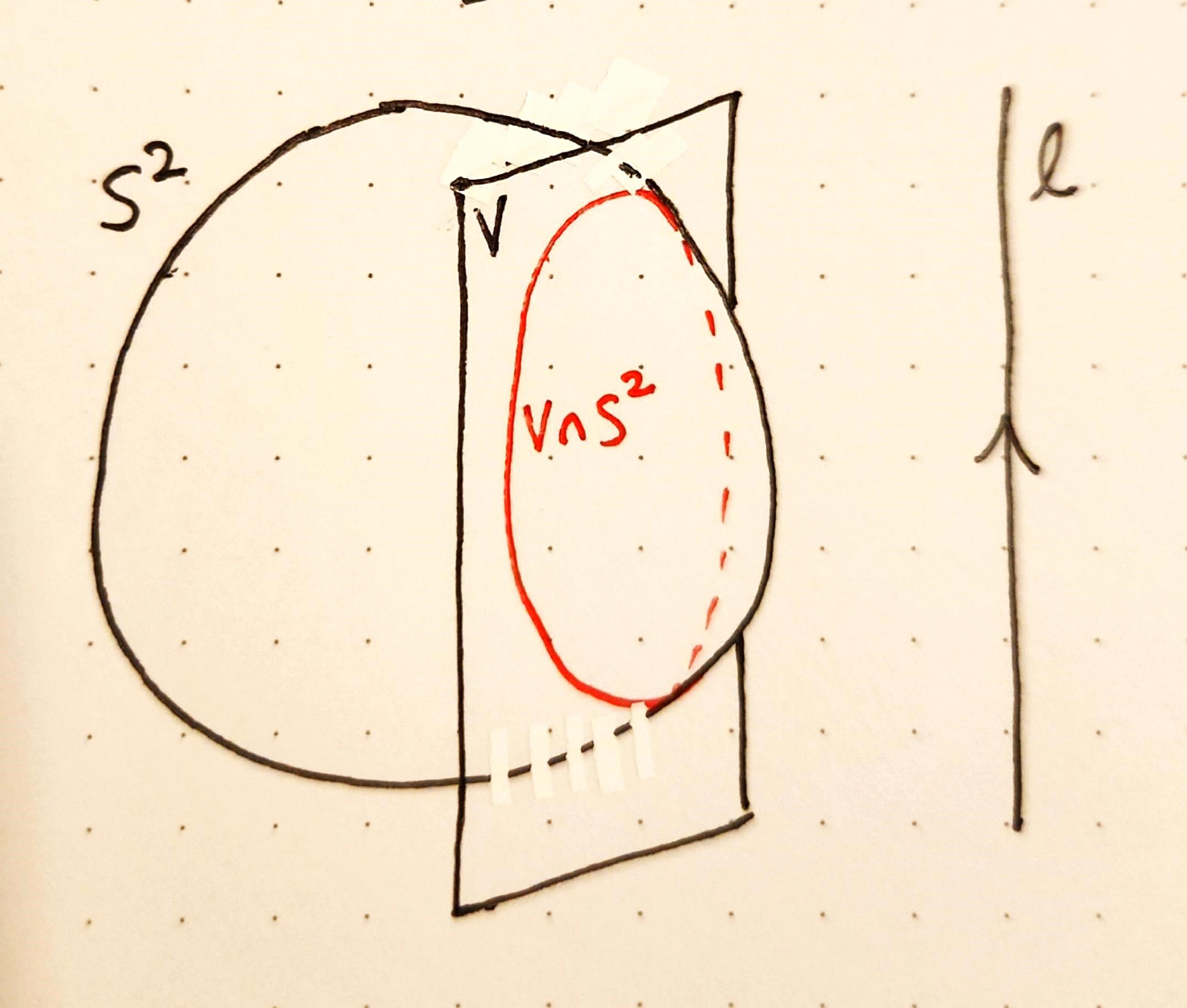} &
        \includegraphics[height=3.5cm]{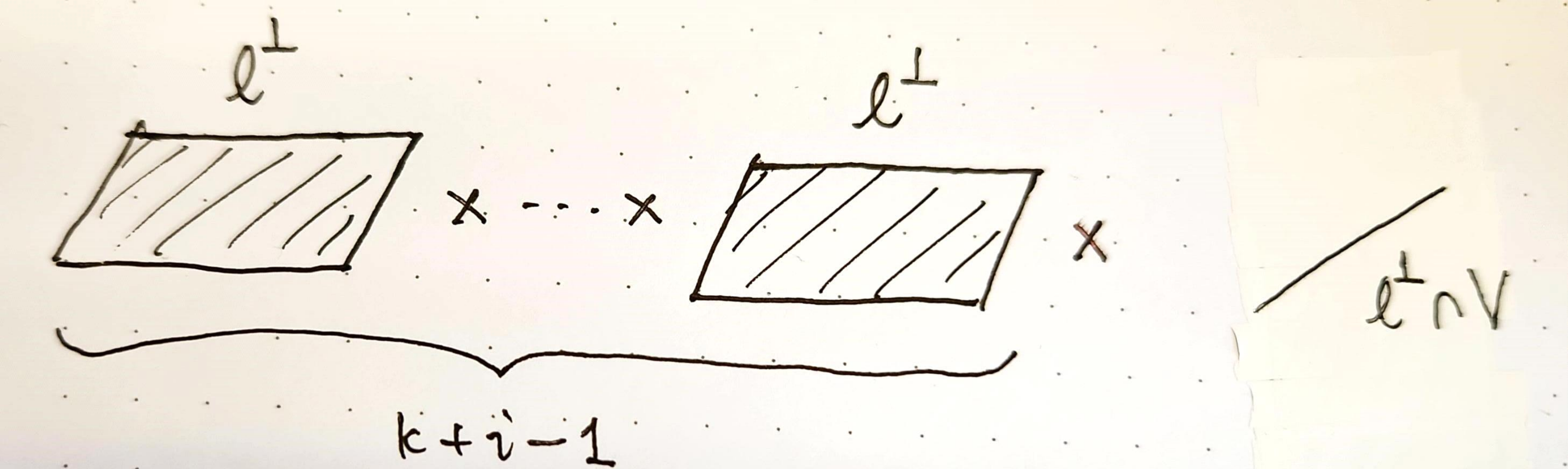}
    \end{tabular}
    
    \caption{A Brown representative $\sq^i$ for $\Sq^i$, when $p = 2$, $k = 1$, $n = 2$, and $i \geq 0$. If the input is a planar cycle $V \cap \Sp2$ as shown in (a), then the output is a union, over lines $\ell \subset \RR^3$ through the origin and parallel to $V$, of a Cartesian product of copies of $\ell^\perp$ and a copy of $V$ (and then intersected with $\D^{(n+1)(k+i)}$), as shown in (b).}
    \label{fig:SqPlanarCycle}
\end{figure}

\subsection{Main technical challenges}
\label{sec:TechnicalChallenges}

Actually, to even state \cref{thm:Bockstein,thm:SteenrodPowers,thm:SteenrodSquares} rigorously, we must overcome many technical difficulties related to Geometric Measure Theory. Overcoming these obstacles will take a large fraction of this work. The formula in \cref{eq:CycProdMap_Intuitive} is intuitively written using familiar operations from point-set topology like set difference and closure, but strictly speaking those operations are not defined for flat cycles. \emph{A priori}, the ``closure'' of the current $h(T^p\setminus \Delta)/\ZZ_p$ may not be well-defined as its mass may blow up near the boundary of the ambient space $L_n \times \D^{n+1}$. We will construct the map $\cyc$ rigorously in \cref{sec:CyclicProductMap} using the Compactness Theorem for integral chains and the Federer-Fleming Deformation Theorem, and prove that the mass does not blow up.

After constructing $\cyc$, we proved that $\cyc$ is continuous. Our proof is related to a question about an equivariant version of the Isoperimetric Inequality: \emph{Suppose that a group $G$ acts on a manifold $X$. If a cycle $T$ in $X$ is $G$-invariant, does it have a filling (that is, a chain bounded by $T$) that is $G$-invariant and whose mass is bounded in terms of $\M(T)$?} In our situation, suppose that we ignore the homeomorphism $h$ in \cref{eq:CycProdMap_Intuitive} for simplicity. Then the fact that the flat metric is related to areas of fillings implies that to prove that $\cyc$ is continuous at zero, we need to show that whenever $T \in \Z{k}(\Sp{n})$ has a filling of small mass $\mu$, the $\ZZ_p$-invariant cycle $T^p \in \Z{pk}(\Sp{p(n+1)-1})$ must have a $\ZZ_p$-invariant filling of mass bounded in terms of $\mu$ and $\M(T)$.

The Brown representatives for the individual Steenrod powers $P^i$ and Bockstein homomorphisms from \cref{thm:SteenrodPowers} are formulated as maps that send a cycle $T$ to the union of a family of cycles $g_q(x) \in \Z{a}(\D^{a+q}, \partial\D^{a+q})$ that is parametrized by points $x$ in a $pk$-dimensional cycle $\cyc(T)$. This is also why unions appear in \cref{thm:SteenrodSquares}. Implicitly, this definition treats the union of a $pk$-dimensional family of $a$-cycles as a $(pk+a)$-cycle. Technically, this does not make sense as currents cannot be treated the same as their supports. Furthermore, even if we treated currents the same as their supports, it is not clear \emph{a priori} that when $g_q$ is an arbitrary continuous family of cycles, the union of the supports of currents in this family is the support of some mod $p$ integral cycle. This is because each $\spt g_q(x)$ may be rectifiable, but $\bigcup_x \spt g_q(x)$ may not be rectifiable. We will define a well-behaved class of families of cycles called \emph{piecewise smooth families} so that this kind of union is rectifiable, and formalize this notion of unions of families of cycles into \emph{gluings of families of cycles that are parametrized by cycles.}

Gluings will serve as a key foundational tool to formalize and prove our main theorems. We will prove that every continuous family of cycles is homotopic to an arbitrarily close family that can be ``glued'' in the aforementioned sense. Our proof adapts the arguments of L.~Guth and Y.~Liokumovich in their approximation theorems which they used to provide an alternative proof of the Almgren Isomorphism Theorem for spaces of cycles with the flat topology \cite{GuthLiokumovich_ParamIneq}. Their approximation theorems have been extended by B.~Staffa in \cite{Staffa_WeylLaw}. The alternative proof by Guth and Liokumovich of the Almgren Isomorphism Theorem also used a discrete version of gluing that is well-defined only at the level of homology: to glue a family $g(x)$ of $k$-cycles parametrized by points $x$ in a $l$-cycle $T$, they choose finitely many points $x_i$ that are finely distributed in $\spt T$, construct a $(k+l)$-cycle that is an ``approximate gluing'' of the finitely many cycles $g(x_i)$, and prove that the homology class of the approximate gluing does not depend on the choice of points $x_i$. However, as we are constructing maps between spaces of cycles, we needed to develop our more general notion of gluing so that families of cycles are glued into cycles that are well-defined as-is, rather than only at the level of homology.

\subsection{Proving \cref{thm:SteenrodPowers} using gluing}

The Steenrod powers are intimately related to the action of $\ZZ_p$ by cyclic permutation on the $p$-fold smash products $X^{\smsh p}$, for cell complexes $X$. Indeed, Steenrod showed how the Steenrod powers arise from \emph{equivariant cohomology}, namely the cohomology groups of a homotopy-theoretic analogue of the quotient $X^{\smsh p}/\ZZ_p$ \cite{Steenrod_CohomOps}. More precisely, $X^{\smsh p}$ is homotopy equivalent to $\Sp\infty \times X^{\smsh p}$, on which there is a free action of $\ZZ_p$. Let $\Sp\infty \times_{\ZZ_p} X^{\smsh p}$ denote the quotient by this action. Then for any $\alpha \in H^m(X)$, there exists a class $\Gamma(\alpha) \in H^{pm}(\Sp\infty \times_{\ZZ_p} X^{\smsh p})$ from which $\beta(\alpha)$ and every $P^i(\alpha)$ can be derived; this is explained in detail in \cref{sec:SteenrodPowersEquivCohom}. 

To prove \cref{thm:SteenrodPowers}, we started by finding Brown representatives for $\Gamma(\iota_m)$, from which we derived Brown representatives for $\beta(\iota_m)$ and $P^i(\iota_m)$. One of our key ideas is that if $X = \Z{k}(\Sp{n})$, where $n - k = m$, then a Brown representative for $\Gamma(\iota_m)$ can be obtained using gluing from a Brown representative for $\Gamma(\alpha)$ for a generator $\alpha \in H^n(\Sp{n})$. More precisely, $\Gamma(\alpha)$ must have some Brown representative $f : \Sp{\infty} \times_{\ZZ_p} (\Sp{n})^{\smsh p} \to \Z0(\Sp{pn})$. Then we can define the following Brown representative for $\Gamma(\iota_m)$:
\begin{equation}
    \label{eq:ExternalOperationByGluing_Intuitive}
\begin{aligned}
    & F : \Sp{\infty} \times_{\ZZ_p} \Z{k}(\Sp{n})^{\smsh p} \to \Z{pk}(\Sp{pn})
    \\
    & F(t,T_1, \dotsc, T_p) = \bigcup_{x \in \{t\} \times (T_1 \smsh \dotsb \smsh T_p)} f(x).
\end{aligned}
\end{equation}
Even though $f$ is not known explicitly, the formula in \cref{eq:ExternalOperationByGluing_Intuitive} makes $F$ compatible with the $\ZZ_p$ action in a way that allows us to relate $F$ to the map $\cyc$.

\subsection{Proving the Almgren Isomorphism Theorem using gluing}

We have also used gluings to provide an alternative proof of \cref{eq:AlmgrenIsoFMetric}, which we call the \emph{Almgren Isomorphism Theorem for mod $p$ cycles with the inductive limit topology}. Our strategy was basically to verify the Eilenberg-Steenrod Axioms for a homology theory, in the spirit of standard proofs of the Dold-Thom Theorem. For instance, to verify the Exactness Axiom, we demonstrated the existence of quasifibrations $\Z{k}(\partial M) \to \Z{k}(M) \to \Z{k}(M, \partial M)$ by using our technical tool of gluing to prove a weak version of the homotopy lifting property. These quasifibrations yield the necessary long exact sequences:
\begin{equation*}
    \dotsb \to \pi_i(\Z{k}(\partial M)) \to \pi_i(\Z{k}(M)) \to \pi_i(\Z{k}(M, \partial M)) \to \dotsb
\end{equation*}
In this way, our proof introduces a homotopy-theoretic perspective to the Almgren Isomorphism Theorem, in contrast to the more direct proofs of the known cases of the theorem which focused more explicitly on how the geometry of the families of cycles relates to the geometry of homology classes.

\subsection{Earlier geometric representations of cohomology classes and operations}

Maps from a simplicial complex $X$ to $\Z{k}(\D^n, \partial\D^n)$ (with the flat topology) that represent a nontrivial cohomology class of $X$ have been studied in other contexts along with other similar objects under the name of \emph{sweepouts} \cite{Almgren_Varifolds,Gromov_Waists,Gromov_FillingRiemnMfds,Guth_Minimax}. The name evokes a family of cycles, parametrized by $X$, that ``sweeps through'' every point in $\Sp{n}$ or $\D^n$, such as in \cref{fig:BrownRepExamples}(a). In particular, given a particular chain of compositions of Steenrod squares $\Sq^{i_1}\dotsm\Sq^{i_q}$ that does not vanish, Guth inductively constructed cell complexes $X$ and maps from $X$ to spaces of cycles that are Brown representatives of a nonzero class $\Sq^{i_1}\dotsm\Sq^{i_q}(\alpha)$ for some $\alpha \in H^*(X)$ \cite{Guth_Minimax}. Sweepouts have been applied in the context of Almgren-Pitts min-max theory to prove the existence of minimal submanifolds, study their geometry, and prove several related conjectures.\footnote{A survey of some early applications of sweepouts and Almgren-Pitts min-max theory is available in \cite{Marques_MinimalSurfacesSurvey}.}

H.~B.~Lawson had constructed Brown representatives of cohomology classes as maps to spaces of \emph{complex algebraic $k$-cycles} in $\CP^n$, which we denote by $\Z[alg]{2k}(\CP^n)$ (the cycles have real dimension $2k$). He proved that $\Z[alg]{2k}(\CP^n) \whe K(\ZZ, 2) \times K(\ZZ,4) \times \dotsb \times K(\ZZ, 2n-2k)$. He also represented the total Chern class of the complex Grassmannian $\Gr_\C(n-k,n+1)$ by the map $f : \Gr_\C(n-k,n+1) \to \Z[alg]{2k}(\CP^n)$ which sends each complex $(n-k)$-plane to its complex orthogonal complement, considered as an algebraic cycle \cite{Lawson_AlgCyclesHomotTheory}. The total Chern class is equal to $f^*(\iota_2 \otimes \iota_4 \otimes \dotsb \otimes \iota_{2n-2k})$, where $\otimes$ denotes the cross product. T.~K.~Lam defined a similar map that represents the total Stiefel-Whitney class of real Grassmannians \cite{Lam_RealAlgCycles}. Our example for $\RP^2$ (\cref{fig:BrownRepExamples}(b)--(c)) can be derived from that map.

The fact that homology classes are geometrically represented by cycles have led several authors to formulate geometric constructions of the \emph{Steenrod homology operations} $\Sq_i : H_{n+i}(-) \to H_n(-)$ on mod 2 homology groups. The $\Sq_i$ are dual to the Steenrod squares, in the sense that evaluating $\Sq^i(\alpha)$ on a mod 2 homology class $\beta$ is the same as evaluating $\alpha$ on $\Sq_i(\beta)$. M.-L.~Michelson considered the infinite symmetric product of an even-dimensional sphere, $\SP(\Sp{2n})$, which is homotopy equivalent to $K(\ZZ,2n)$ by the Dold-Thom theorem. For each $0 \leq k \leq n$ they constructed an explicit integral $2(n+k)$-cycle in $\SP(\Sp{2n})$ whose homology class reduced mod 2, namely $\alpha \in H_{2(n+k)}(\SP(\Sp{2n}))$, satisfies $\Sq_{2k}(\alpha) \neq 0$ \cite{Michelson_AlgCyclesCohomOps}. Analogues of $\Sq_i$ have also been constructed for algebraic varieties using intersection theory and its extensions by several authors (see \cite[p.~377]{Fulton_IntersectionTheory} and \cite{Brosnan_SteenrodOps}). R.~M.~Hardt and C.~G.~McCrory represented $\Sq_i$ for a space $X$ using the double points of certain maps from cycles to Euclidean space \cite{McCrory_GeometricHomOps,HardtMcCrory_SteenrodOps}. P.~F.~dos Santos and P.~Lima-Filho encoded the Steenrod squares in $RO(C_2)$-graded equivariant cohomology using a restriction map to the fixed points of a $\ZZ_2$-action \cite{dosSantosLimaFilho_Sq}.

We are not aware of any prior research on geometric representations of Steenrod powers and Bockstein homomorphisms for odd primes $p$.

\subsection{Potential applications to quantitative homotopy theory}

One potential application of our Brown representatives arises from how the Steenrod powers and Bockstein homomorphisms serve as the starting ingredients of the mod $p$ Adams spectral sequence, which computes the $p$-primary part of the stable homotopy groups of spaces \cite{Adams_SpectralSequence}.\footnote{An exposition is available in \cite{FomenkoFuchs_HomotopicalTopology}.} This connection suggests that our constructions could allow lead to quantitative results about the homotopy groups of spheres. For instance, it is natural to ask whether elements of $\pi_m(\Sp{n})$ can be represented by maps between spheres that are ``efficient'' or have a ``simple'' geometry. Let us give each sphere the unit round metric, upon we could quantify the ``efficiency'' or ``complexity'' of a map $f : \Sp{m} \to \Sp{n}$ by its \emph{$k$-dilation}, which is defined as the infimal value of $C$ such that $\vol f(\Sigma) \leq C \vol\Sigma$ for all $k$-dimensional submanifolds $\Sigma \subset \Sp{m}$. Note that 1-dilation is simply the Lipschitz constant.

L.~Guth analyzed the geometry of the Steenrod squares in a different way from us to prove a lower bound on the $k$-dilation of $f$ when the cell complex obtained by attaching an $(m+1)$-cell to an $n$-cell via $f$ has a nonzero Steenrod square and $k \leq m/2$ \cite{Guth_Dilation}. Due to results about the Hopf invariant \cite{Adams_HopfInvariant}, the condition in Guth's result is satisfied only when $m = n + 1$, or when $n$ is sufficiently large, $m = n + 3$ or $m = n + 7$. This leaves open nearly every other homotopy group of a sphere. Our constructions could potentially help us prove versions of this result for other homotopy groups of spheres and for general Steenrod powers for odd primes.

On the flipside, torsion-free elements of $\pi_m(\Sp{n})$ are known to have representatives with controlled Lipschitz constant, while almost none of the torsion classes have a known representative with an explicitly computed Lipschitz constant. For each $m > n$, a result of Serre implies that $\pi_m(\Sp{n})$ is generated by finitely many torsion classes and at most one torsion-free class. The torsion-free generator $\alpha$ can be represented by a Hopf fibration or a Whitehead product that is 100-Lipschitz \cite[p.~34]{MosherTangora_CohomOps}. Evidently, it remains to study the torsion classes.

Let us define $L(m,n)$ as the infimal value of a constant $L$ such that every torsion class in $\pi_m(\Sp{n})$ has a $L$-Lipschitz representative. How fast does $L(m,n)$ grow as a function of $m$ and $n$? When $\pi_m(\Sp{n}) \neq 0$, $L(m,n) \geq 1$, as any surjective map between unit spheres must be at least 1-Lipschitz. Another lower bound is suggested by the proof by M.~Gromov that the number of classes in $\pi_m(\Sp{n})$ that have $L$-Lipschitz representatives is at most a polynomial in $L$ \cite[p.~305]{Gromov_Dilatation}. An argument of Gromov suggests an upper bound on $L(m,n)$ by a tower of exponentials involving $m$ and $n$ \cite{Gromov_TowerExp}. However, our geometric construction of the Steenrod powers and Bockstein homomorphisms could potentially give rise to a quantitative version of the Adams spectral sequence, which may improve this upper bound on the growth of $L(m,n)$.

\subsection{Organization of Content}

In \cref{sec:Definitions} we define the key objects from Geometric Measure Theory that we will use, such as mod $p$ integral currents and cycles, varifolds, and the flat metric.

In \cref{sec:PiecewiseSmooth} we will define \emph{piecewise smooth families of cycles}, which are families of cycles for which gluings exist. We will prove in \cref{thm:PiecewiseSmoothApprox} that continuous families of cycles can be approximated by piecewise smooth families. In \cref{sec:Gluings} we will prove that piecewise smooth families of cycles have gluings (\cref{thm:PiecewiseSmoothGluing_Existence}) and use that to prove the Almgren Isomorphism Theorem for the inductive limit topology (\cref{Thm:AlmgrenIsoFMetric}) and the Brown Representability Theorem for spaces of cycles (\cref{thm:BrownRepSpacesOfCycles}).

In \cref{sec:BrownRepCohomClassesOps} we construct explicit Brown representatives of cohomology generators of projective and lens spaces, and Brown representatives for the cross product (\cref{prop:CrossProduct_BrownRep}) and cup product (\cref{prop:CrossProduct_BrownRep}). We also use families of cycles to give geometric representations of a mod $p$ analogue of fiber integration (\cref{prop:GeomRep_PushforwardCohom}) and the K\"unneth formula (\cref{cor:KunnethFormula}). 

In \cref{sec:CyclicProductMap} we construct the cyclic product map $\cyc$ rigorously and prove that it is continuous. We also use that to prove \cref{thm:Bockstein}. In \cref{sec:SteenrodPowers} we give an exposition of Steenrod's construction of the Steenrod powers via equivariant cohomology, then combine that with gluings to prove \cref{thm:SteenrodPowers,thm:SteenrodSquares}.

The appendices contains proofs of results that are technical but straightforward, or proofs that are minor adaptations of ideas from the existing literature. The statements of these results may appear in earlier sections but will be labeled using the Roman alphabet, such as \cref{lem:DeltaAdmissibleDoubling}. \Cref{sec:Appendix_GMT} contains proofs of lemmas in Geometric Measure Theory, and \cref{sec:Appendix_TopLemmasLensSpaces} contains proofs of lemmas about the topology of lens spaces.

\section{Definitions}
\label{sec:Definitions}

\subsection{Mod $p$ relative integral currents and the flat metric}

Fix a prime $p$ and some integers $m \geq k \geq 0$. Let $M$ be a connected and compact $m$-dimensional Riemannian manifold, possibly with boundary. Let $\Haus^k$ denote the $k$-dimensional Hausdorff measure on $M$. We say that a mod $p$ singular $k$-chain $T = \sum_i a_i \sigma_i$, where $a_i \in \ZZ_p$, is \emph{Lipschitz} if each $\sigma_i : \Delta^k \to M$ is Lipschitz. We also say that $T$ is \emph{non-overlapping} if it is ``one-to-one almost everywhere'' in the sense that $\Haus^k(\{y \in M : \# \bigcup_i\sigma_i^{-1}(y) > 1\}) = 0$. We may subdivide $T$ using a triangulation of $\Delta^k$ into $k$-simplices $\tau_j$ which admit positively oriented bilipschitz homeomorphisms $f_j : \Delta^k \to \tau_j$, and replacing each $\sigma_i$ with $\sum_j(\sigma_i \circ f_j)$. Let $I_k(M)$ denote the set of non-overlapping mod $p$ Lipschitz $k$-chains in $M$, except that two such chains are identified if one of them is a subdivision of the other.

The \emph{mass} of some $T = \sum_i a_i \sigma_i \in I_k(M)$ is defined as $\M(T) = \sum_i \abs{a_i} \Haus^k(\im\sigma_i)$, where if $a_i$ is represented by an integer from 0 to $p-1$, then $\abs{a_i} = \min\{a_i, p - a_i\}$. Note that we can compute the sums and boundaries of elements of $I_k(M)$ as singular chains, but the resulting chains may not be non-overlapping.

Let $N$ be a compact submanifold of $M$. Consider the equivalence relation $\sim$ on $I_k(M)$ where $S \sim T$ if their difference as singular chains can be written as a linear combination of singular simplices with images in $N$. Note that since we have identified each $T \in I_k(M)$ with its subdivisions, there exists some $T' \sim T$ so that the simplices of $T'$ with images in $N$ have arbitrarily small total volume. We say that some $\sum_i a_i\sigma_i \in I_k(M)$ is \emph{non-overlapping with respect to $N$} if $\Haus^k(\bigcup_i\im\sigma_i \cap \partial(M \setminus N)) = 0$. Now we define the set of \emph{non-overlapping mod $p$ relative $k$-chains} $I_k(M,N)$ to be the subset of chains in $I_k(M)$ that are non-overlapping with respect to $N$, identified according to $\sim$. For each $T \in I_k(M,N)$ we define its mass to be $\M(T) = \inf_{\tilde{T} \in T} \M(\tilde{T})$. Given any $S, T \in I_k(M,N)$, define
\begin{equation*}
    \Fl(S,T) = \inf \{\M(P) + \M(Q) ~:~ P \in I_{k+1}(M), Q \in I_k(M), \partial P + Q \in S - T\}.
\end{equation*}

$\Fl$ is a pseudometric on $I_k(M,N)$. Let $I_k(M,N)/\Fl^{-1}(0)$ denote the induced metric space obtained by identifying elements at zero $\Fl$ distance from each other. The completion of this metric space is the \emph{space of mod $p$ relative flat $k$-chains} $F_k(M,N)$. It can be shown that $\M$ is well-defined on $I_k(M,N)/\Fl^{-1}(0)$ (see \cref{lem:ZeroFlatDistSameCurrent}), and that it can be extended to $F_k(M,N)$ using the formula
\begin{equation}
    \label{eq:MassLowerSemiCtsExtension}
    \M(T) = \liminf_{\varepsilon \to 0} \{ \M(S) : S \in I_k(M,N)/\Fl^{-1}(0), \Fl(S,T) < \varepsilon \}.
\end{equation}
This allows us to define the space of \emph{mod $p$ relative integral $k$-currents with the flat metric}, $\I{k}(M,N; \Fl) = \{T \in F_k(M,N) : \M(T) < \infty\}$. 

\begin{lemma}
    $\I{k}(M,N; \Fl)$ is a group and the boundary map on non-overlapping mod $p$ relative chains extends to a continuous map $\partial : \I{k+1}(M,N; \Fl) \to \I{k}(M,N; \Fl)$ for each $k \geq 0$.
\end{lemma}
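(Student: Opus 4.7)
My plan is to handle the group structure first, then the boundary extension, with the subtle question of finite boundary mass being the main obstacle.

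The completion $F_k(M,N)$ is a complete metric abelian group because, on Lipschitz representatives, addition is 1-Lipschitz in each variable with respect to $\Fl$ (apply the triangle inequality directly to the witnesses in the definition of $\Fl$: if $\partial P + Q \in S - T$ and $\partial P' + Q' \in S' - T'$, then $\partial(P+P') + (Q+Q') \in (S+S') - (T+T')$), and negation is an $\Fl$-isometry. To show that the finite-mass subset $\I{k}(M,N; \Fl)$ is closed under these operations, I would combine the subadditivity $\M(S+T) \leq \M(S) + \M(T)$ on non-overlapping Lipschitz chains (pass to a common subdivision and use that $a \mapsto \min\{a, p-a\}$ is a subadditive symmetric norm on $\ZZ_p$) with the lower-semicontinuous definition of $\M$ from \cref{eq:MassLowerSemiCtsExtension}. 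For $S, T \in \I{k}$, pick Lipschitz representatives $S_n \to S$, $T_n \to T$ in $\Fl$ with $\M(S_n) \to \M(S)$ and $\M(T_n) \to \M(T)$; continuity of addition gives $S_n + T_n \to S + T$, and lower semicontinuity yields $\M(S+T) \leq \liminf(\M(S_n) + \M(T_n)) = \M(S) + \M(T) < \infty$. Closure under negation is immediate because $\M(-T) = \M(T)$ by symmetry of the norm.

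For the boundary, the key step is the 1-Lipschitz estimate on Lipschitz representatives: if $\partial P + Q$ represents $S - T$ with $P \in I_{k+2}(M)$ and $Q \in I_{k+1}(M)$, then $\partial Q$ represents $\partial S - \partial T$, so taking the witness $(Q, 0)$ in the definition of $\Fl(\partial S, \partial T)$ gives $\Fl(\partial S, \partial T) \leq \M(Q) \leq \M(P) + \M(Q)$; infimizing over all $(P, Q)$ yields $\Fl(\partial S, \partial T) \leq \Fl(S, T)$. Hence $\partial$ extends uniquely by uniform continuity to a continuous map $\partial : F_{k+1}(M,N) \to F_k(M,N)$ that agrees with the original boundary on Lipschitz chains.

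The main obstacle is to show that this extension actually lands in $\I{k}$, i.e., that $\M(\partial T) < \infty$ whenever $\M(T) < \infty$; this can fail for flat chains with coefficients in $\RR$ or $\ZZ$, so one must exploit the $\ZZ_p$-structure, where any finite-mass flat chain is known to be rectifiable with a rectifiable boundary. I would realize this via the mod $p$ Federer-Fleming deformation theorem applied to Lipschitz approximations: given $T_n$ with $\Fl(T_n, T) \to 0$ and $\M(T_n) \to \M(T)$, deform each $T_n$ onto a sufficiently fine polyhedral skeleton to obtain $\tilde T_n$ close to $T_n$ in $\Fl$ and with both $\M(\tilde T_n)$ and $\M(\partial \tilde T_n)$ controlled by $\M(T_n)$ (at the chosen scale), so that $\sup_n \M(\partial \tilde T_n) < \infty$; lower semicontinuity of $\M$ then gives $\M(\partial T) \leq \liminf_n \M(\partial \tilde T_n) < \infty$. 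Verifying the mod $p$ deformation theorem for non-overlapping Lipschitz chains, and checking that the scale can be chosen uniformly so that the boundary-mass control is available without an a priori bound on $\M(\partial T)$, is the technically delicate GMT step, and is precisely the sort of result I would expect to be deferred to \cref{sec:Appendix_GMT}.
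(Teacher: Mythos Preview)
The paper does not prove this lemma; it relies on the identification of $\I{k}(M,N;\Fl)$ with Federer's mod $p$ integral currents carried out in \cref{sec:Appendix_GMT} (in particular \cref{lem:IntExtEquiv}), and in Federer's setup an integral current is \emph{by definition} rectifiable with rectifiable boundary, so $\partial:\I[ext]{k+1}\to\I[ext]{k}$ is a tautology. Your treatment of the group structure and of the $1$-Lipschitz extension of $\partial$ to $F_{k+1}\to F_k$ is correct and standard.

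Your deformation argument for $\M(\partial T)<\infty$, however, has a genuine gap that cannot be closed along the lines you sketch. The Federer--Fleming estimate reads $\Fl(T_n,\tilde T_n)\le c\epsilon\bigl(\M(T_n)+\M(\partial T_n)\bigr)$, so without an a priori bound on $\M(\partial T_n)$ you cannot keep $\partial\tilde T_n$ close to $\partial T$ in $\Fl$, and the polyhedral bound $\M(\partial P)\le C(\epsilon)\M(P)$ is then useless for lower semicontinuity. In fact the implication $\M(T)<\infty\Rightarrow\M(\partial T)<\infty$ is false for the literal definition $\I{k}=\{T\in F_k:\M(T)<\infty\}$: on $\Sp2$, let $T=\sum_{n\ge2}A_n$ where $A_n$ is the spherical annulus between latitudes $1/n$ and $1/n+1/n^3$ from a pole; then $\M(T)\asymp\sum n^{-4}<\infty$, while $\partial T$ is an infinite disjoint union of circles of total length $\asymp\sum n^{-1}=\infty$, each carrying multiplicity $\pm1$, so no mod $p$ representative has finite mass. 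Finite boundary mass must therefore be built into the definition of $\I{k}$ (as Federer does), not derived from finite mass alone.
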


\subsection{The inductive limit topology}

Henceforth we will write $\I{k}(M, N)$ and $\Z{k}(M, N)$ to mean $\I{k}(M, N; \Fl)$ and $\Z{k}(M, N; \Fl)$ but with the \emph{inductive limit topology} \cite[(1.9)]{Almgren_HomotopyGroupsIntegralCycles} which is the inductive limit of the mass-bounded subspaces of $\I{k}(M, N; \Fl)$ and $\Z{k}(M, N; \Fl)$.

Let us give sufficient conditions for maps between spaces of currents or cycles with the inductive limit topology to be continuous. Let $\I{k}(M,N)^\mu$ and $\Z{k}(M,N)^\mu$ denote the sets $\{T \in \I{k}(M,N) : \M(T) < \mu\}$ and $\{T \in \Z{k}(M,N) : \M(T) < \mu\}$ respectively. We will only consider finite $\mu$. Observe that the inductive limit topology on $\I{k}(M,N)^\mu$ is exactly the same as the flat topology, and similarly for $\Z{k}(M,N)^\mu$.

\begin{restatable*}{lemma}{LemContinuityIndLimTop}
    \label{lem:ContinuityIndLimTop}
    A map $g : \Z{k}(M,N) \to Y$ is continuous in the inductive limit topology if and only if $g$ is continuous in the flat topology when restricted to $\Z{k}(M,N)^\mu$ for every $\mu > 0$.

    A map $f : \Z{k}(M,N) \to \Z{k'}(M',N')$ is continuous with respect to the inductive limit topologies on its domain and codomain if for all $\mu > 0$, there exists a $\mu' > 0$ such that $\M(T) < \mu \implies \M(f(T)) < \mu'$, and $f$ restricts to a map $f_\mu : \Z{k}(M,N)^\mu \to \Z{k'}(M',N')^{\mu'}$ that is continuous which respect to the flat topologies on the domain and codomain.

    The above statements are still true when $\Z{}$ is replaced by $\I{}$.
\end{restatable*}

\subsection{Concentration of mass}

When applying local modifications to every cycle in a large family of cycles, we may wish to modify the part of each cycle in some ball of radius $r$ that is independent of the particular cycle. Ideally, $r$ can be chosen so that the pieces of those cycles inside balls of radius $r$ must have mass as small as desired. This is known as the \emph{no concentration of mass condition}, and it does not hold in general. We introduce a \emph{mass concentration profile} to quantify how small $r$ must be chosen to guarantee that the masses of the parts of cycles inside $r$-balls must be smaller than some given $\mu > 0$. Roughly speaking, we will use this profile in \cref{sec:PiecewiseSmooth} to ensure that our local modifications to families of cycles preserve the no concentration of mass condition.

\begin{definition}[Mass concentration profile]
    Given any map $F : X \to \Z{k}(M,N)$, let its \emph{mass concentration profile} be the function $\chi_F : \RR \to \RR$ where $\chi_F(r)$ is the supremum of all $\M(F(x) \res B)$, where $x$ ranges over $X$ and $B$ ranges over all balls of radius $r$ in $M$.

    We say that a map $f : X \to \Z{k}(M, N)$ \emph{has no concentration of mass} if $\lim_{r \to 0} \chi_f(r) = 0$.
\end{definition}

\section{Piecewise Smooth Families of Cycles}
\label{sec:PiecewiseSmooth}

Suppose that $M$ is a closed Riemannian manifold, and let $X$ be a $n$-dimensional manifold that is oriented when $p$ is odd. Consider a continuous map $f : X \to \Z{d}(M)$. When $X = \Sp{n}$, the Almgren isomorphism is defined using a ``gluing homomorphism'' that ``glues'' the $m$-parameter family $f$ of $d$-cycles into an $(n+d)$-cycle $A$. One difficulty in defining this gluing homomorphism is evident from considering each $f(x)$ as an integral current mod $p$, which has an \emph{orientation}: a field $v(x)$ of $d$-vectors defined almost everywhere on the support of $f(x)$, which intuitively span the tangent planes. If we view $f(x)$ as a ``submanifold'' of $A$, then $f(x)$ should have a ``normal bundle'' within $A$. Thus we would like to find some $n$-vector field $n(x)$ with the goal that so that $v(x) \wedge n(x)$ is an orientation of $A$. However, it is not clear \emph{a priori} how to find $n(x)$, especially if the cycles $f(y)$ close to $f(x)$ approach $f(x)$ in an irregular way as $y \to x$. The standard constructions of the gluing homomorphism avoid constructing $n(x)$ directly by approximating $f$ with a discrete family of cycles, and filling between nearby cycles in this discrete family \cite{GuthLiokumovich_ParamIneq,Guth_WidthVolume}. 

Throughout the rest of this section, let $M$ be a compact $m$-dimensional manifold, and $N$ be either $\partial M$ or a compact $m$-dimensional submanifold. We call such a pair $(M, N)$ an \emph{$m$-dimensional collar pair.}

\begin{definition}
    \label{def:PiecewiseSmooth}
    Consider a map $f : X \to \Z{k}(M,N)$, where $X$ is a compact smooth manifold with a smooth polyhedral structure. Then we say that a \emph{piecewise smooth structure for $f$} is a triplet of families $(\{\varphi_\sigma\}, \{Z_\sigma\}, \{\varphi_{\tau\sigma}\})$, where the first two families are indexed over the cells $\sigma$ of $X$ and the third is indexed over pairs of cells $\tau \subset \sigma$. Each $\varphi_\sigma$ is a smooth map $\sigma \times M \to M$ called a \emph{chart} and each $Z_\sigma \in \Z{k}(M,N)$ is called a \emph{model relative cycle} such that
    \begin{align}
        \label{eq:PiecewiseSmooth_Chart}
        \varphi_\sigma(\sigma \times N) &\subset N & &\text{and} & \varphi_{\sigma\sharp}(\{x\} \times Z_\sigma) &= f(x) \text{ for all } x \in \sigma.
    \end{align}
    We further require that for all $x$ in the interior of $\sigma$, $\varphi_\sigma(x,-)$ is a diffeomorphism $M \to M$ that restricts to a diffeomorphism $N \to N$.
    
    Each $\varphi_{\tau\sigma}$ is a smooth map $\tau \times M \to M$ called a  \emph{collapse map} that satisfies the following properties:
    \begin{itemize}
        \item $\varphi_{\tau\sigma}(x,-)_\sharp(Z_\sigma) = Z_\tau$ for any $x \in \tau$.
        
        \item $\varphi_\sigma(x,-) = \varphi_\tau(x,-) \circ \varphi_{\tau\sigma}(x,-)$ for all $x \in \tau$. In other words,
        \begin{equation}
            \label{eq:CollapseMapIdentity}
            \varphi_\sigma|_{\tau \times M} = \varphi_\tau \circ \big( (x,y) \mapsto (x, \varphi_{\tau\sigma}(x,y)) \big).
        \end{equation}

        \item $\varphi_{\sigma\sigma}(x,-) = \id$ for all $x \in \sigma$ and for all $\kappa \subset \tau \subset \sigma$ and $x \in \kappa$, $\varphi_{\kappa\sigma}(x,-) = \varphi_{\kappa\tau}(x,-) \circ \varphi_{\tau\sigma}(x,-)$.
    \end{itemize}

    When a map has a piecewise smooth structure, we say that the map is \emph{piecewise smooth.}
\end{definition}

Piecewise smooth maps offer a natural way to define families of relative cycles that are continuous in the inductive limit topology, as shown in the following lemma.

\begin{lemma}
    \label{lem:PiecewiseSmoothLipschitz}
    When $X$ is a compact Riemannian manifold, every piecewise smooth map $f : X \to \Z{d}(M,N)$ is Lipschitz with respect to the geodesic distance in the domain and the inductive limit topology in the codomain.
\end{lemma}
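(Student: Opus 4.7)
The plan is to prove the Lipschitz bound with respect to the flat metric on the image. Since $X$ is compact and each chart $\varphi_\sigma : \bar\sigma \times M \to M$ is smooth, the cycles $f(x) = \varphi_\sigma(x,-)_\sharp Z_\sigma$ have uniformly bounded mass, so $f(X) \subset \Z{d}(M,N)^\mu$ for some $\mu < \infty$, on which the inductive limit and flat topologies coincide by \cref{lem:ContinuityIndLimTop}. Hence it suffices to exhibit a constant $C$ such that $\Fl(f(x), f(y)) \leq C\, d_X(x, y)$ for all $x, y \in X$.

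First I would establish a cell-wise Lipschitz bound. Fix a closed cell $\bar\sigma \subset X$, and let $L_\sigma$ be the Lipschitz constant of $\varphi_\sigma$ on the compact domain $\bar\sigma \times M$. For $x, y \in \bar\sigma$ and any smooth unit-speed path $\gamma : [0, \ell] \to \bar\sigma$ from $x$ to $y$, set $\Phi(t, q) = \varphi_\sigma(\gamma(t), q)$, which is $L_\sigma$-Lipschitz. The $(d+1)$-dimensional current $P = \Phi_\sharp([0, \ell] \times Z_\sigma)$ has mass at most $L_\sigma^{d+1}\,\ell\,\M(Z_\sigma)$, and by the product rule for boundaries of currents, $\partial P = f(y) - f(x) - \Phi_\sharp([0, \ell] \times \partial Z_\sigma)$. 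The chart property $\varphi_\sigma(\sigma \times N) \subset N$ guarantees the last term is supported in $N$, so $P$ serves as a filling of $f(y) - f(x)$ as relative flat chains, giving $\Fl(f(x), f(y)) \leq L_\sigma^{d+1}\,\M(Z_\sigma)\cdot \ell$.

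To globalize, set $C = \max_\sigma L_\sigma^{d+1}\,\M(Z_\sigma)$, a finite maximum over the finitely many cells of $X$. For arbitrary $x, y \in X$ and $\varepsilon > 0$, choose a piecewise smooth path $\gamma$ from $x$ to $y$ of length at most $d_X(x, y) + \varepsilon$ that meets the codimension-$\geq 1$ skeleton transversely; such paths exist by a generic perturbation of a minimizing geodesic. Then $\gamma$ crosses cell boundaries only finitely many times, partitioning into arcs $\gamma_i \subset \bar\sigma_i$. Summing the cell-wise bound along these arcs and applying the triangle inequality for $\Fl$ yields $\Fl(f(x), f(y)) \leq C \cdot \mathrm{length}(\gamma) \leq C(d_X(x, y) + \varepsilon)$, and letting $\varepsilon \to 0$ completes the proof. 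The one step requiring genuine care is the boundary calculation for $P$: one must verify that $\Phi_\sharp([0,\ell] \times \partial Z_\sigma)$ is supported in $N$, which is exactly what the collar condition in \cref{def:PiecewiseSmooth} was designed to give; the remaining path-decomposition argument is then routine thanks to the finite polyhedral structure on $X$.
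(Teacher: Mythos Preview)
Your proof is correct and follows essentially the same approach as the paper: break a path between $x$ and $y$ into segments lying in individual cells, then on each segment use the chart $\varphi_\sigma$ to push forward $[0,\ell]\times Z_\sigma$ as an explicit filling of $f(y)-f(x)$, giving a mass bound proportional to the segment length. Your handling of the extra boundary term $\Phi_\sharp([0,\ell]\times\partial Z_\sigma)$ is in fact slightly more careful than the paper's, which tacitly treats $Z_\sigma$ as an absolute cycle; you correctly observe that this term is supported in $N$ via the condition $\varphi_\sigma(\sigma\times N)\subset N$, so it vanishes in the relative group.
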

\begin{proof}
    Denote the injectivity radius of $X$ by $r$. Fix any $x \in X$. For almost every $y \in X$ within distance $r$ of $x$, the unique minimizing geodesic $\gamma$ from $x$ to $y$ is transverse to every cell of $X$, except possibly at $x$. Thus $\gamma$ can be broken into segments, each of which lies in some top-dimensional cell. To prove the lemma, it suffices to prove that $f$ is $(C+L)$-Lipschitz when restricted to each segment, where $L = \max\{1,\max_\sigma \Lip(\varphi_\sigma)\}$ and $C = \max_\sigma \Lip(\varphi_\sigma)\M(Z_\sigma)$. In other words, we may reduce to the case where $\gamma$ lies within a single top-dimensional cell $\sigma$.

    First we prove that $\Fl(f(x), f(y)) \leq C\length(\gamma)$, by ``gluing'' the family of cycles $f \circ \gamma$ into a filling $Q$ of $f(y) - f(x)$. More formally, if we regard $\gamma$ as a Lipschitz 1-chain (with coefficients in $G$) in the obvious way, then we may define $Q = (\varphi_\sigma)_\sharp(\gamma \times Z_\sigma)$, because
    \begin{equation*}
        \partial Q = (\varphi_\sigma)_\sharp(\partial\gamma \times Z_\sigma) = (\varphi_\sigma)_\sharp((y - x) \times Z_\sharp) = \varphi_\sharp(\{y\} \times Z_\sharp) - \varphi_\sharp(\{x\} \times Z_\sharp) = f(y) - f(x).
    \end{equation*}
    Therefore $\Fl(f(x), f(y)) \leq \M(Q) \leq C\length(\gamma)$.

    Since $\M(f(x)) \leq C$ for all $x \in X$, this implies that $f$ is continuous in the inductive limit topology.
\end{proof}

Here are several examples of piecewise smooth families of relative cycles. Any two cycles in these families are ambient isotopic to each other, and this observation allows us to show that the families are piecewise smooth, continuous in the $\F$ topology when $k \geq 1$, and continuous in the inductive limit topology by \cref{lem:PiecewiseSmoothLipschitz}.

\begin{example}
    \label{eg:FiberBundle_PiecewiseSmooth}
    Consider an oriented fiber bundle $F \to E \xrightarrow{\xi} B$, where $E$ is a closed and oriented Riemannian manifold, $B$ is a polyhedral complex, and the fibers are $k$-dimensional submanifolds with boundary inside $\partial E$. Then the map $\xi^{-1}(-) : B \to \Z{k}(E)$ is piecewise smooth.

    Find a triangulation of $B$ so that each simplex $\sigma$ lies within a trivializing neighbourhood $U$. Let the trivializing homeomorphism be $h : \xi^{-1}(U) \to U \times F$. Define the model relative cycle $Z_\sigma = \xi^{-1}(v)$, where $v$ is the barycenter of $\sigma$. To define the charts, construct a family of orientation-preserving diffeomorphisms, $f : \sigma \times \cl{U} \to \cl{U}$, that fix $\partial U$, so that $f(x,v) = x$. Then define the chart $\varphi_\sigma$ so that each $\varphi_\sigma(x,-) : E \to E$ is the smooth map that is the identity outside of $\xi^{-1}(U)$, and within $\xi^{-1}(U)$ it corresponds, under the trivializing homeomorphism $h$, so the smooth map $(y,z) \mapsto (f(x,y),z)$.

    For any $\tau \subset \sigma$, define the corresponding collapse map by the formula $\varphi_{\tau\sigma}(x,-) = \varphi_\sigma(x,-) \circ \varphi_\tau(x,-)^{-1}$.
\end{example}

\begin{example}
    \label{eg:RPn_PiecewiseSmooth}
    Let $p = 2$ and $n \geq 1$. Then the map $f : \RP^n \to \Z{n}(\D^{n+1}, \partial\D^{n+1})$ that sends $\ell \subset \RR^{n+1}$ to $\ell^\perp \res \D^{n+1}$ is piecewise smooth.

    Consider the fiber bundle $\xi : \SO(n+1) \to \RP^n$ that sends each matrix to the span of its first column. Choose any smooth simplicial structure $X$ on $\RP^n$. Let $\Pi = 0 \times \RR^n \subset \RR^{n+1}$. All of the model currents are $\Pi \cap \D^{n+1}$.
    
    Consider the fiber bundle $\SO(n+1) \to \RP^n$ that sends each matrix to the span of its first column. Since each simplex $\sigma$ is contractible, the inclusion $\sigma \hookrightarrow \RP^n$ lifts using parallel transport to a smooth map $i_\sigma : \sigma \to \SO(n+1)$. Now define the corresponding chart so that $\varphi_\sigma(x,-)$ is the linear transformation $i_\sigma(x)$. For any simplex $\tau \subset \sigma$, define the collapse maps so that $\varphi_{\tau\sigma}(x,-) = \varphi_\tau(x,-)^{-1} \circ \varphi_\sigma(x,-)$.
\end{example}

Let $\perp_{\C}$ denote the orthogonal complement of a complex vector subspace in some $\C^n$ with respect to the standard Hermitian inner product. Endow each complex vector space with its standard orientation as a complex manifold.

\begin{example}
    \label{eg:CPn_PiecewiseSmooth}
    Let $p > 2$ and $n \geq 1$. Then the map $f : \CP^n \to \Z{2n}(\D^{2n+2}, \partial\D^{2n+2})$ that sends a complex line $\ell \subset \C^{n+1}$ to $\ell^{\perp_\C} \res \D^{2n+2}$ is piecewise smooth. (We have $\D^{2n+2} \subset \C^{n+1}$ by the standard identification of $\C^{n+1}$ with Euclidean space.)

    The details of this example follow those of \cref{eg:RPn_PiecewiseSmooth}, except with $\SU(n+1)$ instead of $\SO(n+1)$.
\end{example}

\begin{example}
    \label{eg:Lens_PiecewiseSmooth}
    For any $n \geq 1$, the map $\Lens_p^{2n-1} \to \Z{2n-1}(\D^{2n}, \partial\D^{2n})$ that sends each equivalence class of points
    \begin{equation*}
        \{x_1,\dotsc,x_p\} \mapsto \sum_{j = 1}^p \{z + \lambda x_j : z \perp_\C x_1, \lambda \in [0,\infty)\} \res \D^{2n}.
    \end{equation*}
    Each summand on the right-hand-side is contained in the direct sum of $\vspan_\C\{x_1\}^{\perp_\C}$ (treated as a real vector space) with $\vspan_\RR\{x_j\}$, thus we may give the summand the direct sum orientation. The right-hand-side is a mod $p$ relative cycle: its boundary is $p \cdot \vspan_\C\{x_1\}^{\perp_\C} = 0$.

    The details of this example are a straightforward adaptation of those of \cref{eg:CPn_PiecewiseSmooth}.
\end{example}

\subsection{The Piecewise Smooth Approximation Theorem}

The main result in this section is that every family of relative cycles that is continuous in the inductive limit topology is homotopic to a piecewise smooth approximation. We will prove this by adapting the proof of similar approximation theorems by Guth and Liokumovich \cite{GuthLiokumovich_ParamIneq} and Staffa \cite{Staffa_WeylLaw}. Their methods approximate families $\{f(x)\}_{x \in X}$ of relative cycles that are continuous in the flat metric by ``nicer'' families where if $x$ and $y$ are ``close'' in $X$, then $f(x) - f(y)$ is supported in a small ball. To state our theorem formally, we will need to formalize the above notions, following Guth and Liokumovich.

Like \cite{GuthLiokumovich_ParamIneq}, we will use covers of $\bar{M} = \cl{M \setminus N}$ by \emph{$\delta$-admissible sets} which are either open balls in the interior or ``thickenings'' of open balls in $\bar{M}$. To define those thickenings, \cite[Lemma~2.1]{GuthLiokumovich_ParamIneq} gives a parametrization of a collar neighbourhood of $\bar{M}$ by a map $E : \partial \bar{M} \times [0,r_0] \bar{M}$ that is close to 1-Lipschitz, where $r_0$ is less than the injectivity radius of $\bar{M}$. When $\partial\bar{M}$ is smooth, $E$ is defined using the exponential map normal to $\partial \bar{M}$.

\begin{definition}[$\delta$-admissible sets, {\cite[Section~2.4]{GuthLiokumovich_ParamIneq}}]
    For any $0 < r < r_0$, a \emph{generalized ball of radius $r$} in $\bar{M}$ is either an open ball of radius $r$ in the interior of $\bar{M}$ or $E(B_r \times [0,r])$ where $B_r$ is an open ball of radius $r$ in $\partial\bar{M}$. A collection $\mathcal{U}$ of generalized balls in $\bar{M} = \cl{M \setminus N}$ is called \emph{$\delta$-admissible} for some $\delta > 0$ if they are all disjoint and their radii add up to less than $\delta$.
\end{definition}

\begin{definition}[Localized maps]
    Consider a cubical complex $X$, and a map $F : V \to \Z{k}(M, N)$ where $V$ is a subcomplex of $X$. We say that $F$ is \emph{$\varepsilon$-fine} if for any $x, y \in V$ that lie in the same cell of $X$, either $\F(F(x),F(y)) \leq \varepsilon$ (when $k \geq 1$) or $\Fl(F(x),F(y)) < \varepsilon$ (when $k = 0$).

    We say that $F$ is \emph{$\delta$-localized} if every cell $C$ of $X$ is associated with a $\delta$-admissible collection $\mathcal{U}_C$ of sets in $\bar{M} = \cl{M \setminus N}$ such that for all $x,y \in C \cap V$, $F(x) - F(y)$ is supported inside $\bigcup\mathcal{U}_C$. Moreover, we say that the family $\{\mathcal{U}_C\}_C$  is \emph{doubling} if it satisfies the following properties:
    \begin{itemize}
        \item For every cell $C$, the elements of $\mathcal{U}_C$ are generalized balls that, even when their radii are doubled, are disjoint from one another.

        \item For all cells $C \subset C'$ and every $B \in \mathcal{U}_C$, $2B$ is contained inside some $U' \in \mathcal{U}_{C'}$.
    \end{itemize}
\end{definition}

Now we can state our approximation theorem:

\begin{theorem}[Piecewise Smooth Approximation Theorem]
    \label{thm:PiecewiseSmoothApprox}
    Let $f : X \to \Z{k}(M,N)$ be a continuous from a finite cubical complex. Then for any $\delta > 0$, $f$ is homotopic to a piecewise smooth map $f'$ that is also $\delta$-localized.
    
    Moreover, there exist some $\delta_0, \varepsilon_0 > 0$ such that if $X$ has a subcomplex $Y$ such that $f|_Y$ is piecewise smooth with respect to a refinement of $Y$, $f|_Y$ is $\delta$-localized for some $\delta < \delta_0$, and $f$ is $\varepsilon$-fine for some $\varepsilon < \varepsilon_0$, then $f'$ can be chosen so that $f'|_Y = f|_Y$ and the piecewise smooth structure of $f'$, restricted to $Y$, is a refinement of that of $f|_Y$.
\end{theorem}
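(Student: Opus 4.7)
The plan is to produce $f'$ in two stages: first, adapting the approximation arguments of Guth--Liokumovich \cite{GuthLiokumovich_ParamIneq} and Staffa \cite{Staffa_WeylLaw}, homotope $f$ to a nearby $\delta$-localized map $\tilde{f}$ equipped with a doubling system of $\delta$-admissible covers $\{\mathcal{U}_C\}_C$; then inductively upgrade $\tilde{f}$ to a piecewise smooth map by constructing model cycles $Z_\sigma$, charts $\varphi_\sigma$, and collapse maps $\varphi_{\tau\sigma}$ cell-by-cell over a smooth refinement of $X$. Before starting, subdivide $X$ so that $f$ is $\varepsilon$-fine for any prescribed $\varepsilon$; this uses continuity of $f$ in the inductive limit topology together with compactness of $X$ to obtain a uniform mass bound and uniform $\F$-continuity. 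In the rel-$Y$ setting the subdivision is arranged to refine the given polyhedral structure on $Y$ simultaneously.

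For Stage 1, I would choose the admissible covers $\mathcal{U}_C$ in order of increasing cell dimension so that the generalized balls have summable radii, are disjoint after doubling, and satisfy $2B \subset U'$ for some $U' \in \mathcal{U}_{C'}$ whenever $B \in \mathcal{U}_C$ and $C \subset C'$. Within each cell I would modify $\tilde{f}$ by cutting $\tilde{f}(x)$ along concentric shells just outside each admissible ball and replacing the exterior piece by the corresponding piece of a fixed reference cycle $\tilde{f}(v_C)$; the Federer--Fleming deformation theorem applied in the collar of each shell bounds the mass cost, while the no-concentration-of-mass property inherited from $\F$-continuity of $f$ keeps this cost small whenever $\varepsilon, \delta$ are sufficiently small. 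In the rel-$Y$ case, initialize the cover on cells of $Y$ to be the data already witnessing $f|_Y$'s $\delta$-localization and extend along $X \setminus Y$ using the doubling condition; the thresholds $\delta_0, \varepsilon_0$ are chosen so that this extension always succeeds.

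For Stage 2, induct on skeleta of the refinement, setting $Z_\sigma := \tilde{f}(v_\sigma)$ at a chosen barycenter $v_\sigma$ and assuming charts and collapse maps are in place on $\partial\sigma$. For each ball $U \in \mathcal{U}_\sigma$ the pieces $\tilde{f}(x) \res U$ for $x \in \sigma$ sit within flat-distance $\delta$ of $Z_\sigma \res U$ and have mass bounded by $\chi_{\tilde{f}}(\mathrm{diam}\, U)$, so after smoothing each cycle by Federer--Fleming deformation onto a fine grid and then mollification, they become ambient-isotopic smooth relative submanifolds of $U$. Apply the isotopy extension theorem in $U$ to produce a smooth family $\varphi_\sigma(x,-)$ of diffeomorphisms of $M$, supported in $U$, pushing $Z_\sigma \res U$ to $\tilde{f}(x) \res U$; set $\varphi_\sigma(x,-) = \id$ outside $\bigcup \mathcal{U}_\sigma$. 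Then define collapse maps by $\varphi_{\tau\sigma}(x,-) := \varphi_\tau(x,-)^{-1} \circ \varphi_\sigma(x,-)$ on the interior of each face $\tau \subset \sigma$. The identity in \cref{eq:CollapseMapIdentity} and the cocycle condition $\varphi_{\kappa\sigma} = \varphi_{\kappa\tau} \circ \varphi_{\tau\sigma}$ are then immediate, and $\varphi_{\tau\sigma\sharp} Z_\sigma = Z_\tau$ follows from the inductive construction.

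The main obstacle is the local isotopy extension step: for a generic continuous family of flat cycles, no ambient isotopy of $M$ realizes the variation, since cycles can genuinely change topology. The entire purpose of Stage 1, and the smallness of $\delta_0, \varepsilon_0$, is precisely to arrange that each $\tilde{f}(x) \res U$ is so close to $Z_\sigma \res U$ in $\F$-distance and mass that after smoothing they become honestly ambient-isotopic smooth submanifolds of $U$ with uniformly controlled second fundamental form; verifying that this smoothing plus isotopy extension can be performed quantitatively in $\varepsilon$ and $\delta$ is the crux. A secondary difficulty is simultaneously maintaining the collapse-map cocycle identity across nested cells, which is handled by the doubling hypothesis: since each ball used for $\tau$ sits inside one used for $\sigma$, the composition $\varphi_\tau^{-1} \circ \varphi_\sigma$ is supported in the correct admissible set and consistently yields $\varphi_{\tau\sigma}$.
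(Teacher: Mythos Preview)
Your Stage~1 is aligned with the paper, but Stage~2 contains a genuine gap: the isotopy extension step cannot work as stated. Mod~$p$ integral cycles are not submanifolds and need not become ambient-isotopic after any amount of smoothing or Federer--Fleming deformation. A simple obstruction: inside a single ball $U$, $\tilde{f}(x)\res U$ could be a circle of multiplicity~$2$ while $\tilde{f}(y)\res U$ is two nearby disjoint circles of multiplicity~$1$; these are $\Fl$-close and have comparable mass, yet no diffeomorphism of $M$ carries one to the other. Multiplicities, branching, and component counts are all invisible to flat distance but obstruct isotopy. You correctly flag this as ``the crux,'' but the proposed resolution (smoothing until second fundamental forms are controlled) does not address it, because the cycles are not graphs of anything to begin with.

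The paper avoids this entirely. Rather than pushing $Z_\sigma$ to $\tilde{f}(x)$ by a diffeomorphism, it exploits that $\tilde{f}(x)-\tilde{f}(y)$ is supported in the admissible balls, and builds the charts from \emph{radial collapse maps} $\psi_t$ on those balls: $\psi_0=\id$, and $\psi_1$ crushes each ball to its center (stretching a surrounding annulus, controlled via the doubling hypothesis). For $k\geq 1$, collapsing a ball to a point annihilates any $k$-cycle supported there, so $\psi_{1\sharp}\tilde{f}(x)=\psi_{1\sharp}\tilde{f}(y)$ automatically. Working over the barycentric subdivision of each cell $C$ viewed as a cone, one sets $\varphi_{C\sigma}(t,x,-)=\psi_t\circ\varphi_\sigma(x,-)$ and $Z_{C\sigma}=Z_\sigma$; for $t<1$ this is a diffeomorphism (which is all the definition requires on the interior), while at $t=1$ the collapse map $\varphi_{v,C\sigma}(v,-)=\psi_1$ handles the apex. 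No isotopy extension, no smoothing of cycles, and no control on their geometry is needed---only that their differences live in small balls. Your formula $\varphi_{\tau\sigma}=\varphi_\tau^{-1}\circ\varphi_\sigma$ also breaks down here, since $\varphi_\tau(x,-)$ need not be invertible for $x\in\partial\tau$; the paper instead defines the collapse maps directly from the $\psi_t$.
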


Let us first prove \cref{thm:PiecewiseSmoothApprox} for the case $k = 0$.

\begin{proof}[Proof of \cref{thm:PiecewiseSmoothApprox} when $k = 0$]
    We will explain the proof for the case where $Y = \emptyset$, as the case of $Y \neq \emptyset$ follows similarly. By \cite[p.~262]{Almgren_HomotopyGroupsIntegralCycles}, the cycles $f(x)$ all have mass bounded by some constant $\mu$. Thus by \cref{lem:ContinuityIndLimTop}, $f$ is continuous in the flat topology. From the proof of \cite[Theorem 2.3(2)]{GuthLiokumovich_ParamIneq} we know that $f$ is homotopic to a map $F$ that is $\delta$-localized, where $\delta$ can be chosen as small as desired. The homotopy is constructed using only cycles that are all bounded by some constant mass (see \cite[Proposition~2.6(3)]{GuthLiokumovich_ParamIneq}), so in fact $F$ and the homotopy are all continuous in the inductive limit topology. $F : \tilde{X} \to \Z{k}(M,N)$ is defined using a refinement $\tilde{X}$ of $X$: first the map $F|_{\tilde{X}^0}$ is constructed in a way that is $\delta$-localized for some small $\delta < \inj(\bar{M})$, then it is extended into the whole of $\tilde{X}$.

    It remains to construct a piecewise smooth structure for $F$. We will construct this inductively by following the inductive extension of $F|_{\tilde{X}^0}$ over the higher and higher skeleta of $\tilde{X}$, so let us explain how this extension is done. Suppose that $F$ has been extended into a $\delta$-localized map on $\tilde{X}^d$. Consider any $(d+1)$-cell $C$. Since $F|_{X^d}$ is $\delta$-localized, $C$ corresponds to a $\delta$-admissible family $\mathcal{U}_C$ such that for all $x, y \in \partial C$, $F(x) - F(y)$ is supported in $\bigcup\mathcal{U}_C$. Since $\mathcal{U}_C$ consists of disjoint generalized balls, we may define a 1-parameter family of maps $\psi_t : M \to M$ so that roughly speaking, each $\psi_t$ is a radial contraction on each generalized ball $B$ and identity outside. More precisely, by induction we can show that $\bigcup_{x \in \partial C} \spt (F(x) \res B)$ is contained in a generalized ball $B'$ contained in $B$ and with strictly smaller radius. Then define $\psi_t$ so that it is always the identity on $M \setminus \bigcup\mathcal{U}_C$, and over each ball $B$ disjoint from $\partial\bar{M}$, $\psi_0$ is the identity, and $\psi_1$ collapses each generalized ball to its center and stretches the annulus $\cl{B \setminus B'}$ onto the entirely of $B$.
    
    For generalized balls in the form of $U = E(B \times [0,r))$, the center is regarded as $(x,0)$ where $x$ is the center of $B$. Then $\bigcup_{x \in \partial C} \spt (F(x) \res U)$ is supported in $U' E(B' \times [0,r'))$ where $B'$ is a ball in $B$ of strictly smaller radius. Thus we define $\psi_t$ over $U$ is similarly to the previous case to collapse $U'$ to a point and stretch $U \setminus U'$ to the entire $U$. It is possible to define this so that $\psi_t$ restricts to a diffeomorphism $
    \partial\bar{M} \to \partial\bar{M}$ for $t < 1$, and so we can extend $\psi_t$ to a smooth map $M \to M$ so that $\psi_t(N) \subset N$, and so that for $t < 1$, $\psi_t$ is a diffeomorphism that restricts to a diffeomorphism on $N$.
    
    Then thinking of $C$ as a cone over $\partial C$, it suffices to construct a nullhomotopy $F_t : \partial C \to \Z{k}(M,N)$ of $F|_{\partial C}$. Simply define $F_t(x) = \psi_{t\sharp}(F(x))$. One need to check that $\psi_{1\sharp}(F(x))$ does not depend on $x$. If we had been dealing with cycles of positive dimension, then collapsing it to points would zero, so this would be true. This issue is more subtle for 0-cycles: $\psi_{t\sharp}(F(x))$ consists of a sum of points, each of which is the center of a generalized ball $B$, with coefficient equal to the sum of the coefficients of the points in $F(x) \res B$. (Ignore any points of $F(x)$ on $\partial\bar{M}$.) Nevertheless, the construction of the extension from \cite{GuthLiokumovich_ParamIneq} shows that for any $x, y \in \partial C$ and $B \in \mathcal{U}_C$, $F(x) \res B$ can be obtained from $F(y) \res B$ by adding the boundaries of some 1-chains supported in $B$, so the sum of the coefficients of the points in $F(x) \res B$ is the same as that for $F(y) \res B$.
    
    Now we proceed to construct the piecewise smooth structure. The polyhedral complex structure on $\tilde{X}$ will be its barycentric subdivision. $F|_{\tilde{X}^0}$ is trivially piecewise smooth: the model currents are $Z_v = F(v)$ and the charts are $\varphi_v(v,-) = \id$. Let us assume by induction that $F|_{X^d}$ is piecewise smooth with respect to the barycentric subdivison $\Sigma$ of $X^d$, for $d \geq 0$. Each simplex $\sigma$ in $\Sigma$ has a chart $\varphi_\sigma$ and model chain $Z_\sigma$.   The barycentric subdivision of $C$ is the cone over that of $\partial C$. At the apex $v$ of the cone we assign $Z_v = F(v)$ and $\varphi_v(v,-) = \id$. For each simplex $\sigma$ of $\partial C$, let $C\sigma$ denote its cone parametrized as a quotient of $I \times \sigma$. We define $Z_{C\sigma} = Z_\sigma$ and $\varphi_{C\sigma}(t,x,-) = \psi_t \circ \varphi_\sigma(x,-)$. It can be verified that these definitions satisfy \cref{eq:PiecewiseSmooth_Chart}.

    It remains to verify the compatibility relations. For each simplex $\sigma$ in $\Sigma$, the faces of $C\sigma$ are $\sigma$ itself and the cones over each face of $\sigma$. (If $\sigma$ is a vertex, then $v$ is also a face of $C\sigma$.) Define the collapse maps $\varphi_{\sigma(C\sigma)}(x,y) = y$ so that $\varphi_{\sigma(C\sigma)}(x,-)_\sharp(Z_{C\sigma}) = \id(Z_\sigma) = Z_\sigma$ and for any $x \in \sigma$, $\varphi_\sigma(x,-) \circ \varphi_{\sigma(C\sigma)}(x,-) = \psi_0 \circ \varphi_\sigma(x,-) = \varphi_{C\sigma}((0,x),-)$. For each face $\tau$ of $\sigma$, define $\varphi_{(C\tau)(C\sigma)}((t,x),y) = \varphi_{\tau\sigma}(x,y)$, so that $\varphi_{(C\tau)(C\sigma)}((t,x),-)_\sharp(Z_{C\sigma}) = \varphi_{\tau\sigma}(x,-)_\sharp(Z_\sigma) = Z_\tau = Z_{C\tau}$, and for any $(t,x) \in C\tau$ and $y \in M$,
    \begin{multline*}
        \varphi_{C\tau}((t,x), \varphi_{C\tau,C\sigma}((t,x),y)) = \psi_t(\varphi_\tau(x,\varphi_{\tau\sigma}(x,y)))
        \\
        = \psi_t \circ \varphi_\tau(x,-) \circ \varphi_{\tau\sigma}(x,-)(y) = \psi_t \circ \varphi_\sigma(x,-)(y) = \varphi_{C\sigma}((t,x),y).
    \end{multline*}
    Define $\varphi_{v,C\sigma}(v,-) = \psi_1$, Observe that $\sigma$ is adjacent to exactly one vertex $u$ of $\tilde{X}$, and $Z_{C\sigma} = Z_\sigma = \dotsb = Z_u = F(u)$. Hence $(\varphi_{v(C\sigma)})_\sharp(Z_{C\sigma}) = \psi_{1\sharp}(Z_\sigma) = \psi_{1\sharp}(F(u)) = F_1(u) = Z_v$ and \begin{equation*}
        \varphi_v(v,-) \circ \varphi_{v(C\sigma)}(v,-)(y) = \psi_1(y) = \psi_1(\varphi_\sigma(u,y)) =
        \varphi_{C\sigma}((1,u),y) = \varphi_{C\sigma}(v,y).
    \end{equation*}
\end{proof}

The $k \geq 1$ case follows a similar proof but needs a further modification for the following reason. The maps $\psi_t$ in the $k = 0$ case, when restricted to each generalized ball in the $\delta$-admissible neighbourhoods, stretch an annular subregion of the generalized ball onto the entire generalized ball. We chose this annular region to be away from the support of the cycles $F(x)$ involved, which is possible when $\dim F(x) = 0$. However, when $\dim F(x) \geq 1$, we must deal with the situation where $F(x)$ has some mass in the annular region. Thus we control the amount of the stretching in the asnnular region using $\delta$-admissible collections that are doubling.

\begin{restatable*}{lemma}{LemDeltaAdmissibleDoubling}
    \label{lem:DeltaAdmissibleDoubling}
    Let $X$ be a finite cubical complex, each of whose cells $C$ of positive dimension is associated with a $\delta$-admissible family $\mathcal{U}_C$. Then for some constant $c = c(\dim X, \max_C \#\mathcal{U}_C)$, there is another family of $c\delta$-admissible collections $\mathcal{V}_C$ that is indexed by the cells of $X$ of positive dimension and is also doubling, such that for each cell $C$, $\#\mathcal{V}_C \leq c$, and every $U \in \mathcal{U}_C$ is contained inside some $V \in \mathcal{V}_C$.
\end{restatable*}

\begin{proposition}
    \label{prop:ExtendPiecewiseSmooth}
    Let $X$ be an $n$-dimensional cubical complex that contains some subcomplex $Y$, and $k \geq 1$. Suppose that $f : X^0 \cup Y \to \Z{k}(M, N)$ is a $\delta$-localized family for some $\delta > 0$, which is also piecewise smooth with respect to the barycentric subdivision of $Y$. Suppose further that the corresponding family $\{\mathcal{U}_C\}$ of $\delta$-admissible collections is also doubling. Then $f$ extends to a map $F : X \to \Z{k}(M, N)$ that satisfies the following properties for some constant $c = c(\dim X, \max_C \#\mathcal{U}_C)$:
    \begin{enumerate}
        \item $F$ is $\delta$-localized with the same family $\{\mathcal{U}_C\}_C$.

        \item $F$ is piecewise smooth with respect to the barycentric subdivision of $X$.

        \item $\chi_F(2\delta) \leq c\chi_f(2\delta)$.

        \item For all $x$ and $y$ that lie in the same cell of $X$,
        \begin{equation}
            \label{eq:ExtendPiecewiseSmooth}
            \M(F(x), F(y)) < c\chi_f(2\delta).
        \end{equation}
    \end{enumerate}
\end{proposition}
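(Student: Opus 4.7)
The plan is to extend $f$ to $F$ by induction on the dimension $d$ of the cells of $X$, skipping cells already in $Y$, directly adapting the argument from the $k = 0$ case of \cref{thm:PiecewiseSmoothApprox} and using the doubling hypothesis to control mass growth and propagate $\delta$-localization across nested cells. Assume $F$ has been defined on $X^d \cup Y$ satisfying (1)--(4). For each $(d+1)$-cell $C$ not contained in $Y$, parametrize $C$ as the cone over $\partial C$ with apex $v$ at the barycenter of $C$, so that the barycentric subdivision of $C$ is the cone over the barycentric subdivision of $\partial C$. Using the $\delta$-admissible family $\mathcal{U}_C$, I would construct a smooth $1$-parameter family $\psi_t : M \to M$ for $t \in [0,1]$ with: $\psi_0 = \id$; $\psi_t$ the identity outside $\bigcup \mathcal{U}_C$; each $B \in \mathcal{U}_C$ preserved by every $\psi_t$; $\psi_t(N) \subset N$; for every $t < 1$, $\psi_t$ a diffeomorphism of $M$ restricting to one on $N$; and $\psi_1$ collapsing a concentric half-radius sub-ball of $B$ onto the center of $B$, treating boundary generalized balls $E(B_r \times [0, r))$ analogously in the coordinates supplied by $E$. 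A concrete choice such as $\psi_t(x) = c_B + (1 - t\phi_B(x))(x - c_B)$ on each $B$, with $c_B$ its center and $\phi_B$ a smooth bump function equal to $1$ on the half-radius sub-ball and vanishing near $\partial B$, gives $\Lip(\psi_t) \leq c_0$ uniformly in $t$ for an absolute constant $c_0$.

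Given $\psi_t$, I would set $F(t, z) = \psi_{t\sharp}(F(z))$ for $(t, z) \in C$ with $z \in \partial C$. For any $z, w \in \partial C$, the $\delta$-localization of $F|_{\partial C}$ places $F(z) - F(w)$ inside $\bigcup \mathcal{U}_C$, and $\psi_1$ maps each $B$ into a set of dimension strictly less than $k$; hence $\psi_{1\sharp}(F(z) - F(w))$ vanishes as a $k$-current and $F(v) := \psi_{1\sharp}(F(z))$ is well defined independently of $z$. The piecewise smooth structure on $C$ is built exactly as in the $k = 0$ case: for each simplex $\sigma$ of the barycentric subdivision of $\partial C$, its cone $C\sigma$ is assigned $Z_{C\sigma} = Z_\sigma$ and chart $\varphi_{C\sigma}((t, x), -) = \psi_t \circ \varphi_\sigma(x, -)$; at the apex, $Z_v = F(v)$ and $\varphi_v(v, -) = \id$; the collapse maps are $\varphi_{\sigma(C\sigma)}(x, y) = y$, $\varphi_{(C\tau)(C\sigma)}((t, x), y) = \varphi_{\tau\sigma}(x, y)$, and $\varphi_{v(C\sigma)}(v, -) = \psi_1$. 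The compatibility identities of \cref{def:PiecewiseSmooth}, including \cref{eq:CollapseMapIdentity}, are verified by the same computations as in the $k = 0$ case.

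Property (2) is immediate from the construction. For (1), any $x \in C$ has $F(x) - F(z_0)$ supported in $\bigcup \mathcal{U}_C$ for a fixed $z_0 \in \partial C$, since $\psi_t$ is the identity outside that set; to maintain $\delta$-localization within a higher-dimensional cell $C' \supset C$, I would invoke the doubling hypothesis $2B \subset U'$ for some $U' \in \mathcal{U}_{C'}$, which forces $\bigcup \mathcal{U}_C \subset \bigcup \mathcal{U}_{C'}$, so the newly introduced modifications on $C$ stay inside $\bigcup \mathcal{U}_{C'}$. For (3), since $\psi_t$ preserves each $B$ and is $c_0$-Lipschitz, $\M(F(t, z) \res B) \leq c_0^k \M(F(z) \res B) \leq c_0^k \chi_f(2\delta)$; any ball of radius $2\delta$ meets at most $\#\mathcal{U}_C$ elements of $\mathcal{U}_C$ by $\delta$-admissibility, so summing gives property (3) with $c = c(\dim X, \max_C \#\mathcal{U}_C)$. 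Property (4) follows from the same bound applied to the difference $F(x) - F(y)$, which is supported in $\bigcup \mathcal{U}_C$.

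The main obstacle will be arranging $\psi_t$ to be jointly smooth in $(t, x)$ all the way up to $t = 1$ while remaining a diffeomorphism for every $t < 1$ and executing a degenerate collapse at $t = 1$, and in particular ensuring that the chart $\varphi_{C\sigma}$ and the collapse map $\varphi_{v(C\sigma)}$ still fit together smoothly at the apex, where the diffeomorphism hypothesis of \cref{def:PiecewiseSmooth} degenerates. A careful choice of $\phi_B$ with controlled derivatives, together with a compatible treatment of boundary generalized balls in the coordinates of $E$, is required. Once the local construction is set up, the inductive bookkeeping across cells of different dimension is handled cleanly by the doubling hypothesis, which is exactly the role it plays here and the reason it was introduced in \cref{lem:DeltaAdmissibleDoubling}.
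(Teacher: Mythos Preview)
Your overall scaffold---induction on cell dimension, cone parametrization of each new cell, and transplanting the piecewise smooth structure from the $k=0$ argument---matches the paper. But the construction of $\psi_t$ has a genuine gap that breaks the nullhomotopy.

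You build $\psi_t$ on the balls of $\mathcal{U}_C$ itself, with $\psi_1$ collapsing only the concentric half-radius sub-ball of each $B\in\mathcal{U}_C$. You then assert that ``$\psi_1$ maps each $B$ into a set of dimension strictly less than $k$,'' but this is false for the $\psi_1$ you describe: your own formula $\psi_1(x)=c_B+(1-\phi_B(x))(x-c_B)$ is the identity near $\partial B$, so $\psi_1(B)$ is full-dimensional. Consequently $\psi_{1\sharp}(F(z)-F(w))$ need not vanish. The $\delta$-localization only places $\spt(F(z)-F(w))$ inside $\bigcup\mathcal{U}_C$, not inside the half-radius sub-balls, and the doubling condition $2B'\subset U'$ for face balls $B'$ does \emph{not} force $B'\subset\tfrac12 U'$ (take $U'$ of radius $10$, $B'$ of radius $1$ centered at distance $6$ from the center of $U'$). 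So the support of the difference can lie entirely in the annulus you are stretching rather than collapsing.

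The paper avoids this by working one level down: it collects the balls $B'_1,\dots,B'_{q'}$ from the families $\mathcal{U}_{C'}$ of the $d$-faces $C'$ of $C$, and defines $\psi_t$ on $\bigcup_i 2B'_i$, with $\psi_1$ collapsing each $B'_i$ to its center and stretching the annulus $2B'_i\setminus B'_i$ onto $2B'_i$. The first doubling bullet guarantees the $2B'_i$ are pairwise disjoint, so this is well defined and uniformly $2$-Lipschitz; the second bullet gives $\bigcup_i 2B'_i\subset\bigcup\mathcal{U}_C$, which is what actually yields $\delta$-localization with the \emph{same} family. Since $F(z)-F(w)$ for $z,w\in\partial C$ is supported in $\bigcup_i B'_i$ (chain through a common vertex of the faces), $\psi_1$ genuinely kills it. Once you reroute $\psi_t$ through the face balls in this way, the rest of your argument---the piecewise smooth structure on cones, the mass-concentration bound via the Lipschitz constant and the count $q'\leq 2(d+1)\max_C\#\mathcal{U}_C$, and property~(4)---goes through as you outlined.
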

\begin{proof}
    We will prove this by induction on $\dim X$. When $\dim X = 0$ there is nothing to prove. Suppose the lemma is true for $\dim X = d$, and now suppose that $\dim X = d+1$. By applying the lemma to $X^d$ and $X^d \cap Y$, we have an extension of $f$ to a piecewise smooth map $F : X^d \cup Y \to \Z{k}(M, N)$ that is $\delta$-localized with the family $\{\mathcal{U}_C\}$ such that $\M(F(x), F(y)) \leq c \chi_f(2\delta)$ for any $x,y \in X^d \cup Y$ that lie in the same cell of $X$.
    
    Let $C$ be a $(d+1)$-cell in $X$ that is not contained in $Y$. List the generalized balls inside every $\mathcal{U}_{C'}$, where $C'$ is $d$-face of $C$, as $B'_1, \dotsc, B'_{q'}$. Note that $q' \leq 2(d+1)\max_C \#\mathcal{U}_C \leq c$. Then for all $x, y \in \partial C$, $F(x) - F(y)$ is supported within $\bigcup_i B'_i$. Write $\mathcal{U}_C = \{B_1, \dotsc, B_q\}$. Now define $F$ over $C$ using a nullhomotopy $F_t$ of $F|_{\partial C}$: for each $B'_i$, one can define a radial deformation $\psi_t : \bigcup_i 2B'_i \to \bigcup_i 2B'_i$, so that $\psi_0$ is the identity, and $\psi_1$ collapses each $B'_i$ to its center and stretches $2B'_i \setminus B'_i$ to cover the whole $2B'_i$. This is well-defined because the family $\{\mathcal{U}_C\}$ is doubling, so the generalized balls $2B'_i$ are disjoint from one another. Moreover, we may choose this family so that every $\psi_t$ is 2-Lipschitz. Define $F_t(x) = (\psi_t)_\sharp(F(x))$. This is a nullhomotopy because as $x$ varies within $\partial C$, $F(x)$ stays the same outside of $\bigcup_i( 2B'_i \setminus B'_i)$.

    After extending $F$ over $C$, it is still $\delta$-localized with the same family $\{\mathcal{U}_C\}$ because, as the family is doubling, we have $\bigcup_i 2B' \subset \bigcup_j B_j$.
    \begin{lemma}
        \label{lem:ExtendPiecewiseSmooth_MassConcProfileRecursion}
        $\chi_{F|_C}(2\delta) \leq (1 + q')\chi_{F|_{X^d \cup Y}}(2\delta) \leq c\chi_{F|_{X^d \cup Y}}(2\delta)$.
    \end{lemma}
    \begin{proof}
        Let $B$ be any ball of radius $2\delta$ in $M$, and consider any $x \in C$. Let $y$ be any point on $\partial C$ that lies a common radius of $C$ with $x$. Then $F(x)$ differs from $F(y)$ only within $\bigcup_i 2B'_i$. Thus $\M(F(x) \res B) \leq \M(F(y) \res B) + \sum_i \M(F(y) \res 2B'_i) \leq (1 + q')\chi_{F|_{X^d \cup Y}}(2\delta)$. The lemma follows.
    \end{proof}
    
    Therefore, we may extend $F$ to the whole of $X$, after which we have
    \begin{equation*}
        \chi_F(2\delta) \leq \max_C \chi_{F|_C}(2\delta) \leq 2(d+1)(\max_C \#\mathcal{U}_C) \chi_{F|_{X^d \cup Y}}(2\delta) \leq c\chi_{F|_{X^d \cup Y}}(2\delta) \leq c\chi_f(2\delta),
    \end{equation*}
    where the last inequality follows from the induction hypothesis.
    
    And for any $x, y \in C$, since $F(x)$ differs from $F(y)$ only within $\bigcup_i 2B'_i$,
    \begin{align*}
        \M(F(x),F(y))
        &\leq \sum_i (\M(F(x) \res 2B'_i) + \M(F(y) \res 2B'_i))
        \\
        &\leq 2q'\chi_{F|_C}(2\delta)
        \\
        (\text{\cref{lem:ExtendPiecewiseSmooth_MassConcProfileRecursion}}) &\leq c\chi_{F|_{X^d \cup Y}}(2\delta)
        \\
        (\text{induction hypothesis}) &\leq c\chi_f(2\delta). 
    \end{align*}

    Now we must show that $F$ is piecewise smooth with respect to the barycentric subdvision of $X$. This is entirely analogous to the inductive construction of the piecewise smooth structure from the proof of the $k = 0$ case of \cref{thm:PiecewiseSmoothApprox}.
\end{proof}

Given an open set $U \subset M$, let $\partial_r U$ denote the $r$-neighbourhood of $\partial U$.

\begin{restatable*}{proposition}{PropDiscreteDeltaLocExtension}
    \label{prop:DiscreteDeltaLocExtension}
    For any $\delta > 0$ and $k \geq 1$, there exist constants $\varepsilon_0 = \varepsilon_0(n,\bar{M},\delta)$, $c = c(n,\bar{M},\delta)$, and $0 < r(M,\delta) \leq \delta$ so that the following holds. Suppose that a map $f : X^0 \to \Z{k}(M,N)$ from the vertices of an $n$-dimensional cubical complex $X$ is $\varepsilon$-fine for some $0 < \varepsilon < \varepsilon_0$. Then for some a refinement $\tilde{X}$ of $X$, $f$ can be extended to a $\delta$-localized map $F : \tilde{X}^0 \to \Z{k}(M,N)$ that satisfies following properties:
    \begin{enumerate}
        \item\label{enum:DiscreteDeltaLocExtension_FlatBound} For any cell $C$ of $X$, any vertex $v$ of $C$, and any $x \in C \cap \tilde{X}^0$, $\Fl(f(v),F(x)) \leq c\varepsilon$.

        \item\label{enum:DiscreteDeltaLocExtension_MassBound} $\max_{x \in \tilde{X}^0}\M(F(x)) \leq \max_{x \in X^0} \M(f(x)) + c\varepsilon$.
        \item\label{enum:DiscreteDeltaLocExtension_Inductive} For each cell $C$ of positive dimension in $X$ and each $x \in C \cap \tilde{X}^0$, $F(x)$ is, roughly speaking, ``patched together'' from restrictions of $F(y)$ for $y \in \partial C \cap \tilde{X}^0$. More precisely, there exists a cover $U_1,\dotsc,U_N$ of $M$ by open sets of diameter $< \delta$, depending on $C$, so that each $x \in C \cap \tilde{X}^0$ corresponds to some $y_1, \dotsc, y_N \in \partial C \cap \tilde{X}^0$, such that
    \begin{equation}
        \label{eq:DiscreteDeltaLocExtension}
        \M\left( F(x) - \sum_{i = 1}^N F(y_i) \res U_i \right) \leq c\varepsilon.
    \end{equation}
        Moreover, for any sequence of parameters $0 < \lambda_1,\dotsc, \lambda_N < 1$, the $U_i$'s can be chosen so that for any $x \in C \cap \tilde{X}^0$,
        \begin{align}
            \M(F(x) \res \partial U_i) &\leq \frac{2N^2}{\delta} (\M(F(x)) + c\varepsilon) \label{eq:DiscreteDeltaLocExtension_Coarea}
            \\
            \M(F(x) \res \partial_{\lambda_i r/4} U_i) &\leq \lambda_i N(\M(F(x))  + c\varepsilon). \label{eq:DiscreteDeltaLocExtension_BdryControl}
        \end{align}

        \item For each cell $C$ of $X$, the $\delta$-admissible collection associated with $C$ has at most $c_4(n,\dim M)$ elements.
    \end{enumerate}
\end{restatable*}

\begin{corollary}
    \label{cor:DiscreteDeltaLocExtensionIsFine}
    For any $\delta > 0$, $\mu > 0$, and $k \geq 1$, there exist constants $\varepsilon_0(n,\bar{M},\delta)$ and $c(n,\bar{M},\delta,\mu)$ so that the following holds. Suppose that a map $f : X^0 \to \Z{k}(M, N)$ from the vertices of an $n$-dimensional cubical complex $X$ is $\varepsilon$-fine. Then the $\delta$-localized extension $F : \tilde{X}^0 \to \Z{k}(M,N)$ of $f$ that is obtained from \cref{prop:DiscreteDeltaLocExtension} satisfies the following property:
    \begin{itemize}
        \item $\chi_F(r) \leq c_3(r/\delta)\chi_f(r)$ for all $r > 0$, where $c_3(r/\delta) = c_3(k,r/\delta)$
    \end{itemize}
\end{corollary}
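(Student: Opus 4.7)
The plan is to exploit the patching decomposition in property~3 of \cref{prop:DiscreteDeltaLocExtension} together with an induction on the dimension of the cells of $X$. Recall that the inductive construction of $F$ proceeds skeleton by skeleton: $F|_{X^0} = f$, and for each $(d+1)$-cell $C$ and each $x \in C \cap \tilde{X}^0$ the value $F(x)$ is built from values $F(y_i)$ at vertices $y_i \in \partial C \cap \tilde{X}^0$ via
\begin{equation*}
    F(x) = \sum_{i=1}^N F(y_i) \res U_i + E_x, \qquad \M(E_x) \leq c\varepsilon,
\end{equation*}
where $\{U_i\}$ covers $M$ by sets of diameter less than $\delta$.

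The core estimate bounds $\M(F(x) \res B)$ for a ball $B$ of radius $r$. Restricting the decomposition to $B$ and using $U_i \cap B \subseteq B$ gives
\begin{equation*}
    \M(F(x) \res B) \leq \sum_{i : U_i \cap B \neq \emptyset} \M(F(y_i) \res B) + c\varepsilon.
\end{equation*}
The number of indices $i$ with $U_i \cap B \neq \emptyset$ is controlled by some $N(r/\delta, n, \dim M)$: every such $U_i$ is contained in the $(r + \delta)$-ball concentric with $B$, and by property~4 of \cref{prop:DiscreteDeltaLocExtension} the associated $\delta$-admissible family has at most $c_4(n, \dim M)$ elements, so a standard volume-packing argument yields a bound polynomial in $(1 + r/\delta)$.

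I then induct on $d = \dim C$. The base case $d = 0$ is immediate since $F|_{X^0} = f$ gives $\chi_F(r) \leq \chi_f(r)$ on $X^0$. For the inductive step, each $y_i$ lies in a $d$-cell, so the inductive bound $\M(F(y_i) \res B) \leq K_d(r/\delta)\, \chi_f(r)$ applies; summing over the at most $N(r/\delta, n, \dim M)$ indices yields an estimate of the form $\chi_{F|_{X^{d+1} \cap \tilde{X}^0}}(r) \leq K_{d+1}(r/\delta)\, \chi_f(r) + c'\varepsilon$. After $n = \dim X$ iterations, the multiplicative constant becomes a function $c_3(k, r/\delta)$ of the required shape.

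The main obstacle is absorbing the accumulated additive $c\varepsilon$ errors, since the corollary asserts a purely multiplicative bound by $\chi_f(r)$. I expect to handle this using the $\varepsilon$-fineness hypothesis together with the $\mu$-dependence of $c$ in the corollary: at scales $r$ where $\chi_f(r)$ is tiny, the $\varepsilon$-fineness of $f$ forces $F(y_i) - F(y_j)$ at neighbouring boundary vertices to differ only by chains of mass controlled by $\chi_f$ at a comparable scale, so the error chains $E_x$ can be rearranged as restrictions of such differences and absorbed into the multiplicative term. If this rearrangement fails at the smallest scales, one can instead iterate the patching identity down to vertices of $X$, where $F = f$, so all accumulated errors eventually telescope into controlled contributions of $\chi_f$.
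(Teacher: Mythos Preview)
Your approach is essentially the paper's: both argue inductively over the skeleta using the patching decomposition, and both bound the number of $U_i$ meeting a ball $B$ of radius $r$ by a function of $r/\delta$ (the paper simply writes $(r/\delta)^{\dim M}$, since the $U_i$ arise from a cover by generalized balls of radius $<\delta$). A minor difference is that the paper does not work from the black-box statement of property~(3) but reaches into the proof of \cref{prop:DiscreteDeltaLocExtension}, using the explicit formula $F_j(z)=G(x,t)$ together with the small $\tau$-chains and the coarea slice bound; this gives the same recursive inequality you obtain, namely $\chi_{F_j}(r)\le c_3(r/\delta)\bigl(\chi_{F_{j-1}}(r)+c\varepsilon\bigr)$.

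Your concern about the additive $c\varepsilon$ is legitimate, and the paper does not in fact eliminate it: the displayed chain of inequalities in the paper's proof terminates with $c_3(r/\delta)(\chi_f(r)+\varepsilon)$, not the purely multiplicative bound in the corollary statement. In the single place this corollary is invoked (the proof of \cref{thm:PiecewiseSmoothApprox} for $k\ge1$), the extra $+\varepsilon$ is harmless, so the discrepancy is cosmetic. Your proposed mechanisms for absorbing the $\varepsilon$ do not work as stated---$\varepsilon$-fineness is a flat-distance hypothesis and gives no lower bound on $\chi_f(r)$, and telescoping down to $X^0$ still leaves the error terms $E_x$ unaccounted for---but you should not expect to prove the statement exactly as written; accept the $+\varepsilon$ and move on.
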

\begin{proof}
    $F$ is as given from \cref{prop:DiscreteDeltaLocExtension}. We will prove that for any $r > 0$, $\chi_{F_j}(r) \leq c_3(r/\delta)\chi_f(r)$. Let $B$ be any generalized ball of radius $r$. Consider any vertex $z$ of $C(q_j)$, and suppose that $F_j(z) = G(x,t)$. Since the $U_i$'s come from a cover of $M$ by generalized balls, $B$ can intersect at most $(r/\delta)^{\dim M}$ of the open sets $U_i$. Thus
    \begin{align*}
        \M(F_j(z) \res B)
        \leq{}& \M(F_j(x) \res B) + \sum_{U_i \cap B \neq \emptyset} \M(F_j(y_i) \res \partial U_i)\\
        +{}& \M(\tau(x) \res B) + \sum_{U_i \cap B \neq \emptyset} \M(\tau(y_i) \res B)
        \\
        \leq&{} c_3(r/\delta)(\chi_{F_{j-1}}(r) + c_2\varepsilon)
        \\
        \leq&{} c_3(r/\delta)(\chi_f(r) +\varepsilon).
    \end{align*}
\end{proof}

\begin{proof}[Proof of \cref{thm:PiecewiseSmoothApprox} when $k \geq 1$]
    We will prove the case where $Y = \emptyset$ as the alternative follows similarly. Let us reduce to the case where $f$ has no concentration of mass. Since $X$ is compact and $f$ is continuous in the inductive limit topology, for all $x \in X$ we have $\M(f(x)) < \mu$ for some constant $\mu$ \cite[p.~262]{Almgren_HomotopyGroupsIntegralCycles}. Thus we can use \cite[Theorem~2.3]{GuthLiokumovich_ParamIneq} to homotope $f$ to a map $f^*$ such that $\Fl(f(x), f^*(x)) < \varepsilon$ for every $x \in X$ and some $\varepsilon > 0$ that can be made arbitrarily small. From the construction of $f^*$, we know that $f^*$ has no concentration of mass. We also know from the construction that there exists some constant $\mu' > 0$ such that $f$, $f^*$, and the homotopy between them consist of only cycles of mass less than $\mu'$. Thus $f^*$ and the homotopy are also continuous in the inductive limit topology. Thus we may rename $f^*$ to $f$ and proceed with the proof, assuming that $f$ has no concentration of mass.

    Next, we will construct a sequence of piecewise smooth approximations of $f$ that ``converge'' to $f$, show that each approximation is homotopic to a better one, and finally chain all of these homotopies together into a single homotopy from the worst approximation to $f$.
    
    Let $\mu = \max_{x \in X} \M(f(x))$. Choose some decreasing sequence $1 = \delta_1 > \delta_2 > \dotsb \to 0$, and let $\eta_i = \min\{\delta_i, \varepsilon_0(n,M,\delta_i,\mu)\}$, where $\varepsilon_0$ is defined in \cref{cor:DiscreteDeltaLocExtensionIsFine}. Choose an increasing sequence of integers $0 < q_1 < q_2 < \dotsb$ so that $f : X(q_i) \to \Z{k}(M,N)$ is $\eta_i$-fine.

    $f : X(q_i) \to \Z{k}(M,N)$ is $\eta_i$-fine so \cref{prop:DiscreteDeltaLocExtension} gives an extension of $f|_{X(q_i)^0}$ to a $\delta_i$-localized map $f_i : \tilde{X}_i^0 \to \Z{k}(M,N)$, where $\tilde{X}_i$ is a refinement of $X(q_i)$. \Cref{lem:DeltaAdmissibleDoubling} allows us to assume that the family $\{\mathcal{U}_C\}$ of $\delta$-admissible collections associated to $f_i$ is doubling, at the expense of making $f_i$ $c\delta_i$-localized. Furthermore, due to \cref{prop:DiscreteDeltaLocExtension}(4), $\max_C \#\mathcal{U}_C \leq c$.
    
    Thus \cref{prop:ExtendPiecewiseSmooth} extends $f_i$ to a $c\delta_i$-localized map $F_i : \tilde{X}_i \to \Z{k}(M,N)$ that is piecewise smooth with respect to the barycentric subdivision of $\tilde{X}_i$. Now let us show that each $F_i$ is homotopic to $F_{i+1}$. via some homotopy $G_i : \tilde{X}_{i,i+1} \times I \to \Z{k}(M,N)$, where $\tilde{X}_{i,i+1}$ denotes the common refinement of $\tilde{X}_i$ and $\tilde{X}_{i+1}$. Define $G_i(x,0) = F_i(x)$ and $G_i(x,1) = F_{i+1}(x)$. We wish to extend $G_i$ over $\tilde{X}_{i,i+1} \times I$ using \cref{prop:ExtendPiecewiseSmooth}, so we have to build a family of admissible collections for $G_i$ out of those for $F_i$ and $F_{i+1}$.

    For each cell $C$ of $\tilde{X}_{i,i+1}$ and $j = i, i+1$, let $\mathcal{U}^j_C$ denote the $c\delta_j$-admissible collection associated to the cell of lowest dimension in $\tilde{X}_j$ that contains $C$. Now, for each cell $E = C \times C'$ in $\tilde{X}_{i,i+1} \times I$, define
    \begin{equation*}
        \mathcal{V}_E = \begin{cases}
            \mathcal{U}^i_C & C' = \{0\}
            \\
            \mathcal{U}^{i+1}_C & C' = \{1\}
            \\
            \mathcal{U}^i_C \cup \mathcal{U}^{i+1}_C & C' = [0,1].
        \end{cases}
    \end{equation*}
    Observe that even though each $\mathcal{V}_E$ may not be $\delta$-admissible (its members may intersect), the proof of \cref{lem:DeltaAdmissibleDoubling} applies to the family $\{\mathcal{V}_E\}_E$. From before we have $\max_C \#\mathcal{U}^i_C \leq c$ and $\max_C \#\mathcal{U}^{i+1}_C \leq c$, so we can obtain a family $\{\tilde{\mathcal{V}}_E\}_E$ of $c\delta_i$-admissible collections that is doubling. Thus it can be verified that $G_i$ is $c\delta_i$-localized, and hence $G_i$ can be extended into a homotopy from $F_i$ and $F_{i+1}$ by \cref{prop:ExtendPiecewiseSmooth}.

    It remains to ``chain'' the homotopies $G_i$ together to form a homotopy from $F_1$ to $f$. Define a map $G : X \times I \to \Z{k}(M,N)$ as follows: define $G$ over $X \times \{0\}$ using $f$, and for each positive integer $i$, define $G$ over $X \times [\frac1i, \frac1{i+1}]$ using $G_i$ (using $F_i$ at $X \times \{\frac1i\}$ and so on). Thus $G$ is continuous except possibly at points in $X \times \{0\}$. To prove that it is also continuous there, it suffices to prove the following claim:

    \begin{claim}
        For any $\lambda > 0$, there exists some $i$ such that for any cell $C$ of $X(q_i)$, $x_0, x \in C$, and $t \in [0,\frac1i]$, $\Fl(G(x,t), G(x_0,0)) = \Fl(G(x,t), f(x_0)) < \lambda$.
    \end{claim}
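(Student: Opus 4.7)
The plan is to bound $\Fl(G(x,t), f(x_0))$ by the triangle inequality along a chain of four intermediate cycles, each leg of which is controlled by one of the preceding propositions or by the $\eta_i$-fineness assumption. For $t \in (0, 1/i]$ I first pick the unique $j \geq i$ with $t \in [1/(j+1), 1/j]$, so that $G(x,t) = G_j(x, s)$ for the reparametrised $s \in [0,1]$. I then insert, in order, $F_j(x) = G_j(x, 0)$, then $F_j(w)$ for a vertex $w$ of $\tilde{X}_j$ lying in a common cell of $\tilde{X}_j$ with $x$, then $f(v)$ for a vertex $v$ of the cell of $X(q_j)$ containing $w$, and finally $f(x_0)$. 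The key geometric observation is that since $X(q_j)$ refines $X(q_i)$ and $x \in C$, the vertex $v$ lies in the closed cell $\bar{C}$ and hence in a common cell of $X(q_i)$ with $x_0$.

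The four resulting flat distances are bounded as follows. For $\Fl(G_j(x, s), F_j(x))$, I apply \cref{prop:ExtendPiecewiseSmooth} to the homotopy $G_j$: since $(x, s)$ and $(x, 0)$ lie in a common cell of $\tilde{X}_{j,j+1} \times I$, this yields a bound of order $\chi_{G_j|_Y}(2c\delta_j)$, where $Y = \tilde{X}_{j,j+1} \times \{0, 1\}$. That profile is in turn dominated by those of $F_j$ and $F_{j+1}$, which are bounded by $c\,\chi_f$ by combining \cref{cor:DiscreteDeltaLocExtensionIsFine} with the mass-concentration part of \cref{prop:ExtendPiecewiseSmooth}. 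The leg $\Fl(F_j(x), F_j(w))$ is bounded similarly by $c\,\chi_f(2c\delta_j)$. The leg $\Fl(F_j(w), f(v))$ is bounded by $c\eta_j$ via Property~1 of \cref{prop:DiscreteDeltaLocExtension}. Finally, $\Fl(f(v), f(x_0)) \leq \eta_i$ is immediate from $\eta_i$-fineness of $f$ on $X(q_i)$.

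Summing the four bounds gives a total of the shape $\mathrm{const}\cdot \chi_f(2c\delta_j) + c\eta_j + \eta_i$. Since $f$ was arranged at the start of the $k \geq 1$ case to have no concentration of mass, $\chi_f(r) \to 0$ as $r \to 0$; together with $\delta_j, \eta_j \to 0$ and $j \geq i$, the total drops below any prescribed $\lambda$ once $i$ is taken sufficiently large. The $t = 0$ case is just the last leg and is controlled by $\eta_i$-fineness alone. The main obstacle I anticipate is not conceptual but bookkeeping: four complexes are in play ($X(q_i)$, $X(q_j)$, $\tilde{X}_j$, and $\tilde{X}_{j,j+1} \times I$), and I have to verify that the mass-concentration bounds for $f_j$, $F_j$, and $G_j$ compose with constants independent of $j$, so that the vanishing factor $\chi_f(2c\delta_j)$ really is the quantity that controls the whole estimate.
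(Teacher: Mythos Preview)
Your proposal is correct and follows essentially the same approach as the paper: the paper also breaks $\Fl(G(x,t),f(x_0))$ into the same four legs (with your $w$ and $v$ called $v''$ and $v'$), bounds the first two legs via \cref{prop:ExtendPiecewiseSmooth} and \cref{cor:DiscreteDeltaLocExtensionIsFine} in terms of $\chi_f$ at scale $\sim\delta_j$, the third via \cref{prop:DiscreteDeltaLocExtension}(1), and the fourth via $\eta_i$-fineness. The only cosmetic difference is that the paper takes $C'$ and $C''$ to be the cells of \emph{lowest dimension} containing $x$, which makes the inclusions $C'' \subset C' \subset C$ automatic and cleans up the bookkeeping you flag at the end.
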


    Suppose that $i$ is a large integer to be chosen later. Let $t \in [\frac1j, \frac1{j+1}]$ for some $j \geq i$. Let $C'$ be the cell of lowest dimension in $X(q_j)$ that contains $x$, so $C' \subset C$. Let $C''$ be the cell of lowest dimension in $\tilde{X}_j$ that contains $x$, so $C'' \subset C'$. Let $v'$ (resp. $v''$) be a vertex of $C'$ (resp. $C''$). Then by the triangle inequality we have
    \begin{equation*}
        \Fl(G(x,t), f(x_0)) \leq \Fl(G(x,t), G(x,\tfrac1j)) + \Fl(\underbrace{G(x,\tfrac1j)}_{F_j(x)}, F_j(v'')) + \Fl(F_j(v''), f(v')) + \Fl(f(v'), f(x_0)).
    \end{equation*}

    Since $G$ was extended over $X \times [\frac1j, \frac1{j+1}]$ using \cref{prop:ExtendPiecewiseSmooth}, we have
    \begin{align*}
        \Fl(G(x,t), G(x,\tfrac1j))
        &\leq 2\M(G(x,t), G(x,\tfrac1j))
        \\
        (\text{\cref{prop:ExtendPiecewiseSmooth}})
        &\leq c\chi_{G_j|_{X \times \{0,1\}}}(2c\delta_j)
        \\
        &\leq c(\chi_{F_j}(2c\delta_j) + \chi_{F_{j+1}}(2c\delta_j))
        \\
        (\text{\cref{prop:ExtendPiecewiseSmooth}})
        &\leq c(\chi_{f_j}(2c\delta_j) + \chi_{f_{j+1}}(2c\delta_j))
        \\
        (\text{\cref{cor:DiscreteDeltaLocExtensionIsFine}})
        &\leq c_3(2c)c\chi_f(\delta_j)
        \\
        &\leq c\chi_f(\delta_j).
    \end{align*}

    Similarly, we have
    \begin{align*}
        \Fl(F_j(x), F_j(v''))
        &\leq 2\M(F_j(x), F_j(v''))
        \\
        (\text{\cref{prop:ExtendPiecewiseSmooth}})
        &\leq c\chi_{f_j}(2c\delta_j)
        \\
        (\text{\cref{cor:DiscreteDeltaLocExtensionIsFine}})
        &\leq c_3(2c)c\chi_f(\delta_j)
        \\
        &\leq c\chi_f(\delta_j).
    \end{align*}

    Meanwhile, \cref{prop:DiscreteDeltaLocExtension}(1)  implies that $\Fl(F_j(v''), f(v)) \leq c\eta_j \leq c\delta_j$, and the fact that $f : X(q_i) \to \Z{k}(M,N)$ is $\eta_i$-fine implies that $\Fl(f(v'), f(x_0)) \leq \eta_i \leq \delta_i$.

    To conclude, the claim follows from our assumption that $f$ has no concentration of mass, which guarantees that $\Fl(G(x,t), f(x_0))$ can be made arbitrarily small with a sufficiently large choice of $i$.
\end{proof}

\section{Gluings}
\label{sec:Gluings}

Throughout this section, let $(M,N)$ be a collar pair.

\begin{definition}[Gluing]
    Given a compact Riemannian manifold $X$ and a continuous map $f : X \to \Z{d}(M,N)$ that vanishes over $\partial X$, a \emph{gluing} of $f$ is a sequence of continuous homomorphisms $\Phi^k : \I{k}(X) \to \I{k+d}(M,N)$ for $k \geq 0$ such that $\partial \circ \Phi^{k+1} = \Phi^k \circ \partial$ and for any $x \in X$, $\Phi^0(x) = f(x)$.
    
    In other words, we may write $\Phi : \I{*}(X) \to \I{*+d}(M,N)$ and call it a \emph{continuous chain homomorphism of degree $d$}.
\end{definition}

\begin{definition}
    We say that a singular chain $T$ in $X$ is a \emph{$C^k$ chain} for some $k \leq \infty$ if for each singular simplex $\sigma : \Delta^m \to X$ in the chain, $\sigma$ is $C^k$ when restricted to the interior of every subsimplex of $\Delta^n$.

    We say that $T$ intersects a submanifold of $X$ transversally if the same is true of each of its singular simplices.
\end{definition}

\begin{restatable*}{lemma}{LemCInfinityChainsDense}
    \label{lem:CInfinityChainsDense}
    Let $X$ be a compact Riemannian manifold, and consider any finite collection of its submanifolds. The $C^\infty$ $k$-chains in $X$ that intersect each of those submanifolds transversally form a dense subspace of $\I{k}(X)$. In fact, any $T \in \I{k}(X)$ is the limit in the flat topology of a sequence of $C^\infty$ $k$-chains $T_i$ that intersect each of those submanifolds transversally, such that $\M(T_i) \to \M(T)$ and $\M(\partial T_i) \to \M(\partial T)$.
\end{restatable*}

\begin{theorem}
    \label{thm:PiecewiseSmoothGluing_Existence}
    If $f : X \to \Z{d}(M,N)$ vanishes over $\partial X$ and has a piecewise smooth structure $(\{\varphi_\sigma\}, \{Z_\sigma\}, \{\varphi_{\tau\sigma}\})$, then $f$ has a gluing $\Phi$, called the \emph{standard gluing} for this piecewise smooth structure, such that for all $C^\infty$ $k$-chains $T$ that intersects every cell of $X$ transversally,
    \begin{equation}
        \label{eq:PiecewiseSmoothGluingDefn}
        \Phi(T) = \sum_{\dim \sigma = \dim X} \varphi_{\sigma\sharp}((T \res \sigma) \times Z_\sigma).
    \end{equation}
\end{theorem}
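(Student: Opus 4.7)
The plan is to define $\Phi$ by formula \eqref{eq:PiecewiseSmoothGluingDefn} on the dense subspace of $C^\infty$ chains transverse to every cell of $X$ (dense by \Cref{lem:CInfinityChainsDense}), verify that this prescription is a chain map there, prove a Lipschitz bound with respect to mass and flat distance, and then extend continuously to all of $\I{*}(X)$ using \Cref{lem:ContinuityIndLimTop}. The uniqueness of $\Phi$ satisfying \eqref{eq:PiecewiseSmoothGluingDefn} on this dense set, together with continuity, is automatic.

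For the continuity estimates, each $\varphi_\sigma$ is smooth on the compact $\sigma \times M$ and each $Z_\sigma$ has finite mass, so expanding the finite sum \eqref{eq:PiecewiseSmoothGluingDefn} and bounding each term by the Lipschitz constant of the corresponding chart yields
\begin{equation*}
    \M(\Phi(T)) \leq C\,\M(T) \qquad \text{and} \qquad \Fl(\Phi(T),\Phi(T')) \leq C\,\Fl(T,T'),
\end{equation*}
for a constant $C$ depending only on the piecewise smooth structure. This is precisely the hypothesis of \Cref{lem:ContinuityIndLimTop} for extending $\Phi$ continuously in the inductive limit topology.

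The chain-map identity is the heart of the proof. For a transverse $C^\infty$ chain $T$ and a top cell $\sigma$, the boundary of the product current gives
\begin{equation*}
    \partial\,\varphi_{\sigma\sharp}\bigl((T\res\sigma) \times Z_\sigma\bigr)
    = \varphi_{\sigma\sharp}\bigl(\partial(T\res\sigma) \times Z_\sigma\bigr) + (-1)^{k}\,\varphi_{\sigma\sharp}\bigl((T\res\sigma) \times \partial Z_\sigma\bigr).
\end{equation*}
The second summand is supported inside $\varphi_\sigma(\sigma \times N) \subset N$ by \eqref{eq:PiecewiseSmooth_Chart}, so it vanishes as a relative current. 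Transversality lets me split $\partial(T\res\sigma) = (\partial T)\res\sigma \pm T\res\partial\sigma$, and summing over top cells gives $\partial\Phi(T) = \Phi(\partial T) \pm \sum_{\sigma,\tau\subset\partial\sigma}\varphi_{\sigma\sharp}\bigl((T\res\tau) \times Z_\sigma\bigr)$. I then cancel these ``internal boundary'' terms using the collapse-map identity \eqref{eq:CollapseMapIdentity}: the slicewise relation $\varphi_{\tau\sigma}(x,\cdot)_\sharp Z_\sigma = Z_\tau$ together with $\varphi_\sigma|_{\tau\times M} = \varphi_\tau \circ \bigl((x,y)\mapsto(x,\varphi_{\tau\sigma}(x,y))\bigr)$ forces
\begin{equation*}
    \varphi_{\sigma\sharp}\bigl((T\res\tau) \times Z_\sigma\bigr) = \varphi_{\tau\sharp}\bigl((T\res\tau) \times Z_\tau\bigr),
\end{equation*}
so the two top cells meeting along an interior codimension-one face $\tau$ contribute with opposite induced orientations and cancel, while faces $\tau\subset\partial X$ contribute nothing because $f|_{\partial X}=0$ together with $\varphi_\tau(x,\cdot)$ being a diffeomorphism in the interior of $\tau$ forces $Z_\tau = 0$.

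To conclude, \Cref{lem:CInfinityChainsDense} approximates any $T\in\I{k}(X)$ by transverse $C^\infty$ chains $T_i$ with $\M(T_i)\to\M(T)$ and $\M(\partial T_i)\to\M(\partial T)$; the Lipschitz estimate makes $\{\Phi(T_i)\}$ Cauchy in flat distance with bounded mass, so $\Phi(T):=\lim\Phi(T_i)$ is well defined (interleave two approximating sequences for independence), and the chain-map property passes to the limit. The identity $\Phi^0(x) = f(x)$ is immediate from \eqref{eq:PiecewiseSmooth_Chart}. The main obstacle is the slicewise pushforward identity above: rigorously identifying two product currents produced by applying fiberwise diffeomorphisms requires care with the product-current pushforward and the sign conventions on induced orientations of codimension-one faces, but once that lemma is in hand the cancellation is automatic and the boundary-cell case reduces to the same calculation with $Z_\tau=0$.
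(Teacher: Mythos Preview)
Your proposal is correct and follows essentially the same approach as the paper: define $\Phi$ on transverse $C^\infty$ chains by \eqref{eq:PiecewiseSmoothGluingDefn}, verify the chain-map identity via the collapse-map cancellation, establish mass and flat Lipschitz bounds, and extend by density. The ``main obstacle'' you flag---the fiberwise pushforward identity $\varphi_{\sigma\sharp}((T\res\tau)\times Z_\sigma)=\varphi_{\tau\sharp}((T\res\tau)\times Z_\tau)$---is exactly what the paper isolates as a sub-lemma and proves using Federer's uniqueness characterization of Cartesian products of currents; one additional detail the paper makes explicit is that, in proving the flat-Lipschitz bound, the filling $Q$ and remainder $R$ witnessing $\Fl(S,T)$ must themselves be perturbed into general position (via the deformation theorem) before $\Phi$ can be applied to them.
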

\begin{proof}
    For convenience, let us say that $T \in \I{k}(X)$ is \emph{in general position} if it is a $C^\infty$ $k$-chain that intersects every open cell of $X$ transversally. For such $T = \sum_i a_i\sigma_i$, we first show that $\partial\Phi(T) = \Phi(\partial T)$. For any top-dimensional cell $\tau$, the fact that $T$ is in general position implies that $\partial(T \res \tau) = \partial T \res \tau + T \cap \partial\tau$. To be precise, by $T \cap \partial\tau$ we mean the $(k-1)$-chain $\sum_j\sum_i a_i \sigma_i|_{\sigma_i^{-1}(\itr \tau_j)}$, where the $\tau_j$ are the faces of $\tau$ with the boundary orientation, $\sigma^{-1}(\itr \tau_j)$ is triangulated and given the preimage orientation, and $\sigma_i|_{\sigma_i^{-1}(\itr \tau_j)}$ is treated as a Lipschitz $(k-1)$-chain. Thus it can be verified that, if we write $n = \dim X$,
    \begin{multline*}
        \partial\Phi(T)
        = \sum_{\dim \sigma = n} \partial\varphi_{\sigma\sharp}((T \res \sigma) \times Z_\sigma)
        = \sum_{\dim \sigma = n} \varphi_{\sigma\sharp}(\partial(T \res \sigma) \times Z_\sigma)
        \\
        = \sum_{\dim \sigma = n} \varphi_{\sigma\sharp}((\partial T \res \sigma + T \cap \partial \sigma) \times Z_\sigma)
        = \Phi(\partial T) + \underbrace{\sum_{\dim \sigma = n} \varphi_{\sigma\sharp}((T \cap \partial \sigma) \times Z_\sigma)}_{(\star)}.
    \end{multline*}
    
    Let us show that $(\star)$ vanishes. Each $(n-1)$-cell $\tau$ of $X$ that is not contained in $\partial X$ is a face of two $n$-cells, $\sigma$ and $\sigma'$. Pick any orientation on $\tau$; by \cref{eq:CollapseMapIdentity} we have $\varphi_\sigma|_{\tau \times M} = \varphi_\tau \circ \psi$ where $\psi(x,y) = (x, \varphi_{\tau\sigma}(x,y))$, so
    \begin{equation}
        \label{eq:PiecewiseSmoothGluing_Collapse}
        \varphi_{\sigma\sharp}((T \cap \tau) \times Z_\sigma) = \varphi_{\tau\sharp} \circ \psi_\sharp((T \cap \tau) \times Z_\sigma) = \varphi_{\tau\sharp}((T \cap \tau) \times Z_\tau).
    \end{equation}
    The second equality holds because of the following lemma:
    \begin{lemma}
        $\psi_\sharp((T \cap \tau) \times Z_\sigma) = (T \cap \tau) \times Z_\tau$.
    \end{lemma}
    \begin{proof}
        This is true essentially because Cartesian products of currents are characterized by projections. To be precise, let $\tilde{T} \in T \cap \tau$ and $\tilde{Z} \in Z_\tau$, which implies that $\tilde{T} \times \tilde{Z} \in (T \cap \tau) \times Z_\tau$. Then consider any smooth differential forms $\omega \in \Omega^i(\tau)$ and $\theta \in \Omega^{d+k-1-i}(M)$, and let $\pr_1$ and $\pr_2$ be the projections of $\tau \times M$ onto its first and second factor respectively. Then
        \begin{multline*}
            \psi_\sharp(\tilde{T} \times \tilde{Z})(\pr_1^*\omega \wedge \pr_2^*\theta) = (\tilde{T} \times \tilde{Z})\psi^*(\pr_1^*\omega \wedge \pr_2^*\theta)
            = (\tilde{T} \times \tilde{Z})((\underbrace{\pr_1 \circ \psi}_{\id})^*\omega \wedge (\underbrace{\pr_2 \circ \psi}_{\varphi_{\tau\sigma}})^*\theta)
            \\
            = (\tilde{T} \times \tilde{Z})(\omega \wedge \varphi_{\tau\sigma}^*\theta)
            = \begin{cases}
                0 & i \neq k-1,
                \\
                \tilde{T}(\omega)\varphi_{\tau\sigma\sharp}(\tilde{Z})(\theta) & i = k-1.
            \end{cases}
        \end{multline*}

        The uniqueness condition in the definition of the Cartesian product of currents \cite[4.1.8]{Federer_GMT} then implies that $\psi_\sharp(\tilde{T} \times \tilde{Z}) = \tilde{T} \times \varphi_{\tau\sigma\sharp}(\tilde{Z})$. Hence, $\psi_\sharp((T \cap \tau) \times Z_\sigma) = [\psi_\sharp(\tilde{T} \times \tilde{Z})] = [\tilde{T} \times \varphi_{\tau\sigma\sharp}(\tilde{Z})] = [\tilde{T}] \times \varphi_{\tau\sigma\sharp}[\tilde{Z}] = (T \cap \tau) \times Z_\tau$.
    \end{proof}

    Now observe that $\tau$ appears in $\partial\sigma$ and $\partial\sigma'$ but with opposite orientations. Thus $(\star)$ may be expanded into terms of the form in \cref{eq:PiecewiseSmoothGluing_Collapse}, and those terms involving $\tau$ will cancel each other out. The remaining terms correspond to $(n-1)$-cells $\tau \subset \partial X$, but since $f$ vanishes over $\partial X$, we have $Z_\tau = 0$, so those terms must also vanish. Therefore, we have now proven that $\partial\Phi(T) = \Phi(\partial T)$.

    It remains to analyze the continuity of $\Phi^k$. By the characterization of continuity in the inductive limit topology in \cref{lem:ContinuityIndLimTop}, we see that a sufficient condition for a map $\I{k}(X) \to \I{k+d}(M,N)$ to be continuous is that it is Lipschitz in both the flat metric and $\M$, in the sense that the map increases mass by at most some constant factor.

    It follows from \cref{eq:PiecewiseSmoothGluingDefn} that $\M(\Phi(T)) \leq C\M(T)$ when $T$ is in general position, where $C = \max_\sigma \Lip(\varphi_\sigma)\M(Z_\sigma)$. Let us also prove that $\Phi^k$, defined over mod $p$ integral currents in general position, is $2C$-Lipschitz with respect to the flat metric. Let $S, T \in \I{k}(X)$ be in general position. Then there exist rectifiable chains $Q$ and $R$ such that $S - T = \partial Q + R$ and $\M(Q) + \M(R) < 2\Fl(S,T)$. By the deformation theorem (projecting $Q$), we may assume that $Q$ and $R$ are in general position. Thus $\Phi$ is defined on $Q$ and $R$, and $\Phi(S) - \Phi(T) = \Phi(\partial Q) + \Phi(R) = \partial\Phi(Q) + \Phi(R)$. Thus $\Fl(\Phi(S), \Phi(T)) \leq 2C\Fl(S,T)$.

    Thus $\Phi^k$ is continuous over the chains in general position, which form a dense subspace of $\I{k}(X)$ by \cref{lem:CInfinityChainsDense}. Moreover, $\Phi^k$ is uniformly continuous with respect to the flat metric. Thus it extends uniquely to a continuous map $\Phi^k : \I{k}(X) \to F_k(M,N)$. Let us show that the chains in the image have finite mass: Indeed, for any $T \in \I{k}(X)$, \cref{lem:CInfinityChainsDense} gives us a Cauchy sequence $T_i$ of $C^\infty$ $k$-chains in generic position so that $\Fl(T,T_i) \to 0$, $\M(T_i) \to \M(T)$, and $\M(\partial T_i) \to \M(\partial T)$. Thus we may assume that $\M(T), \M(\partial T), \M(T_i), \M(\partial T_i) < \mu$ for some $\mu > 0$. Thus $\Phi^k(T_i)$ is a Cauchy sequence in $\I{k}(M,N; \Fl)$, which by the Compactness Theorem for integral currents mod $p$ \cite[p.~432]{Federer_GMT} implies that $\Phi^k(T)$ also has mass at most $C\mu$.
\end{proof}

\begin{example}
    \label{eg:FiberBundle_Gluing}
    Consider a fiber bundle $\xi : E \to B$, where $E$ and $B$ are closed Riemannian manifolds. \Cref{eg:FiberBundle_PiecewiseSmooth} shows that the preimage map $\xi^{-1}(-) : B \to \Z{k}(E)$ is piecewise smooth. The definition of $(\xi^{-1}(-))_\sharp$ shows that it sends the fundamental cycle of $B$ to that of $E$. Thus $(\xi^{-1}(-))_*$ sends the fundamental class of $B$ to that of $E$.
\end{example}

Given a map $f : X \to \Z{k}(M,N)$ that has a gluing $\Phi$, $\Phi$ may not be the unique gluing of $f$. However, we will show that the induced homomorphism of homology groups which we denote by $\Phi_* : H_\bullet(X, \partial X) \to H_{\bullet + k}(M, N)$ does not depend on the choice of gluing.

\begin{lemma}
    \label{lem:GluingPiecewiseSmoothApproxChainHomotopy}
    Suppose that $f : X \to \Z{k}(M,N)$ has a gluing $\Phi$. Then there exists some $\varepsilon > 0$, depending on $f$ and $\Phi$, such that for any piecewise smooth approximation $g$ of $f$ such that $\Fl(f(x), g(x)) < \varepsilon$ for all $x \in X$, the homomorphism on homology groups induced by $\Phi$ is precisely $\Gamma_*$, where $\Gamma$ is the standard gluing of $g$.
\end{lemma}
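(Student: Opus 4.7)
The plan is to construct a chain homotopy between $\Phi$ and $\Gamma$ via a prism operator derived from a piecewise smooth homotopy bridging $f$ and $g$. Since $g$ is a piecewise smooth approximation produced by \cref{thm:PiecewiseSmoothApprox}, it comes packaged with a continuous homotopy $H : X \times I \to \Z{k}(M,N)$ from $f$ to $g$ that vanishes on $\partial X \times I$ and is $\Fl$-close to $f$ throughout. I would apply \cref{thm:PiecewiseSmoothApprox} a second time to $H$, taking as the already-piecewise-smooth subcomplex $Y = X \times \{1\}$ (where $H|_Y = g$). This yields a piecewise smooth $\tilde H$ with $\tilde H|_{X \times \{1\}} = g$ and $\tilde H|_{X \times \{0\}} = \tilde f$, where $\tilde f$ is a piecewise smooth approximation of $f$ that is $\Fl$-close to $f$. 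By \cref{thm:PiecewiseSmoothGluing_Existence}, $\tilde H$ admits a standard gluing $\tilde\Psi : \I{*}(X \times I) \to \I{*+k}(M,N)$.

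The prism argument then runs as follows. For any relative cycle $T \in \I{n}(X, \partial X)$,
\[
\partial(T \times I) = \partial T \times I + (-1)^n\bigl(T \times \{1\} - T \times \{0\}\bigr),
\]
so applying $\tilde\Psi$ gives $\partial\tilde\Psi(T \times I) = \tilde\Psi(\partial T \times I) + (-1)^n(\Gamma(T) - \tilde\Phi(T))$, where $\tilde\Phi$ is the standard gluing of $\tilde f$. The boundary term $\tilde\Psi(\partial T \times I)$ vanishes because $\partial T$ is supported in $\partial X$ while $\tilde H$ vanishes on $\partial X \times I$, forcing the corresponding model cycles of $\tilde H$'s piecewise smooth structure there to be zero. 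Hence $\tilde\Phi_* = \Gamma_*$ on relative homology.

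The main obstacle is the remaining comparison $\Phi_* = \tilde\Phi_*$ between the given abstract gluing $\Phi$ of $f$ and the standard gluing $\tilde\Phi$ of its piecewise smooth approximation $\tilde f$. One cannot directly iterate the prism argument because $f$ itself is not piecewise smooth, so there is no standard gluing of a homotopy $f \rightsquigarrow \tilde f$ that restricts to $\Phi$ at $t = 0$. The key idea is to exploit the continuity of $\Phi$ in the inductive limit topology (\cref{lem:ContinuityIndLimTop}) to choose $\varepsilon$ small enough, depending on $\Phi$'s modulus of continuity on each mass-bounded subspace of $\I{*}(X)$, that $\Phi(T)$ is essentially determined by the $\Fl$-closeness of $f$ and $\tilde f$. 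Concretely, for any relative cycle $T$, I would approximate $T$ by a $C^\infty$ chain $T'$ in general position with respect to $\tilde f$'s cellular structure (\cref{lem:CInfinityChainsDense}) and compare $\Phi(T')$ with $\tilde\Phi(T') = \sum_\sigma \varphi_{\sigma\sharp}((T' \res \sigma) \times Z_\sigma)$ cell by cell: inside each top-dimensional cell $\sigma$, the $\Fl$-closeness of $f$ and $\tilde f$, combined with continuity of $\Phi$, should produce local fillings whose contributions on shared faces of adjacent cells cancel, assembling into a global bounding chain of $\Phi(T) - \tilde\Phi(T)$. Ensuring no concentration of mass as the subdivision is refined, so that the assembled fillings stay within a single mass-bounded subspace on which $\Phi$ has a uniform continuity modulus, will be the main technical subtlety.
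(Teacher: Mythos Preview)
Your prism argument in the first half correctly shows $\tilde\Phi_* = \Gamma_*$, but this is essentially the content of the \emph{next} lemma in the paper (\cref{lem:HomotopicPiecewiseSmoothChainHomotopy}). More importantly, the detour through $\tilde f$ and $\tilde\Phi$ gains you nothing: your ``main obstacle''---comparing the abstract gluing $\Phi$ of $f$ with the standard gluing $\tilde\Phi$ of a nearby piecewise smooth $\tilde f$---is exactly the original problem with $g$ replaced by $\tilde f$. Both $g$ and $\tilde f$ are piecewise smooth approximations of $f$ that are $\Fl$-close to it, so you have circled back to where you started. Your sketch for resolving this obstacle (cell-by-cell local fillings whose face contributions cancel) is too vague to count as a proof, and the continuity-modulus argument you propose is not what is actually needed.

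The paper's approach is far more direct and bypasses the prism entirely. Restrict both $\Phi$ and $\Gamma$ to the cellular chain complex $C^{\mathrm{cell}}_\bullet(X,\partial X)$ and build a chain homotopy $\Omega$ by induction on dimension, using the isoperimetric inequality at each step (following \cite{Guth_WidthVolume}). On vertices, $\Phi^0(v)=f(v)$ and $\Gamma^0(v)=g(v)$ satisfy $\Fl(f(v),g(v))<\varepsilon$ by hypothesis, so the isoperimetric inequality gives a small-mass filling $\Omega_0(v)$ of $f(v)-g(v)$. After subdividing so that $\M(\Phi(\sigma))$ and $\M(\Gamma(\sigma))$ are below $\varepsilon$ for every positive-dimensional cell $\sigma$, one checks that $\Phi(\sigma)-\Gamma(\sigma)-\Omega_d(\partial\sigma)$ is a cycle of controlled mass, and the isoperimetric inequality supplies $\Omega_{d+1}(\sigma)$. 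Your vague ``local fillings that cancel on shared faces'' is gesturing toward precisely this construction; the missing ingredient is simply to run it inductively on cellular chains with the isoperimetric inequality, rather than trying to leverage a continuity modulus of $\Phi$.
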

\begin{proof}
    Both $\Phi$ and $\Gamma$ restrict to homomorphisms $C^\text{cell}_\bullet(X,\partial X) \to \I{\bullet + k}(M,N)$. By subdivision, we may assume that for any cell $\sigma$ of positive dimension in $X$, we have $\M(\Phi(\sigma)), \M(\Gamma(\sigma)) < \varepsilon$. We construct a chain homotopy $\Omega_i : C^\text{cell}_i(X) \to \I{i+k+1}(M,N)$ between these two homomorphisms inductively by using the isoperimetric inequality, following \cite[p.~1153]{Guth_WidthVolume}. For each vertex $v$ of $X$, since $g$ is a piecewise smooth approximation of $f$, we have $\Fl(f(v), g(v)) < \varepsilon$. By the isoperimetric inequality we may assume that $f(v) - g(v)$ can be filled by a $(k+1)$-chain of mass at most $\varepsilon$, which we denote by $\Omega_0(v)$. This defines $\Omega_0$. Now suppose $\Omega_d$ has been defined for some $d \geq 0$. Then for any $(d+1)$-cell $\sigma$ in $X$, define $\Omega_{d+1}(\sigma)$ to be a chain filling $\Phi(\sigma) - \Gamma(\sigma) - \Omega_d(\partial \sigma)$ using the isoperimetric inequality. The mass of this filling can be made arbitrarily small by shrinking $\varepsilon$. Continue inductively.
\end{proof}

\begin{lemma}
    \label{lem:HomotopicPiecewiseSmoothChainHomotopy}
    If piecewise smooth maps $f, g : X \to \Z{k}(M,N)$ are homotopic, then their standard gluings $\Phi$ and $\Gamma$ are chain homotopic. That is, there is a sequence of homomorphisms $\Omega_i : \I{i}(X) \to \I{i+k+1}(M,N)$ such that $\Phi^i - \Gamma^i = \partial \circ \Omega_{i+1} + \Omega_i \circ \partial$.

    As a consequence, $\Phi_* = \Gamma_*$.
\end{lemma}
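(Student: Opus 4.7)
The plan is to realize the chain homotopy via the classical prism construction, applied to a piecewise smooth extension of a homotopy between $f$ and $g$. Let $H : X \times I \to \Z{k}(M,N)$ be a continuous homotopy from $f$ to $g$, and give $X \times I$ a product cubical structure refined enough that on $Y := X \times \{0,1\}$ the map $H|_Y$, which is the disjoint union of $f$ and $g$, is piecewise smooth with respect to a refinement of $Y$, and so that $H|_Y$ is $\delta$-localized with $\delta < \delta_0$ and $H$ is $\varepsilon$-fine on $X \times I$ with $\varepsilon < \varepsilon_0$. Then the rel-$Y$ case of \cref{thm:PiecewiseSmoothApprox} produces a piecewise smooth map $\tilde H : X \times I \to \Z{k}(M,N)$ with $\tilde H|_Y = H|_Y$, and whose piecewise smooth structure restricted to $X \times \{0\}$ and $X \times \{1\}$ is a refinement of the given piecewise smooth structures of $f$ and $g$, respectively.

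Next, let $\Psi : \I{*}(X \times I) \to \I{*+k}(M,N)$ denote the standard gluing of $\tilde H$ provided by \cref{thm:PiecewiseSmoothGluing_Existence}. For each $T \in \I{i}(X)$ one has the Cartesian product current $T \times I \in \I{i+1}(X \times I)$, and I would define
\begin{equation*}
    \Omega_{i+1}(T) := (-1)^{i+1}\, \Psi(T \times I) \in \I{i+1+k}(M,N).
\end{equation*}
The Leibniz formula $\partial(T \times I) = (\partial T) \times I + (-1)^{i}(T \times \{1\} - T \times \{0\})$, combined with $\partial \Psi = \Psi \partial$ and a short sign check, then yields
\begin{equation*}
    \partial \Omega_{i+1}(T) + \Omega_i(\partial T) = \Psi^i(T \times \{0\}) - \Psi^i(T \times \{1\}),
\end{equation*}
while continuity of $\Omega_{i+1}$ in the inductive limit topology follows from continuity of $\Psi$ and of the Cartesian product $T \mapsto T \times I$. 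So what remains is to identify the right-hand side with $\Phi^i(T) - \Gamma^i(T)$, i.e.\ to show that the restrictions of $\Psi$ to $X \times \{0\}$ and $X \times \{1\}$ coincide, as chain maps, with $\Phi$ and $\Gamma$ respectively.

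This last identification is the main obstacle, and it amounts to showing that the standard gluing of a piecewise smooth family is invariant under refinement of the piecewise smooth structure. I would establish this by the following computation: if $\sigma$ is a top-dimensional cell of the coarser structure subdivided into top-dimensional cells $\tau$ in the refinement, with compatible collapse maps $\varphi_{\tau\sigma}$ satisfying $\varphi_\sigma|_{\tau \times M} = \varphi_\tau \circ \bigl((x,y) \mapsto (x, \varphi_{\tau\sigma}(x,y))\bigr)$ and $\varphi_{\tau\sigma\sharp}(Z_\sigma) = Z_\tau$, then the same manipulation used in the proof of \cref{thm:PiecewiseSmoothGluing_Existence} gives
\begin{equation*}
    \varphi_{\sigma\sharp}\bigl((T \res \tau) \times Z_\sigma\bigr) = \varphi_{\tau\sharp}\bigl((T \res \tau) \times \varphi_{\tau\sigma\sharp}(Z_\sigma)\bigr) = \varphi_{\tau\sharp}\bigl((T \res \tau) \times Z_\tau\bigr),
\end{equation*}
so summing over $\tau \subset \sigma$ in the refined formula recovers the single term $\varphi_{\sigma\sharp}((T \res \sigma) \times Z_\sigma)$ of the coarse formula on $C^\infty$ chains in general position, and hence the two standard gluings agree on all of $\I{*}(X)$ by continuity and \cref{lem:CInfinityChainsDense}. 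Once refinement invariance is in hand, the consequence $\Phi_* = \Gamma_*$ is the standard homological algebra fact that chain homotopic chain maps induce equal homomorphisms on homology.
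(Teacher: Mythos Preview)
Your approach is essentially the same as the paper's: construct a piecewise smooth homotopy $\tilde H$ extending $f$ and $g$ via the rel-$Y$ case of \cref{thm:PiecewiseSmoothApprox}, take its standard gluing $\Psi$, and set $\Omega(T) = \Psi(T \times I)$ (up to sign), so that the prism identity $\partial(T\times I) = (\partial T)\times I \pm (T\times\{1\} - T\times\{0\})$ yields the chain homotopy. The paper's proof is identical in outline, though it simply \emph{asserts} $\Psi(T\times\{0\}) = \Phi(T)$ ``by the definition of $\Psi$,'' whereas you correctly recognize this step as requiring refinement invariance of the standard gluing and sketch an argument for it; your computation via the collapse identity is the right idea (and in the simplest notion of refinement, where charts are merely restricted and model cycles are unchanged, it is immediate). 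Minor index and sign discrepancies aside, your proof is correct and, if anything, more scrupulous than the paper's on this point.
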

\begin{proof}
    Using a common refinement, we may assume that $f$ and $g$ are piecewise smooth respect to the same cubical structure. Denote the homotopy between $f$ and $g$ by $H : X \times I \to \Z{k}(M,N)$. By \cref{thm:PiecewiseSmoothApprox}, we may assume that $H$ is piecewise smooth with respect to some refinement $\widetilde{X \times I}$ of $X \times I$. By \cref{thm:PiecewiseSmoothGluing_Existence}, $H$ has a gluing $\Psi : \I{*}(X \times I) \to \I{*+k}(M,N)$. Observe that by the definition of $\Psi$ in \cref{eq:PiecewiseSmoothGluingDefn}, $\Phi(T) = \Psi(T \times \{0\})$. Similarly, $\Gamma(T) = \Psi(T \times \{1\})$.
    
    Set $\Omega(T) = \Psi(T \times I)$. Then
    \begin{multline*}
        (\partial \circ \Omega_{i+1} + \Omega_i \circ \partial)(T)
        = \partial \Psi^{i+1}(T \times I) + \Psi^i(\partial T \times I)
        = \Psi^i(\partial(T \times I)) \pm \Psi^i(\partial T \times I)
        \\
        = \Psi^i(T \times \{1\} - T \times \{0\})
        = \Psi^i(T \times \{1\}) - \Psi^i(T \times \{0\})
        = \Gamma(T) - \Phi(T).
    \end{multline*}
\end{proof}

\begin{corollary}
    \label{cor:HomotopicMapsWithGluingChainHomotopic}
    If $f$ and $g$ are homotopic maps with gluings $\Phi$ and $\Psi$, then $\Phi_* = \Psi_*$.
\end{corollary}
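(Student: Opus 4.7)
The plan is to reduce the statement to the case of piecewise smooth maps with their standard gluings, which is handled by \cref{lem:HomotopicPiecewiseSmoothChainHomotopy}. The bridge between an arbitrary gluing of a continuous map and the standard gluing of a piecewise smooth approximation is exactly what \cref{lem:GluingPiecewiseSmoothApproxChainHomotopy} provides, so the argument will be essentially a chase of the induced maps on homology through the approximations.

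First I would apply \cref{lem:GluingPiecewiseSmoothApproxChainHomotopy} to $(f, \Phi)$ to get an $\varepsilon_f > 0$, and to $(g, \Psi)$ to get an $\varepsilon_g > 0$. Set $\varepsilon = \min\{\varepsilon_f, \varepsilon_g\}$. Then I would use \cref{thm:PiecewiseSmoothApprox} to produce piecewise smooth approximations $f'$ of $f$ and $g'$ of $g$ together with homotopies $f \simeq f'$ and $g \simeq g'$, where at every stage of the approximation the flat distance is controlled so that $\Fl(f(x), f'(x)) < \varepsilon$ and $\Fl(g(x), g'(x)) < \varepsilon$ for all $x \in X$. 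Let $\Gamma_f$ and $\Gamma_g$ denote the standard gluings of $f'$ and $g'$ respectively, which exist by \cref{thm:PiecewiseSmoothGluing_Existence}.

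The conclusion then follows from three identities on induced maps. By \cref{lem:GluingPiecewiseSmoothApproxChainHomotopy} applied to $f$ and $g$ separately, we have $\Phi_* = (\Gamma_f)_*$ and $\Psi_* = (\Gamma_g)_*$. Since $f \simeq f'$, $f \simeq g$, and $g \simeq g'$, transitivity of homotopy gives $f' \simeq g'$; both are piecewise smooth, so \cref{lem:HomotopicPiecewiseSmoothChainHomotopy} gives $(\Gamma_f)_* = (\Gamma_g)_*$. Chaining the three equalities yields $\Phi_* = \Psi_*$.

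The main subtlety, and the only place where care is needed, is ensuring that the approximation theorem can be invoked with a prescribed smallness parameter $\varepsilon$ so that \cref{lem:GluingPiecewiseSmoothApproxChainHomotopy} actually applies; this requires reading the conclusion of \cref{thm:PiecewiseSmoothApprox} as providing approximations arbitrarily close in $\Fl$, which follows from the $\delta$-localization together with mass bounds along the homotopy (the $f'$ differs from $f$ only inside a $\delta$-admissible family, so $\Fl(f(x), f'(x))$ is controlled by the total mass that is moved, which can be made as small as desired by shrinking $\delta$ and invoking the no-concentration-of-mass reduction inside the proof of that theorem). Once this quantitative version is in hand, the proof is purely formal.
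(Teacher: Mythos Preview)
Your proposal is correct and follows exactly the approach implicit in the paper's placement of the corollary: reduce to piecewise smooth maps via \cref{lem:GluingPiecewiseSmoothApproxChainHomotopy}, then invoke \cref{lem:HomotopicPiecewiseSmoothChainHomotopy}. The subtlety you flag about extracting $\Fl$-closeness from \cref{thm:PiecewiseSmoothApprox} is real but is handled by the proof of that theorem (the approximations $F_i$ converge to $f$ in $\Fl$), so the argument goes through as written.
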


Thus the induced map on homology groups does not depend on the choice of gluing $\Phi$ of $f$. We denote this induced map on homology groups by $f_*$.

\begin{restatable*}{proposition}{PropPiecewiseSmoothGluing}
    \label{prop:GluingProperties}
    When $f : X \to \Z{k}(M,N)$ has a gluing $\Phi$, then the following laws hold:
    \begin{description}
        \item[Composition law.] Suppose that $g : L \to \Z{c}(X)$ also has a gluing $\Psi$. Then $\Phi \circ \Psi$ is a gluing for $\Phi \circ g$. More precisely, the gluing consists of chain homomorphisms $\Phi^{l+c} \circ \Psi^l : \I{l}(L) \to \I{l+c+d}(M)$.

        We also have $(\Phi \circ g)_* = f_* \circ g_*$.

        \item[Addition law.] Suppose that $g : X \to \Z{k}(M,N)$ also has a gluing $\Psi$. Then $(f + g)$ has a gluing $\Phi + \Psi$ and $(f + g)_* = f_* + g_*$.
    \end{description}
    
    \begin{enumerate}
        \item\label{enum:GluingProperties_WedgeSum}
        Suppose that $g : X' \to \Z{k}(M,N)$ also has a gluing $\Psi$. Then the induced map $f \vee g : X \vee X' \to \Z{k}(M,N)$ has a gluing $T \mapsto \Phi(T \res X) + \Psi(T \res X')$. Moreover, $(f \vee g)_* = f_* \vee g_*$.
        \item\label{enum:PiecewiseSmoothGluing_GluingOfProduct} For any $Z \in \Z{d'}(M')$, define the maps $Z \times f : N \to \Z{d' + d}(M' \times M)$ and $Z \smsh f : N \to \Z{d' + d}(M' \smsh M)$ by $(Z \times f)(x) = Z \times f(x)$ and $(Z \smsh f)(x) = Z \smsh f(x)$. Similarly define $f \times Z$ and $f \smsh Z$. Then
        \begin{align*}
            (Z \times f)_\sharp(A) &= Z \times f_\sharp(A) & (f \times Z)_\sharp(A) &= f_\sharp(A) \times Z
            \\
            (Z \smsh f)_\sharp(A) &= Z \smsh f_\sharp(A) & (f \smsh Z)_\sharp(A) &= f_\sharp(A) \smsh Z.
        \end{align*}

        \item\label{enum:GluingProperties_Suspension} Consider the map $F = SX \xrightarrow{Sf} S\Z{k}(M,N) \to \Z{k}(SM,SN)$. Then $F_*$ and $f_*$ commute with the suspension isomorphisms in the following diagram:
        \begin{equation*}
        \begin{tikzcd}[ampersand replacement = \&]
            H_\bullet(X) \rar{f_*} \arrow[d,"\iso"']
            \& H_{\bullet + k}(M,N) \dar{\iso}
            \\H_{\bullet+1}(SX) \rar{F_*} \& H_{\bullet + k+1}(SM,SN)
        \end{tikzcd}
        \end{equation*}
    \end{enumerate}
\end{restatable*}

\subsection{The Almgren Isomorphism Theorem and Brown representability for the inductive limit topology}

Let $X$ be a pointed compact Riemannian $d$-manifold. Let $[X, \Z{k}(M,N)]$ denote the abelian group of homotopy classes of continuous pointed maps $X \to \Z{k}(M,N)$ that send the point to 0 and which also vanish over $\partial X$, and group operation is induced by the addition of cycles. Now we describe the \emph{gluing homomorphism} $[X, \Z{k}(M,N)] \to H_{k+d}(M,N)$ as follows. For each element $\alpha \in [X, \Z{k}(M,N)]$, choose some representative $f : X \to \Z{k}(M,N)$ that has a gluing, which gives us an element $f_*[X] \in \hat{H}_{k+d}(M,N)$. This element exists as a result of \cref{thm:PiecewiseSmoothApprox,thm:PiecewiseSmoothGluing_Existence}. Moreover this correspondence is an additive homomorphism because of the addition law from \cref{prop:GluingProperties}.

When $X = \Sp{i}$, the gluing homomorphism $\pi_i(\Z{k}(M,N)) \to \hat{H}_{i+k}(M,N)$ is compatible with the usual group structure of homotopy groups because of the wedge sum law from \cref{prop:GluingProperties}.

\begin{theorem}(The Almgren Isomorphism Theorem for the inductive limit topology)
    \label{Thm:AlmgrenIsoFMetric}
    For any $i, k \geq 0$, the gluing homomorphism $\Z{k}(M,\partial M) \to \hat{H}_{k+i}(M,\partial M)$ is an isomorphism.
\end{theorem}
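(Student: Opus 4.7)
The plan is to verify the Eilenberg--Steenrod axioms for the family of functors $(M, N) \mapsto \pi_*(\Z{k}(M, N))$ and to check that the gluing map $\Theta$ is a natural transformation compatible with all axioms, then reduce to a base case by a five-lemma argument. Naturality of $\Theta$ under Lipschitz maps of pairs is provided by the composition law of \cref{prop:GluingProperties} applied to pushforwards of cycles, while additivity under the group operation on homotopy classes is the wedge-sum law.

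The central technical step is to establish a long exact sequence
\begin{equation*}
\cdots \to \pi_i(\Z{k}(\partial M)) \to \pi_i(\Z{k}(M)) \to \pi_i(\Z{k}(M, \partial M)) \to \pi_{i-1}(\Z{k}(\partial M)) \to \cdots
\end{equation*}
by showing that the restriction $\Z{k}(M) \to \Z{k}(M, \partial M)$ is a quasifibration with fiber $\Z{k}(\partial M)$. The proof is via a weak homotopy lifting property: given a cubical family $f : X \to \Z{k}(M, \partial M)$ and a partial lift over a subcomplex $Y$, I extend the lift cell by cell. I first replace $f$ by a nearby piecewise smooth family using \cref{thm:PiecewiseSmoothApprox}. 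On each new cell $\sigma$, the inductive step gives an absolute cycle in $M$ over $\partial\sigma$ lifting $f|_{\partial\sigma}$; the discrepancy with $f|_\sigma$ is a family of relative cycles that can be supported near $\partial M$ (after a small homotopy), and \cref{thm:PiecewiseSmoothGluing_Existence} glues this family into the required filling of the lift. The $\delta$-localization and mass-concentration controls from \cref{prop:DiscreteDeltaLocExtension} keep the extension continuous in the inductive limit topology.

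Given the long exact sequence, I would verify that $\Theta$ commutes with the connecting maps: the homology boundary of a relative glued cycle equals the absolute glued cycle of the boundary family, which is exactly how the homotopy connecting map is realized geometrically. Excision follows from the ability to localize families of cycles to one side of a bicollared hypersurface, a direct consequence of \cref{prop:DiscreteDeltaLocExtension}. The dimension axiom holds since $\Z{k}(\mathrm{pt}) = \{0\}$ for every $k \geq 0$. A Mayer--Vietoris argument combined with induction on a handle decomposition of $M$ then reduces the theorem to the base case of a disc: for $(\D^m, \partial\D^m)$ with $i = 0$ and $m = k$, a direct classification of relative $k$-cycles mod boundary yields $\pi_0(\Z{k}(\D^k, \partial\D^k)) \cong \ZZ_p$, matching $H_k(\D^k, \partial \D^k)$, and the suspension law in \cref{prop:GluingProperties} propagates the isomorphism to arbitrary $i$ and $m$.

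The principal obstacle will be the quasifibration property. Classical Dold--Thom-style arguments exploit an explicit abelian group functor structure on configuration spaces, but no such algebraic shortcut is available here. Continuous lifting over a family of relative cycles has to be constructed geometrically, with the piecewise smooth approximation supplying local coordinates around each cycle, gluings realizing the fillings that interpolate between boundary data and pointwise relative cycles, and the mass-concentration machinery of \cref{sec:PiecewiseSmooth} ensuring that the resulting lifted family remains continuous in the inductive limit topology throughout.
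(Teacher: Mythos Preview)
Your proposal is correct and follows essentially the same approach as the paper: verify the Eilenberg--Steenrod axioms by establishing the quasifibration $\Z{k}(M) \to \Z{k}(M,\partial M)$ via a piecewise-smooth homotopy lifting property built from gluings, establish excision, and reduce to the base case of disks handled by induction. The paper carries out the final reduction via a triangulation and the skeletal filtration of $M$ (showing the gluing homomorphism is an isomorphism on the relative cellular chain groups and then diagram-chasing) rather than a handle decomposition with Mayer--Vietoris, but this is a minor implementation difference.
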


\begin{lemma}
    \label{lem:QuasifibLES}
    The map $\xi : \Z{k}(M) \to \Z{k}(M,N)$ satisfies a ``relatively piecewise smooth homotopy lifting property'': for any continuous map $\tilde{f} : X \to \Z{k}(M)$ and a piecewise smooth homotopy $F : X \times I \to \Z{k}(M,N)$ such that $F(-,0) = \xi \circ \tilde{f}$, there exists a continuous map $\tilde{F} : X \times I \to \Z{k}(M,N)$ such that $F = \xi \circ \tilde{F}$.

    Consequently, $\xi$ is a quasifibration with fiber $\Z{k}(N)$, and induces a long exact sequence
    \begin{equation*}
        \dotsb \to \pi_{i+1}(\Z{k}(M,N)) \to \pi_i(\Z{k}(N)) \to \pi_i(\Z{k}(M)) \to \pi_i(\Z{k}(M,N))
        \to \pi_{i-1}(\Z{k}(N)) \to \dotsb
    \end{equation*}
\end{lemma}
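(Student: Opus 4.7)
The plan is to first prove the relatively piecewise smooth homotopy lifting property by explicitly constructing $\tilde F$ from the piecewise smooth structure of $F$, and then deduce the quasifibration property and the long exact sequence by approximation via \cref{thm:PiecewiseSmoothApprox}.

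For the lifting, fix a piecewise smooth structure $(\{\varphi_\sigma\}, \{Z_\sigma\}, \{\varphi_{\tau\sigma}\})$ for $F$ on a cell decomposition of $X \times I$ that extends one on $X$. For each top cell $E$ pick an absolute integral representative $\tilde Z_E \in \I{k}(M)$ of $Z_E$, so $\partial \tilde Z_E \in \I{k-1}(N)$. On the interior of $E$ set
\begin{equation*}
    \tilde F(x,t) := \tilde f(x) + \partial\bigl[\varphi_{E\sharp}(\{x\} \times [0,t] \times \tilde Z_E)\bigr].
\end{equation*}
Expanding, this equals $\tilde f(x) + \varphi_E(x,t,\cdot)_\sharp(\tilde Z_E) - \varphi_E(x,0,\cdot)_\sharp(\tilde Z_E) - \varphi_{E\sharp}(\{x\} \times [0,t] \times \partial \tilde Z_E)$; the final term lies in $N$ since $\varphi_E$ sends $E \times N$ into $N$, so modulo $N$ the whole formula reduces to $\xi \tilde f(x) + F(x,t) - F(x,0) = F(x,t)$. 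Thus $\xi \circ \tilde F = F$ and $\tilde F$ takes values in absolute cycles because $\partial^2 = 0$ and $\partial \tilde f = 0$; at $t=0$ the $\partial$-term vanishes so $\tilde F(x,0) = \tilde f(x)$.

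The main obstacle is gluing this cell-by-cell definition continuously. Across a shared face $E'' \subset E \cap E'$ of top cells, the collapse identity \cref{eq:CollapseMapIdentity} together with $\varphi_{E''E\sharp}(Z_E) = Z_{E''}$ implies that the two candidate lifts agree modulo a continuously varying $k$-cycle supported in $N$, which is exactly the freedom available in lifting a relative cycle to an absolute cycle. Inductively over the skeleta of $X \times I$ in the style of \cref{prop:ExtendPiecewiseSmooth}, I absorb these discrepancies into continuous families of correction chains in $\I{k}(N)$, exploiting the abelian topological group structure of $\Z{k}(N)$ and the Lipschitz bounds from \cref{lem:PiecewiseSmoothLipschitz} to guarantee continuity in the inductive limit topology. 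Consistency at triple intersections reduces to a cocycle condition that is automatically satisfied by the collapse map relations $\varphi_{\kappa\sigma} = \varphi_{\kappa\tau} \circ \varphi_{\tau\sigma}$.

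For the quasifibration claim, apply the lifting property together with \cref{thm:PiecewiseSmoothApprox} to show that $\xi_* : \pi_i(\Z{k}(M), \Z{k}(N)) \to \pi_i(\Z{k}(M,N))$ is an isomorphism for every $i$: surjectivity follows by piecewise smoothly approximating a representing pair map $(D^i, \partial D^i) \to (\Z{k}(M,N), 0)$ and lifting it (using the trivial lift of $0$ on the boundary as the initial condition), and injectivity follows by the same argument applied to a homotopy between two lifts rel $\partial D^i$ and correcting the endpoint by a path in the fiber $\Z{k}(N)$. Each nonempty fiber $\xi^{-1}(T)$ is a translate of $\Z{k}(N)$, so all fibers over path components hit by $\xi$ are homeomorphic to $\Z{k}(N)$, and the desired long exact sequence is the standard one associated to the quasifibration $\Z{k}(N) \hookrightarrow \Z{k}(M) \xrightarrow{\xi} \Z{k}(M,N)$.
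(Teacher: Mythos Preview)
Your overall strategy matches the paper's: construct the lift explicitly from the piecewise smooth data, then deduce the isomorphism $\xi_* : \pi_i(\Z{k}(M),\Z{k}(N)) \to \pi_i(\Z{k}(M,N))$ via \cref{thm:PiecewiseSmoothApprox}. Your treatment of the quasifibration half is essentially identical to the paper's.

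However, your lifting construction has a real gap. Your formula
\[
\tilde F(x,t) = \tilde f(x) + \partial\bigl[\varphi_{E\sharp}(\{x\}\times[0,t]\times \tilde Z_E)\bigr]
\]
tacitly assumes that the entire segment $\{x\}\times[0,t]$ lies in the single top cell $E$, i.e.\ that the piecewise smooth structure on $X\times I$ has cells of the form $C\times I$. Nothing in the hypothesis guarantees this; the structure on $X\times I$ may subdivide the $I$ direction, so $\varphi_E$ is simply undefined on part of that segment. Your paragraph about ``absorbing discrepancies into continuous families of correction chains in $\I{k}(N)$'' in the style of \cref{prop:ExtendPiecewiseSmooth} does not address this: that proposition is about extending $\delta$-localized discrete maps via radial collapses, which is a different mechanism, and you do not actually specify what the correction chains are or why they vary continuously. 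What you are really trying to do here is rebuild, by hand, a gluing of the family $(x,t)\mapsto \partial\tilde Z$-type data across cells---precisely the content of \cref{thm:PiecewiseSmoothGluing_Existence}.

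The paper avoids this entirely with one observation: set $F'(x,t)=F(x,t)\res\cl{M\setminus N}$ and note that $\partial F'$ is itself a piecewise smooth family of \emph{absolute} $(k-1)$-cycles (its model cycles are $\partial(Z_\sigma\res\cl{M\setminus N})$, and the same charts and collapse maps work). Hence \cref{thm:PiecewiseSmoothGluing_Existence} gives a global, continuous gluing $\Phi$ for $\partial F'$, and one can write
\[
\tilde F(x,t) = F'(x,t) - \Phi(\{x\}\times[0,t]) + \tilde f(x) - F'(x,0).
\]
Continuity across cells is already packaged inside $\Phi$, and the verification that $\partial\tilde F=0$ and $\xi\circ\tilde F=F$ is a two-line computation. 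This is the missing ingredient in your argument: rather than lifting the model relative cycles $Z_E$ to absolute chains $\tilde Z_E$ and then fighting with their mismatches across faces, pass to the boundary family $\partial F'$, which is already an honest piecewise smooth family of absolute cycles and therefore has a gluing for free.
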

\begin{proof}
    Let $F'(x,t) = F(x,t) \res \cl{M \setminus N}$, which is also relatively piecewise smooth. Thus $\partial F'$, defined as the map $(x,t) \mapsto \partial F'(x,t) = \partial F(x,t) \res \cl{M \setminus N} + F(x,t) \res \partial\cl{M \setminus N}$, is also piecewise smooth, and has a standard gluing $\Phi$ by \cref{thm:PiecewiseSmoothGluing_Existence}. Then define
    \begin{equation*}
        \tilde{F}(x,t)
        = F'(x,t) - \Phi(\{x\} \times [0,t]) + F(x,0) - F'(x,0).
    \end{equation*}
    $\tilde{F}$ is continuous in the inductive limit topology. Note that by the definition of a gluing,
    \begin{align*}
        \partial(\Phi(\{x\} \times [0,t]))
        &= \Phi(\partial(\{x\} \times [0,t]))
        \\
        &= \Phi((x,t) - (x,0))
        \\
        &= \partial F'(x,t) - \partial F'(x,0)
        \\
        \implies
        \partial\tilde{F}(x,t)
        &= \partial F(x,0) = \partial(\xi(\tilde{f}(x))) = 0,
    \end{align*}
    thus $\tilde{F}$ is a map $X \to \Z{k}(M)$. And we also have $\xi \circ \tilde{F} = F$, because relative cycles in $\Z{k}(M,N)$ are defined up to chains supported in $N$, and the only summand of $\tilde{F}(x,t)$ that is not supported in $N$ is $F'(x,t)$, so $\xi(\tilde{F}(x,t)) = F'(x,t) + F(x,t) \res N = F(x,t)$.

    Now, we wish to show that the map $\xi_* : \pi_i(\Z{k}(M), \Z{k}(N)) \to \pi_i(\Z{k}(M,N))$ is an isomorphism. It suffices to base both homotopy groups at 0. First we show that it is surjective: for any homotopy class in $\pi_i(\Z{k}(M,N))$, choose a piecewise smooth representative $F : (\D^i, \partial\D^i) \to (\Z{k}(M,N),0)$, which is always possible due to \cref{thm:PiecewiseSmoothApprox}. (This representative can be chosen so that it passes through 0.) Since $\D^i = C\Sp{i-1}$, we may view $F$ as a nullhomotopy of a constant map $\Sp{i-1} \to \Z{k}(M,N)$ that sends $\Sp{i-1}$ to 0. This constant map clearly lifts to a constant map $\Sp{i-1} \to \Z{k}(M)$. Thus by the piecewise smooth homotopy lifting property proven earlier, $F$ lifts to a homotopy $\tilde{F} : \Sp{i-1} \times I \to \Z{k}(M)$ such that $\xi(\tilde{F}(x,1)) = 0$ for all $x$. That is, $\tilde{F}(x,1) \in \Z{k}(N)$ for all $x$, which give surjectivity.

    To prove injectivity, suppose that $\xi_*(\alpha) = 0$. Using \cref{thm:PiecewiseSmoothApprox}, choose a piecewise smooth representative $\tilde{f} : (\D^i, \partial\D^i) \to (\Z{k}(M), \Z{k}(N))$ of $\alpha$. Thus $\xi \circ \tilde{f}$ is nullhomotopic via a map $F : (\D^i, \partial\D^i) \times I \to \Z{k}(M,N)$ that sends $\partial\D^i \times I$ to 0. Since we can homotope $\xi \circ \tilde{f}$ to a piecewse smooth map, we may apply \cref{thm:PiecewiseSmoothApprox} to homotope relative to its boundary until it is piecewise smooth. Then $\tilde{f}$ lifts $F|_{(\D^i, \partial\D^i) \times \{0\}}$, so by the piecewise smooth homotopy lifting property, $F$ can be lifted to a nullhomotopy of $\tilde{f}$. Thus $\alpha = 0$.
\end{proof}

\begin{lemma}[Excision]
    \label{lem:Excision}
    The map $\Z{k}(\cl{M \setminus N}, \partial \cl{M \setminus N}) \hookrightarrow \Z{k}(M, N)$ is a weak homotopy equivalence.
\end{lemma}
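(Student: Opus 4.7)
The plan is to derive excision for cycle spaces from the Almgren Isomorphism Theorem (\cref{Thm:AlmgrenIsoFMetric}) just established, combined with classical excision for mod $p$ relative singular homology. When $N = \partial M$ the claim is vacuous since $\cl{M\setminus N} = M$ and $\partial\cl{M\setminus N} = N$, so I may assume $N$ is a compact codimension-$0$ submanifold of $M$. First I would note that the inclusion $\iota : (\cl{M\setminus N}, \partial\cl{M\setminus N}) \hookrightarrow (M, N)$ of collar pairs induces a mass-preserving and $1$-Lipschitz homomorphism $\iota_\# : \Z{k}(\cl{M\setminus N}, \partial\cl{M\setminus N}) \to \Z{k}(M, N)$, which is continuous in the inductive limit topology by \cref{lem:ContinuityIndLimTop}. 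The goal is then to show that $\iota_\#$ induces isomorphisms on $\pi_i$ for all $i \geq 0$.

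The key step is naturality of the gluing homomorphism. If a pointed $f : \Sp{i} \to \Z{k}(\cl{M\setminus N}, \partial\cl{M\setminus N})$ that vanishes at the basepoint has a gluing $\Phi : \I{\bullet}(\Sp{i}) \to \I{\bullet+k}(\cl{M\setminus N}, \partial\cl{M\setminus N})$, then the composite $\iota_\# \circ \Phi$ is a continuous chain homomorphism into $\I{\bullet+k}(M, N)$ that restricts in degree zero to $\iota_\# \circ f$, so it is a gluing for $\iota_\# \circ f$. Hence $(\iota_\# \circ f)_* = \iota_* \circ f_*$ on induced homology maps, where the right-hand $\iota_*$ is the map on relative singular homology. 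Combined with \cref{Thm:AlmgrenIsoFMetric} applied to both collar pairs, this yields a commuting square whose horizontal arrows are the Almgren isomorphisms, left vertical is $\iota_\#$ on $\pi_i$, and right vertical is $\iota_*$ on $H_{k+i}$. Classical excision (removing $\itr N$ from both $M$ and $N$) makes the right vertical arrow an isomorphism, so $\iota_\#$ induces isomorphisms on $\pi_i$ for every $i$.

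The hard part will be making naturality of gluings fully rigorous, since a gluing for a general continuous $f$ is only obtained after a piecewise smooth approximation via \cref{thm:PiecewiseSmoothApprox,thm:PiecewiseSmoothGluing_Existence}. I would verify that if $g$ is such a piecewise smooth approximation of $f$, then $\iota_\# \circ g$ is automatically a piecewise smooth approximation of $\iota_\# \circ f$, and its standard gluing is $\iota_\# \circ \Gamma$ where $\Gamma$ is the standard gluing of $g$. This follows by inspection of \cref{def:PiecewiseSmooth}: the charts $\varphi_\sigma$, model cycles $Z_\sigma$, and collapse maps $\varphi_{\tau\sigma}$ all transport under the pushforward $\iota_\#$ because $\iota$ is an embedding that carries $\partial\cl{M\setminus N}$ into $N$. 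Invoking \cref{lem:GluingPiecewiseSmoothApproxChainHomotopy} then confirms that any other gluing of $f$ induces the same map on homology, completing the naturality verification.
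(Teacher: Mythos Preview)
Your argument is circular. In the paper's logical order, \cref{lem:Excision} is a prerequisite for \cref{thm:AlmgrenIsoFMetric_SpheresDisks} (it appears explicitly in the diagram there), which in turn is used to prove \cref{Thm:AlmgrenIsoFMetric}. So invoking \cref{Thm:AlmgrenIsoFMetric} to establish \cref{lem:Excision} assumes what you are trying to prove. The phrase ``just established'' is mistaken: at this point \cref{Thm:AlmgrenIsoFMetric} has only been \emph{stated}; its proof comes after Excision and depends on it.

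There is a second obstacle even if the circularity were somehow avoided. \Cref{Thm:AlmgrenIsoFMetric} is stated only for pairs of the form $(M,\partial M)$, so you may apply it to $(\cl{M\setminus N},\partial\cl{M\setminus N})$, but you also need the gluing homomorphism to be an isomorphism for the general collar pair $(M,N)$ to make the right-hand vertical of your square an isomorphism. That more general statement is precisely what the paper obtains by first proving Excision and then reducing to disks. The paper's own proof bypasses all of this: take a piecewise smooth representative $f:\Sp{i}\to\Z{k}(M,N)$ via \cref{thm:PiecewiseSmoothApprox}, and simply restrict each model cycle $Z_\sigma$ to $\cl{M\setminus N}$ to obtain a piecewise smooth lift to $\Z{k}(\cl{M\setminus N},\partial\cl{M\setminus N})$; injectivity is handled the same way with $\D^{i+1}$ in place of $\Sp{i}$. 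This is self-contained and does not touch the Almgren isomorphism.
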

\begin{proof}
    Consider the homomorphism $\phi : \pi_i(\Z{k}(\cl{M \setminus N}, \partial \cl{M \setminus N})) \to \pi_i(\Z{k}(M, N))$. We first show that it is surjective: any element in the codomain has a piecewise smooth representative $f : \Sp{i} \to \Z{k}(M, N)$ by \cref{thm:PiecewiseSmoothApprox}, and if we take its piecewise smooth structure and restrict every model relative cycle to $\cl{M \setminus N}$, then we get a piecewise smooth map $\Sp{i} \to \Z{k}(\cl{M \setminus N}, \partial \cl{M \setminus N})$ that is sent to $[f]$ by $\phi$.

    The injectivity proof is similar. Suppose that a piecewise smooth map $g : \Sp{i} \to \Z{k}(\cl{M \setminus N}, \partial \cl{M \setminus N})$ is sent to the zero class by $\phi$. Then $g$ extends to a map $G : \D^{i+1} \to \Z{k}(M, N)$. By \cref{thm:PiecewiseSmoothApprox}, we can choose $G$ to be piecewise smooth, and agreeing with the piecewise smooth structure of $g$ on $\partial\D^{i+1}$. Then as before, we restrict every model relative cycle to $\cl{M \setminus N}$, and obtain a map $\D^{i+1} \to \Z{k}(\cl{M \setminus N}, \partial \cl{M \setminus N})$. Thus $g$ is nullhomotopic.
\end{proof}

\begin{theorem}
    \label{thm:AlmgrenIsoFMetric_SpheresDisks}
    Consider any $i, k, n \geq 0$. Then the gluing homomorphism $\pi_i(\Z{k}(\Sp{n})) \to \hat{H}_{k+i}(\Sp{n})$ is an isomorphism. Moreover, if $n \geq 1$ then the gluing homomorphism $\pi_i(\Z{k}(\D^n, \partial\D^n)) \to \hat{H}_{k+i}(\D^n, \partial\D^n)$ is also an isomorphism.
\end{theorem}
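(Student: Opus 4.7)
The plan is to establish both cases simultaneously by induction on $n$, using the quasifibration long exact sequence of \cref{lem:QuasifibLES}, excision (\cref{lem:Excision}), and the weak contractibility of $\Z{k}(\D^n)$.

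First I would establish weak contractibility of $\Z{k}(\D^n)$: pushing forward by the straight-line contraction $h_t : \D^n \to \D^n$ to the center gives $(h_t)_\sharp : \Z{k}(\D^n) \to \Z{k}(\D^n)$, continuous in the inductive limit topology by \cref{lem:ContinuityIndLimTop}, and carrying $\id$ to the zero map (when $k \geq 1$ any $k$-current collapses to zero, and when $k = 0$ the zero-sum condition on cycles makes the limit vanish). The base case $n = 0$ of the sphere statement is then direct: for $k \geq 1$ both sides vanish, and for $k = 0$ the zero-sum condition makes $\Z{0}(\Sp{0})$ a discrete copy of $\ZZ_p$ whose generator $(+1) - (-1)$ glues to the generator of $\hat{H}_0(\Sp{0})$.

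For the inductive step, I would first derive the disk case at $n$ from the sphere case at $n-1$. Apply \cref{lem:QuasifibLES} to $(\D^n, \partial\D^n)$; the vanishing of $\pi_*(\Z{k}(\D^n))$ reduces its long exact sequence to an isomorphism $\pi_i(\Z{k}(\D^n, \partial\D^n)) \xrightarrow{\partial} \pi_{i-1}(\Z{k}(\Sp{n-1}))$, which by the inductive hypothesis identifies with $\hat{H}_{k+i-1}(\Sp{n-1}) \cong \hat{H}_{k+i}(\D^n, \partial\D^n)$ through the connecting map of the pair. Next I would derive the sphere case at $n$ from this disk case. Write $\Sp{n} = \D^n_+ \cup \D^n_-$ glued along $\Sp{n-1}$ and apply \cref{lem:QuasifibLES} to the collar pair $(\Sp{n}, \D^n_+)$; the contractibility of $\Z{k}(\D^n_+)$ now gives $\pi_i(\Z{k}(\Sp{n})) \xrightarrow{\iso} \pi_i(\Z{k}(\Sp{n}, \D^n_+))$, which by \cref{lem:Excision} is weakly equivalent to $\pi_i(\Z{k}(\D^n_-, \Sp{n-1}))$, and by the disk case just established equals $\hat{H}_{k+i}(\D^n_-, \Sp{n-1}) \cong \hat{H}_{k+i}(\Sp{n})$.

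The main remaining obstacle is to verify that these abstract isomorphisms coincide with the gluing homomorphism. The composition and wedge-sum laws of \cref{prop:GluingProperties} show that gluings commute with the inclusion- and projection-induced arrows in the quasifibration long exact sequences, and the inclusion $\cl{M \setminus N} \hookrightarrow M$ underlying excision is similarly natural with respect to gluings. For the connecting homomorphism, the relatively piecewise-smooth lift constructed in the proof of \cref{lem:QuasifibLES} realizes $\partial$ through a piecewise smooth family, whose standard gluing (from \cref{thm:PiecewiseSmoothGluing_Existence}) is intertwined with the singular-homology connecting map by the suspension compatibility in \cref{prop:GluingProperties}. Chasing these commutative squares through the induction identifies the composite isomorphism with the gluing homomorphism, completing the proof.
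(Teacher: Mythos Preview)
Your approach is essentially the same as the paper's: induction on $n$ through the quasifibration long exact sequence of \cref{lem:QuasifibLES}, excision (\cref{lem:Excision}), and contractibility of $\Z{k}(\D^n)$, with the suspension compatibility from \cref{prop:GluingProperties} supplying the commutativity with the gluing homomorphism. In fact you make explicit one thing the paper leaves implicit, namely the weak contractibility of $\Z{k}(\D^n)$.

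Two points deserve more care. First, your inductive step uses the connecting map $\pi_i \to \pi_{i-1}$ and so only treats $i \geq 1$; the case $i = 0$ must be handled separately. The paper does this directly: when $k > n$ both sides vanish; when $k = n$ the space is discrete $\ZZ_p$; when $k < n$ one shows $\Z{k}(\Sp{n})$ and $\Z{k}(\D^n,\partial\D^n)$ are connected by approximating an arbitrary cycle by a polyhedral one (via \cref{thm:PiecewiseSmoothApprox}) and then radially pushing it off to a point. Second, your last paragraph is the crux and is currently too soft. The paper does not appeal to abstract naturality of the connecting map but instead builds explicit piecewise smooth representatives: given $f : \Sp{i-1} \to \Z{k}(\Sp{n})$ representing $\gamma$, it forms the cone $F : C\Sp{i-1} \to \Z{k}(\D^{n+1})$ and the suspension $G : S\Sp{i-1} \to \Z{k}(\Sp{n+1})$, checks these are piecewise smooth, and then verifies directly (using \cref{prop:GluingProperties}(\ref{enum:GluingProperties_Suspension}) for one square and an explicit boundary computation $\partial\Phi[\D^i] = \Phi[\Sp{i-1}]$ for the other) that the gluing homomorphisms intertwine the suspension and connecting isomorphisms. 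You should expect to carry out that same concrete construction rather than rely on a general ``chasing commutative squares'' argument.
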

\begin{proof}
    Let us begin by handling various edge cases. When $n = 0$, all of the groups in the theorem statement vanish unless $k = 0$, in which case $\Z0(\Sp0) = \{ax - ay~|~a \in \ZZ_p\}$ with the discrete topology where $\Sp0 = \{x,y\}$, and $\hat{H}_0(\Sp0) = \ZZ_p$, so the theorem holds.

    When $i = 0$ there are three cases:
    \begin{itemize}
        \item If $k > n$, then $\Z{k}(\Sp{n}) = \Z{k}(\D^n, \partial \D^n) = \{0\}$ so the theorem holds.

        \item If $k = n$, then $\Z{k}(\Sp{n})$ and $\Z{k}(\D^n, \partial \D^n)$ are both $\ZZ_p$ with the discrete topology, so the theorem also holds.

        \item If $k < n$, then it suffices to prove that $\Z{k}(\Sp{n})$ and $\Z{k}(\D^n, \partial \D^n)$ are both connected. For any $T \in \Z{k}(\Sp{n})$, the piecewise smooth approximation theorem gives a path connecting $T$ to a polyhedral cycle. Find an open ball in $\Sp{n}$ disjoint from that polyhedral cycle and dilate that ball until its complement is squashed into a point. Similarly for $\Z{k}(\D^n, \partial \D^n)$.
    \end{itemize}

    Now we prove the theorem by inducting on $n$; the base case of $n = 0$ has already been handled. Suppose that the theorem holds for some $n \geq 0$ (and all $i, k \geq 0$); we will prove it for $n + 1$. The case where $i = 0$ has been handled already. Consider any $k \geq 0$ and $i \geq 1$. Identify $\Sp{n}$ with $\partial\D^{n+1}$, and $\D^{n+1}$ with the lower hemisphere of $\Sp{n+1}$. Consider the following diagram, which may not commute \emph{a priori.} the isomorphisms in the upper row come from the long exact sequences in the labeled lemmas. The map labeled $\partial$ comes from a long exact sequence of reduced homology groups. The curved arrow arises from the suspension isomorphism. The columns are the gluing homomorphisms, among which the rightmost one is an isomorphism by the induction hypothesis. Let $\alpha$, $\beta$, and $\gamma$ be any choice of elements of the groups below them that correspond to each other via the isomorphisms.
    \begin{equation*}
    \begin{tikzcd}[column sep=large]
        \overset{\alpha}{\pi_i(\Z{k}(\Sp{n+1}))} \arrow[r,"\iso","\text{Lem.~\ref{lem:QuasifibLES}}"'] \dar & \pi_i(\Z{k}(\Sp{n+1},\D^{n+1}))  & \arrow[l,"\iso"',"\text{Lem.~\ref{lem:Excision}}"] \overset{\beta}{\pi_i(\Z{k}(\D^{n+1},\Sp{n}))} \arrow[r,"\iso","\text{Lem.~\ref{lem:QuasifibLES}}"'] \dar & \overset{\gamma}{\pi_{i-1}(\Z{k}(\Sp{n}))} \dar{\iso}
        \\
        \hat{H}_{k+i}(\Sp{n+1}) && \hat{H}_{k+i}(\D^{n+1}, \Sp{n}) \arrow[r,"\partial"',"\iso"] & \arrow[lll,"\text{suspension}","\iso"', bend left = 15] \hat{H}_{k+i-1}(\Sp{n})
    \end{tikzcd}
    \end{equation*}
    Thus the groups $\pi_i(\Z{k}(\Sp{n+1}))$, $\hat{H}_{k+i}(\Sp{n+1})$, $\pi_i(\Z{k}(\D^{n+1}, \Sp{n}))$, and $\hat{H}_{k+i}(\D^{n+1}, \Sp{n})$ are abstractly isomorphic to one another, but we need to prove that the corresponding gluing homomorphisms are isomorphisms. To do that, we prove that the outer ``rectangle'' and the inner square of the diagram commute.

    Suppose that $\gamma$ is represented by a piecewise smooth map $f : \Sp{i-1} \to \Z{k}(\Sp{n})$. Consider the map $F : C\Sp{i-1} \xrightarrow{Cf} C\Z{k}(\Sp{n}) \to \Z{k}(C\Sp{n}) = \Z{k}(\D^{n+1})$ where $C\Sp{n}$ is identified with $\D^{n+1}$. The proof of \cref{lem:QuasifibLES} shows that $\beta$ is represented by the map $F' : (\D^i, \Sp{i-1}) \to (\Z{k}(\D^{n+1}, \Sp{n}), 0)$ induced by $F$.

    The excision map sends $\beta = [F'] \in \pi_i(\Z{k}(\D^{n+1},\Sp{n}))$ to $[F'] \in \pi_i(\Z{k}(\Sp{n+1},\D^{n+1}))$. Thus $\alpha$ is in fact represented by the map $G : \Sp{i} = S\Sp{i-1} \xrightarrow{Sf} S\Z{k}(\Sp{n}) \to \Z{k}(S\Sp{n}) = \Z{k}(\Sp{n+1})$.  It can be verified that $F$, $F'$, and $G$ are all piecewise smooth. Thus the gluing homomorphism sends $\alpha$ to $G_*[\Sp{i}]$ and $\gamma$ to $f_*[\Sp{i-1}]$. \Cref{prop:GluingProperties}(\ref{enum:GluingProperties_Suspension}) implies that $G_*[\Sp{i}]$ and $f_*[\Sp{i-1}]$ are related by the suspension isomorphism, so the  outer rectangle of the diagram commutes.

    Now observe that the gluing homomorphism sends $\beta$ to $F'_*[\D^i]$. By definition, $\partial(F'_*[\D^i])$ is computed by taking an absolute chain that represents $F'_*[\D^i]$ and returning the homology class of its boundary. Let $\Phi$ be a gluing for $F$. Then if we identify $C\Sp{i-1}$ with $\D^i$, then $F'$ has a gluing $\Phi'$ which is simply $\Phi$ composed with the map $\Z{\bullet}(\D^{n+1}) \to \Z{\bullet}(\D^{n+1}, \Sp{n})$, because this map commutes with the boundary map. Therefore $\partial(F'_*[\D^i]) = \partial(\Phi'_*[\D^i])$ is the class, in $\hat{H}_{k+i-1}(\Sp{n})$, of $\partial\Phi[\D^i] = \Phi\partial[\D^i] = \Phi[\Sp{i-1}]$. Let $j : \Sp{n} \hookrightarrow \D^{n+1}$ be the inclusion of the boundary. The class of $\Phi[\Sp{i-1}]$ in $\hat{H}_{k+i-1}(\Sp{n})$ is $\Phi_*[\Sp{i-1}] = F_*[\Sp{i-1}] = F_*j_*[\Sp{i-1}]$ which by the composition law in \cref{prop:GluingProperties} is $(F \circ j)_*[\Sp{i-1}] = f_*[\Sp{i-1}]$. Thus the inner square of the diagram commutes.
\end{proof}

\begin{proof}[Proof of \cref{Thm:AlmgrenIsoFMetric}]
    Find a triangulation of $M$. Our proof will use the relative cellular homology of $M$, which is the homology of the relative cellular chain complex $\hat{H}_*(M^d, M^{d-1} \cup (\partial M)^d)$. There exists a sequence $\N_d$ of compact tubular neighbourhoods in $M$ of $M^d$, with increasing radii, such that each $\N_d \setminus \N_{d-1}$ is a disjoint union of ``thickenings'' of the $d$-cells of $M$. Similarly, let $\N^\partial_d$ be the tubular neighbourhood of the $d$-skeleton of $\partial M$ with the same radius as $\N_d$. Observe that $Y_d = \N_0 \cup \dotsb \cup \N_d$ deformation retracts onto $M^d$, $Y^\partial_d = \N^\partial_0 \cup \dotsb \cup \N^\partial_d$ deformation retracts onto $(\partial M)^d$, and that
    \begin{equation}
        \hat{H}_j(Y_d, Y_{d-1} \cup Y^\partial_d) \iso \hat{H}_j(M^d, M^{d-1} \cup (\partial M)^d).
    \end{equation}

    Note that $(Y_d, Y_{d-1} \cup Y^\partial_d)$ is a collar pair. Let us first prove that the gluing homomorphism,
    \begin{equation}
        \label{eq:AlmgrenIsoFMetric_RelCellChainCplx_GluingHom}
        \pi_i(\Z{k}(Y_d, Y_{d-1} \cup Y^\partial_d)) \to \hat{H}_{k+i}(Y_d, Y_{d-1} \cup Y^\partial_d),
    \end{equation}
    is an isomorphism. By \cref{lem:Excision}, the domain is isomorphic to $\pi_i(\Z{k}(\coprod_l e^d_l, \coprod_l \partial e^d_l))$, where the $e^d_l$'s are the $d$-cells of $M$ that are not contained in $\partial M$. The codomain is isomorphic to $\bigoplus_l \hat{H}_{k+i}(e^d_l, \partial e^d_l)$. Thus we can apply \cref{thm:AlmgrenIsoFMetric_SpheresDisks} to show that \cref{eq:AlmgrenIsoFMetric_RelCellChainCplx_GluingHom} is an isomorphism. After this, it is a matter of diagram chasing to prove the theorem.
\end{proof}

\begin{theorem}[Brown Representability for Spaces of Cycles]
    \label{thm:BrownRepSpacesOfCycles}
    For each cell complex $X$, there is an isomorphism $[X, \Z{k}(\Sp{n})] \xrightarrow{\iso} \hat{H}^{n-k}(X, \partial X)$, where the homotopy class of a map $f$ that has a gluing corresponds to the cohomology class that evaluates on a homology class $\alpha \in \hat{H}_{n-k}(X, \partial X)$ to $f_*(\alpha) \in \hat{H}_n(\Sp{n}) \iso \ZZ_p$.
    
    There is also an analogous isomorphism $[X, \Z{k}(\D^n, \partial\D^n)] \xrightarrow{\iso} \hat{H}^{n-k}(X)$, whose definition is the same except with $H_n(\D^n, \partial\D^n)$ in the place of $H_n(\Sp{n})$.
\end{theorem}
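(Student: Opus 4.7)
The plan is to interpret the gluing as a natural transformation $\Theta_X : [X, \Z{k}(\Sp{n})] \to \hat{H}^{n-k}(X, \partial X)$ (defined by sending $[f]$ to the functional $\alpha \mapsto f_*(\alpha) \in \hat{H}_n(\Sp{n}) \iso \ZZ_p$) and identify it with the classical Brown representability isomorphism, using the Almgren Isomorphism Theorem proved above to determine the weak homotopy type of $\Z{k}(\Sp{n})$. First I would verify that $\Theta_X$ is a well-defined natural transformation: \cref{thm:PiecewiseSmoothApprox,thm:PiecewiseSmoothGluing_Existence} produce a gluing $\Phi$ of $f$ inducing $f_* = \Phi_* : \hat{H}_*(X, \partial X) \to \hat{H}_{*+k}(\Sp{n})$ depending only on $[f]$ by \cref{cor:HomotopicMapsWithGluingChainHomotopic}, and over the field $\ZZ_p$ the universal coefficient theorem identifies $\hat{H}^{n-k}(X, \partial X)$ with $\Hom(\hat{H}_{n-k}(X, \partial X), \ZZ_p)$, so $\Theta_X([f])$ is a bona fide cohomology class. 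Additivity and naturality in $X$ follow from the addition and composition laws of \cref{prop:GluingProperties}.

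Next I would combine \cref{thm:AlmgrenIsoFMetric_SpheresDisks}, which gives $\pi_i(\Z{k}(\Sp{n})) = \ZZ_p$ for $i = n-k$ and $0$ otherwise, with the fact that $\Z{k}(\Sp{n})$ is a topological abelian group and hence a simple H-space. A standard Postnikov-tower argument then yields a weak equivalence $\Z{k}(\Sp{n}) \whe K(\ZZ_p, n-k)$. Classical Brown representability accordingly supplies a natural bijection $B_X : [X, \Z{k}(\Sp{n})] \iso \hat{H}^{n-k}(X, \partial X)$ sending $[f]$ to $f^*(\iota_{n-k})$, where $\iota_{n-k}$ is the fundamental class.

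To conclude, I would show $\Theta_X = B_X$ via the Yoneda lemma: both are natural transformations from the representable functor $[-, \Z{k}(\Sp{n})]$ to $\hat{H}^{n-k}(-, \partial -)$, so it suffices to check equality on the representing sphere $X = \Sp{n-k}$. There, $\Theta_{\Sp{n-k}}([f])$ pairs with $[\Sp{n-k}]$ to give $f_*[\Sp{n-k}] \in \hat{H}_n(\Sp{n}) \iso \ZZ_p$, which is exactly the image of $[f]$ under the Almgren gluing isomorphism; and $B_{\Sp{n-k}}([f])$ pairs with $[\Sp{n-k}]$ to give $\langle f^*(\iota_{n-k}), [\Sp{n-k}]\rangle = \langle \iota_{n-k}, f_*[\Sp{n-k}]\rangle$, which by the Hurewicz identification of $\pi_{n-k} \iso H_{n-k} \iso \ZZ_p$ yields the same element. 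Hence $\Theta_X$ is bijective for every cell complex $X$, and the disk case follows identically using the second half of \cref{thm:AlmgrenIsoFMetric_SpheresDisks}. The main obstacle I anticipate is the careful identification of the Hurewicz image of the fundamental class on the $K(\ZZ_p, n-k)$ side with the Almgren gluing isomorphism on the $\Z{k}(\Sp{n})$ side; this compatibility is exactly what makes the Yoneda reduction to a single sphere succeed.
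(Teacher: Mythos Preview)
Your proposal is correct and follows the same overall strategy as the paper: both use the Almgren isomorphism to identify $\Z{k}(\Sp{n}) \whe K(\ZZ_p,n-k)$, invoke classical Brown representability, and then check that the resulting isomorphism $[f] \mapsto f^*\iota_{n-k}$ agrees with the gluing description $\alpha \mapsto f_*(\alpha)$. The organization differs, however. The paper pins down $\iota_{n-k}$ explicitly as the dual of the Hurewicz image of the Almgren generator, then for a general $X$ uses obstruction theory to homotope $f$ until it vanishes on $X^{n-k-1}$; on a cellular cycle $A$ representing $\alpha$ the map $f$ then restricts to an element of $\pi_{n-k}(\Z{k}(\Sp{n}))$ on each top cell, and one reads off directly that $\langle f^*\iota_{n-k},\alpha\rangle = [\Phi(A)] = f_*(\alpha)$. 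Your Yoneda reduction to the single test space $X = \Sp{n-k}$ is a cleaner packaging that bypasses the obstruction-theory step, at the price of having to verify naturality of $\Theta$; note that to invoke the composition law of \cref{prop:GluingProperties} for a map $g : X' \to X$ you should regard $g$ as a map $X' \to \Z{0}(X)$ with gluing $g_\sharp$. The ``main obstacle'' you flag---matching the Hurewicz image with the Almgren gluing isomorphism---is precisely what the paper's explicit choice of $\iota_{n-k}$ is designed to make trivial, and your check on $\Sp{n-k}$ goes through once that normalization is fixed.
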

\begin{proof}
    We will prove the theorem for $\Z{k}(\Sp{n})$ as the other case follows similarly. $\Z{k}(\Sp{n})$ is weakly homotopy equivalent to the Eilenberg-Maclane space $K(\ZZ_p, n-k)$, due to \cref{thm:AlmgrenIsoFMetric_SpheresDisks}. Hence the Brown Representability Theorem and standard arguments from obstruction theory imply that there is an isomorphism $[X, \Z{k}(\Sp{n})] \xrightarrow{\iso} H^{n-k}(X)$ that sends $[f]$ to $f^*\iota_{n-k}$. We fix $\iota_{n-k}$ by first fixing an isomorphism $\pi_{n-k}(\Z{k}(\Sp{n})) \iso \hat{H}_n(\Sp{n}) \iso \ZZ_p$, where the first isomorphism comes from \cref{thm:AlmgrenIsoFMetric_SpheresDisks} and the second from identifying the fundamental class associated to the standard orientation with 1. This fixes a generator of $\pi_{n-k}(\Z{k}(\Sp{n}))$ corresponding to 1, which corresponds to a generator of $H_{n-k}(\Z{k}(\Sp{n}))$ by the Hurewicz theorem. The dual of this homology generator is $\iota_{n-k}$.

    Now consider some $f : X \to \Z{k}(\Sp{n})$ and homology class $\alpha \in \hat{H}_{n-k}(X)$. By the preceding choice of $\iota_{n-k}$, to evaluate $f^*\iota_{n-k}$ on $\alpha$, we have to push $\alpha$ forward using $f$ to a homology class in $H_{n-k}(\Z{k}(\Sp{n}))$, find its preimage in $\pi_{n-k}(\Z{k}(\Sp{n}))$ over the Hurewicz isomorphism, and then apply the gluing homomorphism to get a homology class in $\hat{H}_n(\Sp{n}) \iso \ZZ_p$.
    
    By obstruction theory, we may assume that we have homotoped $f$ until it vanishes on $X^{n-k-1}$. By the relative piecewise smooth appoximation theorem, $f$ is also piecewise smooth with respect to some refinement $\tilde{X}$ of $X$, so it has a gluing $\Phi$. Each homology class $\alpha \in \hat{H}_{n-k}(X)$ can be represented by a cellular cycle $A$. Since $f$ vanishes over the boundary of each summand cell $e^{n-k}$ of $A$, $f$ defines an element of $\beta \in \pi_{n-k}(\Z{k}(\Sp{n}))$ which is precisely the sum of the images of $e^{n-k}$ with their respective multiplicities in $A$. $\beta$ precisely corresponds to the pushforward of $\alpha$ along $f$ under the Hurewicz isomorphism. Then applying the gluing homomorphism we get $[\Phi(A)] = f_*(\alpha)$.
\end{proof}

\subsection{Gluings as Brown representatives}

Since gluings give maps between spaces of cycles, it is natural to ask what cohomology classes they represent.

\begin{proposition}
    \label{prop:GluingAsBrownRep}
    Let $X$ be a compact Riemannian manifold, and let $\alpha_{i1}, \dotsc, \alpha_{id_i}$ be a basis for $H_i(X, \partial X)$. Then for any $k \geq 0$, $\Z{k}(X, \partial X) \whe \prod_{i > k} \prod_{j = 1}^{d_i} K_{i-k,j}$ where each $K_{mj}$ is a copy of $K(\ZZ_p, m)$.
    
    In addition, let $\iota_{mj} \in H^m(\Z{k}(X, \partial X))$ correspond to the fundamental cohomology class of $K_{mj}$. Suppose that a class $\gamma \in H^i(X, \partial X)$ has a piecewise smooth Brown representative $f : X \to \Z{l}(\D^{i+l}, \partial\D^{i+l})$. Then if $\Phi$ is the standard gluing of $f$, then $\Phi^k : \Z{k}(X, \partial X) \to \Z{l+k}(\D^{i+l}, \partial\D^{i+l})$ is a Brown representative for $\gamma(\alpha_{i1})\iota_{i-k,1} + \dotsb + \gamma(\alpha_{id_i})\iota_{i-k,d_i}$.
\end{proposition}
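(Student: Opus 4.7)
For the first assertion, $Y := \Z{k}(X, \partial X)$ is a topological abelian group under cycle addition, hence weakly equivalent to $\prod_{m \geq 1} K(\pi_m Y, m)$. The Almgren Isomorphism Theorem (\cref{Thm:AlmgrenIsoFMetric}) identifies $\pi_m Y \iso H_{m+k}(X, \partial X; \ZZ_p) \iso \ZZ_p^{d_{m+k}}$; relabeling $i = m + k$ gives the stated product.

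For the second assertion, let $\nu \in H^{i-k}(Y; \ZZ_p)$ be the class represented by $\Phi^k$ under Brown representability for cycles (\cref{thm:BrownRepSpacesOfCycles}) and set $\omega := \sum_j \gamma(\alpha_{ij}) \iota_{i-k, j}$. My plan is to show $\nu = \omega$ by checking agreement on each summand of the indecomposable decomposition of $H_{i-k}(Y; \ZZ_p)$. Both $\nu$ and $\omega$ are \emph{primitive} elements of the Hopf algebra $H^*(Y; \ZZ_p)$: $\omega$ as a linear combination of fundamental classes of product factors, and $\nu$ because $\Phi^k$, being a gluing, is a continuous homomorphism of topological abelian groups and hence pulls back the primitive class $\iota_{\mathrm{target}}$ to a primitive class. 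Since primitive cohomology classes vanish on positive-degree Pontryagin-decomposable homology, it suffices to evaluate both on the summands $H_{i-k}(K_{m, j'})$ of the indecomposable decomposition $QH_{i-k}(Y; \ZZ_p) = \bigoplus_{m, j'} QH_{i-k}(K_{m, j'}; \ZZ_p)$.

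For a Hurewicz class $h_{j'} \in H_{i-k}(K_{i-k, j'})$ representing the generator $\beta_{ij'} \in \pi_{i-k} Y$ (matched with $\alpha_{ij'}$ by Almgren), the composition law (\cref{prop:GluingProperties}) applied to a representative $h : \Sp{i-k} \to Y$ gives
\begin{equation*}
    \nu(h_{j'}) = f_*(h_*[\Sp{i-k}]) = f_*(\alpha_{ij'}) = \gamma(\alpha_{ij'}) = \omega(h_{j'}),
\end{equation*}
where the last equality uses that only the $j = j'$ summand of $\omega$ pairs non-trivially with $h_{j'}$. For classes $\pi \in H_{i-k}(K_{m, j'})$ with $m < i - k$, $\omega$ vanishes on the image $j_*\pi \in H_{i-k}(Y)$ since each $\iota_{i-k, j}$ is supported in a different product factor. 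To show $\nu$ also vanishes, I would realize the inclusion $K_{m, j'} \hookrightarrow Y$ concretely as pushforward of cycles along a piecewise smooth representative $\Sigma \subset X$ of $\alpha_{m+k, j'}$ (that is, as the inclusion $\Z{k}(\Sigma, \partial \Sigma) \hookrightarrow \Z{k}(X, \partial X) = Y$) and apply the composition law. The resulting expression for $\nu(j_*\pi)$ is $f_*$ applied to a class in $H_i(X, \partial X)$ supported on $\Sigma$; since $\dim \Sigma = m + k < i$, this class vanishes, forcing $\nu(j_*\pi) = 0$.

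The hardest step is the concrete realization of each factor inclusion $K_{m, j'} \hookrightarrow Y$ as a cycle-pushforward supported on an explicit submanifold, for which I would invoke piecewise smooth Brown representatives (\cref{thm:PiecewiseSmoothApprox}) of the homology generators $\alpha_{m+k, j'}$. The supporting Hopf-algebraic facts — that pullbacks under group homomorphisms preserve primitivity, that primitives annihilate positive-degree Pontryagin decomposables, and that indecomposables of a tensor product of Hopf algebras split along the factors — are standard over $\ZZ_p$.
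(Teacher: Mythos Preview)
Your Hopf-algebraic reduction via primitivity is a genuinely different route from the paper's. The paper does not invoke the group structure on $\Z{k}(X,\partial X)$ at all; it works only in the special case $X = L_n \times \D^{n+1}$ and restricts $\Phi^k$ along the inclusion $\lambda : \Z{k}(\N_{i-1}, \N_{i-1} \cap \partial X) \hookrightarrow \Z{k}(X, \partial X)$, where $\N_{i-1}$ is a tubular neighbourhood of the $(i-1)$-skeleton. Since $\N_{i-1}$ deformation retracts onto $X^{i-1}$, every $\Phi^k(\lambda(T))$ is supported in an $(i-1+l)$-dimensional subset of $\D^{i+l}$, so $\Phi^k\circ\lambda$ is nullhomotopic; the paper then argues (somewhat tersely) that this forces the represented class to be a multiple of $\iota_{i-k}$. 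Your primitivity argument makes the algebraic structure cleaner and, if completed, would give the general statement rather than only the case the paper needs.

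However, your treatment of the summands with $m<i-k$ has a genuine gap. You propose to realise the factor inclusion $K_{m,j'}\hookrightarrow Y$ as $\Z{k}(\Sigma,\partial\Sigma)\hookrightarrow Y$ for a ``piecewise smooth representative $\Sigma\subset X$'' of $\alpha_{m+k,j'}$, citing \cref{thm:PiecewiseSmoothApprox}. That theorem approximates maps \emph{into} cycle spaces by piecewise smooth ones; it says nothing about representing homology classes of $(X,\partial X)$ by embedded submanifolds, and $\Z{k}(\Sigma,\partial\Sigma)$ is only defined when $(\Sigma,\partial\Sigma)$ is a collar pair of manifolds. Even granting such a $\Sigma$, the space $\Z{k}(\Sigma,\partial\Sigma)$ is a product $\prod_s K(\ZZ_p,s)^{\dim H_{s+k}(\Sigma,\partial\Sigma)}$ rather than a single $K(\ZZ_p,m)$, and there is no reason a composite $K(\ZZ_p,m)\to\Z{k}(\Sigma,\partial\Sigma)\hookrightarrow Y$ should be homotopic to the factor inclusion: maps into a product of Eilenberg--MacLane spaces are determined up to homotopy by the cohomology classes they pull back, not merely by their effect on homotopy groups, so the two maps can differ by classes in $H^{m'}(K(\ZZ_p,m))$ for $m'>m$. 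The paper's single tubular neighbourhood $\N_{i-1}$ sidesteps both issues at once, being a genuine manifold pair that carries all the low-degree homology of $(X,\partial X)$; you could graft that step onto your primitivity framework, but the factor-by-factor submanifold approach does not go through as written.
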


The real content of this proposition is that $\Phi^k$ is a Brown representative for a cohomology class that only contains fundamental cohomology classes as summands, and not any other classes, such as cross products or Steenrod powers of some classes in lower degrees. In this sense, gluings by themselves cannot represent ``interesting'' cohomology classes.

\begin{proof}    
    For our purposes it is enough to prove this for $X = L_n \times \D^{n+1}$ as this is the only case we need for the proof of \cref{thm:SteenrodPowers}. The general case will follow similarly, just with more cumbersome notation.

    In this case, \cref{Thm:AlmgrenIsoFMetric} and the Thom Isomorphism Theorem imply that $\Z{k}(X, \partial X) \whe \prod_{j = n+1-k}^{p(n+1)-1-k} K(\ZZ_p, j)$. Thus $\Phi^k$ is a Brown representative of a class $\phi$ which is the $\ZZ_p$-linear combination of cross products of cup products of mod $p$ cohomology operations applied to $\iota_j$'s for $j \leq i-k$.

    Our main task is to show that $\phi$ is simply a multiple of $\iota_{i-k}$ (after which the precise multiplicative factor may be computed by a simple evaluation), and that there are no more complicated terms. To prove this, we ``restrict $f$ to $\prod_{j = n+1-k}^q K(\ZZ_p, j)$'' where $q = i-k-1$ and show that the result is a Brown representative for the zero class. Consider a triangulation of the polyhedral structure of $X$ that comes from the piecewise smooth structure of $f$, and let $\N_{i-1}$ denote a tubular neighbourhood of $\cl{X^{i-1} \setminus \partial X}$ in $X$. Then consider the inclusion
    \begin{equation*}
        \lambda : \Z{k}(\N_{i-1}, \N_{i-1} \cap \partial X) \hookrightarrow \Z{k}(X, \partial X).
    \end{equation*}
    \Cref{Thm:AlmgrenIsoFMetric} implies that $\pi_j(\Z{k}(\N_{i-1}, \N_{i-1} \cap \partial X)) \iso H_{k+j}(X^{i-1}, (\partial X)^{i-1})$. Thus $\lambda$ induces a surjection on homotopy groups of degree at most $q$. By considering $\Z{k}(\N_{i-1}, \N_{i-1} \cap \partial X)$ as weakly homotopy equivalent to a product of Eilenberg-Maclane spaces, one for each generator of a homotopy group, we see that the $\lambda$ also induces surjections on homology groups of degree at most $q$. Thus $\lambda$ induces injections on cohomology groups of degree at most $q$, and it suffices to check that $\Phi^k \circ \lambda$ is nullhomotopic.

    By the deformation retract of $\N_{i-1}$ onto $X^{i-1}$, $\lambda$ is homotopic to a map $\lambda'$ such that every $\lambda'(T)$ is supported in $X^{i-1}$. However, if the $(i-1)$-cells of $X$ are denoted by $e^{i-1}_r$, then the support of every $\Phi^k(\lambda'(T))$ is inside a $(i-1+l)$-dimensional set (because $\Phi$ is a standard gluing), which cannot cover the entirety of $\D^{i+l}$. This allows us to nullhomotope $\Phi^k \circ \lambda'$, and also $\Phi^k \circ \lambda$.
\end{proof}

\section{Brown Representatives of some Cohomology Classes and Operations}
\label{sec:BrownRepCohomClassesOps}

\subsection{Representing the cohomology generators of projective and lens spaces}

Let $n \geq 1$. When $p = 2$, let $\omega_k$ denote the generator of $H^k(\RP^n)$ for all $k \leq n$. When $p > 2$ and $L$ is an $n$-dimensional lens space for $n \leq \infty$, let $\omega_k$ denote the generator of $H^k(L)$ for all $k \leq n$, as chosen in \cref{sec:LensSpaceCohomConvention}. We will represent points of lens spaces as sets of $p$ points, which are the fibers of the covering map $\Sp{n} \to L$. Let $\vspan_\C$ denote the complex span.

\begin{lemma}
    \label{lem:ProjLensCohomGens}
    Let $n \geq 1$.
    \begin{enumerate}
        \item\label{enum:ProjLensCohomGens_RPn} When $p = 2$, the map $\RP^n \to \Z{n}(\D^{n+1}, \partial\D^{n+1})$ that is defined by $\ell \mapsto \ell^\perp \res \D^{n+1}$ (see \cref{eg:RPn_PiecewiseSmooth}) is a Brown representative for $\omega_1$.

        \item\label{enum:ProjLensCohomGens_CPn} When $p > 2$, the map $\CP^n \to \Z{2n}(\D^{2n+2}, \partial \D^{2n+2})$ that is defined by $\ell \mapsto \ell^{\perp_\C} \res \D^{2n+2}$ (see \cref{eg:CPn_PiecewiseSmooth}) is a Brown representative for the generator of $H^2(\CP^n)$ that evaluates to 1 on the 2-cell.

        \item\label{enum:ProjLensCohomGens_Lens_H2} When $p > 2$, the map $\Lens_p^{2n-1} \to \Z{2n-2}(\D^{2n}, \partial\D^{2n})$ that is defined by $\{x_1, \dotsc, x_p\} \mapsto \vspan_\C\{x_1\}^{\perp_\C} \res \D^{2n}$ is a Brown representative for $\omega_2$.

        \item\label{enum:ProjLensCohomGens_Lens_H1} When $p > 2$, the map $\Lens_p^{2n-1} \to \Z{2n-1}(\D^{2n},\partial \D^{2n})$ that is defined in \cref{eg:Lens_PiecewiseSmooth} is a Brown representative for $\omega_1$.
    \end{enumerate}
\end{lemma}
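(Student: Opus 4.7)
The plan is to apply \cref{thm:BrownRepSpacesOfCycles}, which identifies a piecewise smooth map $f : X \to \Z{k}(\D^n, \partial\D^n)$ with the cohomology class $\gamma \in \hat{H}^{n-k}(X)$ whose value on any $\alpha \in \hat{H}_{n-k}(X)$ is $f_*(\alpha) \in \hat{H}_n(\D^n, \partial\D^n) \cong \ZZ_p$, computed via any gluing of $f$. The maps in parts (1), (2), and (4) are piecewise smooth by \cref{eg:RPn_PiecewiseSmooth}, \cref{eg:CPn_PiecewiseSmooth}, and \cref{eg:Lens_PiecewiseSmooth} respectively; each therefore has a standard gluing by \cref{thm:PiecewiseSmoothGluing_Existence}. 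So for each part it suffices to pair the represented class with a cellular generator of the degree-$1$ or degree-$2$ homology of the domain, compute the gluing, and check that the result equals the fundamental class of $(\D, \partial\D)$.

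For part (1), I would take the generator of $H_1(\RP^n; \ZZ_2)$ to be the loop $\gamma(t) = \vspan\{(\cos\pi t, \sin\pi t, 0, \dots, 0)\}$ for $t \in [0,1]$. Then $\gamma(t)^\perp \cap \D^{n+1}$ is the $n$-disk spanned by $(-\sin\pi t, \cos\pi t, 0, \dots, 0)$ together with the last $n-1$ coordinate axes, and the standard gluing over $[0,1]$ is the map
\begin{equation*}
(t, w, v_2, \dots, v_n) \mapsto (-w\sin\pi t,\ w\cos\pi t,\ v_2, \dots, v_n),
\end{equation*}
defined on $[0,1] \times \{w^2 + v_2^2 + \dots + v_n^2 \leq 1\}$. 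A direct check shows this is a bijection onto $\D^{n+1}$ up to a set of measure zero, so the gluing equals the fundamental class mod $2$.

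Part (2) is analogous: I would pair with $[\CP^1] \in H_2(\CP^n)$ represented by the projectivization of $\C^2 \times \{0\}^{n-1} \subset \C^{n+1}$. Any point $(z_1, \dots, z_{n+1}) \in \D^{2n+2}$ with $(z_1, z_2) \neq 0$ lies in $\ell^{\perp_\C}$ for the unique $\ell \in \CP^1$ spanned by the Hermitian-orthogonal complement of $(z_1, z_2)$ in $\C^2$, so the gluing parametrizes $\D^{2n+2}$ with degree $1$ using the standard orientations on the summands of $\C^{n+1} = \ell \oplus \ell^{\perp_\C}$. Part (3) then follows by naturality: the map factors as $\Lens_p^{2n-1} \xrightarrow{\bar\pi} \CP^{n-1} \xrightarrow{g} \Z{2n-2}(\D^{2n}, \partial\D^{2n})$, where $\bar\pi$ is induced by the Hopf projection and $g$ is the map of part (2) with $n$ replaced by $n-1$; hence the composition represents $\bar\pi^*$ of the generator of $H^2(\CP^{n-1})$, which equals $\omega_2$ by the convention of \cref{sec:LensSpaceCohomConvention}.

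For part (4), I would lift the $1$-cell of $\Lens_p^{2n-1}$ to the arc $\theta \mapsto (e^{i\theta}, 0, \dots, 0)$ in $\Sp{2n-1}$ for $\theta \in [0, 2\pi/p]$. Along this arc $x_1^{\perp_\C} = \{0\} \times \C^{n-1}$ is constant, while the $p$ summands of the defining formula are the half-spaces $\{(r e^{i(\theta + 2\pi(j-1)/p)},\, z_2, \dots, z_n) : r \geq 0\}$. As $(j, \theta)$ ranges over $\{1,\dots,p\} \times [0, 2\pi/p]$, the angles $\theta + 2\pi(j-1)/p$ sweep the unit circle exactly once, so the gluing parametrizes $\D^{2n}$ with multiplicity $1$ and no mod-$p$ cancellation, yielding the generator. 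The main obstacle across parts (2)--(4) is ruling out mod-$p$ cancellation from misaligned orientations; I would handle this by keeping the standard complex orientation on $\ell \oplus \ell^{\perp_\C} = \C^{n+1}$ throughout, and in part (4) by the combinatorial observation that the $p$ half-spaces have disjoint interiors whose union exhausts $\D^{2n}$ with multiplicity exactly $1$.
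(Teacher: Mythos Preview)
Your proposal is correct and follows essentially the same approach as the paper: evaluating on the natural cellular generators ($\RP^1$, $\CP^1$, the $1$-cell arc) via \cref{thm:BrownRepSpacesOfCycles} and checking that the glued family sweeps out the fundamental class exactly once, and reducing part~(3) to part~(2) by factoring through the Hopf quotient $\Lens_p^{2n-1}\to\CP^{n-1}$ (the paper writes $\CP^n$, but the bundle $S^1\to\Lens_p^{2n-1}\to\CP^{n-1}$ makes clear $\CP^{n-1}$ is meant). Your treatment is in fact more explicit than the paper's in parts~(1) and~(4), giving concrete parametrizations where the paper argues only that a generic point lies in exactly one cycle of the family.
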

\begin{proof}
    To prove (\ref{enum:ProjLensCohomGens_RPn}), it suffices to evaluate the cohomology class represented by the map $\ell \mapsto \ell^\perp \cap \D^{n+1}$ on the homology class represented by $\RP^1 \subset \RP^n$, where $\RP^1$ consists of the lines through the origin in $\RR^2 \times \{0\}^{n-1}$. Observe that a generic point in $\D^{n+1}$ lies $\ell^\perp$ for exactly one $\ell \in \RP^1$. For this reason, the disks $\ell^\perp \cap \D^{n+1}$ glues into the relative fundamental class of $\D^{n+1}$. (\ref{enum:ProjLensCohomGens_CPn}) follows similarly.

    (\ref{enum:ProjLensCohomGens_Lens_H2}) follows from the fact that the map $\Lens_p^{2n-1} \to \CP^n$ sending $\{x_1, \dotsc, x_p\}$ to $\vspan_\C\{x_1\}$ induces an isomorphism on $H^2$. This is a well-known fact that follows from the Serre spectral sequence of the fiber bundle $S^1 \to \Lens_p^{2n-1} \to \CP^n$.

    To prove (\ref{enum:ProjLensCohomGens_Lens_H1}), let $f$ denote the map in \cref{eg:Lens_PiecewiseSmooth}. Note that the 1-skeleton of $\Lens_p^{2n-1}$ is $\Sp1$, which we may parametrize as an arc $\{(e^{i\theta},0,\dotsc,0)~|~\theta \in [0,2\pi/p]\} \subset \Lens_p^{2n-1}$ (see \cite[p.~144]{Hatcher_AlgTop}). Then $f_*[\Sp1]$ is precisely the fundamental cycle of $\D^{2n}$, because a generic point in $\C^{d-1}$ (with norm less than 1) lies in exactly one of the mod $p$ cycles $f(x)$ for $x \in \Sp1$. Intuitively, the cycles $f(x)$ for $x \in \Sp1$ rotate about $(\C \times \{0\}^{d-1})^{\perp_\C} = \{0\} \times \C^{d-1}$ and sweep out $\D^{2n}$ exactly once.
\end{proof}

\begin{lemma}
    Consider the general lens space $L = \Lens_p(\ell_1,\dotsc,\ell_n)$, where each $\ell_j$ is a nonzero element of $\ZZ_p$. Let $\ell_j^{-1}$ denote the multiplicative inverse of $\ell_j$. Then the map $L \to \Lens_p^{2n-1}$ defined by
    \begin{equation*}
        (r_1 e^{i\theta_1}, \dotsc, r_n e^{i\theta_n}) \mapsto (r_1 e^{i\ell_1^{-1}\theta_1}, \dotsc, r_n e^{i\ell_n^{-1}\theta_n}),
    \end{equation*}
    composed with the maps from \cref{lem:ProjLensCohomGens}(\ref{enum:ProjLensCohomGens_Lens_H2}) and (\ref{enum:ProjLensCohomGens_Lens_H1}) give Brown representatives $\omega_2$ and $\omega_1$ respectively.
\end{lemma}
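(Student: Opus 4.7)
The plan is to show that the map $\phi : L \to \Lens_p^{2n-1}$ satisfies $\phi^*(\omega_1^{\text{std}}) = \omega_1^L$ and $\phi^*(\omega_2^{\text{std}}) = \omega_2^L$ on mod $p$ cohomology. Once established, the composition of $\phi$ with the Brown representatives from \cref{lem:ProjLensCohomGens}(\ref{enum:ProjLensCohomGens_Lens_H2}) and (\ref{enum:ProjLensCohomGens_Lens_H1}) immediately gives Brown representatives for $\omega_2^L$ and $\omega_1^L$ respectively, by naturality of the Brown representative correspondence under pullback by $\phi$.

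First, I would verify that $\phi$ is well-defined on the quotient. After fixing integer representatives $m_j \in \{1, \dotsc, p-1\}$ of $\ell_j^{-1} \pmod p$, define the lift $\tilde\phi : \Sp{2n-1} \to \Sp{2n-1}$ by $\tilde\phi(r_1 e^{i\theta_1}, \dotsc, r_n e^{i\theta_n}) = (r_1 e^{i m_1\theta_1}, \dotsc, r_n e^{i m_n\theta_n})$. Letting $\tau$ and $\tau_{\text{std}}$ denote the generators of the two $\ZZ_p$-actions on $\Sp{2n-1}$ whose quotients are $L$ and $\Lens_p^{2n-1}$, the identity $m_j \ell_j \equiv 1 \pmod p$ yields $e^{2\pi i m_j \ell_j/p} = e^{2\pi i/p}$ for every $j$, so $\tilde\phi \circ \tau = \tau_{\text{std}} \circ \tilde\phi$. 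Thus $\tilde\phi$ descends to a continuous map $\phi : L \to \Lens_p^{2n-1}$ and simultaneously exhibits a bundle map from the principal $\ZZ_p$-bundle $\Sp{2n-1} \to L$ to $\Sp{2n-1} \to \Lens_p^{2n-1}$.

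Next I would identify $\phi^*$ on $H^1$ via the principal bundle interpretation. The equivariance of $\tilde\phi$ means $\phi$ pulls back the $\ZZ_p$-cover $\Sp{2n-1} \to \Lens_p^{2n-1}$ to the $\ZZ_p$-cover $\Sp{2n-1} \to L$. Principal $\ZZ_p$-bundles over a CW complex $X$ are classified by $H^1(X; \ZZ_p)$, and by the convention of \cref{sec:LensSpaceCohomConvention} the class $\omega_1$ is precisely the one classifying this universal $\ZZ_p$-cover (equivalently, the class dual to the deck transformation generator of $\pi_1$). Naturality of the classifying class thus gives $\phi^*(\omega_1^{\text{std}}) = \omega_1^L$.

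Finally, for $\omega_2$, using that $\omega_2 = \beta(\omega_1)$ holds for both lens spaces under the standard convention and that the Bockstein is natural with respect to pullbacks,
\begin{equation*}
    \phi^*(\omega_2^{\text{std}}) = \phi^*\bigl(\beta(\omega_1^{\text{std}})\bigr) = \beta\bigl(\phi^*(\omega_1^{\text{std}})\bigr) = \beta(\omega_1^L) = \omega_2^L.
\end{equation*}
The main obstacle will be bookkeeping the conventions for $\omega_1^L$ and $\omega_2^L$; once these are anchored by the classifying class of the universal $\ZZ_p$-cover and the Bockstein respectively, the pullback identities are essentially formal. The ambiguity in selecting integer representatives $m_j$ produces different but homotopic $\phi$'s (since $\Sp{2n-1}$ is simply connected for $n \geq 2$), so the induced map on cohomology is independent of the choice.
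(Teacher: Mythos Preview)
Your proposal is correct and follows essentially the same route as the paper. The paper's proof is a two-line compression of your argument: it cites \cite[p.~310]{Hatcher_AlgTop} for the fact that $\phi$ induces an isomorphism on $\pi_1$ (which is exactly your equivariance computation $\tilde\phi\circ\tau=\tau_{\mathrm{std}}\circ\tilde\phi$ sending the deck-transformation generator to the deck-transformation generator), and then invokes that the Bockstein $H^1\to H^2$ is an isomorphism on both lens spaces to pass to $H^2$, which is your naturality step $\phi^*\beta=\beta\phi^*$ applied to $\omega_1$. Your version is more explicit about matching the specific generators $\omega_1,\omega_2$ rather than merely asserting isomorphisms, which is arguably what the statement actually demands.
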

\begin{proof}
    By \cite[p.~310]{Hatcher_AlgTop}, the map induces an isomorphism on $\pi_1(-)$, and since the Bockstein homomorphisms $H^1(-) \to H^2(-)$ are isomorphisms for $L$ and $\Lens_p^{2n-1}$, it is also an isomorphism on $H^2$.
\end{proof}

\begin{lemma}
    \label{lem:ThomSpace_BrownRep}
    Let $X$ be a closed manifold and $X \times \D^q$ be a trivial disk bundle over $X$. If some $\alpha \in H^{k-n}(X)$ has a Brown representative $a : X \to \Z{k}(\D^n, \partial\D^n)$, then the the class in the Thom space $(X \times \D^n)/(X \times \partial\D^n)$ that corresponds to $\alpha$ via the Thom isomorphism has the following Brown representative:
    \begin{align*}
        &f : X \times \D^n \to \Z{k}((\D^n \times \D^q), \partial(\D^n \times \D^q))
        \\
        &f(x,v) = a(x) \times \{v\}.
    \end{align*}
\end{lemma}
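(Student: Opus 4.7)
The plan is to recognize $f$ as the external cross product of $a$ with an explicit Brown representative of the Thom class of the trivial bundle, and then invoke the Brown representative for cross products established in \cref{prop:CrossProduct_BrownRep}. Since the bundle $X \times \D^q \to X$ is trivial, its Thom space is $X_+ \smsh \D^q/\partial\D^q$ and the Thom isomorphism
\[
\Phi : \hat H^{n-k}(X) \to \hat H^{n-k+q}(X \times \D^q, X \times \partial\D^q)
\]
is given by external product with the Thom class $\tau \in \hat H^q(\D^q, \partial\D^q)$, namely $\Phi(\alpha) = \alpha \times \tau$.

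First, I would verify that the map $\iota : \D^q \to \Z0(\D^q, \partial \D^q)$ defined by $v \mapsto \{v\}$ is a Brown representative for $\tau$. Because points on $\partial \D^q$ vanish as relative 0-cycles, $\iota$ is continuous and sends $\partial\D^q$ to $0$. It is piecewise smooth (it is the preimage map of the identity fiber bundle, a trivial special case of \cref{eg:FiberBundle_PiecewiseSmooth}), so by \cref{thm:PiecewiseSmoothGluing_Existence} it admits a standard gluing $\Psi$ which, by \cref{eq:PiecewiseSmoothGluingDefn}, is just the identity on relative $C^\infty$ chains in $\D^q$. Hence $\Psi_*[\D^q, \partial\D^q] = [\D^q, \partial\D^q]$, and \cref{thm:BrownRepSpacesOfCycles} identifies the homotopy class of $\iota$ with the generator of $\hat H^q(\D^q, \partial\D^q) \cong \ZZ_p$, i.e.\ with $\tau$.

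Second, I would apply \cref{prop:CrossProduct_BrownRep} to the pair of Brown representatives $a$ and $\iota$: the product map
\[
(x, v) \mapsto a(x) \times \iota(v) = a(x) \times \{v\}
\]
is a Brown representative for the cross product $\alpha \times \tau \in \hat H^{n-k+q}(X \times \D^q, X \times \partial\D^q)$, which by the paragraph above is exactly $\Phi(\alpha)$. That this is the same as $f$ is immediate from the formula in the lemma.

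The only substantive obstacle is bookkeeping: confirming that the Thom isomorphism for the trivial bundle really is external multiplication by the generator of $\hat H^q(\D^q, \partial\D^q)$ in the sign and orientation conventions used in \cref{thm:BrownRepSpacesOfCycles} (where the generator is pinned down by evaluating to $1$ on the standard fundamental class), and that the cross-product Brown representative respects the relative structure on the second factor. Neither requires any new geometric input beyond the gluing and cross-product machinery already developed.
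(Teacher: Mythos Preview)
Your approach is correct and takes a different route from the paper's. The paper gives a one-line argument: invoke \cref{thm:BrownRepSpacesOfCycles} directly and observe that evaluating $f$ on a relative homology class of $(X\times\D^q, X\times\partial\D^q)$ amounts to gluing $f$ over the product of a cycle in $X$ with a copy of $\D^q$, which immediately yields the correct value. You instead factor $f$ as the composite of $a\times\iota$ with the Cartesian-product map and then appeal to \cref{prop:CrossProduct_BrownRep}. This is conceptually clean, since it makes explicit that the Thom isomorphism for a trivial bundle is external multiplication by the Thom class. Two bookkeeping points are worth flagging, however. First, in the paper's logical order \cref{prop:CrossProduct_BrownRep} appears \emph{after} this lemma, so you would need either to reorder or to note that its proof is independent of this lemma (it is). Second, \cref{prop:CrossProduct_BrownRep} as stated concerns $m$ identical factors $\Z{k}(\D^n,\partial\D^n)$, whereas you need the mixed product map $\Z{k}(\D^n,\partial\D^n)\times\Z0(\D^q,\partial\D^q)\to\Z{k}(\D^n\times\D^q,\partial(\D^n\times\D^q))$; the same proof goes through, but strictly speaking you are invoking a mild generalization. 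The paper's direct evaluation sidesteps both issues.
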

\begin{proof}
    This follows from \cref{thm:BrownRepSpacesOfCycles}, and the fact that evaluating this $f$ on a homology class involves gluing it over the product of a cycle in $X$ and a copy of $\D^n$.
\end{proof}

\subsection{The cohomology pushforward and the K\"unneth formula}

\begin{proposition}
    \label{prop:GeomRep_PushforwardCohom}
    Consider a fiber bundle $M \to E \xrightarrow{\pi} B$ where $M$ is a closed Riemannian $m$-manifold, and $E$, and $B$ are compact Riemannian manifolds of dimensions $\dim E = e$ and $\dim B = b$ respectively such that $\partial E = \pi^{-1}(\partial B)$. Suppose that $\partial B$ is the disjoint union of $\partial_- B$ and $\partial_+ B$, either of which may be empty. Let $\partial_\pm E = \pi^{-1}(\partial_\pm B)$. Then the pushforward map on $n^\text{th}$ cohomology, where $0 \leq n \leq e$
    \begin{equation*}
        \pi_* = H^n(E,\partial_+ E) \xrightarrow{\text{Poincar\'e duality}} H_{e-n}(E, \partial_- E) \xrightarrow{\pi_*} H_{e-n}(B, \partial_- B) \xrightarrow{\text{Poincar\'e duality}} H^{n - m}(B, \partial_+ B),
    \end{equation*}
    sends a relative cohomology class represented by a piecewise smooth map $f : (E, \partial_+ E) \to (\Z{k}(\Sp{n+k}),0)$ to the class represented by
    \begin{equation}
        \pi_*f := (B, \partial_+ B) \xrightarrow{\pi^{-1}(-)} (\Z{m}(E),\Z{m}(\partial_+ E)) \xrightarrow{f_\sharp} (\Z{k+m}(\Sp{n+k}),0).
    \end{equation}
\end{proposition}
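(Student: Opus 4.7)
The plan is to verify the asserted identity of classes in $H^{n-m}(B, \partial_+ B)$ by evaluating both on every homology class $\beta \in H_{n-m}(B, \partial_+ B)$, using \cref{thm:BrownRepSpacesOfCycles}. First I would observe that $\pi^{-1}(-) : (B, \partial_+ B) \to (\Z{m}(E, \partial_+ E), 0)$ is piecewise smooth by a direct adaptation of \cref{eg:FiberBundle_PiecewiseSmooth}: the charts are built from local trivializations of $\pi$, which respect $\partial_+ E = \pi^{-1}(\partial_+ B)$, and the fibers over $\partial_+ B$ are supported in $\partial_+ E$ and so vanish as relative cycles. Let $\Psi$ denote its standard gluing and let $\Phi$ be a gluing of $f$. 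By the composition law of \cref{prop:GluingProperties}, $\Phi \circ \Psi$ is a gluing of $\pi_* f = f_\sharp \circ \pi^{-1}(-)$, so on homology
\begin{equation*}
    (\pi_* f)_* = f_* \circ \pi^{-1}(-)_* : H_\bullet(B, \partial_+ B) \to H_{\bullet + m + k}(\Sp{n+k}).
\end{equation*}

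The key step is to identify the induced map $\pi^{-1}(-)_* : H_\bullet(B, \partial_+ B) \to H_{\bullet + m}(E, \partial_+ E)$ with the classical Gysin transfer $\pi^!$ of the oriented fiber bundle. Given $\beta$ represented by a smooth cycle $C$ transverse to the polyhedral structure underlying the piecewise smooth data of $\pi^{-1}(-)$, \cref{eq:PiecewiseSmoothGluingDefn} expresses $\Psi(C) = \sum_\sigma \varphi_{\sigma\sharp}((C \res \sigma) \times Z_\sigma)$, where each $Z_\sigma$ is the fiber over the barycenter $v_\sigma$ of $\sigma$ and each $\varphi_\sigma(x,-)$ is a diffeomorphism of $E$ sending $\pi^{-1}(v_\sigma)$ to $\pi^{-1}(x)$. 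Thus the summand indexed by $\sigma$ is supported on $\pi^{-1}(C \cap \sigma)$, and the orientations assemble across adjacent cells, using the orientation of the fiber bundle, into $\Psi(C) = \pi^{-1}(C)$ as oriented chains. Consequently $\pi^{-1}(-)_*(\beta) = [\pi^{-1}(C)] = \pi^!(\beta)$; the special case $C = [B]$ already appears in \cref{eg:FiberBundle_Gluing}.

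To conclude, I would invoke the classical projection formula $\langle \pi_*(\alpha), \beta \rangle = \langle \alpha, \pi^!(\beta) \rangle$, which is precisely the dual characterization of the cohomological pushforward defined in the statement as the composition of Poincar\'e duality, homological pushforward, and Poincar\'e duality. Combining this with the preceding two paragraphs and \cref{thm:BrownRepSpacesOfCycles} gives
\begin{equation*}
    \langle [\pi_* f], \beta \rangle = (\pi_* f)_*(\beta) = f_*(\pi^!(\beta)) = \langle [f], \pi^!(\beta) \rangle = \langle \pi_*([f]), \beta \rangle
\end{equation*}
for every $\beta \in H_{n-m}(B, \partial_+ B)$, so $[\pi_* f] = \pi_*([f])$. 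The main obstacle is the local verification that $\Psi(C) = \pi^{-1}(C)$ as oriented chains: it reduces on each trivializing neighbourhood to a product identity between a base cycle and a fiber, but requires care to match orientations coherently across charts and to confirm that the per-cell contributions assemble into a single relative cycle, which is exactly where the transversality of $C$ to every cell of $B$ enters.
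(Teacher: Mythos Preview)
Your approach is correct and in the same spirit as the paper's, but packaged differently. The paper does not identify $\pi^{-1}(-)_*$ with the Gysin transfer $\pi^!$ on arbitrary homology classes; instead it only uses the special case $\pi^{-1}(-)_*[B] = [E]$ from \cref{eg:FiberBundle_Gluing}, and establishes adjointness at the level of cup products: for any piecewise smooth $g : B \to \Z0(\Sp{b-(n-m)})$ it shows $(\pi_* f \wedge g)_*[B] = (f \wedge (g \circ \pi))_*[E]$, then appeals to the cup/cap/Poincar\'e duality formalism to conclude. Your version instead proves the stronger geometric statement $\Psi(C) = \pi^{-1}(C)$ for transverse cycles, which gives $\pi^{-1}(-)_* = \pi^!$ directly, and then the Kronecker-pairing projection formula finishes immediately without introducing auxiliary test maps $g$. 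The trade-off is that you must carry out the orientation bookkeeping you flag as the ``main obstacle'', whereas the paper sidesteps this by staying at the level of fundamental classes and smash products of maps; conversely, the paper must check that the wedge-product manipulations interact correctly with the gluing homomorphism (the step labeled ``wedge sum law from \cref{prop:GluingProperties}''). Both routes are valid and roughly equal in length.
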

\begin{proof}
    Let us first establish the adjointness property: for any piecewise smooth map $g : B \to \Z0(\Sp{b - (n-m)})$,
    \begin{align*}
        (\pi_*f \wedge g)_* [B] 
        &= \big( x \mapsto f_\sharp(\pi^{-1}(x)) \wedge g(x) \big)_* [B]
        \\ 
        \eqnote{wedge sum law from \cref{prop:GluingProperties}} &= \big(x \mapsto (f \wedge (g \circ \pi))_\sharp(\pi^{-1}(x)) \big)_* [B]
        \\
        &= ((f \wedge (g \circ \pi))_\sharp \circ \pi^{-1})_* [B]
        \\
        \eqnote{composition law from \cref{prop:GluingProperties}} &= (f \wedge (g \circ \pi))_* \circ \pi^{-1}_* [B]
        \\
        \eqnote{\cref{eg:FiberBundle_Gluing}} &= (f \wedge (g \circ \pi))_*[E].
    \end{align*}
    
    The fact that $\pi_*$ represents the cohomology pushforward can be proven from this adjointness relation, by noting that the wedge product here represents cup product, relating the cup product to the cap product and thus Poincar\'e duality, and then performing algebraic manipulations.
\end{proof}

\begin{corollary}
    \label{cor:KunnethFormula}
    Let $X$ be a closed manifold and $Y$ be a manifold with boundary, and let $f : X \times Y \to \Z{k}(\Sp{n})$ be a piecewise smooth map that represents $\gamma = \bigoplus_{i + j = n - k} \alpha_i \otimes \beta_j$, where $\alpha_i \in H^i(X)$ and $\beta_j \in H^j(Y)$. Then for any closed $d$-dimensional submanifold $A \subset X$, $\alpha_d([A])\beta_{n-k-d} \in H^{n-k-d}(Y)$ is represented by the map
    \begin{align*}
        Y &\to \Z{k+d}(\Sp{n})
        \\
        y &\mapsto f_\sharp(A \times \{y\}).
    \end{align*}
\end{corollary}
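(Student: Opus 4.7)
The plan is to deduce this from Proposition \ref{prop:GeomRep_PushforwardCohom} applied to the trivial fiber bundle $A \to A \times Y \xrightarrow{\pi_A} Y$, combined with the K\"unneth decomposition and the projection formula for fiber integration. First, I would refine the polyhedral structure of $X$ so that $A$ is a subcomplex (a standard triangulation argument using transversality), and then apply the relative piecewise smooth approximation theorem (\cref{thm:PiecewiseSmoothApprox}) to homotope $f$ so that it is piecewise smooth with respect to the refined cell structure on $X \times Y$, in which $A \times Y$ is a subcomplex. The restriction $f|_{A \times Y} : A \times Y \to \Z{k}(\Sp{n})$ is then piecewise smooth, and by naturality of Brown representatives under pullback it represents $i^*\gamma$, where $i : A \times Y \hookrightarrow X \times Y$ is the inclusion.

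Next, I would apply \cref{prop:GeomRep_PushforwardCohom} to the fiber bundle $A \to A \times Y \xrightarrow{\pi_A} Y$ (with $\partial_+(A \times Y) = A \times \partial Y$, say), which asserts that $(\pi_A)_*(i^*\gamma) \in H^{n-k-d}(Y)$ is represented by
\[
    y \mapsto (f|_{A \times Y})_\sharp(\pi_A^{-1}(y)) = f_\sharp(A \times \{y\}),
\]
which is exactly the map in the corollary's conclusion. It therefore suffices to identify this pushforward class with $\alpha_d([A]) \cdot \beta_{n-k-d}$.

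For the algebraic identification, naturality of the cross product yields $i^*\gamma = \sum_{i + j = n-k} (i^*\alpha_i) \otimes \beta_j$. Since $A$ is a closed $d$-manifold, $H^i(A) = 0$ for $i > d$, so only terms with $i \leq d$ contribute. Fiber integration $(\pi_A)_*$ lowers total degree by $d$, so on the term $(i^*\alpha_i) \otimes \beta_j$ with $i < d$ the result lies in a negative-degree group and vanishes. Only the $i = d$ term survives, and by the projection formula
\[
    (\pi_A)_*\bigl((i^*\alpha_d) \otimes \beta_{n-k-d}\bigr) = \bigl((\pi_A)_*(i^*\alpha_d)\bigr) \cdot \beta_{n-k-d} = \langle i^*\alpha_d, [A]\rangle \cdot \beta_{n-k-d} = \alpha_d([A]) \cdot \beta_{n-k-d}.
\]

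The main technical obstacle is the piecewise smoothness bookkeeping: one must ensure that after refining the polyhedral structure to make $A$ a subcomplex, the map $f$ can genuinely be homotoped to a piecewise smooth map on the refined structure (while preserving the class it represents), so that both the restriction to $A \times Y$ and the application of \cref{prop:GeomRep_PushforwardCohom} are legitimate. Everything else — the K\"unneth splitting, the vanishing of lower-degree contributions, and the projection formula — is purely algebraic and standard.
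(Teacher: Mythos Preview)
Your proposal is correct and follows essentially the same approach as the paper: restrict $f$ to $A \times Y$, apply \cref{prop:GeomRep_PushforwardCohom} to the trivial bundle $A \to A \times Y \to Y$, and compute the resulting cohomology pushforward of the K\"unneth sum. The paper phrases the final algebraic step via Poincar\'e duality rather than the projection formula, but this is the same computation; your extra care about refining the polyhedral structure so that $A \times Y$ is a subcomplex is a point the paper glosses over.
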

\begin{proof}
    Observe that the above map is $\pi_*f|_{A \times Y}$, where $\pi : A \times Y \to Y$ is the projection onto the second factor. Let the classes $\alpha_i$ pull back to $\alpha' \in H_i(A)$ along the inclusion $A \hookrightarrow X$. Then $f|_{A \times Y}$ represents $\bigoplus_{i = 0}^d \alpha_i' \otimes \beta_{n-k-i}$, so \cref{prop:GeomRep_PushforwardCohom} implies that $\pi_*f|_{A \times Y}$ represents the cohomology pushforward of $\bigoplus_{i = 0}^d \alpha_i' \otimes \beta_{n-k-i}$. That is, take the Poincar\'e dual to $\bigoplus_{i = 0}^d \overline{\alpha_i'} \otimes \bar\beta_{n-k-i}$ (bars indicate Poincar\'e duals), push forward via $\pi_*$ to get $\overline{\alpha_d'}\bar\beta_{n-k-d} = \alpha_d'[A]\bar\beta_{n-k-d} = \alpha_d[A]\bar\beta_{n-k-d}$, then take the Poincar\'e dual again to get $\alpha_d[A]\beta_{n-k-d}$.
\end{proof}

\subsection{Brown representatives for the cross product and cup product}

Let $(M,N)$ and $(M',N')$ be collar pairs. In this section we will show that the Cartesian product of cycles gives a Brown representative for the cross product. However, to formalize this we need to decide on the topology on products of the form $\Z{k}(M,N) \times \Z{k'}(M',N')$. A product of this form has two natural topologies: the product topology of the two inductive limit topologies, and the inductive limit topology with respect to the family of sets $A \times B$, where $A$ is mass-bounded in $\Z{k}(M,N)$ and $B$ is mass-bounded in $\Z{k'}(M',N')$. We call the former the \emph{product topology} and the latter the \emph{inductive limit topology}. The product topology is finer than the inductive limit topology \cite[2.6]{Cohen_WeakTop}.

Let us show that these topologies are weakly homotopy equivalent. The following remark will be helpful:

\begin{remark}
    \label{rem:ProdIndLimTopEquivOnMassBounded}
    On the space $\Z{k}(M,N)^\mu \times \Z{k'}(M',N')^{\mu'}$ for any $\mu, \mu' > 0$, the topolgoies induced as a subspace of the product topology and as a subspace of the inductive limit topology agree with the topology as a subspace of the product of the flat topologies.
\end{remark}

\begin{lemma}
    \label{lem:ProdIndLimTopsEquiv}
    The product and inductive limit topologies on $\Z{k}(M,N) \times \Z{k'}(M',N')$ are weakly homotopy equivalent.
\end{lemma}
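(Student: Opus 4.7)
Write $X = \Z{k}(M,N) \times \Z{k'}(M',N')$, let $\tau_{\mathrm{prod}}$ and $\tau_{\mathrm{ind}}$ denote the product and inductive limit topologies on $X$, and set $X_{\mu,\mu'} = \Z{k}(M,N)^\mu \times \Z{k'}(M',N')^{\mu'}$. Since $\tau_{\mathrm{prod}}$ refines $\tau_{\mathrm{ind}}$, the identity $\iota : (X,\tau_{\mathrm{prod}}) \to (X,\tau_{\mathrm{ind}})$ is continuous, and the plan is to show that $\iota$ induces isomorphisms on all homotopy groups. The key reduction is that every continuous map from a compact space $K$ (in particular $K = S^n$ and $K = D^{n+1}$) into either topology should factor through some $X_{\mu,\mu'}$. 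Once this is in hand, \cref{rem:ProdIndLimTopEquivOnMassBounded} tells us that $\tau_{\mathrm{prod}}$ and $\tau_{\mathrm{ind}}$ restrict to the same topology on each $X_{\mu,\mu'}$, so any map $S^n \to X$ and any nullhomotopy can be relocated inside a common $X_{\mu,\mu'}$ where the two topologies literally agree, yielding bijections $\pi_n(X,\tau_{\mathrm{prod}}) \iso \pi_n(X,\tau_{\mathrm{ind}})$ at every basepoint.

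For $\tau_{\mathrm{prod}}$ the factorization is straightforward: if $f : K \to (X, \tau_{\mathrm{prod}})$ is continuous from a compact $K$, then both coordinate projections $\pi_j \circ f$ are continuous into the inductive limit topologies on the individual factors, and by the standard cycle-space fact from \cite[p.~262]{Almgren_HomotopyGroupsIntegralCycles} --- already invoked in the proof of \cref{thm:PiecewiseSmoothApprox} --- each such map has uniformly mass-bounded image. Taking $\mu$ and $\mu'$ to be these two mass bounds gives $f(K) \subset X_{\mu,\mu'}$.

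For $\tau_{\mathrm{ind}}$ I would invoke the general fact that in a countable colimit along closed inclusions of Hausdorff spaces, every compact subset lies in a single stage. To set this up, I would replace the strict inequality in the definition of $X_{\mu,\mu'}$ by $\M \leq n$, which is closed in each ambient mass-bounded stage by the lower semicontinuity of $\M$ (\cref{eq:MassLowerSemiCtsExtension}) and induces the same inductive limit topology; this realizes $(X,\tau_{\mathrm{ind}})$ as the colimit of the chain $X_{1,1} \subset X_{2,2} \subset \cdots$ of closed inclusions of metrizable spaces. A standard contradiction argument then finishes the job: if $f(K) \not\subset X_{n,n}$ for every $n$, pick $x_i \in f(K) \setminus X_{i,i}$; then $\{x_i\}$ meets each $X_{n,n}$ in a finite, hence closed, subset, so every subset of $\{x_i\}$ is closed in $\tau_{\mathrm{ind}}$, making $\{x_i\}$ an infinite closed discrete subset of the compact $f(K)$, a contradiction. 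The main obstacle is precisely this colimit/compactness step, in particular verifying that the $X_{n,n}$'s sit as closed subspaces of one another in the product flat topology; once that technicality is pinned down, the rest of the argument is routine topology.
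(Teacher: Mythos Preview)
Your proposal is correct and follows essentially the same route as the paper: both arguments reduce to showing that any continuous map from a compact space into $(X,\tau_{\mathrm{ind}})$ has image contained in some $X_{\mu,\mu'}$, where \cref{rem:ProdIndLimTopEquivOnMassBounded} then makes the two topologies coincide. The paper dispatches this step by citing an adaptation of \cite[(1.9)]{Almgren_HomotopyGroupsIntegralCycles}, while you spell out the standard closed-colimit argument explicitly; the closedness ``obstacle'' you flag is immediate from the lower semicontinuity of $\M$ in the flat topology (\cref{eq:MassLowerSemiCtsExtension}), so there is no real gap.
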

\begin{proof}
    We wish to prove that the relevant homomorphisms between homotopy groups, going from the product topology to the inductive limit topology, are isomorphisms. First we show surjectivity: for any map $f : \Sp{i} \to \Z{k}(M,N) \times \Z{k'}(M',N')$ that is continuous in the inductive limit topology, its image is compact so by adapting \cite[(1.9)]{Almgren_HomotopyGroupsIntegralCycles} we see that its image must lie in $\Z{k}(M,N)^\mu \times \Z{k'}(M',N')^{\mu'}$ for some $\mu, \mu' > 0$. By \cref{rem:ProdIndLimTopEquivOnMassBounded}, $f$ is also continuous in the product topology. A similar argument shows injectivity. 
\end{proof}

Henceforth, we will give the inductive limit topology to finite products of the form $\prod_{i = 1}^j\Z{k_i}(M_i,N_i)$. By \cref{lem:ProdIndLimTopsEquiv}, we can compute their cohomology using the K\"unneth formula.

\begin{lemma}
    \label{lem:CartProdContinuous}
    The map $c : \Z{k}(M,N) \times \Z{k'}(M',N') \to \Z{k+k'}(M \times M', M \times N' \cup N \times M')$ where $c(S,T) = S \times T$ is continuous.
\end{lemma}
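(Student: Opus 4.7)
The plan is to reduce to the mass-bounded setting using the product analogue of \cref{lem:ContinuityIndLimTop}. Since the domain carries the inductive limit topology (which agrees on each mass-bounded subset with the restriction of the product of flat topologies by \cref{rem:ProdIndLimTopEquivOnMassBounded}), it suffices to verify two things: first, that $c$ sends $\Z{k}(M,N)^\mu \times \Z{k'}(M',N')^{\mu'}$ into a mass-bounded subset of the codomain; second, that the restriction of $c$ to such a product is continuous in the flat topology.

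The mass bound is immediate from the standard inequality $\M(S \times T) \leq \M(S)\,\M(T)$ for integral currents, so $c(S,T) \in \Z{k+k'}(M \times M', M \times N' \cup N \times M')^{\mu\mu'}$. For flat continuity, I would prove the bilipschitz estimate
\begin{equation*}
    \Fl(S \times T, S' \times T') \leq \M(T)\,\Fl(S,S') + \M(S')\,\Fl(T,T') \leq \mu'\,\Fl(S,S') + \mu\,\Fl(T,T'),
\end{equation*}
first on $I_k(M,N) \times I_{k'}(M',N')$ and then extending by continuity to the completions. Split $S \times T - S' \times T' = (S - S') \times T + S' \times (T - T')$ and treat the two summands symmetrically. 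For the first, fix $\varepsilon > 0$ and choose a decomposition $S - S' = \partial P + Q$ as relative chains with $\M(P) + \M(Q) < \Fl(S,S') + \varepsilon$. Applying the Leibniz formula $\partial(P \times T) = \partial P \times T + (-1)^{\dim P} P \times \partial T$ gives
\begin{equation*}
    (S - S') \times T = \partial(P \times T) + (-1)^{\dim P + 1} P \times \partial T + Q \times T.
\end{equation*}
Since $\partial T$ is supported in $N'$, the middle term lies in $M \times N'$ and therefore vanishes as a relative chain in $\I{k+k'}(M \times M', M \times N' \cup N \times M')$. Hence $P \times T$ and $Q \times T$ exhibit a relative filling of total mass at most $(\M(P) + \M(Q))\M(T) < (\Fl(S,S') + \varepsilon)\mu'$. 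The symmetric treatment of $S' \times (T - T')$ and letting $\varepsilon \to 0$ produces the claimed estimate.

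The only real obstacle is the bookkeeping that makes the boundary error terms like $P \times \partial T$ land inside the relative boundary $M \times N' \cup N \times M'$; once this is checked, the rest is a routine application of the Leibniz rule for the Cartesian product of currents and the product mass inequality. Extending the estimate from non-overlapping Lipschitz chains to the flat completion is then automatic, and yields continuity of $c$ on each mass-bounded product, which combined with the mass bound gives continuity of $c$ in the inductive limit topology.
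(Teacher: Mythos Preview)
Your proposal is correct and follows essentially the same approach as the paper: reduce via \cref{lem:ContinuityIndLimTop} to mass-bounded sets, use $\M(S\times T)\le\M(S)\M(T)$ for the mass bound, and obtain flat continuity from the splitting $S\times T - S'\times T' = (S-S')\times T + S'\times(T-T')$ together with the estimate $\Fl(A\times T,0)\le \M(T)\,\Fl(A,0)$. The paper simply asserts this last inequality; your added justification via the Leibniz rule (noting that $P\times\partial T$ lies in $M\times N'$ and hence vanishes relatively) is the natural way to make that step precise.
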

\begin{proof}
    We will adapt the sufficient condition for continuity in \cref{lem:ContinuityIndLimTop}. Suppose that for some $S_i \in \Z{k}(M,N)$ and $T_i \in \Z{k}(M',N')$ for $i = 1,2$, and $\mu, \mu' > 0$ we have $\M(S_i) < \mu$ and $\M(T_i) < \mu'$. Note that $\M(S_i \times T_i) < \mu\mu'$. The triangle inequality implies that
    \begin{equation*}
        \Fl(S_0 \times T_0, S_1 \times T_1) \leq \Fl(S_0(T_0 - T_1)) + \Fl((S_0 - S_1)T_1) \leq \mu\Fl(T_0,T_1) + \mu'\Fl(S_0, S_1).
    \end{equation*}
    This implies that $c : \Z{k}(M,N)^\mu \times \Z{k'}(M',N')^{\mu'} \to \Z{k+k'}(M \times M', M \times N' \cup N \times M')^{\mu\mu'}$ is continuous in the flat topology in the codomain, and the product topology of the flat topologies in the domain, for all $\mu, \mu' > 0$. The lemma statement follows.
\end{proof}

Let $\smsh$ denote the smash product, and let $(-)^{\smsh k}$ denote the $k$-fold smash power.

\begin{proposition}
    \label{prop:CrossProduct_BrownRep}
    The map $f : \Z{k}(\D^n, \partial\D^n)^m \to \Z{mk}((\D^n)^m, \partial(\D^n)^m)$ is a Brown representative for the $m$-fold cross product. In other words, $f^*(\iota_{m(n-k)}) = \iota_{n-k}^{\otimes m}$.
    
    Likewise, the map $g : \Z{k}(\Sp{n})^m \to \Z{mk}(\Sp{mn})$ defined by $g(T_1, \dotsc, T_m) = T_1 \smsh \dotsb \smsh T_m$ is also a Brown representative for the $m$-fold cross product.
\end{proposition}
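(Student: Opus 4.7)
The plan is to apply Theorem \ref{thm:BrownRepSpacesOfCycles}, which reduces $f^*(\iota_{m(n-k)}) = \iota_{n-k}^{\otimes m}$ to verifying that the two classes pair identically with every homology class in $\hat{H}_{m(n-k)}(\Z{k}(\D^n,\partial\D^n)^m)$. First I would reduce to a single generator: by Lemma \ref{lem:ProdIndLimTopsEquiv} and Theorem \ref{thm:AlmgrenIsoFMetric_SpheresDisks} the source is weakly equivalent to $K(\ZZ_p,n-k)^m$, and the K\"unneth formula forces $H^{m(n-k)}$ to be one-dimensional---each of the $m$ tensor factors must contribute exactly degree $n-k$ since $K(\ZZ_p,n-k)$ is $(n-k-1)$-connected---and generated by $\iota_{n-k}^{\otimes m}$. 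Dually, $\hat{H}_{m(n-k)}$ is one-dimensional with a generator given by the image of $[\Sp{n-k}]^{\times m}$ under $(a_1,\dotsc,a_m)\colon (\Sp{n-k})^m \to \Z{k}(\D^n,\partial\D^n)^m$, where each $a_i$ is a piecewise smooth Brown representative of $\iota_{n-k}$ (which exists by Theorems \ref{thm:AlmgrenIsoFMetric_SpheresDisks} and \ref{thm:PiecewiseSmoothApprox}).

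Next I would evaluate the composite $F = f \circ (a_1,\dotsc,a_m)$, which sends $(x_1,\dotsc,x_m)$ to $a_1(x_1)\times\dotsb\times a_m(x_m)$, on this generator. The key claim is that $F$ inherits a piecewise smooth structure whose cells are products $\sigma_1\times\dotsb\times\sigma_m$, whose charts are the coordinate-wise products of the charts of the $a_i$'s (mapping into $(\D^n)^m$), and whose model cycles are Cartesian products $Z^1_{\sigma_1}\times\dotsb\times Z^m_{\sigma_m}$. Granting this, the standard gluing formula \eqref{eq:PiecewiseSmoothGluingDefn} evaluated on a product chain $T_1\times\dotsb\times T_m$ in general position factors coordinate-wise to give
\begin{equation*}
    \Phi_F(T_1\times\dotsb\times T_m) = \Phi_1(T_1)\times\dotsb\times\Phi_m(T_m),
\end{equation*}
where each $\Phi_i$ is the standard gluing of $a_i$. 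Hence $F_*[(\Sp{n-k})^m] = [\D^n]^{\times m}$, which is the relative fundamental class of $(\D^n)^m$.

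By the normalization convention of Theorem \ref{thm:BrownRepSpacesOfCycles}, $\iota_{m(n-k)}$ pairs with this relative fundamental class to $1$, so $f^*(\iota_{m(n-k)})$ evaluates on our generator to $1$; meanwhile $\iota_{n-k}^{\otimes m}$ evaluates on the corresponding tensor product of $H_{n-k}$-generators to $1$ by definition of the cross product. The two classes therefore agree, establishing the first statement. The sphere-and-smash case for $g$ is handled by the same argument with Cartesian products replaced by smash products throughout, using the identification $(\Sp{n})^{\smsh m} \cong \Sp{mn}$ and the analogous smash-product identity in Proposition \ref{prop:GluingProperties}(\ref{enum:PiecewiseSmoothGluing_GluingOfProduct}).

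The main obstacle will be verifying the iterated product formula for standard gluings. Proposition \ref{prop:GluingProperties}(\ref{enum:PiecewiseSmoothGluing_GluingOfProduct}) handles only the case of pairing a gluing with a single fixed cycle, so I would iterate it by telescoping one tensor factor at a time, or alternatively prove the formula directly from \eqref{eq:PiecewiseSmoothGluingDefn} after constructing the product piecewise smooth structure and checking the compatibility relations of Definition \ref{def:PiecewiseSmooth} for products of charts and collapse maps.
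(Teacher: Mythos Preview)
Your reduction step contains a genuine gap. The claim that $H^{m(n-k)}(K(\ZZ_p,n-k)^m)$ is one-dimensional is false: connectivity only kills $H^j(K(\ZZ_p,n-k))$ for $0<j<n-k$, not for $j>n-k$. For instance with $p=2$, $n-k=1$, $m=2$ we have $K(\ZZ_2,1)\simeq\RP^\infty$ and $H^2((\RP^\infty)^2)$ has basis $\{\iota_1\otimes\iota_1,\ \iota_1^2\otimes 1,\ 1\otimes\iota_1^2\}$, so it is three-dimensional. Consequently $\hat H_{m(n-k)}$ has more than one generator, and checking the pairing only against $[\Sp{n-k}]^{\times m}$ does not determine $f^*(\iota_{m(n-k)})$; a priori it could differ from $\iota_{n-k}^{\otimes m}$ by terms of the form $1\otimes\gamma\otimes\dotsb$ with $\deg\gamma>n-k$.

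The paper closes exactly this gap with one extra observation: in any K\"unneth summand $\gamma_1\otimes\dotsb\otimes\gamma_m$ of $f^*(\iota_{m(n-k)})$, no $\gamma_i$ can have degree $0$. To see this, evaluate on a product homology class whose $i$-th factor is a point, which (since $\Z{k}(\D^n,\partial\D^n)$ is connected) may be taken to be the zero cycle; then $f(\dotsc,0,\dotsc)=0$, so the pairing vanishes. Once every $\gamma_i$ has positive degree, connectivity forces $\deg\gamma_i\geq n-k$, and the total degree constraint gives $\deg\gamma_i=n-k$ for all $i$, whence $f^*(\iota_{m(n-k)})=a\,\iota_{n-k}^{\otimes m}$ for some $a\in\ZZ_p$. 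Your gluing computation on $[\Sp{n-k}]^{\times m}$ (which parallels the paper's use of the explicit $h(x)=(\{x\}\times\D^k)\cap\D^n$) then correctly pins down $a=1$. So your evaluation step is fine; you just need to insert the degree-zero exclusion argument before it.
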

\begin{proof}
    We will prove this for $f$, as the proof is similar for $g$. The continuity of $f$ follows from \cref{lem:CartProdContinuous}. \emph{A priori}, $f^*(\iota_{m(n-k)})$ is a class in degree $m(n-k)$, so by the K\"unneth formula it must be the sum of classes of the form $\gamma_1 \otimes \dotsb \otimes \gamma_m$. Let us first show that each $\gamma_i$ must have positive degree: if not, suppose without loss of generality that $\gamma_1$ has degree 0. Then we may evaluate $f^*(\iota_{m(n-k)})$ on a cycle $A_1 \times \dotsb \times A_m$ where $A_i \subset \Z{k}(\D^n, \partial\D^n)$, but $A_1$ is a linear combination of cycles, which we may assume is the zero cycle. Clearly $f(A_1 \times \dotsb \times A_m) = 0$ by the formula for $f$. 

    Thus each $\gamma_i$ has degree at least $n-k$, which forces $f^*(\iota_{m(n-k)})$ to be $a\iota_{n-k}^{\otimes m}$ for some $a \neq 0$. To compute $a$, consider the canonical map $h : \Sp{n-k} \to \Z{k}(\D^n, \partial\D^n)$ defined by $h(x) = (\{x\} \times \D^k) \cap \D^n$. It can be checked using \cref{thm:BrownRepSpacesOfCycles} that $h^*(\iota_{n-k})$ is a generator $\alpha \in H^{n-k}(\Sp{n-k})$. In fact, if $h^m$ denotes the induced map $(\Sp{n-k})^m \to \Z{k}(\Sp{n})^m$, then it can be verified using \cref{thm:BrownRepSpacesOfCycles} that $f \circ h^m$ is a Brown representative for $\alpha^{\otimes m}$. Thus $a = 1$.
\end{proof}

\begin{lemma}
    \label{lem:DiagMapContinuous}
    The diagonal map $\Delta : \Z{k}(M,N) \hookrightarrow \Z{k}(M,N) \times \Z{k}(M,N)$, defined by $T \mapsto (T,T)$, is continuous.
\end{lemma}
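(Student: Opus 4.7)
The argument is very short. The plan is to exploit the relation between the two natural topologies on $\Z{k}(M,N) \times \Z{k}(M,N)$ noted immediately before \cref{rem:ProdIndLimTopEquivOnMassBounded}: the product of the two inductive limit topologies is \emph{finer} than the inductive limit topology on the product. Because continuity into a finer topology implies continuity into any coarser topology on the same set, it therefore suffices to prove that $\Delta$ is continuous when the codomain carries the product of the inductive limit topologies on the two factors.

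This last statement is purely formal: it is the standard universal property of the product topology, namely that a map into a product is continuous iff each coordinate map is continuous. Both coordinates of $\Delta$ are the identity map $\Z{k}(M,N) \to \Z{k}(M,N)$, which is trivially continuous, so $\Delta$ is continuous into the product of inductive limit topologies, and hence into the (coarser) inductive limit topology on the product.

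If one prefers a direct verification rather than invoking the comparison of topologies, the same conclusion follows from the product analogue of \cref{lem:ContinuityIndLimTop} (compare the argument in the proof of \cref{lem:ProdIndLimTopsEquiv}). Since $\M(T) < \mu$ implies $\M(T) < \mu$ in each coordinate of $\Delta(T) = (T,T)$, the map $\Delta$ restricts for every $\mu > 0$ to $\Delta_\mu : \Z{k}(M,N)^\mu \to \Z{k}(M,N)^\mu \times \Z{k}(M,N)^\mu$. On this mass-bounded product, \cref{rem:ProdIndLimTopEquivOnMassBounded} says that the subspace topology induced from the inductive limit topology agrees with the product of the flat topologies; and $\Delta_\mu$ is continuous into that product topology because $\Fl(S,T) \to 0$ trivially forces $(\Fl(S,T), \Fl(S,T)) \to 0$. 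There is no serious obstacle to overcome.
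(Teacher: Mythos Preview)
Your proposal is correct, and your second approach is essentially the paper's proof: restrict to mass-bounded subsets, use \cref{rem:ProdIndLimTopEquivOnMassBounded} to identify the induced topology with the product of flat topologies, note that the diagonal is trivially flat-continuous there, and conclude via \cref{lem:ContinuityIndLimTop}.

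Your first approach, however, is genuinely shorter than what the paper does. The paper records (just before \cref{rem:ProdIndLimTopEquivOnMassBounded}) that the product of the inductive limit topologies is finer than the inductive limit topology on the product, but then does not use this fact in the proof of \cref{lem:DiagMapContinuous}. You observe that this comparison alone already finishes the argument: continuity of $\Delta$ into the product topology is immediate from the universal property (both coordinate maps are the identity), and continuity into any coarser topology follows formally. This bypasses the mass-bounded restriction and the appeal to \cref{lem:ContinuityIndLimTop} entirely. The paper's route has the minor advantage of being self-contained within the framework it has set up for checking continuity in the inductive limit topology, whereas your route depends on the cited comparison of the two topologies; but given that comparison is already stated in the paper, your first argument is the cleaner one.
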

\begin{proof}
    For any $T \in \Z{k}(M,N)$ and $\mu > \M(T)$, $(T,T) \in \Z{k}(M,N)^\mu \times \Z{k}(M,N)^\mu$. Clearly the diagonal map $\Z{k}(M,N)^\mu \hookrightarrow \Z{k}(M,N)^\mu \times \Z{k}(M,N)^\mu$ is continuous in the flat metric. By \cref{rem:ProdIndLimTopEquivOnMassBounded} the diagonal map $\Z{k}(M,N)^\mu \hookrightarrow \Z{k}(M,N) \times \Z{k}(M,N)$ is continuous with respect to the flat topology in the domain and the inductive limit topology in the codomain. Thus by \cref{lem:ContinuityIndLimTop}, the diagonal map $\Z{k}(M,N) \hookrightarrow \Z{k}(M,N) \times \Z{k}(M,N)$ is continuous.
\end{proof}

\begin{corollary}
    \label{cor:CupPower_BrownRep}
    The following maps are Brown representatives for the $p$-fold cup power.
    \begin{align*}
        \Z{k}(\D^n, \partial\D^n) &\to \Z{pk}((\D^n)^p, \partial(\D^n)^p)
        &
        \Z{k}(\Sp{n})^m &\to \Z{mk}(\Sp{mn})
        \\
        T &\mapsto T^p & T &\mapsto T^{\smsh p}
    \end{align*}
\end{corollary}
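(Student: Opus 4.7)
The plan is to factor the $p$-fold cup power map through the Cartesian (or smash) product map, which was already shown to represent the $p$-fold cross product in \cref{prop:CrossProduct_BrownRep}, by precomposing with the topological diagonal. More precisely, the map $T \mapsto T^p$ equals the composition
\[
    \Z{k}(\D^n, \partial\D^n) \xrightarrow{\Delta} \Z{k}(\D^n, \partial\D^n)^p \xrightarrow{f} \Z{pk}((\D^n)^p, \partial(\D^n)^p),
\]
where $\Delta$ is the $p$-fold diagonal $T \mapsto (T,\dotsc,T)$ and $f$ is the Cartesian product map from \cref{prop:CrossProduct_BrownRep}. The analogous factorization $T \mapsto T^{\smsh p}$ equals $g \circ \Delta$ in the spherical setting.

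The first step is to verify continuity. \Cref{lem:DiagMapContinuous} gives continuity of the 2-fold diagonal; iterating gives continuity of $\Delta$ into the $p$-fold product equipped with its inductive limit topology. Continuity of $f$ and $g$ is part of \cref{prop:CrossProduct_BrownRep} (which in turn rests on \cref{lem:CartProdContinuous}), so the composition is continuous.

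The cohomological calculation is then immediate from functoriality. By \cref{prop:CrossProduct_BrownRep}, $f^*(\iota_{p(n-k)}) = \iota_{n-k}^{\otimes p}$, so
\[
    (f \circ \Delta)^*(\iota_{p(n-k)}) = \Delta^*\bigl(\iota_{n-k}^{\otimes p}\bigr) = \iota_{n-k}^p,
\]
where the last equality is the definition of cup product as the pullback of the cross product along the diagonal. The same argument using $g$ in place of $f$ handles the smash case.

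There is no real obstacle; the only point requiring even a little care is that $\Delta$ is pointed (it sends $0$ to $(0,\dotsc,0)$) and that its continuity holds in the inductive limit topology on $\Z{k}(\D^n, \partial\D^n)^p$. The former is immediate, and the latter follows by iterating \cref{lem:DiagMapContinuous}, or alternatively by noting that $\Delta$ is manifestly continuous in the finer product topology and then invoking \cref{lem:ProdIndLimTopsEquiv}.
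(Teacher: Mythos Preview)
Your proof is correct and follows essentially the same approach as the paper: factor the cup-power map as the diagonal followed by the cross product map, invoke \cref{lem:DiagMapContinuous} for continuity of $\Delta$, and use the definition of cup product as the diagonal pullback of the cross product. The paper's proof is just a one-line reference to these same ingredients, so your version is simply a more explicit rendering of the same argument.
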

\begin{proof}
    This follows from \cref{lem:DiagMapContinuous} and the definition of the cup product in terms of the cross product and the diagonal map \cite[p.~279]{Hatcher_AlgTop}.
\end{proof}

\begin{corollary}
    \label{cor:CupProd_BrownRep}
    Suppose that for $i = 1, \dotsb, q$, $f_i : X \to \Z{k_i}(\D^{n_i}, \partial\D^{n_i})$ is a Brown representative for $\alpha_i \in H^{n_i-k_i}(X)$. Then the class $\alpha_1 \smile \dotsb \smile \alpha_q$ has a Brown representative which is the following composite map:
    \begin{equation*}
        X \xrightarrow{\Delta} X^q \xrightarrow{f_1 \times \dotsb \times f_q} \prod_i \Z{k_i}(\D^{n_i}, \partial\D^{n_i}) \xrightarrow{c} \Z{k_1 + \dotsb + k_q}(\textstyle\prod_i\D^{n_i}, \partial(\prod_i\D^{n_i})),
    \end{equation*}
    where $\Delta(x) = (x,\dotsc,x)$ and $c(T_1,\dotsc,T_q) = T_1 \times \dotsb \times T_q$.
\end{corollary}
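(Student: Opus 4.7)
The plan is to identify the cohomology class represented by the composite using naturality of the cross product together with the standard identity $\alpha_1 \smile \dotsb \smile \alpha_q = \Delta^*(\alpha_1 \times \dotsb \times \alpha_q)$, where here $\Delta : X \to X^q$ is the $q$-fold diagonal. Continuity of the composite is immediate: the diagonal on a topological space is continuous, continuity of $f_1 \times \dotsb \times f_q$ follows from continuity of each $f_i$, and iterating \cref{lem:CartProdContinuous} shows that $c$ is continuous in the inductive limit topology.

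First I would upgrade \cref{prop:CrossProduct_BrownRep} to the asymmetric setting by checking that the Cartesian product map
\[
c : \prod_{i=1}^q \Z{k_i}(\D^{n_i}, \partial\D^{n_i}) \to \Z{k_1+\dotsb+k_q}\bigl(\textstyle\prod_i \D^{n_i}, \partial(\prod_i \D^{n_i})\bigr)
\]
is a Brown representative for the external $q$-fold cross product of fundamental classes. The argument for \cref{prop:CrossProduct_BrownRep} adapts directly: by \cref{lem:ProdIndLimTopsEquiv} the K\"unneth formula applies to the domain, and $c^*(\iota)$ lies in degree $\sum_i(n_i-k_i)$. Evaluating on cycles where any one factor is the zero cycle forces each tensor summand of the pullback to have every tensor factor of degree at least $n_i - k_i$, so $c^*(\iota)$ must be a scalar multiple of $\iota_{n_1-k_1} \otimes \dotsb \otimes \iota_{n_q-k_q}$. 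Precomposing with the canonical maps $h_i : \Sp{n_i-k_i} \to \Z{k_i}(\D^{n_i}, \partial\D^{n_i})$, $h_i(x) = (\{x\} \times \D^{k_i}) \cap \D^{n_i}$, exactly as in the proof of \cref{prop:CrossProduct_BrownRep}, pins the scalar to $1$.

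Next, by naturality of the cross product applied to $f_1 \times \dotsb \times f_q$,
\[
(f_1 \times \dotsb \times f_q)^*\bigl(\iota_{n_1-k_1} \otimes \dotsb \otimes \iota_{n_q-k_q}\bigr) = f_1^*(\iota_{n_1-k_1}) \otimes \dotsb \otimes f_q^*(\iota_{n_q-k_q}) = \alpha_1 \otimes \dotsb \otimes \alpha_q,
\]
and pulling back along the diagonal $\Delta$ converts the external tensor product into the cup product, giving the claim.

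The only substantive step is the asymmetric extension of \cref{prop:CrossProduct_BrownRep}; the rest is formal naturality plus the definition of the cup product. I do not anticipate any genuine obstacle, since the ``zero factor'' argument and the evaluation on $h_1 \times \dotsb \times h_q$ depend only on the individual factors and not on a symmetry assumption.
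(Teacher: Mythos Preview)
Your proposal is correct and matches the paper's intended argument. The paper states this result as a corollary without an explicit proof, relying on the same ingredients you identify: \cref{prop:CrossProduct_BrownRep} (extended to unequal factors), \cref{lem:CartProdContinuous}, and the standard identity $\alpha_1 \smile \dotsb \smile \alpha_q = \Delta^*(\alpha_1 \otimes \dotsb \otimes \alpha_q)$, just as in the one-line proof of \cref{cor:CupPower_BrownRep}. Your explicit verification of the asymmetric case of \cref{prop:CrossProduct_BrownRep} simply fills in what the paper leaves implicit.
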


\begin{remark}
    \label{rem:ProjLensSpacesCohom_BrownReps}
    \Cref{cor:CupProd_BrownRep} can be combined with \cref{lem:ProjLensCohomGens,lem:ThomSpace_BrownRep} to obtain Brown representatives of every cohomology class in any real or complex projective space and any lens space, by using the cup product structure on their cohomology rings.
\end{remark}

\section{The Cyclic Product Map and the Bockstein Homomorphisms}
\label{sec:CyclicProductMap}

Let us define the map $\cyc : \Z{k}(\Sp{n}) \to \Z{pk}(L_n \times \D^{n+1}, L_n \times \partial\D^{n+1})$ from \cref{thm:SteenrodPowers} formally. First let us define it over \emph{polyhedral cycles}, i.e. the cycles formed from linear combinations of the faces of a fine cubical structure on $\Sp{n}$. Consider the diagonal $\Delta = \{(x,\dots,x)~|~x \in \Sp{n}\} \in \Sp{p(n+1)-1}$ and let $\Delta_\varepsilon$ denote the open $\epsilon$-neighbourhood of $\Delta$.

We will need the following ``equivariant isoperimetric inequality'':

\begin{theorem}
    \label{thm:InvariantFilling}
    For any $0 \leq k \leq p(n+1)-3$, let $R \in \Z{k}(\Sp{p(n+1)-1})$ be a $\ZZ_p$-invariant cycle that has an invariant filling and a filling $Q$ of mass $\varepsilon = \M(Q) \leq 1$. Then for any $m > 4pn$, there exists a $\ZZ_p$-invariant chain $S$ in $\Sp{p(n+1)-1}$ such that
    \begin{align*}
        \M(S) \leq C\varepsilon^{\frac{m-2pn}m} && \text{and} && \M(R - \partial S) \leq C\varepsilon^{\frac{m-2pn}m} + \M(R \res \Delta_{2pn\varepsilon^{1/m}}).
    \end{align*}
\end{theorem}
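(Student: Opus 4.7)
My approach is to prove the theorem in three stages, combining an equivariant Federer--Fleming deformation at scale $\delta = \varepsilon^{1/m}$ with a transfer argument on the free part of the $\ZZ_p$-action on $\Sp{p(n+1)-1}$. First I would fix a $\ZZ_p$-invariant polyhedral decomposition of $\Sp{p(n+1)-1}$ of scale $\delta$ --- such a decomposition exists because $\ZZ_p$ acts by isometries --- and apply a $\ZZ_p$-equivariant Federer--Fleming deformation to push $Q$ onto the $(k+1)$-skeleton. This yields a polyhedral chain $Q_1$ with $\M(Q_1) \leq C\varepsilon$ whose boundary $R_1 = \partial Q_1$ is an invariant polyhedral approximation of $R$ satisfying $\M(R - R_1) \leq C\delta\,\M(R)$.

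Next, setting $\rho = 2pn\delta$, let $K_\rho \subset \Sp{p(n+1)-1}$ be the $\ZZ_p$-invariant subcomplex of cells at distance at least $\rho$ from $\Delta$, on which $\ZZ_p$ acts freely. Split $R_1 = R_1^{\mathrm{far}} + R_1^{\mathrm{near}}$ with $R_1^{\mathrm{near}}$ supported in $\Delta_\rho$. Because the action on $K_\rho$ is free, the transfer homomorphism gives an isomorphism $\tau$ between invariant cellular chains on $K_\rho$ and cellular chains on the quotient $\bar K_\rho = K_\rho/\ZZ_p$, with $\M(\tau\bar{S}) = p\M(\bar{S})$. The invariant chain $R_1^{\mathrm{far}}$ descends to a relative cycle $\bar R$ in $\bar K_\rho$ modulo $\partial \bar K_\rho$, which the hypothesis of an invariant filling of $R$ forces to be a relative boundary. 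The strategy is then to produce a relative filling $\bar S$ of $\bar R$ in $\bar K_\rho$ with $\M(\bar S) \leq C\varepsilon^{(m-2pn)/m}$, so that $S := \tau(\bar S)$ is an invariant chain on $\Sp{p(n+1)-1}$ with $\M(S) \leq C\varepsilon^{(m-2pn)/m}$ and $\partial S = R_1^{\mathrm{far}}$; then $R - \partial S = (R - R_1) + R_1^{\mathrm{near}}$ gives the claimed bound, since $R_1^{\mathrm{near}}$ lies inside $\Delta_\rho \subset \Delta_{2pn\varepsilon^{1/m}}$ and the deformation error $R - R_1$ is absorbed into $C\varepsilon^{(m-2pn)/m}$.

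The main obstacle is constructing the small relative filling $\bar S$. A naive mod-$p$ averaging $\sum_{g \in \ZZ_p} g_\sharp Q$ is useless because it produces only an invariant cycle (its boundary is $\sum_g g_\sharp R = pR = 0$) rather than an invariant filling, so no direct symmetrization of $Q$ works. Instead one must combine the topological hypothesis (that an invariant filling exists, ensuring $\bar R$ is nullhomologous in $\bar K_\rho$) with the metric input coming from the non-invariant filling: the pushforward $\pi_\sharp Q_1$ has mass at most $C\varepsilon$ and boundary $\pi_\sharp R_1 = p\bar R_1 = 0 \pmod p$, so it is itself a mod-$p$ cycle in $\bar K_\rho$. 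Combining this cycle with any reference filling via a parametric isoperimetric inequality in the quotient complex, adapted from the methods of \cite{GuthLiokumovich_ParamIneq}, should produce $\bar S$; the exponent $(m-2pn)/m$ then arises from the cost, roughly $\rho^{-2pn} = \varepsilon^{-2pn/m}$, of the filling having to traverse the $\rho$-collar near $\partial \bar K_\rho$ where the free action degenerates.
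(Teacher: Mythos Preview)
Your identification of the central obstacle is exactly right, but the plan you sketch for overcoming it does not work. In the quotient $\bar K_\rho$ you have two objects: the reference filling $\bar F$ coming from the invariant filling (no mass control) and the pushforward $\pi_\sharp Q_1$ (mass $\leq C\varepsilon$, but boundary $p\bar R_1 = 0$, hence a \emph{cycle}, not a filling of $\bar R$). There is no mechanism---parametric isoperimetric or otherwise---that turns a small cycle plus a large filling into a small filling of a third chain; adding the small cycle to $\bar F$ just gives another uncontrolled filling, and the isoperimetric inequality in $\bar K_\rho$ (a lens space cross a disk) applied to $\bar R$ alone gives a bound in terms of $\M(\bar R)$, which is comparable to $\M(R)$ and not to $\varepsilon$. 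A secondary issue: your Federer--Fleming step yields only $\Fl(R,R_1)\le C\delta\,\M(R)$, not $\M(R-R_1)\le C\delta\,\M(R)$, so even after finding $S$ with $\partial S = R_1^{\mathrm{far}}$ you cannot conclude the stated bound on $\M(R-\partial S)$ without introducing an unwanted $\M(R)$ term.

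The paper avoids the quotient entirely. It fixes a $\ZZ_p$-invariant skeletal filtration $Y_{n+1}\subset\cdots\subset Y_{p(n+1)-1}$ of $\Sp{p(n+1)-1}\setminus\Delta_r$ built from tubular neighbourhoods of skeleta (pulled back from a cell structure on $\Sp{(p-1)(n+1)-1}$), chosen so that each shell $\cl{Y_d\setminus Y_{d-1}}$ is $p$ disjoint disks permuted freely by $\ZZ_p$. The invariant filling is then built inductively: at stage $d$ one takes $Q$ restricted to \emph{one} of the $p$ new disks $D_1$ and copies it to the others via the group action; the mismatch with $Q_{d-1}$ along $\partial D_1\cap\partial Y_{d-1}$ is a small relative cycle (mass controlled by coarea slices of $Q$ and $Q_{d-1}$ at scale $\varepsilon^{1/m}$), which is nullhomologous because both are homologous to the corresponding slice of the uncontrolled invariant filling $F$, and hence can be coned off inside $D_1$ by the isoperimetric inequality. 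Each of the roughly $pn$ inductive steps costs a factor $\varepsilon^{-1/m}$ in mass, producing the exponent $(m-2pn)/m$. The missing idea in your approach is precisely this: rather than pushing $Q$ to the quotient (where its boundary vanishes), one must \emph{restrict} $Q$ to fundamental-domain pieces and control the resulting wall terms by coarea.
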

\begin{proof}
    Let $F$ be the uncontrolled invariant filling. Since $F - Q$ is a $(k+1)$-cycle and $k + 1 < p(n+1)-1$, it can be filled by a $(k+2)$-chain $G$. Using the coarea inequality, choose some $r \in [2pn\varepsilon^{1/m}, (2pn+1)\varepsilon^{1/m}]$ such that $\M(Q \res \partial\Delta_r) \leq C\varepsilon^{\frac{m-1}m}$.
    
    Take the standard $\ZZ_p$-invariant cell structure on $\Sp{(p-1)(n+1)-1}$, find the product cell structure on $\Sp{(p-1)(n+1)-1} \times \D^{n+1}$. Let $X_d$ be the preimage over $f$ of the $d$-skeleton of $\Sp{(p-1)(n+1)-1} \times \D^{n+1}$. Given any sequence of distances $r_{n+1}, \dotsc, r_{p(n+1)-1}$, we can define a $\ZZ_p$-invariant filtration
    \begin{equation}
        \label{eq:MakeFillingInvariant_SkeletalFiltration}
        \emptyset \subset Y_{n+1} \subset Y_{n+2} \subset \dotsb \subset Y_{p(n+1)-1} = \Sp{p(n+1)-1}_r, \qquad\text{where } Y_i = \cl{\bigcup_{j = n+1}^i N_{r_j}(X_j) \setminus \Delta_r}.
    \end{equation}
    
    We will modify $Q$ in each successively higher elements of the filtration. More precisely, we will inductively construct, for each $n+1 \leq d \leq p(n+1)-1$, a distance $r_d$ and a $\ZZ_p$-invariant $(k+1)$-chain $Q_d$ such that:
    \begin{itemize}
        \item $r_d \in [(2pn - i)\varepsilon^{1/m}, (2pn - d+1)\varepsilon^{1/m}]$.

        \item $Q_d$ is a relative filling of $R \res Y_d$. That is, $Q_d$ is supported in $Y_d$ and $\partial Q_d - R \res Y_d$ is suported in $\partial Y_d$.

        \item $Q_d$ is relatively homologous to $F \res Y_d$. That is, there is some chain $G_d$ such that $\partial G_d + Q_d - F \res Y_d$ is supported in $\partial Y_d$.

        \item We have the bounds
        \begin{gather}
            \M(Q_d) \leq C\varepsilon^{\frac{m-d}m} \label{eq:InvariantFilling_FillingMass}
            \\
            \M(Q \res \partial N_{r_d}(X_d)) \leq C\varepsilon^{\frac{m-1}m} \label{eq:InvariantFilling_FillingSliceMass}
            \\
            \M(Q_d \res \partial N_{r_d}(X_d)) \leq C\varepsilon^{\frac{m-d}m}, \label{eq:InvariantFilling_InvariantFillingSliceMass}
            \intertext{and, for $d > n+1$,}
            \M(Q_{d-1} \res (\partial N_{r_{d-1}}(X_{d-1}) \cap \partial N_{r_d}(X_d))) \leq C\varepsilon^{\frac{m-d}m}.  \label{eq:InvariantFilling_InvariantFillingDoubleSliceMass}
        \end{gather}
    \end{itemize}
    
    Observe that if all of the $r_d$'s are chosen from the stipulated intervals, then each $Y_i \setminus Y_{i-1}$ is the disjoint union of $p$ components, each homeomorphic to a disk. For the base case of $d = n+1$: use the coarea inequality to choose $r_{n+1} \in [(2pn-n-1)\varepsilon^{1/m}, (2pn-n)\varepsilon^{1/m}]$ to satisfy \cref{eq:InvariantFilling_FillingSliceMass}. Then $Y_{n+1}$ is homeomorphic to $p$ disks $D_1, \dotsc, D_p$, and the $\ZZ_p$ action maps each disk homeomorphically to another. Thus we can simply replace each $Q \res D_i$ with the image of $Q \res D_1$ to get the required $Q_1$. Thus \cref{eq:InvariantFilling_FillingMass,eq:InvariantFilling_InvariantFillingSliceMass} are also satisfied. We can also define $G_{n+1}$ by replacing each $G \res D_i$ with an image of $G \res D_1$.
    
    For the induction step, assume that the induction hypothesis holds for $d - 1$. Use the simultaneous coarea inequality to slice $Q$, $Q \res \partial N_{r_{d-1}}(X_{d-1})$, and $Q_{d-1} \res \partial N_{r_{d-1}}(X_{d-1})$, so we choose $r_d \in [(2pn-d)\varepsilon^{1/m}, (2pn-d+1)\varepsilon^{1/m}]$ to satisfy the bounds \cref{eq:InvariantFilling_FillingSliceMass,eq:InvariantFilling_InvariantFillingDoubleSliceMass} and the following:
    \begin{equation}
        \label{eq:InvariantFilling_FillingDoubleSliceMass}
        \M(Q \res (\partial N_{r_{d-1}}(X_{d-1}) \cap \partial N_{r_d}(X_d))) \leq C\varepsilon^{\frac{m-2}m}.
    \end{equation}
    
    Observe that $\cl{Y_d \setminus Y_{d-1}}$ is $p$ disjoint disks $D_1, \dotsc, D_p$. $D_1$ and $Y_{d-1}$ have disjoint interiors but share part of their boundary at $B = \partial Y_{d-1} \cap \partial D_1 = \partial Y_{d-1} \cap N_{r_d}(e^d)$ for some $d$-cell $e^d$ of $X_d$. Thus $B$ is homeomorphic, and nearly 1-bilipschitz, to $S^{n+1}(\Sp{d-n-2} \times \D^{p(n+1)-1-d}(r_d))$.
    
    Let us assume that $B$ and $\partial B$ intersect $Q$, $Q_{d-1}$, and $R$ transversally. Then $\partial(Q \res D_1) = R \res D_1 + Q \res \partial D_1$ and $\partial Q_{d-1} = R \res Y_{d-1} + Q_{d-1} \res \partial Y_{d-1}$. It can be shown that $(Q - Q_{d-1}) \res B$ is a relative cycle, because
    \begin{align*}
        & \partial(Q \res B) - \partial(Q_{d-1} \res B)
        \\
        ={}& R \res B + Q \res \partial B - R \res B - Q_{d-1} \res \partial B
        \\
        ={}& Q \res \partial B - Q_{d-1} \res \partial B,
    \end{align*}
    and $\partial B \subset \partial Y_d$. Let us find a small relative filling of this relative cycle. Observe that $\partial B \subset \partial N_{r_{d-1}}(X_{d-1}) \cap \partial N_{r_d}(X_d)$, so by \cref{eq:InvariantFilling_InvariantFillingDoubleSliceMass,eq:InvariantFilling_FillingDoubleSliceMass}, $\M(Q \res \partial B - Q_{d-1} \res \partial B) \leq C\varepsilon^{\frac{m-d}m}$. Moreover, $Q \res \partial B - Q_{d-1} \res \partial B$ is a null-homologous cycle because both $Q \res \partial B$ and $Q_{d-1} \res \partial B$ are homologous to $F \res \partial B$ by the induction hypothesis. Hence, by the isoperimetric inequality (this holds because $\frac{m-d}k \geq 1$ and so we can apply the deformation theorem with a grid size less than $r_d$)), $Q \res \partial B - Q_{d-1} \res \partial B$ can be filled by a chain of mass at most $C\varepsilon^{\frac{k(m-d)}{(k-1)m}} \leq C\varepsilon^{\frac{m-d}m}$.
    
    Adding that filling to $Q \res B - Q_{d-1} \res B$, we get a cycle of mass at most $C\varepsilon^{\frac{m-d}m}$ by \cref{eq:InvariantFilling_FillingSliceMass} and \cref{eq:InvariantFilling_InvariantFillingSliceMass} from the $d-1$ case. Since this cycle lies in $D_1$ which is nearly 1-bilipschitz to $S^{n+1}(\D^{d-n-1 } \times \D^{p(n+1)-1-d}(r_d))$, it can be coned off by a chain in $D_1$ of mass at most $C\varepsilon^{\frac{m-d}m}$. Add this to $Q_{d-1}$ and the images of $Q \res D_1$ to get $Q_d$. One can verify that $Q_d$ is a relative filling of $R \res Y_d$ in $Y_d$, and that \cref{eq:InvariantFilling_FillingMass,eq:InvariantFilling_InvariantFillingSliceMass} hold.

    It remains to check that $Q_d$ is relatively homologous to $F \res Y_d$. This is true because $Q_d - F \res Y_d$ can be filled by $G_d$ which is the sum of $G_{d-1} + G \res D_1$ with the cone of $G_{d-1} \res B + G \res B$ inside $D_1$.

    Therefore we now have a $\ZZ_p$-invariant chain $Q_{p(n+1)-1}$ in $\Sp{p(n+1)-1}_r$ such that $\partial Q_{p(n+1)-1} - R$ is supported in $\Delta_r$, and by \cref{eq:InvariantFilling_InvariantFillingSliceMass}, $\M(\partial Q_{p(n+1)-1} - R) \leq C\varepsilon^{\frac{m-p(n+1)+1}m} + \M(R \res \Delta_r)$.
\end{proof}

To formally define $\cyc$, we must specify the homeomorphism $h : \Sp{p(n+1)-1} \setminus \Delta \to L_n \times \D^{n+1}$ from \cref{eq:SphereQuotientLensBundle}, and estimate its Jacobian. It is covered by a smooth embedding $f : \Sp{p(n+1)-1} \setminus \Delta \to \Sp{(p-1)(n+1)-1} \times \D^{n+1}$ that is defined using a real analogue of the Discrete Fourier Transform Matrix. Let $\theta = 2\pi/p$, and consider the \emph{real discrete Fourier transform matrix} $F$ defined as follows. When $p = 2$, define $F = \frac1{\sqrt2}\left[ \begin{smallmatrix}
    1 &  1 \\
    1 & -1
\end{smallmatrix} \right]$. When $p > 2$, define
\begin{equation*}
    F = \begin{bmatrix}
    u & v_1 & w_1 & v_2 & w_2 & \cdots & v_{\frac{p-1}2} & w_{\frac{p-1}2}
\end{bmatrix}^T,
\end{equation*}
whose rows are, for $j = 1,\dotsc, p-1$,
\begin{equation}
    \label{eq:FourierBasis}
\begin{aligned}
    u &= \sqrt{\frac1p}(1,\dotsc,1)
    \\
    v_j &= \sqrt{\frac2p}(1, \cos(j\theta), \cos(2j\theta), \dotsc, \cos((p-1)j\theta))
    \\
    w_j &= \sqrt{\frac2p}(0, \sin(j\theta), \sin(2j\theta), \dotsc, \sin((p-1)j\theta)).
\end{aligned}
\end{equation}

Let $F^\perp$ denote $F$ with the first row removed, and $F^\perp_m = F^\perp \otimes I_m$, and $F_m = F \otimes I_m$, where $\otimes$ denotes the Kronecker product. Then $f$ is defined by the formula
\begin{equation}
    \label{eq:CycProdHomeoLensBundle_DFT}
    f(x_1,\dotsc,x_p) =  \left( \frac{F^\perp_{n+1} x}{\norm{F^\perp_{n+1} x}} , \frac{x_1 + \dotsb + x_p}p \right).
\end{equation}

Write $\Sp{p(n+1)-1}_\varepsilon = \Sp{p(n+1)-1} \setminus \Delta_\varepsilon$, and let $f_\varepsilon = f|_{\Sp{p(n+1)-1}_\varepsilon}$. In particular, the image of $f_\varepsilon$ is $\Sp{(n+1)(p-1)-1} \times B_{\sqrt{1 - \varepsilon^2}}(0)$.

\begin{lemma}
    \label{lem:StretchingAwayFromDiagonal}
    Let $\tilde\Delta = \{(x,\dotsc,x)~|~x \in \RR^{n+1}\}$. Consider any $v \in \Delta_\varepsilon$ for sufficiently small $\varepsilon > 0$. Then if the domain of $Df_v : \RR^{p(n+1)} \to \RR^{p(n+1)}$ is decomposed as $\RR^{p(n+1)} = \tilde\Delta \oplus \tilde\Delta^\perp$ and the codomain is given some permutation of coordinates, then $Df_v = \frac1{\sqrt{p}} I_{n+1} \oplus \frac1\varepsilon I_{(p-1)(n+1)}$.
\end{lemma}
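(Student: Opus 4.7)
The plan is to extend $f$ from $\Sp{p(n+1)-1} \setminus \Delta$ to the ambient Euclidean domain $\RR^{p(n+1)} \setminus \tilde\Delta$ using the same formula \eqref{eq:CycProdHomeoLensBundle_DFT}, and then compute the linear map $Df_v : \RR^{p(n+1)} \to \RR^{p(n+1)}$ directly. Writing $f = (f_1, f_2)$ where $f_1(x) = F^\perp_{n+1} x / \|F^\perp_{n+1} x\|$ and $f_2(x) = (x_1 + \dotsb + x_p)/p$, the central observation is that each component respects the domain decomposition: the linear map $f_2$ vanishes on $\tilde\Delta^\perp$ (precisely the subspace $\{\sum_i x_i = 0\}$), while $f_1$ vanishes on $\tilde\Delta$, because the rows of $F^\perp$ are orthogonal to the diagonal vector $u \in \RR^p$, so $F^\perp_{n+1} = F^\perp \otimes I_{n+1}$ annihilates $\tilde\Delta = \vspan\{u\} \otimes \RR^{n+1}$.

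Consequently $Df_v|_{\tilde\Delta}$ takes values in the $\RR^{n+1}$ factor of the codomain and $Df_v|_{\tilde\Delta^\perp}$ takes values in the $\RR^{(p-1)(n+1)}$ factor, so swapping the two codomain factors (the ``permutation of coordinates'' in the statement) makes $Df_v$ block diagonal, with blocks $Df_2|_{\tilde\Delta}$ and $Df_1|_{\tilde\Delta^\perp}$. The first block is the linear map $(y,\dotsc,y) \mapsto y$; under the canonical isometric identification $\tilde\Delta \cong \RR^{n+1}$ that rescales by $\sqrt p$, this becomes $\frac{1}{\sqrt p} I_{n+1}$.

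For the second block, I would first verify, using the standard orthogonality identities for discrete cosines and sines (e.g.\ $\sum_{k=0}^{p-1}\cos(kj\theta)\cos(kl\theta) = \frac{p}{2}\delta_{jl}$ for $1 \leq j,l \leq (p-1)/2$), that $F$ is an orthogonal matrix, so $\tau := F^\perp_{n+1}|_{\tilde\Delta^\perp}$ is an isometric isomorphism onto $\RR^{(p-1)(n+1)}$. Then $f_1|_{\tilde\Delta^\perp}(x_\perp) = \tau(x_\perp)/\|x_\perp\|$, and applying the chain rule with the derivative of the normalization $w \mapsto w/\|w\|$ at $w = \tau(v_\perp)$ yields
\begin{equation*}
Df_1|_{\tilde\Delta^\perp}(u_\perp) = \frac{1}{\|v_\perp\|}\left( \tau(u_\perp) - \frac{\tau(u_\perp) \cdot \tau(v_\perp)}{\|v_\perp\|^2}\, \tau(v_\perp) \right).
\end{equation*}
For $v \in \partial\Delta_\varepsilon$ one has $\|v_\perp\| = \varepsilon$ to leading order, and precomposing the codomain with $\tau^{-1}$ (also absorbed into the ``permutation of coordinates'') identifies this block with $\frac{1}{\varepsilon}$ times the identity on $\tilde\Delta^\perp$ minus the rank-one orthogonal projection onto the direction of $v_\perp$.

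The hard part is interpreting this remaining rank-one projection: the ambient $Df_v$ does genuinely have a one-dimensional kernel along $v_\perp$, reflecting the invariance of $f_1$ under positive rescaling of the $\tilde\Delta^\perp$-component of its input. This kernel direction has nonzero inner product with $v$ and therefore lies essentially along the radial direction of $\Sp{p(n+1)-1}$ at $v$, outside the tangent space; once it is excluded (which the stated formula does implicitly by asserting the clean block form $\frac{1}{\sqrt p} I_{n+1} \oplus \frac{1}{\varepsilon} I_{(p-1)(n+1)}$), the claim emerges. Everything else is bookkeeping the isometry of the various identifications and the standard derivative formulas, so the real obstacle is this reconciliation between the ambient picture (where a one-dimensional kernel is present) and the intended tangential picture (where that kernel is absorbed into the radial direction already projected out).
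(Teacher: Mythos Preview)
Your proposal is correct and takes essentially the same approach as the paper: extend $f$ to the ambient $\RR^{p(n+1)}\setminus\tilde\Delta$, factor it as the orthogonal DFT matrix $F_{n+1}$ (which carries $\tilde\Delta$ and $\tilde\Delta^\perp$ isometrically onto the two coordinate blocks) followed by $(x,y)\mapsto(y/\lVert y\rVert,\,x/\sqrt{p})$, and then read off the derivative of the second map at a point with $\lVert y\rVert=\varepsilon$. The paper's proof is just a terser version of exactly this.

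Your discussion of the rank-one kernel is more careful than the paper's own argument, which simply asserts the conclusion after noting that $F_{n+1}$ is an isometry sending $\partial\Delta_\varepsilon$ to $\{(x,y):\lVert x\rVert=\sqrt{1-\varepsilon^2},\ \lVert y\rVert=\varepsilon\}$. You are right that the ambient derivative genuinely kills the $v_\perp$-direction; the lemma's clean block form is to be read up to that radial direction (which is all that is needed in the Jacobian computations of \cref{lem:CycProdMapJacobian}). One small point: the codomain identification you need is the isometry $\tau^{-1}$, not literally a coordinate permutation, but the paper's phrasing ``some permutation of coordinates'' is equally loose on this point, and only the singular values matter downstream.
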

\begin{proof}
    $f$ extends onto a composite map
    \begin{equation*}
        \RR^{p(n+1)} \setminus \tilde\Delta \xrightarrow{F_{n+1}} \RR^{p(n+1)} \setminus \RR^{n+1} \times \{0\}^{(p-1)(n+1)} \xrightarrow{(x,y) \mapsto \left( \frac{y}{\norm{y}}, \frac{x}{\sqrt{p}} \right)} \Sp{(n+1)(p-1)-1} \times \RR^{n+1}.
    \end{equation*}
    Observe that $F_{n+1}$ is an isometry that maps $\partial \Delta_\varepsilon$ to the set $\{(x,y)~|~x \in \RR^{n+1}, y \in \RR^{(p-1)(n+1)}, \norm{x} = \sqrt{1 - \varepsilon^2}, \norm{y} = \varepsilon\}$ for sufficiently small $\varepsilon$. The lemma statement follows.
\end{proof}

    \begin{lemma}
        \label{lem:OrthoCompSumOfSubspaces}
        Let $A, B$ be vector subspaces of $\RR^n$. Then $(A + B)^\perp = A^\perp \cap B^\perp$.
    \end{lemma}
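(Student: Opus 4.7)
The plan is to prove this by double inclusion, using only the definition of orthogonal complement and the bilinearity of the inner product.

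First I would establish $(A+B)^\perp \subseteq A^\perp \cap B^\perp$. Since $A \subseteq A+B$ (every $a \in A$ equals $a + 0$ with $0 \in B$) and similarly $B \subseteq A+B$, any vector orthogonal to every element of $A+B$ is in particular orthogonal to every element of $A$ and to every element of $B$, hence lies in $A^\perp \cap B^\perp$.

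For the reverse inclusion $A^\perp \cap B^\perp \subseteq (A+B)^\perp$, take any $v \in A^\perp \cap B^\perp$ and any $w \in A+B$, writing $w = a + b$ with $a \in A$ and $b \in B$. Bilinearity of the standard inner product gives
\begin{equation*}
    \langle v, w \rangle = \langle v, a \rangle + \langle v, b \rangle = 0 + 0 = 0,
\end{equation*}
so $v \in (A+B)^\perp$. Combining the two inclusions yields the desired equality.

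There is no real obstacle here; the statement is a routine linear algebra fact and the proof is essentially a one-line unfolding of definitions. The only point worth noting is that the argument uses nothing beyond bilinearity of the inner product, so it holds verbatim for any finite-dimensional inner product space, and in particular for the real ambient spaces $\RR^n$ in which the cyclic product construction takes place.
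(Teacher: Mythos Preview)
Your proof is correct and follows essentially the same double-inclusion argument as the paper: both use $A,B \subset A+B$ for one direction and bilinearity of the inner product for the other.
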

    \begin{proof}
        Observe that $(A + B)^\perp \subset A^\perp$ and $(A + B)^\perp \subset B^\perp$, so $(A + B)^\perp \subset A^\perp \cap B^\perp$. For the reverse inclusion, let $v \in A^\perp \cap B^\perp$. Then for all $u + w \in A + B$, where $u \in A$ and $w \in B$, $v \perp u$ and $v \perp w$ so $v \perp w + u$.
    \end{proof}

    \begin{lemma}
        \label{lem:PlaneProdProjDiagRank}
        Let $\Pi_1,\dotsc,\Pi_p$ be subspaces of $\RR^{n+1}$. Then the rank of the orthogonal projection $\pi : \Pi_1 \times \dotsb \times \Pi_p \to \tilde\Delta$ is at least $\frac1p(\dim\Pi_1 + \dotsb + \dim\Pi_p)$. Equality is attained when $\Pi_1 = \dotsb = \Pi_p$.
    \end{lemma}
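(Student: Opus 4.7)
The plan is to begin by identifying the image of $\pi$ explicitly. Writing a general element of $\Pi_1 \times \dotsb \times \Pi_p$ as $(v_1, \dotsc, v_p)$ with $v_i \in \Pi_i$, the point of $\tilde\Delta$ closest to it is obtained by minimizing $\sum_i \norm{v_i - w}^2$ over $w \in \RR^{n+1}$, which yields $w = \tfrac{1}{p}(v_1 + \dotsb + v_p)$. Hence
\begin{equation*}
    \pi(v_1, \dotsc, v_p) = \Bigl( \tfrac{1}{p}(v_1 + \dotsb + v_p), \dotsc, \tfrac{1}{p}(v_1 + \dotsb + v_p) \Bigr).
\end{equation*}
Since each $\Pi_i$ is a subspace (so closed under rescaling by $1/p$), the image of $\pi$ is exactly the diagonal copy of the sum $\Pi_1 + \dotsb + \Pi_p$ inside $\tilde\Delta$, and therefore $\rank \pi = \dim(\Pi_1 + \dotsb + \Pi_p)$.

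Next, I would apply the trivial bound $\dim(\Pi_1 + \dotsb + \Pi_p) \geq \max_i \dim \Pi_i$, which follows because each $\Pi_i$ is a subspace of the sum. Combining this with the fact that a maximum is always at least the average gives
\begin{equation*}
    \rank \pi = \dim(\Pi_1 + \dotsb + \Pi_p) \geq \max_i \dim \Pi_i \geq \tfrac{1}{p}\bigl(\dim \Pi_1 + \dotsb + \dim \Pi_p\bigr),
\end{equation*}
proving the inequality. For the equality claim, if $\Pi_1 = \dotsb = \Pi_p = \Pi$ then $\Pi_1 + \dotsb + \Pi_p = \Pi$, so $\rank \pi = \dim \Pi$, which exactly equals $\tfrac{1}{p}(p \dim \Pi)$.

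There is no real obstacle here; the only subtle point is the initial identification of $\pi$'s image, which reduces a question about the rank of a projection on a large product space to a statement about the sum $\Pi_1 + \dotsb + \Pi_p$ inside $\RR^{n+1}$. Once that reduction is made, the lemma is immediate from elementary linear algebra.
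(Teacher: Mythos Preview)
Your proof is correct and is in fact more direct than the paper's. The key step you take --- computing the projection explicitly as the diagonal of the average $\tfrac{1}{p}\sum v_i$ and thereby identifying the image of $\pi$ with the diagonal copy of $\Pi_1 + \dotsb + \Pi_p$ --- immediately gives $\rank\pi = \dim(\Pi_1 + \dotsb + \Pi_p)$, from which the inequality and the equality case are one line each.

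The paper instead reaches the same identity by a longer route through orthogonal complements: it uses rank--nullity and the auxiliary lemma $(A+B)^\perp = A^\perp \cap B^\perp$ to rewrite $\rank\pi = n+1 - \dim(\tilde\Delta \cap \Pi_1^\perp \times \dotsb \times \Pi_p^\perp)$, then dualizes again to the projection $\tilde\Delta \to \Pi_1 \times \dotsb \times \Pi_p$ and argues via Pigeonhole that this projection surjects onto some $\Pi_i$ of dimension at least $d/p$. This is equivalent to what you do (indeed $\tilde\Delta \cap \prod_i \Pi_i^\perp$ is the diagonal of $\bigcap_i \Pi_i^\perp = (\sum_i \Pi_i)^\perp$), but your explicit formula for $\pi$ bypasses the detour entirely. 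Your argument is the cleaner one here.
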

    \begin{proof}
        Let $d = \dim\Pi_1 + \dotsb + \dim\Pi_p$.
        
        By \cref{lem:OrthoCompSumOfSubspaces},
        \begin{align*}
            \rank \pi
            &= d - \dim(\Pi_1 \times \dotsb \times \Pi_p \cap \tilde\Delta^\perp)
            \\
            &= d - p(n+1) + \dim(\tilde\Delta + \Pi_1^\perp \times \dotsb \times \Pi_p^\perp))
            \\
            &=d - p(n+1) + \underbrace{\dim\tilde\Delta}_{n+1} + \underbrace{\dim(\Pi_1^\perp \times \dotsb \times \Pi_p^\perp)}_{p(n+1)-d)} - \dim(\tilde\Delta \cap \Pi_1^\perp \times \dotsb \times \Pi_p^\perp))\\
            &= n + 1 - \dim(\tilde\Delta \cap \Pi_1^\perp \times \dotsb \times \Pi_p^\perp)).
        \end{align*}
        Thus it remains to prove that $\dim(\tilde\Delta \cap \Pi_1^\perp \times \dotsb \times \Pi_p^\perp)) \leq n + 1 - d/p$. By the rank-nullity theorem, this is equivalent to proving that the dimension of the image of the orthogonal projection $\tilde\Delta \to \Pi_1 \times \dotsb \times \Pi_p$ is at least $d/p$. To show this, note that the Pigeonhole Principle implies that some $\Pi_i$ must have dimension at least $d/p$. Consider the composite projection $\tilde\Delta \to \Pi_1 \times \dotsb \times \Pi_p \to \Pi_i$.
        The image of this composite projection is surjective because for each vector $v \in \Pi_i$, $(v,\dotsc,v)$ projects to $v$

        When $\Pi_1 = \dotsb = \Pi_p$, by applying a linear isomorphism to $\RR^{n+1}$, we may assume that $\Pi_i = \RR^m \times \RR^{n+1-m}$ for some fixed $m$. Then the orthogonal projection $\tilde\Delta \to \Pi_1 \times \dotsb \times \Pi_p$ fixes $(e_i,\dotsc,e_i)$ when $i \leq m$ and kills it otherwise. Thus the rank of this projection is exactly $m$.
    \end{proof}

\begin{theorem}
    \label{thm:CartPowerMassNearDiagonal}
    Let $R \in \Z{k}(\Sp{n})$ be a polyhedral cycle, where $k \geq 1$. Then for sufficiently small $r$, there exists a constant $C$ such that
    \begin{align*}
        \M(f_\sharp(R^p \res \partial\Delta_r)) \leq C\M(R) && \text{and} && \M(f_\sharp(R^p \res \Delta_r)) \leq Cr\M(R).
    \end{align*}
\end{theorem}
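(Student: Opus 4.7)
The plan is to localize the analysis to neighborhoods of the diagonal and describe the image $f(R^p)$ near $\Delta$ geometrically, so that both mass bounds reduce to volume computations in the target. Since $R$ is polyhedral, $\spt R^p \cap \Delta = \{(x,\dots,x) : x \in \spt R\}$ decomposes into finitely many diagonal copies of top-dimensional faces, so by additivity of mass on disjoint supports it suffices to bound the contribution near one such piece. Fix a generic interior point $x$ of a top-dimensional face of $R$, with $\Pi = T_x R \subset T_x \Sp{n}$, and work near $(x,\dots,x) \in \Delta$.

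Parameterize $R^p$ locally by $(v_1,\dots,v_p) \in \Pi^p$ and change coordinates to $(u, \rho, \omega) \in \Pi \times [0, \rho_0) \times \Sp{(p-1)k-1}$, where $u = (v_1 + \dots + v_p)/p$ is the diagonal component in $\Pi^p \cap \tilde\Delta$ and $\rho\omega = (v_i - u)_i$ polar-decomposes the transverse component in $\Pi^p \cap \tilde\Delta^\perp$. By \cref{lem:PlaneProdProjDiagRank}, these give orthogonal subspaces of dimensions $k$ and $(p-1)k$, totaling $pk$. Unpacking the definition of $f$, the image has the form $f(u,\rho,\omega) = (i(\omega), \bar x(u,\rho))$, where $i$ isometrically embeds $\Sp{(p-1)k-1}$ into $\Sp{(p-1)(n+1)-1}$ (via the inclusion $\Pi^p \cap \tilde\Delta^\perp \hookrightarrow \tilde\Delta^\perp$ composed with $F^\perp_{n+1}$), and $\bar x \in \D^{n+1}$ has magnitude $\sqrt{1 - \rho^2/p}$ with direction determined by $u$ to leading order. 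Since $f$ is injective on $\Sp{p(n+1)-1} \setminus \Delta$, the masses in question equal the corresponding image volumes.

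Both bounds then follow from direct volume estimates in the target. For the first bound, $f(R^p \res \partial\Delta_r)$ is the image at fixed $\rho = r$, parameterized injectively by $(u,\omega)$ controlling orthogonal coordinates in $\D^{n+1}$ and $\Sp{(p-1)(n+1)-1}$ respectively; its $(pk-1)$-volume is of order $\M(R) \cdot \vol(\Sp{(p-1)k-1})$, independent of $r$ to leading order, yielding $\M(f_\sharp(R^p \res \partial\Delta_r)) \leq C \M(R)$. For the second bound, $f(R^p \res \Delta_r)$ fills a thin shell of radial thickness $1 - \sqrt{1 - r^2/p} \approx r^2/(2p)$ adjacent to $\Sp{(p-1)(n+1)-1} \times \partial\D^{n+1}$ in the target; its $pk$-volume is bounded by the cross-sectional volume from the first step times this thickness, giving $\leq C r^2 \M(R) \leq C r \M(R)$ for $r \leq 1$.

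The main obstacle is justifying the parameterization with uniform quantitative control as $\rho \to 0$: specifically, verifying that $(u, \rho, \omega) \mapsto (i(\omega), \bar x(u, \rho))$ is bilipschitz onto its image with distortion bounded independently of $\rho$, and checking that the higher-order $\rho$-dependencies in the direction of $\bar x$ (arising from the constraint $x_i \in \Sp{n}$) contribute only subleading corrections to the shell-volume estimate. An equivalent computational route uses \cref{lem:StretchingAwayFromDiagonal} to estimate $Df$ on $\Pi^p$ directly: the $1/\rho$ stretching on $\Pi^p \cap \tilde\Delta^\perp$ has a one-dimensional kernel in the radial direction $\hat w$, and the next-order $O(\rho)$ correction along $\hat w$ (from $\|\bar x\| = \sqrt{1 - \rho^2/p}$) yields Jacobians of order $\rho^{1-(p-1)k}$ on the $(pk-1)$-slice and $\rho^{2-(p-1)k}$ on the $pk$-plane; integration against the polar factor $\rho^{(p-1)k-1}$ recovers the same bounds.
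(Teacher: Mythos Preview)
Your reduction in the first paragraph is where the argument breaks. It is true that $\spt R^p \cap \Delta$ is a union of diagonal copies of top-dimensional faces, but $R^p \res \Delta_r$ does \emph{not} decompose into neighborhoods of these pieces with disjoint supports. Near a diagonal point $(v,\dots,v)$ with $v$ on the $(k-1)$-skeleton of $R$, the set $R^p \cap \Delta_r$ contains ``cross terms'': products $e^k_{i_1}\times\dotsb\times e^k_{i_p}$ where the $e^k_{i_j}$ are distinct top cells sharing a common $d$-face $e^d_l$ for some $d<k$. Your local model, which places all $x_i$ in a single tangent plane $\Pi$, does not see these pieces at all. Worse, your parameterization by $(u,\rho,\omega)$ relies on the orthogonal splitting $\Pi^p = (\Pi^p\cap\tilde\Delta)\oplus(\Pi^p\cap\tilde\Delta^\perp)$ into pieces of dimensions $k$ and $(p-1)k$; when the $\Pi_i$ differ, $\Pi_1\times\dotsb\times\Pi_p$ need not split this way and $\dim(\Pi_1\times\dotsb\times\Pi_p\cap\tilde\Delta^\perp)$ can be strictly less than $(p-1)k$, so neither the geometric picture nor the Jacobian computation you sketch applies.

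Controlling the cross terms is exactly where the paper spends its effort. It builds a combinatorial decomposition of $(\spt R)^p\cap\partial\Delta_\varepsilon$ into blocks $(E^d_l)^p\cap e^k_{i_1}\times\dotsb\times e^k_{i_p}$ indexed by a shared $d$-face (\cref{lem:CartPowerNearDiag_BlockDecomp}), bounds the mass of each block by $C\lambda^d\varepsilon^{pk-d-1}$ (\cref{lem:MassCubeSliceNearDiagonal}), and uses \cref{lem:PlaneProdProjDiagRank} to bound the Jacobian of $f$ on each block by $C\varepsilon^{-(p-1)k+1}$ even when the $\Pi_i$ are unequal. The product is $C\lambda^d\varepsilon^{k-d}$, so for $d<k$ each cross term is individually small and the counting argument shows their total is at most $C\M(R)$; but none of this follows from a generic-interior analysis. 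Your same-cell computation corresponds to the $d=k$ summands and is essentially correct there, but it is only one case of the full estimate.
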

\begin{proof}
    Let the grid size be $\lambda$. Let $k$-cells of $R$ as $e^k_1, \dotsc, e^k_q$. Let $e^k_i$ appear with multiplicity $a_i$ in $R$. (Note that $\abs{a_i} \leq p$.) Thus $\spt R^p = (\spt R)^p$ is a $pk$-dimensional cubical complex whose top-dimensional cells are of the form $e^k_{i_1} \times \dotsb \times e^k_{i_p}$.

    For each $0 \leq d \leq k$, let $K_d = 4^{k-d+1}$ and let $V_d$ (resp. $\frac12V_d$) be the $K_d\varepsilon$-neighbourhood (resp. $\frac12K_d\varepsilon$-neighbourhood), in the $\ell^\infty$ metric, of the $d$-skeleton of $R$. Note that for all $1 \leq d \leq k$, $K_{d-1} = 4K_d$.
    
    let $e^d_1,\dotsc,e^d_{q_d}$ denote the $d$-cells of $R$, for $0 \leq d \leq k$. Let $a_i$ denote the multiplicity of $e^k_i$ in $R$. Observe that for all $(x_1,\dotsc,x_p) \in \partial\Delta_\varepsilon$, any $x_j$ and $x_k$ must be distance at most $2\varepsilon$ apart. Then
    \begin{lemma}
        \label{lem:NbhdDiffCuboids}
        For each $1 \leq d \leq k$ and for sufficiently small $\varepsilon$, $\cl{V_d \setminus \frac12V_{d-1}}$ is a disjoint union of cuboids, each cuboid containing $e^d_i \setminus \frac12V_{d-1}$ for a unique $i$. The cuboid $E^d_i$ consists of points at $\ell^\infty$ distance at most $K_d\varepsilon$ from $e^d_i$, and whose orthogonal projection onto $e^d_i$ lies in $\cl{e^d_i \setminus \frac12V_{d-1}}$.

        Furthermore, the cuboids are further than distance  $2\varepsilon$ away from each other.
    \end{lemma}
    \begin{proof}
        First we show that those ``cuboid'' sets defined above are actually cuboids. Without loss of generality, and by the invariance of the $\ell^\infty$ metric under coordinate permutations and translations, we may consider $e^d_1$ and assume that it is $[0,\delta]^d \times \{0\}^{n-d}$. Then the ``cuboid sets'' are $[\frac12K_{d-1}\varepsilon,\delta - \frac12K_{d-1}\varepsilon]^d \times [-K_d\varepsilon, K_d\varepsilon]^{n-d}$. These are indeed cuboids.

        Next we show that the union of the cuboids is indeed $\cl{V_d \setminus \frac12V_{d-1}}$. To prove the inclusion, first note that the union of the cuboids is certainly contained in $V_d$. Each cuboid is also disjoint from the interior of $\frac12V_{d-1}$. To see this, again use the above translation and change of coordinates; then since each point in the cuboid orthogonally projects onto $e^d_1$ to give a point in $[\frac12K_{d-1}\varepsilon,\delta - \frac12K_{d-1}\varepsilon]^d \times \{0\}^{n-d}$, its first $d$ coordinates must be in the interval $[\frac12K_{d-1}\varepsilon,\delta - \frac12K_{d-1}\varepsilon]$, which means that the point itself must be at $\ell^\infty$ distance at least $\frac12K_{d-1}\varepsilon$ from $\partial e^d_1$.

        To prove the reverse inclusion, consider any $x \in \cl{V_d \setminus \frac12V_{d-1}}$. Then $x$ must lie within $\ell^\infty$ distance $K_d\varepsilon$ from some $e^d_i$. Do the same coordinate transformations as before to get $e^d_i = [0,\delta]^d \times \{0\}^{n-d}$, so that $x \in [-K_d\varepsilon,\delta + K_d\varepsilon]^d \times \{-K_d\varepsilon,K_d\varepsilon\}^{n-d}$. Since $x$ must be further than $\ell^\infty$ distance $\frac12K_{d-1}\varepsilon = 2K_d\varepsilon$ from every face of $e^d_i$, for each $1 \leq j \leq d$ we must have that $x$ lies outside \begin{multline*}
            [-2K_d\varepsilon,\delta + 2K_d\varepsilon]^{j-1} \times ([-2K_d\varepsilon,2K_d\varepsilon] \cup [\delta - 2K_d\varepsilon, \delta + 2K_d\varepsilon])
            \\
            \times [-2K_d\varepsilon,\delta + 2K_d\varepsilon]^{d-j} \times [-2K_d\varepsilon,2K_d\varepsilon]^{n-d}.
        \end{multline*}
        Therefore the $j^\text{th}$ coordinate of $x$ must lie in $[\frac12K_{d-1}\varepsilon, \delta - \frac12K_{d-1}\varepsilon]$. Since this is true for each $j$, we have that the projection of $x$ onto $e^d_i$ lies outside $V_{d-1}$.
        
        Finally, we show that the cuboids are at distance at least $2\varepsilon$ away from each other. For that we need the following lemma about cubic lattices:
        \begin{lemma}
            \label{lem:CubesTouchingWithoutContainment}
            Consider the standard cubic lattice of edge length $\delta$ in $\RR^n$. Suppose that the cube $c = [0,\delta]^r \times \{0\}^{n-r}$ intersects but does not contain another cube $c'$. Then either for some $i \leq r$ the $i^\text{th}$ factor of $c'$ is $[-\delta, 0]$ or $[\delta, 2\delta]$, or for some $i > r$ the $i^\text{th}$ factor of $c'$ is $[-\delta,0]$ or $[0,\delta]$.
        \end{lemma}
        \begin{proof}
            Assume otherwise for the sake of contradiction. Then $c' = A_i \times \dotsb \times A_r \times \{(x_{r+1},\dotsc,x_n)\}$, where since $c$ intersects $c'$, each $A_i$ is either $[0,\delta]$ or a point that has to be either 0 or $\delta$, and each $x_i = 0$. That implies that $c' \subset c$, giving a contradiction.
        \end{proof}
        
        Suppose for the sake of contradiction that, without loss of generality, the cuboids associated with distinct $d$-cells $e^d_1$ and $e^d_2$ contain points $x_1$ and $x_2$ respectively such that $\norm{x_1 - x_2} \leq 2\varepsilon$. Then the triangle inequality implies that $e^d_1$ and $e^d_2$ are within $\ell^\infty$ distance $(2K_d + 2)\varepsilon$ apart, which for sufficiently small $\varepsilon$ implies that $e^d_1$ and $e^d_2$ must touch. \Cref{lem:CubesTouchingWithoutContainment} implies that, without loss of generality via a reflection, some factor of $e^d_2$ is $[-\delta, 0]$. Thus that coordinate of $E^d_2$ must be at most $-\frac12K_{d-1}\varepsilon = -2 \cdot K_d\varepsilon$, while that coordinate of $E^d_1$ must be at least $-K_d\varepsilon$. Hence the distance between $E^d_1$ and $E^d_2$ is at least $K_d\varepsilon > 2\varepsilon$, giving a contradiction.
    \end{proof}
    
    \begin{lemma}
        \label{lem:CartPowerNearDiag_BlockDecomp}
        For sufficiently small $\varepsilon$, 
        \begin{equation*}
            (\spt R)^p \cap \partial \Delta_\varepsilon \subset \bigcup_{\substack{
                1 \leq i_1, \dotsc, i_p \leq q_d
                \\
                0 \leq d \leq k
                \\
                e^d_l \subset e^k_{i_1} \cap \dotsb \cap e^k_{i_p}
            }}
            (E^d_l)^p \cap e^k_{i_1} \times \dotsb \times e^k_{i_p}.
        \end{equation*}
    \end{lemma}
    \begin{proof}
        For any $x = (x_1,\dotsc,x_p) \in (\spt R)^p \cap \partial\Delta_\varepsilon$, choose the largest $d$ such that some $x_i$ lies in $V_d \setminus V_{d-1}$. This implies that every $x_j$ also lies in $V_d$. In addition, $x_i$ must lie in some $E^d_l$ by \cref{lem:NbhdDiffCuboids}, and $x_i$ is further than distance $K_{d-1}\varepsilon$ away from the $(d-1)$-skeleton of $R$. This implies that every $x_j$ must be further than distance $\frac12K_{d-1}\varepsilon$ away from the $(d-1)$-skeleton of $R$, because $\frac12K_{d-1}\varepsilon = 2 \cdot 4^{k-d+1}\varepsilon > 2\varepsilon$. Thus $x_j$ lies in $V_d \setminus \frac12V_{d-1}$. Together with \cref{lem:NbhdDiffCuboids} and the fact that $x_i$ and $x_j$ must be within distance $2\varepsilon$ of each other, this implies that $x_j \in E^d_l$. Moreover, $x_j$ must lie in some $e^k_{i_j}$. Consequently, $x \in (E^d_l)^p \cap e^k_{i_1} \times \dotsb \times e^k_{i_p}$.
        
        It remains to show that $e^d_l \subset e^k_{i_j}$ for all $j$. Assume for the sake of contradiction that this is false for some $e^k_{i_j}$. Nevertheless, $e^k_{i_j}$ has a point $x_j$ that is also contained in $E^d_l$, so it is within distance $K_d\varepsilon$ of a point in $\cl{e^d_l \setminus \frac12V_{d-1}}$. For sufficiently small $\varepsilon$ this implies that $e^k_{i_j}$ intersects but does not contain $e^d_l$. Perform coordinate transformations until $e^k_{i_j} = [0,\lambda]^k \times \{0\}^{n-k}$. By \cref{lem:CubesTouchingWithoutContainment} and without loss of generality due to a reflection, $e^d_l$ has some factor that is $[-\lambda,0]$. However, that would imply that that coordinate of $E^d_2$ would be at most $-\frac12K_{d-1}\varepsilon = -2 \cdot K_d\varepsilon$, so actually $e^k_{i_j}$ and $E^d_l$ have to be further than distance $K_d\varepsilon$ apart, giving a contradiction. 
    \end{proof}

    Let $\Pi_j$ be the tangent space of $e^k_{i_j}$. (Assume that the cells are nearly flat so we can identify all the tangent spaces.)
    \begin{lemma}
        \label{lem:CycProdMapJacobian}
        For any $1 \leq i_1, \dotsc, i_p \leq q$ and $v$ in the interior of $e^k_{i_1} \times \dotsb \times e^k_{i_p}$,
        \begin{equation*}
            J_vf|_{e^k_{i_1} \times \dotsb \times e^k_{i_p} \cap \partial\Delta_\varepsilon} = \frac{(\sqrt{p})^{pk - \dim(\Pi_1 \times \dotsb \times \Pi_p \cap \tilde\Delta^\perp)}}{\varepsilon^{\dim(\Pi_1 \times \dotsb \times \Pi_p \cap \tilde\Delta^\perp) - 1}} \leq \frac{p^{pk/2}}{\varepsilon^{\dim(\Pi_1 \times \dotsb \times \Pi_p \cap \tilde\Delta^\perp) - 1}}.
        \end{equation*}
    \end{lemma}
    \begin{proof}
        From \cref{lem:StretchingAwayFromDiagonal}, only the stretching in the direction of $\tilde\Delta^\perp$ matters, where $\tilde\Delta^\perp$ denotes the orthogonal complement to $\tilde\Delta$ at $v$. The result follows.
    \end{proof}
    
    $R^p$ has a finite number of tangent $pk$-dimensional planes at each point. Each tangent $pk$-plane is (affinely) spanned by a product of $k$-cells $e^k_{i_1} \times \dotsb \times e^k_{i_p}$. For each $v \in D$, let $S_v$ denote the sphere of radius $\varepsilon$ around $v$ in the orthogonal complement to $\tilde\Delta$ at $p$.
    
    \begin{lemma}
        \label{lem:MassCubeSliceNearDiagonal}
        For each $1 \leq i_1, \dotsc, i_p \leq q$, suppose that $e^k_{i_1} \cap \dotsb \cap e^k_{i_p}$ contains some $e^d_l$ for some  and $0 \leq d \leq k$. Then
        \begin{equation*}
            \M((E^d_l)^p \cap e^k_{i_1} \times \dotsb \times e^k_{i_p} \cap \partial\Delta_\varepsilon) \leq C\lambda^d\varepsilon^{pk-d-1}
        \end{equation*}
    \end{lemma}
    \begin{proof}
        The cells share at least one vertex, and assuming that the cells are small, and by applying a rotation of $\Sp{n}$, we may assume that $e_1 \in e^k_{i_1} \cap \dotsb \cap e^k_{i_p}$ where $e_1 = (1,0,\dotsc,0)$. By applying a diffeomorphism with bilipschitz constant close to 1, we may assume that the cells are affine subsets of $\RR^{n+1}$ that are orthogonal to $e_1$ and that $\Delta$ can be identified with the part of its span $\tilde\Delta$ near $(e_1,\dotsc,e_1)$, whose linear part is spanned by the vectors $d_i = (e_i,\dotsc,e_i)$ for $2 \leq i \leq n+1$. By a further rotation of $\Sp{n}$, we may assume that the edges of the cubical structure are parallel to the standard basis vectors.
    
        $(E^d_l)^p \cap e^k_{i_1} \times \dotsb \times e^k_{i_p}$ is a $pk$-dimensional cuboid whose edges in the direction of $(E^d_l)^p$ have length $\lambda$, and whose other edges have length $(k-d)\varepsilon$. whose affine span is the $pk$-plane $\Pi$.
    
        We will apply the coarea formula for the orthogonal projection $\pi : (E^d_l)^p \cap e^k_{i_1} \times \dotsb \times e^k_{i_p} \cap \partial\Delta_\varepsilon \to \Delta$. Thus we may study the differential of the projection, $D_v\pi$, for $v = (v_1,\dotsc,v_p) \in e^k_{i_1} \times \dotsb \times e^k_{i_p}$: $\ker D_v\pi = \Pi_1 \times \dotsb \times \Pi_p \cap \tilde\Delta^\perp$. Thus the fibers of $\pi$ are the intersections of the domain with the planes $\{v + \ker D_v\pi\}$. In particular, this implies that $\dim(\im \pi) = pk - \dim(\Pi_1 \times \dotsb \times \Pi_p \cap \tilde\Delta^\perp)$.

        Let the tangent spaces of $e^d_l$ be $W$. When $D_v\pi$ is restricted to $W^p$, \cref{lem:PlaneProdProjDiagRank} implies that its image has dimension $d$. 
        
        Since $\pi$ is 1-Lipschitz, $\vol(\pi((e^d_l)^p)) \leq C\lambda^d$. Since every point in $(E^d_l)^p \cap e^k_{i_1} \times \dotsb \times e^k_{i_p}$ is distance at most $CK_d\varepsilon$ from $(e^d_l)^p$, the image of $\pi$ is an affine set of dimension $pk - \dim(\Pi_1 \times \dotsb \times \Pi_p \cap \tilde\Delta^\perp)$ that is within distance $(k-d)\varepsilon$ from a flat $d$-dimensional disk of volume at most $C\lambda^d$, so $\vol(\im \pi) \leq C\lambda^d\varepsilon^{pk - d - \dim(\Pi_1 \times \dotsb \times \Pi_p \cap \tilde\Delta^\perp)}$. Now, since each fiber of $\pi$ is part of a sphere of radius $\varepsilon$ and dimension $\dim(\Pi_1 \times \dotsb \times \Pi_p \cap \tilde\Delta^\perp) - 1$, the volume of the fiber is at most $C\varepsilon^{\dim(\Pi_1 \times \dotsb \times \Pi_p \cap \tilde\Delta^\perp) - 1}$. Therefore the coarea inequality implies that
        \begin{equation*}
            \M((E^d_l)^p \cap e^k_{i_1} \times \dotsb \times e^k_{i_p} \cap \partial\Delta_\varepsilon) \leq C\lambda^d\varepsilon^{pk-d-1}.
        \end{equation*}
    \end{proof}
    
    Then by the area formula,
    \begin{align*}
        \M(f_\sharp(R^p \res \partial\Delta_\varepsilon))
        &= \int_{\spt R^p \cap \partial\Delta_\varepsilon} \theta_{R^p}(v) Jf|_{\spt R^p \cap \partial\Delta_\varepsilon}  \,dv
        \\
        (\text{\cref{lem:CartPowerNearDiag_BlockDecomp}}) &\leq C\sum_{\substack{
                1 \leq i_1, \dotsc, i_p \leq q_d
                \\
                0 \leq d \leq k
                \\
                e^d_l \subset e^k_{i_1} \cap \dotsb \cap e^k_{i_p}
            }} \int_{(E^d_l)^p \cap e^k_{i_1} \times \dotsb \times e^k_{i_p} \cap \partial\Delta_\varepsilon} J_vf|_{e^k_{i_1} \times \dotsb \times e^k_{i_p} \cap \partial\Delta_\varepsilon} \,dv
        \\
        (\text{\cref{lem:CycProdMapJacobian}}) &\leq p^{p+k/2} \sum_{\substack{
                1 \leq i_1, \dotsc, i_p \leq q_d
                \\
                0 \leq d \leq k
                \\
                e^d_l \subset e^k_{i_1} \cap \dotsb \cap e^k_{i_p}
            }} \frac{\M((E^d_l) \cap e^k_{i_1} \times \dotsb \times e^k_{i_p} \cap \partial\Delta_\varepsilon)}{\varepsilon^{\dim(\Pi_1 \times \dotsb \times \Pi_p \cap \tilde\Delta^\perp) - 1}},
        \\
        (\text{\cref{lem:MassCubeSliceNearDiagonal}})
        &\leq C \sum_{\substack{
                1 \leq i_1, \dotsc, i_p \leq q_d
                \\
                0 \leq d \leq k
                \\
                e^d_l \subset e^k_{i_1} \cap \dotsb \cap e^k_{i_p}
            }} 
        \lambda^d\varepsilon^{pk - d - \dim(\Pi_1 \times \dotsb \times \Pi_p \cap \tilde\Delta^\perp)}
        \\
        (\text{\cref{lem:PlaneProdProjDiagRank}}) &\leq  C \sum_{\substack{
                1 \leq i_1, \dotsc, i_p \leq q_d
                \\
                0 \leq d \leq k
                \\
                e^d_l \subset e^k_{i_1} \cap \dotsb \cap e^k_{i_p}
            }} 
        \lambda^d\varepsilon^{k - d}
        \\
        &\leq C \sum_{\substack{
                1 \leq i_1, \dotsc, i_p \leq q_d
                \\
                0 \leq d \leq k
                \\
                e^d_l \subset e^k_{i_1} \cap \dotsb \cap e^k_{i_p}
            }} 
        \lambda^k.
    \end{align*}
    Each summand corresponds to $p$ $k$-cells and a $d$-cell in their intersection, so those $k$-cells must all share some vertex. Thus the number of summands is bounded by the number of vertices in $R$ times the number of $k$-cells around a vertex, times the number of lower-dimensional cells in a $k$-cell. This is all bounded by a constant $C(n)$ times the number of $k$-cells in $R$, which is at most $\M(R)/\lambda^k$. Altogether, this implies that $\M(f_\sharp(R^p \res \partial\Delta_\varepsilon)) \leq C\M(R)$.
    
    $\M(f_\sharp(R^p \res \Delta_r))$ can be bounded using the coarea inequality by integrating  $\M(f_\sharp(R^p \res \partial\Delta_s))$ for $0 \leq s \leq r$.
\end{proof}

\begin{proposition}
    \label{prop:CycProdMap_Existence_PolyCycles}
    For each mod $p$ polyhedral cycle $\mathcal{R} \in \Z{k}(\Sp{n})$, let $R$ be one of its representatives mod $p$. Then the following hold:
    \begin{enumerate}
        \item The sequence of integral chains $S_1, S_2, \dotsc$, where $S_i = f_\sharp(R^p \res \Sp{p(n+1)-1}_{1/i})$, converges in the flat topology to an integral chain $S$ in $\Sp{(n+1)(p-1)-1} \times \D^{n+1}$ that is $\ZZ_p$-invariant. Moreover,
        \begin{equation}
            \label{eq:CycProdMap_BoundsOnMassAndBoundary}
        \begin{aligned}
            \M(S) &\leq \frac{\M(R)^p}{r_0^{pk}} + Cr_0\M(R)
            \\
            \M(\partial S) &\leq C\M(R)
        \end{aligned}
        \end{equation}

        \item The image of $S$ under the covering map $\Sp{(n+1)(p-1)-1} \times \D^{n+1} \to L \times \D^{n+1}$ is $pT$ for some flat chain $T$ in $L \times \D^{n+1}$.

        \item $[T]$ is a mod $p$ relative cycle in $\Z{pk}(L \times \D^{n+1}, L \times \partial\D^{n+1})$.
    \end{enumerate}
\end{proposition}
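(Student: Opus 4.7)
I establish (1), (2), (3) in sequence, leveraging \cref{thm:CartPowerMassNearDiagonal} and the two independent factors of $p$ contributed by the mod $p$ cycle condition $\partial R = pB$ and by the $p$-fold quotient $q \colon \Sp{(p-1)(n+1)-1} \to L_n$.

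\textbf{Part (1).} The sequence $(S_i)$ is Cauchy in the flat metric: for $j < i$,
\begin{equation*}
    S_i - S_j = f_\sharp(R^p \res (\Delta_{1/j}\setminus\Delta_{1/i})),
\end{equation*}
whose mass is at most $C\M(R)/j$ by the second estimate in \cref{thm:CartPowerMassNearDiagonal}, and $\Fl \leq \M$. To obtain $\M(S) \leq \M(R)^p/r_0^{pk} + Cr_0\M(R)$, split $S_i$ into pieces over $\Sp{p(n+1)-1}\setminus\Delta_{r_0}$ and $\Delta_{r_0}\setminus\Delta_{1/i}$: by \cref{lem:StretchingAwayFromDiagonal}, $f$ has Lipschitz constant $\leq 1/r_0$ on the former (mass $\leq \M(R)^p/r_0^{pk}$), while \cref{thm:CartPowerMassNearDiagonal} bounds the latter by $Cr_0\M(R)$; lower semicontinuity of mass passes the bound to $S$. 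For $\M(\partial S)$, slicing yields
\begin{equation*}
    \partial S_i = f_\sharp(\partial R^p \res \Sp{p(n+1)-1}_{1/i}) \pm f_\sharp(R^p \res \partial\Delta_{1/i});
\end{equation*}
the second summand has mass $\leq C\M(R)$ by \cref{thm:CartPowerMassNearDiagonal}, and the first is controlled by re-running the cube-by-cube analysis of that theorem with the polyhedral chain $\partial R^p$ in place of $R^p$. Uniform bounds on mass and boundary mass, together with the Compactness Theorem for integral currents mod $p$, upgrade the flat limit to an integral chain $S$. Invariance of $S$ under $\ZZ_p$ passes to the flat limit from invariance of each $S_i$: the action on the target $\Sp{(p-1)(n+1)-1} \times \D^{n+1}$ conjugated from cyclic permutation on $(\RR^{n+1})^p$ is trivial on the $\D^{n+1}$-factor because $(x_1+\dotsb+x_p)/p$ is symmetric.

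\textbf{Parts (2) and (3).} The $\ZZ_p$-action on $\Sp{(p-1)(n+1)-1} \times \D^{n+1}$ is free because its sphere component is a sum of rotations by nontrivial multiples of $2\pi/p$. Choosing a fundamental domain $D$, invariance of $S$ gives $S = \sum_{g \in \ZZ_p} g_\sharp(S \res D)$, and since $q$ identifies orbits to points, $q_\sharp S = p\cdot q_\sharp(S\res D)$; setting $T = q_\sharp(S\res D)$ establishes (2). For (3), since $R$ represents a mod $p$ cycle, $\partial R = pB$ integrally, hence $\partial R^p = pC$ where $C = \sum_{i=1}^p \pm R\times\dotsb\times B\times\dotsb\times R$. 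The slicing contribution $f_\sharp(R^p \res \partial\Delta_{1/i})$ is supported inside $\Sp{(p-1)(n+1)-1} \times \partial B_{\sqrt{1-1/i^2}}(0)$, which escapes to $\Sp{(p-1)(n+1)-1} \times \partial\D^{n+1}$ as $i \to \infty$; in particular, for any compact $K \subset \Sp{(p-1)(n+1)-1} \times \itr\D^{n+1}$ and large enough $i$ the slicing term does not meet $K$. Thus on the interior,
\begin{equation*}
    \partial S \res (\Sp{(p-1)(n+1)-1} \times \itr\D^{n+1}) = p\, f_\sharp(C\res(\Sp{p(n+1)-1}\setminus\Delta)),
\end{equation*}
which is $p$ times a $\ZZ_p$-invariant integral chain. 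Applying the fundamental-domain argument from (2) again yields $q_\sharp f_\sharp(C\res\dotsc) = p \cdot (\text{integral chain})$; combined with $p\partial T = q_\sharp \partial S$, this forces $\partial T \res (L_n \times \itr\D^{n+1})$ to equal $p$ times an integral chain. Hence $[T]$ is a mod $p$ relative cycle in $\Z{pk}(L_n \times \D^{n+1}, L_n \times \partial\D^{n+1})$.

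\textbf{Main obstacle.} The technically delicate step is bounding $\M(f_\sharp(\partial R^p \res \Sp{p(n+1)-1}_{1/i}))$ uniformly in $i$. Since the Lipschitz constant of $f$ blows up as $i \to \infty$, no Lipschitz argument suffices, and one must adapt the careful cube-by-cube stratification of the proof of \cref{thm:CartPowerMassNearDiagonal} to the chain $\partial R^p$ in place of $R^p$, exploiting that $\partial R^p$ is itself a polyhedral chain whose top-dimensional cells are indexed by $(p-1)$-tuples of $k$-cells of $R$ together with one $(k-1)$-cell of $\partial R$.
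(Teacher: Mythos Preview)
Your Part (1) tracks the paper's proof closely: the Cauchy estimate via \cref{thm:CartPowerMassNearDiagonal}, the split at $r_0$ for the mass bound, and the appeal to the Compactness Theorem are all exactly what the paper does. The one substantive divergence is your ``main obstacle.'' The paper works throughout Part (1) in the category of integral currents \emph{mod $p$}; since $R$ represents a mod $p$ cycle, $\partial R = 0$ there, hence $\partial(R^p) = 0$, and so
\[
\partial S_i \;=\; f_\sharp\bigl(R^p \res \partial\Delta_{1/i}\bigr)
\]
consists of the slice term alone, which \cref{thm:CartPowerMassNearDiagonal} bounds directly by $C\M(R)$. No adaptation of the cube-by-cube stratification to $\partial R^p$ is needed. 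If you prefer to stay over $\ZZ$, the obstacle is still avoidable: on $\Sp{n}$ one can always choose the integral polyhedral representative $R$ with $\partial R = 0$ integrally (the obstruction lives in $H_{k-1}(\Sp{n};\ZZ)$, which vanishes for $0<k<n$, and the cases $k=0,n$ are immediate), so again the $\partial R^p$ term drops out. Either way, what you flagged as the delicate step is not actually present.

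For Parts (2) and (3) the paper is extremely terse (``it can be verified''), and your fundamental-domain argument for (2) together with the observation that the slice contribution escapes to $L_n \times \partial\D^{n+1}$ for (3) are a correct and more explicit elaboration of what the paper leaves implicit.
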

\begin{proof}
    This result is immediate for 0-cycles as they are formal sums of points. Henceforth we assume that $k \geq 1$. First let us establish that each $S_i$ is an integral current mod $p$. By the Closure Theorem \cite[p.~432]{Federer_GMT}, it suffices to show that $\M(S_i)$ and $\M(\partial S_i)$ are finite. The former is true because $R^p \res \Sp{p(n+1)-1}_{1/i}$ has finite mass and over its support, $f$ has Jacobian bounded by some power of $i$ by \cref{lem:CycProdMapJacobian}. The latter is true because $\partial\Delta_{1/i}$ intersects $R^p$ transversally so that the intersection has finite mass, and the Jacobian of $f$ has a uniform bound over the intersection.

    The sequence $S_i$ is Cauchy in the mass metric, and thus also in the flat metric: Fix any integer $k$, and consider any $i > j > k$. Then \cref{thm:CartPowerMassNearDiagonal} implies that $\M(S_i - S_j) \leq C\M(R)/k$. To apply the Compactness Theorem for integral currents mod $p$ \cite[p.~432]{Federer_GMT} to the Cauchy sequence $S_i$, we need uniform bounds on $\M(S_i)$ and $\M(\partial S_i)$. Let $r_0$ be the threshold for ``sufficiently small'' in \cref{thm:CartPowerMassNearDiagonal}. We may drop initial terms of the sequence until every $1/i < r_0$. Then \cref{thm:CartPowerMassNearDiagonal} and \cref{lem:StretchingAwayFromDiagonal} imply that
    \begin{equation*}
        \M(S_i) \leq \M(f_\sharp(R^p \res \Sp{p(n+1)-1}_{r_0})) + \M(f_\sharp(R^p \res \Delta_{r_0})) \leq \frac{\M(R)^p}{r_0^{pk}} + Cr_0\M(R).
    \end{equation*}
    Additionally, \cref{thm:CartPowerMassNearDiagonal} implies that $\M(\partial S_i) = \M(f_\sharp(R^p \res \partial\Delta_{1/i})) \leq C\M(R)$. Thus the Compactness Theorem guarantees that $S_i$ converges in the flat topology to an integral chain in $\I{k}(L_n \times \D^{n+1}; \Fl)$.
    
    It can be verified that the $S_i$'s are restrictions of $S$. Since each of the $S_i$'s are $\ZZ_p$-invariant, $S$ is also $\ZZ_p$-invariant. As a result, the image of $S$ under the covering map is $pT$ for some flat chain $T$ in $L \times \D^{n+1}$. It can be verified that $T$ is a mod $p$ relative cycle.
\end{proof}

Let $\cyc$ denote the map from \cref{prop:CycProdMap_Existence_PolyCycles} that maps polyhedral cycles to $\Z{k}(\Sp{n})$ to $\Z{pk}(L \times \D^{n+1}, L \times \partial\D^{n+1})$. Let $\D^m_r$ denote a closed $m$-disk of radius $r$.

\begin{proposition}
    \label{prop:CycProdMap_Continuous}
    $\cyc$ is continuous over the space of polyhedral cycles. It also extends to a continuous map $\cyc : \Z{k}(\Sp{n}) \mapsto \Z{pk}(L \times \D^{n+1}, L \times \partial\D^{n+1})$.
\end{proposition}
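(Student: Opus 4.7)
The plan is to first establish continuity on polyhedral cycles in the flat metric within each mass-bounded subspace, and then extend by uniform continuity and density; by Lemma~\ref{lem:ContinuityIndLimTop} this suffices for continuity in the inductive limit topology, since Proposition~\ref{prop:CycProdMap_Existence_PolyCycles} already bounds the mass of $\cyc(R)$ in terms of $\M(R)$. Given polyhedral $R_1, R_2$ with $\M(R_i) \leq \mu$ and $\Fl(R_1, R_2) = \varepsilon$, I will build an explicit filling of $\cyc(R_1) - \cyc(R_2)$ in $L_n \times \D^{n+1}$ whose mass tends to $0$ with $\varepsilon$. The strategy is to construct a $\ZZ_p$-invariant approximate filling of $R_1^p - R_2^p$ upstairs in $\Sp{p(n+1)-1}$, push it down through $f$, and then fill the small residual error via the standard isoperimetric inequality downstairs.

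Write $R_1 - R_2 = \partial Q + E$ with $\M(Q) + \M(E) < 2\varepsilon$. The telescoping identity
\begin{equation*}
    R_1^p - R_2^p = \partial\Bigl(\textstyle\sum_{j=0}^{p-1} (-1)^{jk} R_1^j \times Q \times R_2^{p-1-j}\Bigr) + E',
\end{equation*}
where $E'$ collects the terms involving $E$, exhibits a (non-invariant) filling of $R_1^p - R_2^p$ of mass at most $C\mu^{p-1}\varepsilon$, together with an error $E'$ of the same order. Since $pk+1 < p(n+1)-1$ (as $k < n$) and the fixed diagonal $\Delta \cong \Sp{n}$ is low-dimensional, the invariant cycle $R_1^p - R_2^p$ also admits an (uncontrolled) invariant filling by equivariant obstruction theory, verifying the hypothesis of Theorem~\ref{thm:InvariantFilling}. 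Applying that theorem with a large parameter $m$ yields a $\ZZ_p$-invariant chain $S_{\mathrm{up}}$ supported in $\Sp{p(n+1)-1}_r$ for $r = 2pn\,\varepsilon^{1/m}$, with $\M(S_{\mathrm{up}}) \leq C\varepsilon^{(m-2pn)/m}$ and
\begin{equation*}
    \M\bigl((R_1^p - R_2^p) - \partial S_{\mathrm{up}}\bigr) \leq C\varepsilon^{(m-2pn)/m} + \M\bigl((R_1^p - R_2^p) \res \Delta_r\bigr).
\end{equation*}

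Now push everything down via $f$, then through the $\ZZ_p$-covering and division by $p$ (valid because all chains are $\ZZ_p$-invariant). Lemma~\ref{lem:StretchingAwayFromDiagonal} bounds the Jacobian of $f$ on $\Sp{p(n+1)-1}_r$ by $Cr^{-(p-1)(n+1)}$, so $\M(f_\sharp S_{\mathrm{up}}) \leq C\varepsilon^{(m-2pn-(p-1)(n+1))/m}$. Theorem~\ref{thm:CartPowerMassNearDiagonal} bounds the pushforward of the residual part near the diagonal by $\M(f_\sharp(R_1^p \res \Delta_r)) + \M(f_\sharp(R_2^p \res \Delta_r)) \leq Cr\mu = C\mu\,\varepsilon^{1/m}$. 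Together with the pushforward of $E'$, the result is a cycle in $L_n \times \D^{n+1}$ of mass $o(1)$ as $\varepsilon \to 0$, which the isoperimetric inequality on $L_n \times \D^{n+1}$ fills by a chain of comparably small mass. Adding this to (the quotient of) $f_\sharp S_{\mathrm{up}}$ furnishes the desired filling of $\cyc(R_1) - \cyc(R_2)$, proving flat continuity on polyhedral cycles of mass $\leq \mu$. The extension to all of $\Z{k}(\Sp{n})$ is standard: polyhedral cycles are dense in each $\Z{k}(\Sp{n})^\mu$ (e.g.\ via Lemma~\ref{lem:CInfinityChainsDense} combined with the Federer-Fleming deformation), the above estimates give uniform continuity, and Lemma~\ref{lem:ContinuityIndLimTop} promotes the extension to the inductive limit topology.

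The main obstacle is balancing the two competing $\varepsilon$-exponents: the invariant filling has mass $\varepsilon^{(m-2pn)/m}$ (small for large $m$) but is supported near the diagonal at distance $r \sim \varepsilon^{1/m}$ (forced to zero as $m$ grows), exactly where the Jacobian of $f$ blows up as $r^{-(p-1)(n+1)}$. These exponents combine to give $\varepsilon^{(m-2pn-(p-1)(n+1))/m}$, positive precisely when $m > 2pn + (p-1)(n+1)$, so one chooses $m$ above this threshold. A secondary technical point is the existence of the invariant filling needed to apply Theorem~\ref{thm:InvariantFilling}: naive averaging fails mod $p$ because $1/p$ is unavailable, so one instead invokes equivariant obstruction theory, using that the relevant dimension lies strictly below the ambient dimension and above the dimension of the fixed set.
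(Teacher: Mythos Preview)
Your approach mirrors the paper's: build a small-mass filling of $R_1^p - R_2^p$, upgrade it to a $\ZZ_p$-invariant approximate filling via Theorem~\ref{thm:InvariantFilling}, push through $f$ with the Jacobian control of Lemma~\ref{lem:StretchingAwayFromDiagonal}, handle the near-diagonal residual via Theorem~\ref{thm:CartPowerMassNearDiagonal}, balance the exponents by choosing $m$ large, and extend by uniform continuity on mass-bounded subsets. Your explicit verification of the uncontrolled invariant-filling hypothesis (via equivariant obstruction theory, since averaging is unavailable mod $p$) is a point the paper's proof glosses over.

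Two wrinkles in your write-up are worth correcting. First, the paper eliminates your error term at the outset: since $k < n$, the cycle $R_1 - R_2$ is null-homologous in $\Sp{n}$, so the isoperimetric inequality gives $R_1 - R_2 = \partial Q$ with $\M(Q) \leq C\varepsilon$ and hence $E = 0$, $E' = 0$. You should do the same, because with $E \neq 0$ your argument becomes muddled: Theorem~\ref{thm:InvariantFilling} must be applied to the invariant cycle $R_1^p - R_2^p$ itself, so $E'$ has to be absorbed (via an isoperimetric filling upstairs) into the small filling \emph{before} that step, after which your later ``pushforward of $E'$'' is both superfluous and ill-defined (the Jacobian of $f$ is unbounded on $\spt E' \cap \Delta_r$, so $f_\sharp E'$ need not have finite mass). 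Second, your final isoperimetric step in $L_n \times \D^{n+1}$ is unnecessary and potentially problematic (that space has nontrivial homology): the flat distance $\Fl(\cyc(R_1),\cyc(R_2))$ is already bounded by the mass of the pushed-down invariant filling plus the mass of the residual, with no need to fill the latter.
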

\begin{proof}
    For any $0 < \varepsilon < 1$ and $\mu > 0$, let $\delta = \varepsilon^{5pn}/(\mu + 1)^{6pn} < 1$. Consider any polyhedral cycles $R_1, R_2 \in \Z{k}(\Sp{n})$ within flat distance $\delta$ of each other, such that $\M(R_1), \M(R_2) < \mu$. Without loss of generality, assume that they are within the same grid size. By the isoperimetric inequality, $R_1 - R_2$ is filled by a chain $Q$ of mass at most $C(\delta + \delta^{\frac{k+1}k}) \leq C\delta$.

    Expanding $R_1^p = (R_2 + \partial Q)^p$ using the binomial theorem gives a sum of monomials that, other than $R_2^p$ and $(\partial Q)^p$, can be groups of $p$ terms so that each group consists of different cyclic permutations of the same term. Consider any such group, whose sum is a $\ZZ_p$-invariant cycle $G$. Each term in the group must have at least one factor of $\partial Q$, so one of the terms must be $\partial Q \times A_2 \times \dotsb \times A_p = \partial(Q \times A_2 \times \dotsb \times A_p)$, where each $A_i$ is either $\partial Q$ or $R_2$. Thus $G$ can be filled by $\sum_\text{cyc} Q \times A_2 \times \dotsb \times A_p$, a chain of mass at most $C\delta\mu^{p-1}$. Similarly, $(\partial Q)^p$ can be filled by $Q \times (\partial Q^{p-1}$ which is a chain of mass at most $C\delta\mu^{p-1}$. Summing over all such groups $G$ and $(\partial Q)^p$, we obtain that $R_1^p - R_2^p$ is an invariant cycle with a filling by a chain of mass at most $C\delta\mu^{p-1}$.

    By \cref{thm:InvariantFilling} for $m = 5pn$, there exists a $\ZZ_p$-invariant chain $S$ in $\Sp{p(n+1)-1}$ of mass at most $C(\delta \mu^{p-1})^{3/5}$ such that $\M(R_1^p - R_2^p - \partial S) \leq C(\delta \mu^{p-1})^{3/5} + \M((R_1^p - R_2^p) \res \Delta_r)$, where $r = 2pn(\delta \mu^{p-1})^{1/5pn}$. Pushing all these chains by $f_\sharp$, \cref{lem:StretchingAwayFromDiagonal} implies that that the Jacobian of $f|_{\spt S}$ is at most $1/r^{k+1} \leq C(\delta \mu^{p-1})^{2/5}$. Thus $\M(f_\sharp(S)) \leq C\delta \mu^{p-1} \leq C\varepsilon^{5pn}(\frac{\mu}{\mu+1})^{p-1}/(\mu + 1)^{6pn-p+1} \leq C\varepsilon$. Similarly, if we let $\tilde{T}_i$ be the lift of $\cyc(R_i)$ over the universal cover, then
    \begin{align*}
        \M(\tilde{T}_1 - \tilde{T}_2 - \partial S) &\leq C\varepsilon + \M(f_\sharp((R_1^p - R_2^p) \res \Delta_r))
        \\
        (\text{\cref{thm:CartPowerMassNearDiagonal}}) &\leq C\varepsilon + Cr\mu
        \\
        &= C\varepsilon + C(\delta \mu^{p-1})^{1/5pn}\mu
        \\
        &= C\varepsilon + C\varepsilon \left(\frac{\mu^{p-1}}{(\mu+1)^{6pn}}\right)^{1/5pn}\mu
        \\
        &\leq C\varepsilon + C\varepsilon(\mu + 1)^{\frac{p - 1 + 5pn - 6pn}{5pn}}
        \\
        &\leq C\varepsilon.
    \end{align*}
    Therefore, $\Fl(\cyc(R_1), \cyc(R_2)) \leq C\varepsilon/p$.

    So far we have established that $\cyc$ is uniformly continuous in the flat metric as a map from the polyhedral cycles in $\Z{k}(\Sp{n}; \Fl)^{< \mu} = \{T \in \Z{k}(\Sp{n}; \Fl) : \M(T) < \mu\}$ to $\I{pk}(L_n \times \D^{n+1}; \Fl)$. Moreover, the closure of the image is compact because of \cref{eq:CycProdMap_BoundsOnMassAndBoundary} and the Compactness Theorem for integral currents mod $p$ \cite[p.~432]{Federer_GMT}. Thus $\cyc$ extends to a continuous map $\Z{k}(\Sp{n}; \Fl)^{< \mu} \to \Z{pk}(L_n \times \D^{n+1}, L_n \times \partial\D^{n+1}; \Fl)$. The extensions must be consistent for every value of $\mu$, giving a continuous map $\cyc : \Z{k}(\Sp{n}; \Fl) \to \Z{pk}(L_n \times \D^{n+1}, L_n \times \partial\D^{n+1}; \Fl)$. The proposition then follows.
\end{proof}

\subsection{Geometric representations of the Bockstein homomorphisms}
\label{sec:Bockstein}

Now we are in a position to prove \cref{thm:Bockstein}. Observe that the map $b$ in \cref{eq:Bockstein} is equal to $\cyc : \Z0(\Sp{n}) \to \Z0(L_n \times \D^{n+1}, L_n \times \partial\D^{n+1})$ composed with the map that projects every relative 0-cycle in $L_n \times \D^{n+1}$ onto the second factor $\D^{n+1}$. Combined with \cref{prop:CycProdMap_Continuous}, this implies that $b$ is continuous.

\begin{proof}[Proof of \cref{thm:Bockstein}]
    Let us first handle the case where $p > 2$. Since $H^{n+1}(K(\ZZ_p,n), \ZZ_p) \iso \ZZ_p$, it suffices to prove that $b^*(\iota_{n+1})$ evaluates to the same value as $\beta(\iota_n)$ on some generator of $H_{n+1}(\Z[rel]0(\D^{n+1}); \ZZ_p)$.
    
    Let us first prove this theorem for $n = 1$. Let $X$ be the mapping cone of the map $\Sp1 \to \Sp1$ defined by $w \mapsto w^p$. Define the map $g : X \to \Z0(\Sp1)$ as follows. Parametrize the points in $X$ by $(t,w)$ where $w \in \Sp1 \subset \C$ and $t \in I$, such that all points $(0,w)$ are identified and each point $(1,w)$ is identified with $(1, \zeta w)$, where $\zeta = e^{2\pi i/p}$. Define
    \begin{equation}
        \label{eq:Bockstein_MappingConeEmbedding}
        g(t,w) = \{we^{2\pi i tk/p}~|~ {-(p-1)/2} \leq k \leq (p-1)/2\}.
    \end{equation}
    This is well-defined because $g(0,w) = 0$ for all $w$, and $g(1,w)$ is $w$ times the roots of unity, so $g(1,\zeta w) = g(1,w)$.

    $g$ represents the cohomology class in $g^*(\iota_1) \in H^1(X)$, and it evaluates on the 1-skeleton of $X$, denoted by $X^1$, by the Almgren isomorphism: gluing $g|_{X^1}$ into exactly the fundamental homology class of $\Sp1$, i.e. $1 \in H_1(\Sp1) \iso \ZZ_p$. (Orientation is preserved.) Let us compute $\beta g^*(\iota_1) \in H^2(X)$. Thus we consider the map of short exact sequences among cellular cochains,
    \begin{equation*}
    \begin{tikzcd}
        0 \rar & C^1(X; \ZZ_p) \rar{p}\dar & C^1(X; \ZZ_{p^2}) \rar\dar & C^1(X; \ZZ_p) \rar\dar & 0
        \\
        0 \rar & C^2(X; \ZZ_p) \rar{p} & C^2(X; \ZZ_{p^2}) \rar & C^2(X; \ZZ_p) \rar & 0
    \end{tikzcd}
    \end{equation*}
    We need to go from the top-right group to the bottom-left group by the snake lemma. $g^*(\iota_1)$ is represented by a cochain $\psi$ in the top-right group that evaluates to 1 on the 1-cell $e^1 \subset X$. That cochain is the image of another cochain $\varphi \in C^1(X; \ZZ_{p^2})$ that evaluates to $1 \in \ZZ_{p^2}$ on $e^1$. Then $d\varphi$ evaluates on the 2-cell $e^2$ of $X$ to $\varphi(pe^1) = p\varphi(e^1) = p \in \ZZ_{p^2}$. Dividing by $p$ gives a cochain that evaluates to $1 \in \ZZ_p$ on $e^2$. Therefore $\beta g^*(\iota_1)$ has to evaluate to 1 on $e^2$.

    Let us now evaluate $g^*b^*(\iota_2)$ on $e^2$. It is the same as evaluating $\iota_2$ on the homology class $[b \circ g] \in H_2(\Z[rel]0(\D^2))$ represented by the family of cycles $b \circ g : X \to \Z[rel]0(\D^2)$. To evaluate the fundamental cohomology class $\iota_2$ on $[b \circ g]$, observe that over the 1-skeleton of $X$, which is $\Sp1$, $b \circ g$ consists of points that are a uniform distance away from $\partial\D^2$. This is because each point in $b \circ g(X^1)$ is the barycentre of $p$ points $x_1, \dotsc, x_p$ that are not all the same, and at least one of those points, which we may assume to be $x_1$ without loss of generality, is at least a distance of $2\pi/p$ (along $\partial\D^2$) away from the others. The barycenter is equal to $\frac{p-1}p y + \frac1p x_1$, where $y = \frac1{p-1}(x_2 + \dotsb + x_p)$. Moreover, the arc of points on $\partial\D^2$ at that distance away from $x$, has a convex hull $K$ that contains $y$. Since $\frac{p-1}p K + \frac1p x$ is a uniform distance from $\partial\D^2$, the same is true of the barycenter.

    Therefore we can contract $b \circ g(X^1)$ to the origin, and get an equivalent map that is constant over $X^1$, which descends to a map $\tilde{b} : X/X^1 \he S\Sp1 \he \Sp2 \to \Z[rel]0(\D^2)$. Thus we may see $\tilde{b}$ as an element $[\tilde{b}] \in \pi_2(\Z[rel]0(\D^2))$ which is sent to $[b \circ g]$ by the Hurewicz homomorphism. For each $w \in \Sp1$, $\tilde{b}(-,w) : [0,1] \to \Z[rel]0(\D^2)$ for fixed $w$ and $t$ going from 0 to 1 traces out $m$ paths from $w$ to the origin. Thus gluing $\tilde{b}$ according to the Almgren isomorphism means sweeping each of those paths into a copy of $\D^2$ in the opposite orientation, eventually giving us $(-m)$ times of the fundamental homology class of $(\D^2, \partial\D^2)$. Thus we want to compute $-m$ modulo $p$.
    
    $m$ is the number of points in $\cyc(\text{sum of $p$ distinct points})$, which is the number of orbits of size $p$ in $(\ZZ_p)^p$ under the cyclic shift action, which is $(p^p - p)/p = p^{p-1} - 1 \equiv -1 \pmod{p}$. Therefore the homology class $[b \circ g]$ is the image under the Hurewicz homomorphism of $[\tilde{b}]$ which corresponds to $-m \equiv 1 \in H_2(\D^2, \partial\D^2)$ via the Almgren isomorphism. By the definition of fundamental cohomology class \cite[p.~3]{MosherTangora_CohomOps}, $\iota_2([b \circ g]) = [\tilde{b}] = 1$.

    Therefore $g^*b^*(\iota_2) = \beta g^*(\iota_1) = g^*\beta(\iota_1) \neq 0$. This implies that $g^*$ is not the zero map on $H^2$, which means it must be an isomorphism on $H^2$. Thus $\beta(\iota_1) = b^*(\iota_2)$. Therefore $b$ represents $\beta$. This proves the theorem for $n = 1$.

    For $n \geq 2$, define the map $G : S^{n-1}X \to \Z0(S^{n-1}\Sp1) \he \Z0(\Sp{n})$ by $G(s,x) = \{s\} \times g(x)$. $G$ represents the cohomology class $G^*(\iota_n) \in H^n(X)$, and it evaluates on the $n$-skeleton $(S^{n-1}X)^n \he S^{n-1}\Sp1$ by the Almgren isomorphism: gluing $G|_{(S^{n-1}X)^n}$ gives exactly the fundamental homology class of $\Sp{n}$, i.e. $1 \in H_n(\Sp{n}) \iso \ZZ_p$. A similar snake lemma argument as before shows that $\beta G^*(\iota_n)$ evaluates to 1 on the $(n+1)$-cell $e^{n+1}$ of $S^{n-1}X$.

    Same as before, we can evaluate $G^*b^*(\iota_{n+1})$ by evaluating $\iota_{n+1}$ on the homology class $[b \circ G] \in H_{n+1}(\Z[rel]0(\D^{n+1}))$ represented by the family of cycles $b \circ G : S^{n-1}X \to \Z[rel]0(\D^{n+1})$. Like before, we can contract $(b \circ G)|_{(S^{n-1}X)^n}$ to $I^{n-1} \times \{0\} \subset S^{n-1}\D^2 \he \D^{n+1}$ (the origins of every slice) and then push the stack of origins to the side, into $\partial\D^{n+1}$. Then the same arguments as before show that $\iota_{n+1}([b \circ G]) = 1$. Therefore $G^*b^*(\iota_{n+1}) = \beta G^*(\iota_n) = G^*\beta(\iota_n) \neq 0$. This implies that $G^*$ is not the zero map on $H^{n+1}$, which means it must be an isomorphism on $H^{n+1}$. Thus $\beta(\iota_n) = b^*(\iota_{n+1})$. Therefore $b$ represents $\beta$.

    For $p = 2$, the proof is largely the same with a few modifications. Observe that the mapping cone $X$ is now $\RP^2$, and replace \cref{eq:Bockstein_MappingConeEmbedding} with the following formula:
    \begin{equation*}
        g(t,w) = \{we^{-i\pi t/2}, we^{i\pi t/2}\}.
    \end{equation*}
    The rest of the proof follows similarly.
\end{proof}

\section{Geometric Representations of the Steenrod Powers}
\label{sec:SteenrodPowers}

Our goal for this section is to formalize and prove \cref{thm:SteenrodPowers}. It can be rigorously stated as follows:

\begin{theorem}
    \label{thm:SteenrodPowers_Formal}
    For each prime $p$, $i \geq 0$, and $0 \leq k < n$. Let $m = n - k$. Then $P^i : H^m(-) \to H^{m + 2i(p-1)}(-)$ and $\beta \circ P^i : H^m(-) \to H^{m+2i(p-1)+1}(-)$ have Brown representatives $\Phi_{pk+m+2i(p-1)} \circ \cyc$ and $\Phi_{pk+m+2i(p-1)+1} \circ \cyc$ respectively, where each $\Phi_q$ is a gluing of some piecewise smooth Brown representative $g_q$ for a generator of $H^q(L_n \times \D^{n+1}, L_n \times \partial\D^{n+1})$.
\end{theorem}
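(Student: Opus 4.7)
The plan is to reduce the theorem to Steenrod's equivariant cohomology construction of the $P^i$, which will be recalled in \cref{sec:SteenrodPowersEquivCohom}. For $\alpha \in H^m(X)$, the external $p$-th power $\alpha^{\times p}$ is $\ZZ_p$-equivariant under cyclic permutations and descends to an equivariant class $\Gamma(\alpha) \in H^{pm}(\Sp{\infty} \times_{\ZZ_p} X^{\smsh p})$. The individual operations $P^i(\alpha)$ and $\beta P^i(\alpha)$ are recovered by restricting to a finite-dimensional model of this Borel quotient and reading off the coefficients of the monomials $\omega_1^a \omega_2^b$ (or $\omega_1^a$ if $p=2$) from $H^*(\Sp{\infty}/\ZZ_p)$ in the K\"unneth decomposition. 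Applied with $X = \Z{k}(\Sp{n}) \whe K(\ZZ_p, m)$ and $\alpha = \iota_m$, this reduces the theorem to exhibiting a Brown representative for $\Gamma(\iota_m)$ whose composition with gluings of the $g_q$ recovers those coefficients.

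I will produce that Brown representative in two stages. In the top-degree case $k = 0$, $m = n$, a Brown representative for the generator $\alpha \in H^n(\Sp{n})$ is the identity $\Sp{n} \to \Z0(\Sp{n})$, and hence $\alpha^{\times p}$ is represented by the Cartesian $p$-th power $(\Sp{n})^p \to \Z0((\Sp{n})^p)$. Passing to the quotient by $\ZZ_p$ off the diagonal and applying the homeomorphism $h$ of \cref{eq:SphereQuotientLensBundle} identifies this with $\cyc$ restricted to atomic $0$-cycles. For $k \geq 1$ I apply the gluing strategy of \cref{eq:ExternalOperationByGluing_Intuitive}: gluing the $k=0$ representative over the smash-product cycles $T_1 \smsh \cdots \smsh T_p$ produces a Brown representative for $\Gamma(\iota_m)$ on $\Z{k}(\Sp{n})$ which, by the product-gluing law in \cref{prop:GluingProperties}(\ref{enum:PiecewiseSmoothGluing_GluingOfProduct}), coincides with $\cyc$ after composition with $h$. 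The homotopy uniqueness of gluings (\cref{cor:HomotopicMapsWithGluingChainHomotopy}) ensures that the homotopy class does not depend on the particular choice of seed representative.

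To extract the individual operations, \cref{prop:GluingAsBrownRep} shows that the gluing $\Phi_q$ of a piecewise smooth Brown representative $g_q$ for a generator of $H^q(L_n \times \D^{n+1}, L_n \times \partial\D^{n+1})$ represents that generator on $\Z{pk}(L_n \times \D^{n+1}, L_n \times \partial\D^{n+1})$. Consequently, $\Phi_q \circ \cyc$ represents $\cyc^*$ of that generator. Under the Thom isomorphism for the trivial disk bundle $L_n \times \D^{n+1} \to L_n$, these relative cohomology generators correspond to the monomials $\omega_1^a \omega_2^b \in H^*(L_n)$ above, which are precisely the classes whose pairing with $\Gamma(\iota_m)$ produces the individual Steenrod powers. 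Matching the degrees $q = pk + m + 2i(p-1)$ and $q = pk + m + 2i(p-1) + 1$ then identifies the corresponding compositions with Brown representatives for $P^i(\iota_m)$ and $\beta P^i(\iota_m)$.

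The main obstacle is rigorously verifying that the intuitive formula \cref{eq:ExternalOperationByGluing_Intuitive} actually defines a Brown representative whose homotopy class is determined solely by the $\ZZ_p$-equivariance of the construction, since the ``seed'' map $f$ for $\Gamma(\alpha)$ is not known in closed form. This requires combining the uniqueness properties of gluings established in \cref{sec:Gluings} with a careful analysis of how $\cyc$ interacts with the homeomorphism $h$, so that the $\ZZ_p$-quotient structure on $T^p$ imposed by $\cyc$ genuinely matches the one appearing in Steenrod's construction. Once this identification is secured, the remaining computations reduce to straightforward applications of the K\"unneth formula and the Thom isomorphism to the known cohomology rings of the general lens spaces $L_n$.
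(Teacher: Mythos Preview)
Your outline---build a Brown representative $F$ for $\Gamma(\iota_m)$ on the Borel construction by gluing a seed $f$, restrict along the diagonal, read off components via K\"unneth, then invoke \cref{prop:GluingAsBrownRep}---matches the paper's. But your second paragraph contains a genuine gap that your ``main obstacle'' paragraph misdiagnoses. You assert that the glued map ``coincides with $\cyc$ after composition with $h$,'' yet a representative of $\Gamma(\iota_m)$ has domain $\Sp{\bar n}\times_{\ZZ_p}\Z{k}(\Sp{n})^{\smsh p}$ and codomain $\Z{pk}(\Sp{p(n+k)})$, while $\cyc$ has domain $\Z{k}(\Sp{n})$ and codomain $\Z{pk}(L_n\times\D^{n+1},L_n\times\partial\D^{n+1})$: these cannot coincide. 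Moreover, the lens space $L_n$ in the target of $\cyc$ arises from $(\Sp{p(n+1)-1}\setminus\Delta)/\ZZ_p$ via $h$, and is \emph{not} the $\Lens_p^\infty = \Sp\infty/\ZZ_p$ on which Steenrod's classes $\omega_j$ live; you have not explained why these two lens-space structures should match up.

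The bridge you are missing is the following. After the diagonal restriction and K\"unneth (\cref{cor:KunnethFormula}), each component $\theta_j(\iota_m)$ is represented by $T\mapsto\Phi_{n+k}(\tilde C\times_{\ZZ_p}T^{\smsh p})$ for a lift $\tilde C\subset\Sp{\bar n}$ of a cellular cycle in $\Lens_p^{\bar n}$. For this to factor through $\cyc$ one must choose the seed $f_{n+k}$ so that it \emph{vanishes on a neighbourhood of the diagonal} $\Sp{(p-1)(n+k)-1}\times_{\ZZ_p}\Delta$ (this is the content of \cref{prop:ExternalOperation_Sphere}, obtained by obstruction theory). Your proposed seed, the identity $\Sp{n}\to\Z0(\Sp{n})$ together with its Cartesian power, is not a map out of the Borel construction and has no such vanishing. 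Once the vanishing is in hand, one may replace $T^{\smsh p}$ by $T^{\smsh p}\setminus\Delta$, and a fundamental-domain argument then trades the factor $\tilde C$ for the full sphere $\Sp{(p-1)m-j}$ acting on a single fundamental domain $D$, which $h$ identifies with $\cyc(T)$ (\cref{thm:CyclicFactorInExternalOperation}). A separate fibre-integration computation (\cref{prop:ExternalOperation_Sphere_OrbitSpaceMap}) is still required to show that the resulting map $h_j:L_n\times\D^{n+1}\to\Z{\bullet}(\Sp{p(n+k)})$ represents the Thom class of the correct $\omega_{(p-1)(n+k)-1-j}$ rather than zero. None of these steps follows from uniqueness of gluings or from the product law in \cref{prop:GluingProperties}.
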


At a high level, our proof can be thought of as a combination of gluing with the construction of the Steenrod powers using equivariant cohomology.

\subsection{The equivariant cohomology characterization of the Steenrod powers}
\label{sec:SteenrodPowersEquivCohom}

Let us give a brief overview of the equivariant cohomology construction of the Steenrod powers. We will use a minor modification of this construction as presented in \cite{Hatcher_AlgTop}. For a topological space $X$, its $p$-fold smash product $X^{\wedge p}$ also admits an action of $\ZZ_p$ by cyclic permutation. Let us construct a \emph{homotopy quotient}, a homotopy-theoretic analogue of $X^{\wedge p}/\ZZ_p$ with ``nicer'' properties. Let $d$ be either $\infty$ or an odd integer, and consider the free $\ZZ_p$ action on the contractible infinite-dimensional sphere $\Sp\infty$ corresponding to the covering space $\Sp{d} \to \Lens_p^d$. This gives a free $\ZZ_p$ action on $\Sp{d} \times X^{\wedge p}$ by acting on each factor simultaneously. The quotient by this action is denoted by $\Sp{d} \times_{\ZZ_p} X^{\wedge p}$. When $d = \infty$, it is called the homotopy quotient of $X^{\wedge p}$. The cohomology groups $H^*(\Sp\infty \times_{\ZZ_p} X^{\wedge p})$ are the \emph{equivariant cohomology groups} of $X^{\wedge p}$ with respect to the $\ZZ_p$ action.

The connection between the Steenrod powers and equivariant cohomology can be appreciated by adapting an analogy offered by Steenrod: for each $\alpha \in H^n(X)$, the cup power $\alpha^p$ can be expressed as the pullback of the $p$-fold cross product $\alpha^{\otimes p} = \alpha \otimes \dotsb \otimes \alpha \in H^{pn}(X^p)$ over the diagonal map $X \hookrightarrow X^p$ that sends $x$ to $(x,\dotsc,x)$. We may think of the cross product $H^n(X) \to H^{pn}(X^p)$ as an ``external operation'' as its output does not stay within $H^*(X)$. In contrast, the cup square $H^n(X) \to H^{pn}(X)$ is an ``internal operation.'' Thus in this case, the internal operation is defined by first applying the external operation, then pulling back over a diagonal map.

In the same vein, the Steenrod powers $P^i : H^n(X) \to H^{n + 2i(p-1)}(X)$ are internal operations that are defined by first applying an external operation, $H^n(X) \to H^{pn}(\Sp\infty \times_{\ZZ_p} X^{\wedge p})$, and then pulling back over a ``diagonal map'' $\Lens_p^\infty \times X \hookrightarrow \Sp\infty \times_{\ZZ_p} X^p$. The diagonal map is defined by sending $(t,x)$ to $(\tilde{t},x,\dotsc,x)$, where $\tilde{t}$ is any preimage of $t$ under the covering map $\Sp\infty \to \Lens_p^\infty$.

\begin{theorem}[{\cite[pp.~152--155]{AdemMilgram_CohomFiniteGroups}, \cite[p.~504]{Hatcher_AlgTop}, \cite[p.~112]{Steenrod_CohomOps}}]
    \label{thm:ExternalOperation_Cyclic}
    For each $n \geq 1$, there is a unique family of operations $\Gamma : H^n(X) \to H^{pn}(\Sp{\infty} \times_{\ZZ_p} X^{\smsh p})$ for each cell complex $X$ that satisfies the following properties:
    \begin{enumerate}
        \item Let $i$ be an inclusion of a fiber in the fiber bundle
        \begin{equation}
            \label{eq:HomotopyQuotientFiberBundle}
            X^{\wedge p} \to \Sp{\infty} \times_{\ZZ_p} X^{\wedge p} \to \Lens_p^{\infty},
        \end{equation}
        which is induced by the projection onto the first factor. Then $\Gamma(\alpha)$ pulls back over $X^p \to X^{\smsh p} \xrightarrow{i} \Sp\infty \times_{\ZZ_p}X^{\smsh p}$ to $\alpha^{\otimes p} \in H^{pn}(X^p)$.

        \item Naturality: For every continuous map $f : X \to Y$ and $\alpha \in H^n(Y)$, let $\Gamma(f)$ denote the induced map $\Sp{\infty} \times_{\ZZ_p} X^{\smsh p} \to \Sp{\infty} \times_{\ZZ_p} Y^{\smsh p}$. Then $\Gamma(f)^*\Gamma(\alpha) = \Gamma(f^*\alpha)$.

        \item $\Gamma(\alpha \smile \beta) = \Gamma(\alpha) \smile \Gamma(\beta)$.
    \end{enumerate}

    Furthermore, the Steenrod powers are determined by the classes $\Gamma(\alpha)$ as follows. Choose generators $\omega_k \in H^k(\Lens_p^{\infty})$ that are dual to the $k$-cells in $\Lens_p^{\infty}$. Then $\Gamma(\alpha)$ pulls back along the diagonal inclusion $\Lens_p^{\infty} \times X \hookrightarrow \Sp{\infty} \times_{\ZZ_p} X^{\smsh p}$ to $\sum_k \omega_{pn-n-k} \otimes \theta_k(\alpha)$.
    \begin{itemize}
        \item When $p = 2$, $\Sq^i(\alpha) = \theta_i(\alpha)$.

        \item When $p = 2m+1$ is odd,
        \begin{align*}
            P^i(\alpha) &= (-1)^{i + mn(n+1)/2}(m!)^n\theta_{2i(p-1)}(\alpha)
            \\
            \beta P^i(\alpha) &= (-1)^{i + mn(n+1)/2+1}(m!)^n\theta_{2i(p-1)+1}(\alpha),
        \end{align*}
        and all other $\theta_k(\alpha)$ vanish.
    \end{itemize}
\end{theorem}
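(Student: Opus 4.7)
The plan is to follow the classical Steenrod--Adem--Milgram construction: build $\Gamma(\iota_n)$ at the cochain level, verify uniqueness via naturality and multiplicativity, and derive the Steenrod formulas by computing the diagonal pullback. For existence, by Brown representability and naturality (property 2), it suffices to construct a universal class $\Gamma(\iota_n) \in H^{pn}(\Sp{\infty} \times_{\ZZ_p} K(\ZZ_p, n)^{\smsh p})$, after which $\Gamma(\alpha) = (1 \times_{\ZZ_p} a^{\smsh p})^*\Gamma(\iota_n)$ whenever $a : X \to K(\ZZ_p, n)$ represents $\alpha$. Following \cite[pp.~152--155]{AdemMilgram_CohomFiniteGroups}, if $u$ is a cocycle representing $\iota_n$, then the $p$-fold cross product cocycle $u^{\times p}$ descends to $K(\ZZ_p, n)^{\smsh p}$ (because $u$ vanishes at the basepoint) and is $\ZZ_p$-invariant under cyclic permutation up to the sign $(-1)^{(p-1)n^2}$, which is $+1$ for odd $p$ (since $p-1$ is even) and trivial modulo 2. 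Identifying the cellular cochain complex of the Borel construction with the $\ZZ_p$-invariants of $C^*(\Sp{\infty}) \otimes C^*(K(\ZZ_p, n)^{\smsh p})$, the invariant cocycle $1 \otimes u^{\times p}$ defines $\Gamma(\iota_n)$, and property 1 is automatic. Property 3 is verified cochain-theoretically using that cross products commute with cup products.

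For uniqueness, suppose $\Gamma'$ also satisfies properties 1--3. Then $\Gamma(\iota_n) - \Gamma'(\iota_n)$ restricts trivially to the fiber, hence lies in positive filtration of the Serre spectral sequence of \cref{eq:HomotopyQuotientFiberBundle} and takes the form $\sum_{k \geq 1} \omega_k \otimes \eta_k$ for some classes $\eta_k$. I would force each $\eta_k$ to vanish by applying naturality to the sum map $K(\ZZ_p, n) \times K(\ZZ_p, n) \to K(\ZZ_p, n)$, expanding $\Gamma(\iota_n \otimes 1 + 1 \otimes \iota_n)$ via property 3 and Kunneth, and comparing cross-term coefficients; this is the standard inductive argument showing that the Steenrod powers are uniquely determined by their axioms.

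For the Steenrod power formulas, the pullback $d^*\Gamma(\iota_n)$ along the diagonal $d : \Lens_p^{\infty} \times K(\ZZ_p, n) \hookrightarrow \Sp{\infty} \times_{\ZZ_p} K(\ZZ_p, n)^{\smsh p}$ decomposes by the Kunneth theorem as $\sum_k \omega_{pn - n - k} \otimes \theta_k(\iota_n)$, and naturality extends this definition of $\theta_k$ to all $\alpha \in H^n(-)$. I would verify that the family $\{\theta_k\}$ satisfies the Cartan formula, stability, and unstable vanishing directly from properties 2 and 3 of $\Gamma$, then invoke the axiomatic characterization of \cite[Section~4.L]{Hatcher_AlgTop} to identify them with $\Sq^i$ for $p = 2$ and with $P^i$, $\beta P^i$ up to a scalar for odd $p$. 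The main obstacle is pinning down the scalar $(-1)^{i + mn(n+1)/2}(m!)^n$ with $m = (p-1)/2$: the $\ZZ_p$-representation on $\otimes^p \RR^{n+1}$ splits into $n + 1$ trivial summands and $m(n+1)$ two-dimensional rotation summands (the same decomposition underlying \cref{eq:FourierBasis}), and a character computation on the induced action in the Kunneth expansion for $H^*(\Lens_p^{\infty} \times K(\ZZ_p, n))$ yields the factorial $(m!)^n$, while the sign arises from orientation choices in that splitting; I would follow the explicit computation in \cite[p.~504]{Hatcher_AlgTop}.
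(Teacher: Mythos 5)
This theorem is stated in the paper as a known result, with citations to Steenrod, Adem--Milgram, and Hatcher, and the paper supplies no proof of it; so there is no in-paper argument to compare against. Your sketch is a reasonable reconstruction of the classical construction in those references (equivariant extension of the invariant cocycle $u^{\times p}$, verification of the three properties at the chain level, and identification of the $\theta_k$ with the Steenrod powers via the diagonal restriction and the axiomatic characterization), so in spirit it matches exactly what the paper is delegating to the literature.

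Two soft spots in the reconstruction are worth flagging. First, the identification of the cochains of the Borel construction with the $\ZZ_p$-invariants of $C^*(\Sp\infty)\otimes C^*(K(\ZZ_p,n)^{\smsh p})$ should really be phrased via $\mathrm{Hom}_{\ZZ_p[\ZZ_p]}(W\otimes C_*(-),\ZZ_p)$ with $W=C_*(\Sp\infty)$ a free resolution: taking invariants is not exact over $\ZZ_p$, and it is freeness of $W$ that makes this complex compute the cohomology of the quotient. Second, and more substantively, your uniqueness argument does not go through as written. Since $K(\ZZ_p,n)^{\smsh p}$ is $(pn-1)$-connected, the only positive-filtration contribution to $H^{pn}$ of the Borel construction in the Serre spectral sequence of \cref{eq:HomotopyQuotientFiberBundle} is $E_\infty^{pn,0}$, so the difference $\Gamma(\iota_n)-\Gamma'(\iota_n)$ is a multiple of the class pulled back from $\Lens_p^\infty$ --- not a sum $\sum_{k\ge1}\omega_k\otimes\eta_k$ with nontrivial fiber components $\eta_k$. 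Moreover the ``sum map'' device cannot be used to kill it, because property (3) is multiplicativity, not additivity, and the total power operation $\Gamma$ is famously \emph{not} additive (the deviation is a transfer term). What actually pins down this last ambiguity is the normalization in the ``Furthermore'' clause (the diagonal pullback has no $\omega_{pn}\otimes 1$ term), or equivalently the requirement that $\Gamma$ vanish on the basepoint section; if you want a self-contained uniqueness proof you should make that normalization part of the axioms you invoke, as the cited sources effectively do.
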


\subsection{An overview of the proof of \cref{thm:SteenrodPowers}}

From \cref{thm:ExternalOperation_Cyclic} $\Sq^i(\iota_m)$ is  related to $\Gamma(\iota_m)$ via the K\"unneth formula. In particular, $\sum_i \omega_{n-i} \otimes \Sq^i(\iota_m)$ has Brown representative
\begin{equation*}
\begin{aligned}
    & \RP^\infty \times \Z{k}(\Sp{n}) \to \Z{2k}(\Sp{2n})
    \\
    & (t,T) \mapsto F(t,T,T) = \bigcup_{x \in \{t\} \times_{\ZZ_2} T^{\smsh 2}} f(x).
\end{aligned}
\end{equation*}

We proved a result that expresses the K\"unneth formula geometrically in terms of Brown representatives. It implies that $\Sq^i(\iota_m)$ has the following Brown representative. Let $H$ denote the upper hemisphere of $\Sp{n-i}$:
\begin{equation}
    \label{eq:SteenrodSquareByGluing_Intuitive}
\begin{aligned}
    & \Z{k}(\Sp{n}) \to \Z{2k+n-i}(\Sp{2n})
    \\
    & T \mapsto F(H \times \{(T,T)\}) = \bigcup_{x \in H \times_{\ZZ_2} T^{\smsh 2}} f(x).
\end{aligned}
\end{equation}

We proved that the map in \cref{eq:SteenrodSquareByGluing_Intuitive} factors through $\cyc$. To prove this, observe that $H \times_{\ZZ_2} T^{\smsh 2} \subset \Sp{n-i} \times_{\ZZ_2} (\Sp{n})^{\smsh 2}$. It is useful to view $\Sp{n-i} \times_{\ZZ_2} (\Sp{n})^{\smsh 2}$ as a ``fiber bundle'' over $(\Sp{n})^{\smsh 2}/\ZZ_2$ induced by projection onto the second factor.

\subsection{Brown representatives of the images of $\Gamma$}
\label{sec:BrownRepImageExtOp}

By the naturality property in \cref{thm:ExternalOperation_Cyclic} and Brown representability, $\Gamma$ is determined by $\Gamma(\iota_n)$ for all $n$. First we will find Brown representatives for $\Gamma(\alpha)$, for the generators $\alpha$ of $H^n(\Sp{n})$. These representatives are maps $f_n : \Sp{\infty} \times_{\ZZ_p} (\Sp{n})^{\smsh p} \to \Z0(\Sp{pn})$, and we will construct them so that they are piecewise smooth when restricted to the subspace $\Sp{\bar{n}} \times_{\ZZ_p} (\Sp{n})^{\smsh p}$, where $\bar{n} = 2pn+1$. This subspace is a closed manifold so we can consider its space of cycles, and $f_n$ restricted to this manifold will have a gluing. From this gluing we can derive a Brown representative not exactly for $\Gamma(\iota_n) \in H^{pn}(\Sp\infty \times_{\ZZ_p} \Z{k}(\Sp{n+k})^{\smsh p})$, but for its image under the restriction homomorphism \begin{equation}
    H^{pn}(\Sp{\infty} \times_{\ZZ_p} X^{\smsh p}) \xrightarrow{\iso} H^{pn}(\Sp{\bar{n}} \times_{\ZZ_p} X^{\smsh p}),
\end{equation}
for $X = \Z{k}(\Sp{n+k})$. For general spaces $X$, the fact that this homomorphism is an isomorphism can be deduced from the Serre spectral sequences of the fiber bundle in \cref{eq:HomotopyQuotientFiberBundle} and the analogous bundle for $\Sp{\bar{n}} \times_{\ZZ_p} X^{\wedge p}$. Denote the images of classes $\Gamma(\alpha)$ under this isomorphism by $\Gamma^{\bar{n}}(\alpha)$. $\Gamma^{\bar{n}}$ is also natural in the sense that maps $f : X \to Y$ give rise to maps $\Gamma^{\bar{n}}(f) : \Sp{\bar{n}} \times_{\ZZ_p} X^{\wedge p} \to \Sp{\bar{n}} \times_{\ZZ_p} Y^{\wedge p}$.

Henceforth, by $K(\ZZ_p,n)$ we will mean a cell complex whose $n$-skeleton is $\Sp{n}$.

\begin{proposition}
    \label{prop:ExternalOperation_Sphere}
    For any $m \geq 1$, let $a : \Sp{m} \hookrightarrow K(\ZZ_p,m)$ denote the inclusion of the $m$-skeleton, which represents a generator of $H^m(\Sp{m})$, that we denote by $\alpha$. Then for some $\varepsilon > 0$, $\Gamma(\alpha)$ is represented by a continuous map $f_m : \Sp{\infty} \times_{\ZZ_p} (\Sp{m})^{\smsh p} \to \Z0(\Sp{pm})$ such that:
    \begin{itemize}
        \item $f_m|_{\Sp{\bar{m}} \times_{\ZZ_p} (\Sp{m})^{\smsh p}}$ is piecewise smooth.
    
        \item $f_m(t,x_1,\dotsc,x_p) = 0$ whenever $t$ lies in the $(p-1)m-1$-skeleton $(\Sp{\infty})^{(p-1)m-1}$ and $(x_1, \dotsb, x_p)$ is within distance $\varepsilon$ of the image of the diagonal $\Delta \subset (\Sp{m})^p$ in $(\Sp{m})^{\smsh p}$.
    \end{itemize}

    In addition, for topological spaces $X$ and $Y$, consider the map
    \begin{equation*}
        R : \Sp{\infty} \times_{\ZZ_p} (X \times Y)^{\smsh p} \to (\Sp{\infty} \times_{\ZZ_p} X^{\smsh p}) \times (\Sp{\infty} \times_{\ZZ_p}Y^{\smsh p}),
    \end{equation*}
    defined by $R(t,((x_i,y_i))_i) = ((t,(x_i)_i), (t,(y_i)_i))$. Then for any $m, n \geq 1$, $f_m$, $f_n$ and $f_{m+n}$ are related through the following diagram that commutes up to homotopy:
    \begin{equation}
        \label{eq:ExternalOperation_Sphere_CupProd}
    \begin{tikzcd}
        \Sp\infty \times_{\ZZ_p} (\Sp{m} \times \Sp{n})^{\smsh p} \arrow[d,"R"'] \rar{\varphi} & \Sp\infty \times_{\ZZ_p} (\Sp{m+n})^{\smsh p} \arrow[dd,"f_{m+n}"]
        \\
        (\Sp\infty \times_{\ZZ_p} (\Sp{m})^{\smsh p}) \times (\Sp\infty \times_{\ZZ_p} (\Sp{n})^{\smsh p}) \arrow[d,"f_m \times f_n"']
        \\
        \Z0(\Sp{pm}) \times \Z0(\Sp{pn}) \arrow[r,"\chi"'] & \Z0(\Sp{p(m+n)})
    \end{tikzcd}
    \end{equation}
\end{proposition}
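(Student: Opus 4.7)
Overall, I would construct $f_m$ by successive homotopic refinement in three stages and then treat the cup-product diagram as a last step. Start with abstract existence: since $\Z0(\Sp{pm}) \whe K(\ZZ_p, pm)$ by the Almgren Isomorphism Theorem (\Cref{Thm:AlmgrenIsoFMetric}), Brown representability (\Cref{thm:BrownRepSpacesOfCycles}) together with the existence of $\Gamma$ from \Cref{thm:ExternalOperation_Cyclic} yields an initial continuous pointed map $f_m^{(0)} : \Sp\infty \times_{\ZZ_p} (\Sp{m})^{\smsh p} \to \Z0(\Sp{pm})$ representing $\Gamma(\alpha)$. The subspace $W = \Sp{\bar{m}} \times_{\ZZ_p} (\Sp{m})^{\smsh p}$ is a compact finite-dimensional CW complex, so I would equip it with a polyhedral structure, apply the Piecewise Smooth Approximation Theorem (\Cref{thm:PiecewiseSmoothApprox}) to the restriction of $f_m^{(0)}$, and use the cofibration $W \hookrightarrow \Sp\infty \times_{\ZZ_p} (\Sp{m})^{\smsh p}$ to extend the homotopy globally, producing $f_m^{(1)}$ that is piecewise smooth on $W$ and still represents $\Gamma(\alpha)$.

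The core of the proof is enforcing the vanishing near the diagonal. Let $\tilde{U} \subset (\Sp{m})^p$ be a small $\ZZ_p$-invariant $\varepsilon$-neighborhood of the diagonal $\Delta$, let $U$ be its image in $(\Sp{m})^{\smsh p}$, and set $V = (\Sp\infty)^{(p-1)m-1} \times_{\ZZ_p} U$. Since $\ZZ_p$ acts trivially on $\Delta$ and $U$ equivariantly deformation retracts onto the image of $\Delta \cong \Sp{m}$, the space $V$ deformation retracts onto $L \times \Sp{m}$, where $L$ is the $((p-1)m-1)$-skeleton of $\Sp\infty/\ZZ_p$. By the diagonal pullback identity in \Cref{thm:ExternalOperation_Cyclic}, the pullback of $\Gamma(\alpha)$ to $(\Sp\infty/\ZZ_p) \times \Sp{m}$ equals $\sum_k \omega_{(p-1)m-k} \otimes \theta_k(\alpha)$. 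The class $\theta_k(\alpha) \in H^{m+k}(\Sp{m})$ vanishes unless $k = 0$ (the term $k=m$ lives in $H^{2m}(\Sp{m}) = 0$ for $m \geq 1$), so only a scalar multiple of $\omega_{(p-1)m} \otimes \alpha$ survives, and $\omega_{(p-1)m}$ further vanishes on the skeleton $L$ by dimension. Hence $f_m^{(1)}|_V$ represents the zero class, and is therefore nullhomotopic in $\Z0(\Sp{pm})$. Using the homotopy extension property for the cofibration $V \hookrightarrow \Sp\infty \times_{\ZZ_p} (\Sp{m})^{\smsh p}$, together with the relative form of \Cref{thm:PiecewiseSmoothApprox} applied on $V \cap W$ to preserve the piecewise smooth structure, I would deform $f_m^{(1)}$ to the desired map $f_m$ that vanishes identically on $V$.

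The cup-product diagram follows readily: both composites land in $\Z0(\Sp{p(m+n)})$, and using the cross-product Brown representative from \Cref{prop:CrossProduct_BrownRep}, the lower-left composite $\chi \circ (f_m \times f_n) \circ R$ pulls $\iota_{p(m+n)}$ back to $R^*(\Gamma(\alpha) \smile \Gamma(\beta))$, while the upper-right composite $f_{m+n} \circ \varphi$ pulls it back to $\varphi^* \Gamma(\alpha \smile \beta)$ under the identification $\Sp{m} \smsh \Sp{n} \whe \Sp{m+n}$; the multiplicativity of $\Gamma$ from \Cref{thm:ExternalOperation_Cyclic}(3) equates these classes, so Brown representability provides the homotopy commuting the diagram. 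The main obstacle throughout is executing Stage~3 compatibly with both the $\ZZ_p$-equivariance and the piecewise smooth structure on $W$: I would handle this by always working on the equivariant cover $\Sp\infty \times (\Sp{m})^{\smsh p}$ so that all homotopies descend to the Borel construction, and by carrying out the nullhomotopy abstractly before invoking the relative piecewise smooth approximation theorem on $V \cap W$ to refine it.
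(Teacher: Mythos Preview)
Your approach is correct and matches the paper's overall architecture: obtain a representative by Brown representability, show the restriction to the diagonal region is nullhomotopic, use homotopy extension to force literal vanishing, and deduce the diagram from the multiplicativity of $\Gamma$.

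The one genuine difference is in how you establish the vanishing on $V$. You argue directly on $\Sp{m}$ via the diagonal-pullback formula $\sum_k \omega_{(p-1)m-k}\otimes\theta_k(\alpha)$, using that $H^{m+k}(\Sp{m})=0$ for $k>0$ to isolate the single term $\omega_{(p-1)m}\otimes\alpha$, which then dies on the $((p-1)m-1)$-skeleton. The paper instead passes first to $\Sp\infty\times_{\ZZ_p}K(\ZZ_p,m)^{\smsh p}$ and uses a pure dimension count: the subspace $\tilde\Delta=\Sp{(p-1)m-1}\times_{\ZZ_p}\Delta$ is only $(pm-1)$-dimensional, so attaching it to the basepoint line cannot change $H^{pm}$; one then pulls back along $\Gamma(a)$ at the end to obtain $f_m$. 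Your route is more direct for the sphere and makes the obstructing class visible; the paper's route requires no knowledge of the $\theta_k$ whatsoever and would work unchanged for any $X$ with $\dim X = m$. Your explicit handling of piecewise smoothness via the relative form of \Cref{thm:PiecewiseSmoothApprox} is something the paper leaves implicit.

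One small notational slip in your diagram paragraph: what lives on the product $(\Sp\infty\times_{\ZZ_p}(\Sp{m})^{\smsh p})\times(\Sp\infty\times_{\ZZ_p}(\Sp{n})^{\smsh p})$ before pulling back along $R$ is the \emph{cross} product $\Gamma(\alpha)\otimes\Gamma(\beta)$, and likewise $\alpha\smile\beta$ should be $\alpha\times\beta$; the cup products only appear after the pullback, as the paper writes explicitly as $\Gamma(\pr_1)^*\Gamma(\alpha)\smile\Gamma(\pr_2)^*\Gamma(\beta)$.
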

\begin{proof}
    Let $\Delta \subset (\Sp{m})^{\smsh p}$ denote the diagonal, and consider the $(pm-1)$-dimensional cell complex $\tilde\Delta = \Sp{(p-1)m-1} \times_{\ZZ_p} \Delta$. Let $a : \Sp{m} \hookrightarrow K(\ZZ_p,m)$ be the inclusion of the $m$-skeleton, which generates $H^m(\Sp{m})$. Then $\Gamma(a)$ actually restricts to an embedding on $\tilde\Delta$. Observe that $\Lens_p^\infty \times \{*\} \cup \tilde\Delta$ can be formed from $\Lens_p^\infty \times \{*\}$ by attaching cells of dimension at most $pm-1$, which does not change the $pm$-dimensional cohomology group. Thus $\Gamma(\iota_m)$ still pulls back to zero over the inclusion $\Lens_p^\infty \times \{*\} \cup \tilde\Delta \hookrightarrow \Sp\infty \times_{\ZZ_p} K(\ZZ_p,m)^{\wedge p}$.

    In other words, we can represent $\Gamma(\iota_m)$ by a map $h : \Sp\infty \times_{\ZZ_p} K(\ZZ_p,m)^{\wedge p} \to \Z0(\Sp{pm})$ that becomes nullhomotopic when restricted to $\Lens_p^\infty \times \{*\}\cup \tilde\Delta$. By the homotopy extension property, we may assume that $h$ actually vanishes over $\Lens_p^\infty \times \{*\} \cup \tilde\Delta$. In fact, we can assume that $h$ vanishes over the union of $\Lens_p^\infty \times \{*\}$ and the closed $\varepsilon$-neighbourhood of the diagonal, for sufficiently small $\varepsilon$.
    
    By naturality, $\Gamma(\alpha) = \Gamma(a^*\iota_m) = \Gamma(a)^*\Gamma(\iota_m)$. Thus by Brown representability, we can represent $\Gamma(\alpha)$ by $f_m = h \circ \Gamma(a)$.
    
    Let us now prove that the diagram \eqref{eq:ExternalOperation_Sphere_CupProd} commutes. From our construction of $f_m$ and $f_n$, we know that $(f_m)^*(\iota_{pm}) = \Gamma(\alpha)$ and $(f_n)^*(\iota_{pn}) = \Gamma(\beta)$, where $\alpha : \Sp{m} \hookrightarrow K(\ZZ_p, m)$ and $\beta : \Sp{n} \hookrightarrow K(\ZZ_p, n)$ are the respective inclusions of the $m$-skeleton and $n$-skeleton.

        The maps in the left column compose into a map $\Sp\infty \times_{\ZZ_p} (\Sp{m} \times \Sp{n})^{\smsh p} \to \Z0(\Sp{pm}) \times \Z0(\Sp{pn})$ that is the pair of maps $(f_m \circ \Gamma(\pr_1), f_n \circ \Gamma(\pr_2))$. Therefore the composite of the left column and bottom row represents the cup product
        \begin{align*}
            \Gamma(\pr_1)^* (f_m)^* (\iota_{pm}) \smile 
            \Gamma(\pr_2)^* (f_n)^* (\iota_{pn})
            &= \Gamma(\pr_1)^* \Gamma(\alpha) \smile \Gamma(\pr_2)^* \Gamma(\beta)
            \\
            \text{(by naturality)} &= \Gamma(\pr_1^*(\alpha)) \smile \Gamma(\pr_2^*(\beta))
            \\
            \text{(by (3))} &= \Gamma(\pr_1^*(\alpha) \smile \pr_2^*(\beta))
            \\
            &= \Gamma(\alpha \times \beta).
        \end{align*}

        On the other hand, the composite of the top row and the right column represents $\Gamma(q)^*(f_{m+n})^*(\iota_{p(m+n)}) = \Gamma(q)^*\Gamma(\gamma)$, where $q$ is the quotient map of the smash product, and $\gamma : \Sp{m+n} \hookrightarrow K(\ZZ_p,m+n)$ is the inclusion of the $(m+n)$-skeleton. By naturality, $\Gamma(q)^*\Gamma(\gamma) = \Gamma(q^*(\gamma))$, but $\gamma \circ q$ represents $\alpha \times \beta$. The lemma statement follows by Brown representability.
\end{proof}

Let $\Phi_m$ denote a gluing for $f_m|_{\Sp{\bar{m}} \times_{\ZZ_p} (\Sp{m})^{\smsh p}}$, which exists by \cref{thm:PiecewiseSmoothGluing_Existence}.

\begin{proposition}
    \label{prop:ExternalOperation_CycleSpace}
    For any $n \geq 1$ and $k \geq 0$, $\Gamma^{\bar{n}}(\iota_n) \in H^{pn}(\Sp{\bar{n}} \times_{\ZZ_p} \Z{k}(\Sp{n+k})^{\smsh p})$ is represented by a map
    \begin{equation}
        \label{eq:ExternalOperation_CycleSpace}
        F_k^n : \Sp{\bar{n}} \times_{\ZZ_p} \Z{k}(\Sp{n+k})^{\smsh p} \to \Sp{\bar{n}} \times_{\ZZ_p} \Z{pk}((\Sp{n+k})^{\smsh p}) \to \Z{pk}(\Sp{\bar{n}} \times_{\ZZ_p} (\Sp{n+k})^{\smsh p}) \xrightarrow{\Phi_{n+k}^{pk}} \Z{pk}(\Sp{p(n+k)}),
    \end{equation}
    where the first map is induced by the Cartesian product in the second variable and the second map is $(t,A) \mapsto \{t\} \times A$.

    Furthermore, for any cell complex $X$ and cohomology class $\alpha \in H^n(X)$ that is represented by a map $a : X \to \Z{k}(\Sp{n+k})$, $\Gamma^{\bar{n}}(\alpha) = \Gamma^{\bar{n}}(a)(F_k^n)^*(\iota_{pn})$.
\end{proposition}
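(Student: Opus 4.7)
The plan is: verify continuity of $F_k^n$; show $(F_k^n)^*(\iota_{pn}) = \Gamma^{\bar n}(\iota_n)$ by pulling back along a highly connected map; then deduce the naturality claim for general $\alpha$.

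First I would verify continuity. $F_k^n$ is a composite of three continuous maps: the smash product $\Z{k}(\Sp{n+k})^{\smsh p} \to \Z{pk}((\Sp{n+k})^{\smsh p})$, which is continuous in the inductive limit topology by \cref{lem:CartProdContinuous}; the pairing $(t,A) \mapsto \{t\} \times A$, continuous for the same reason; and the gluing $\Phi_{n+k}^{pk}$, continuous by \cref{thm:PiecewiseSmoothGluing_Existence}. The composite descends to the $\ZZ_p$-quotient because the smash product is cyclically symmetric and the piecewise smooth structure on $f_{n+k}|_{\Sp{\overline{n+k}} \times_{\ZZ_p}(\Sp{n+k})^{\smsh p}}$ can be chosen $\ZZ_p$-equivariant.

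The main step is to show $(F_k^n)^*(\iota_{pn}) = \Gamma^{\bar n}(\iota_n)$. Let $a_k : \Sp n \to \Z{k}(\Sp{n+k})$ be a standard Brown representative of the generator $\alpha \in H^n(\Sp n)$. Since $\Z{k}(\Sp{n+k}) \whe K(\ZZ_p, n)$, the map $a_k$ is $n$-connected, so its $p$-fold smash power $(a_k)^{\smsh p} : \Sp{pn} \to \Z{k}(\Sp{n+k})^{\smsh p}$ is $pn$-connected (both sides are $(pn{-}1)$-connected and the induced map on $\pi_{pn}$ is the surjection $\ZZ \twoheadrightarrow \ZZ_p$). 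Applying the five-lemma to the long exact homotopy sequences of the bundles $Y^{\smsh p} \to \Sp{\bar n} \times_{\ZZ_p} Y^{\smsh p} \to \Lens_p^{\bar n}$ for $Y = \Sp n$ and $Y = \Z{k}(\Sp{n+k})$, the induced map $\Gamma^{\bar n}(a_k)$ is also $pn$-connected, so its pullback on $H^{pn}$ is injective. It therefore suffices to show that $F_k^n \circ \Gamma^{\bar n}(a_k)$ represents $\Gamma^{\bar n}(\alpha)$, which by \cref{prop:ExternalOperation_Sphere} is represented by $f_n|_{\Sp{\bar n} \times_{\ZZ_p}(\Sp n)^{\smsh p}}$.

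Unpacking the composite, $(t, x_1, \dotsc, x_p)$ maps to $\Phi_{n+k}^{pk}\bigl(\{t\} \times (a_k(x_1) \smsh \dotsb \smsh a_k(x_p))\bigr)$. Since $a_k(x) = (\{x\} \times \D^k) \cap \Sp{n+k}$ is a $k$-disk meridian, I would use \cref{prop:GluingProperties}(\ref{enum:PiecewiseSmoothGluing_GluingOfProduct}) (gluings commute with Cartesian products by a fixed cycle) together with the natural inclusion $\Sp{pn} \hookrightarrow \Sp{p(n+k)}$ to identify this gluing with a $pk$-cycle obtained by sweeping $f_{n+k}(t, y_1, \dotsc, y_p)$ as $(y_1,\dotsc,y_p)$ ranges over $a_k(x_1) \smsh \dotsb \smsh a_k(x_p)$. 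Using the compatibility of $f_n$ and $f_{n+k}$ from the cross product diagram \cref{eq:ExternalOperation_Sphere_CupProd} and the suspension isomorphism \cref{prop:GluingProperties}(\ref{enum:GluingProperties_Suspension}), this class agrees with that represented by $f_n|_{\Sp{\bar n} \times_{\ZZ_p}(\Sp n)^{\smsh p}}$. The main obstacle will be making this identification rigorous: it requires carefully tracking orientations and matching the $pk$-dimensional gluing target in $\Sp{p(n+k)}$ to the $0$-cycle target $\Sp{pn}$ for $f_n$. Finally, the second claim follows from naturality of $\Gamma^{\bar n}$: for any $a : X \to \Z{k}(\Sp{n+k})$ representing $\alpha$, $\Gamma^{\bar n}(\alpha) = \Gamma^{\bar n}(a)^* \Gamma^{\bar n}(\iota_n) = \Gamma^{\bar n}(a)^*(F_k^n)^*(\iota_{pn})$, as asserted.
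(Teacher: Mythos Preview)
Your connectivity reduction is valid: $\Gamma^{\bar n}(a_k)$ is indeed $pn$-connected by the five-lemma on the homotopy long exact sequences of the two bundles over $\Lens_p^{\bar n}$, so its pullback on $H^{pn}$ is injective, and it suffices to check the identity after restricting to $\Sp^{\bar n} \times_{\ZZ_p} (\Sp^n)^{\smsh p}$. (A minor remark: the descent to the $\ZZ_p$-quotient in your continuity paragraph does not need a $\ZZ_p$-equivariant piecewise smooth structure, since $f_{n+k}$ is already defined on the quotient space.) However, the gap you flag in the final identification is real and not easily closed by the tools you cite. The diagram \eqref{eq:ExternalOperation_Sphere_CupProd} relates $f_n$, $f_k$, and $f_{n+k}$, but deploying it to compare $\Phi_{n+k}^{pk}\bigl(\{t\}\times a_k(x_1)\smsh\dotsb\smsh a_k(x_p)\bigr)$ with $f_n(t,x_1,\dotsc,x_p)$ would require understanding the contribution of $f_k$, which represents the nontrivial class $\Gamma(\alpha_k)$; the product and suspension laws from \cref{prop:GluingProperties} do not by themselves furnish this. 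Note also that agreement on a single fiber $(\Sp^n)^{\smsh p}$ is insufficient, since the restriction $H^{pn}(\Sp^{\bar n}\times_{\ZZ_p}\Sp^{pn}) \to H^{pn}(\Sp^{pn})$ is not injective (the Serre spectral sequence contributes classes from the base).

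The paper takes a different route that avoids this direct comparison. It defines a candidate operation $\tilde\Gamma^{\bar n}(\alpha) := \Gamma^{\bar n}(a)^*(F_k^n)^*(\iota_{pn})$ for \emph{every} space $X$ and $\alpha \in H^n(X)$, and then verifies that $\tilde\Gamma^{\bar n}$ satisfies the three characterizing axioms of \cref{thm:ExternalOperation_Cyclic}. Naturality is immediate from the definition. The fiber-restriction axiom is a direct computation: restricted to a fiber, $F_k^n$ is homotopic to the $p$-fold smash product map, which represents the cross product by \cref{prop:CrossProduct_BrownRep}. The multiplicativity axiom is where \eqref{eq:ExternalOperation_Sphere_CupProd} enters: the paper lifts that diagram to the level of gluings (via the composition law and \cref{lem:HomotopicPiecewiseSmoothChainHomotopy}) to obtain a commuting-up-to-homotopy square relating $F_j^m$, $F_k^n$, and $F_{j+k}^{m+n}$. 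Uniqueness then forces $\tilde\Gamma^{\bar n} = \Gamma^{\bar n}$. In short, the paper trades your single geometric comparison for an axiomatic verification; the cross product diagram is still the key ingredient, but it is used to check multiplicativity of the whole family rather than to bridge $f_n$ and $f_{n+k}$ directly.
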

\begin{proof}
    In order to prove the proposition, the naturality of $\Gamma^{\bar{n}}$ implies that it suffices to prove that when $X$ is a cellular model for $K(\ZZ_p,n)$ with $\Sp{n}$ as its $n$-skeleton, then $\Gamma^{\bar{n}}(\iota_n) = \Gamma^{\bar{n}}(\iota_n)^*(F_k^n)^*(\iota_{pn})$. For any cell complex $X$ and $\alpha \in H^n(X)$, define the operation $\tilde{\Gamma}^{\bar{n}}(\alpha) = \Gamma^{\bar{n}}(\alpha)^*(F_k^n)^*(\iota_{pn})$ where Brown representability lets us view $\alpha$ as a map $X \to K(\ZZ_p,n)$.
    
    We verify that $\tilde{\Gamma}^{\bar{n}}$ satisfies properties (1)--(3) in \cref{thm:ExternalOperation_Cyclic}. The naturality property (2) follows directly from the definition and Brown representability. (1): We prove that $j^*\tilde{\Gamma}^{\bar{n}}(\iota_n) = \iota_n^{\otimes p}$, where $j : K(\ZZ_p,n)^{\smsh p} \hookrightarrow \Sp{\bar{n}} \times_{\ZZ_p} K(\ZZ_p,n)^{\smsh p}$ is an inclusion of the fiber. Consider another inclusion of a fiber, $i : \Z{k}(\Sp{n+k})^{\smsh p} \to \Sp{\bar{n}} \times_{\ZZ_p} \Z{k}(\Sp{n+k})^{\smsh p}$. By the definition of $F_k^n$, the composite map
    \begin{align*}
        & \Z{k}(\Sp{n+k})^{\smsh p} \xrightarrow{i} \Sp{\bar{n}} \times_{\ZZ_p} \Z{k}(\Sp{n+k})^{\smsh p} \xrightarrow{F_k^n} \Z{pk}(\Sp{p(n+k)})
        \\
        ={}& \Z{k}(\Sp{n+k})^{\smsh p} \xrightarrow{(A_1,\dotsc,A_p) \mapsto \{*\} \times A_1 \times \dotsb \times A_p} \Z{pk}(\Sp{\bar{n}} \times_{\ZZ_p} (\Sp{n+k})^{\smsh p}) \xrightarrow{\Phi_{n+k}^{pk}} \Z{pk}(\Sp{p(n+k)})
        \\
        ={}& \Z{k}(\Sp{n+k})^{\smsh p} \xrightarrow{(-)^{\smsh p}} \Z{pk}((\Sp{n+k})^{\smsh p}) \xrightarrow{(i_{n+k})_\sharp} \Z{pk}(\Sp{\bar{n}} \times_{\ZZ_p} (\Sp{n+k})^{\smsh p}) \xrightarrow{\Phi_{n+k}^{pk}} \Z{pk}(\Sp{p(n+k)}),
    \end{align*}
    where $(-)^{\smsh p}$ denotes the $p$-fold smash product. Observe that the composition law in \cref{prop:GluingProperties} implies that $f_{n+k} \circ i_{n+k}$ has a gluing, namely $\Phi_{n+k}^{pk} \circ (i_{n+k})_\sharp$.
    
    Property (1) of $\Gamma$ implies that $i_{n+k}^*(f_{n+k})^*(\iota_{p(n+k)}) = i_{n+k}^*\Gamma^{\bar{n}}(\alpha) = \alpha^{\otimes p}$ (where $\alpha : \Sp{n+k} \hookrightarrow K(\ZZ_p, n+k)$). Since $f_{n+k} \circ i_{n+k}$ represents $\alpha^{\otimes p}$, we can homotope it until it is the inclusion of the smash product, $(\Sp{n+k})^{\smsh p} \xrightarrow{h} \Z0(\Sp{p(n+k)})$. Note that the identity homomorphism is a gluing of $h$. Therefore, by \cref{lem:HomotopicPiecewiseSmoothChainHomotopy}, $\Phi_{n+k}^{pk} \circ (i_{n+k})_\sharp$ is homotopic to the identity map.
    
    To summarize, $F_k^n \circ i$ is homotopic to the $p$-fold smash product map $\Z{k}(\Sp{n+k})^p \to \Z{pk}(\Sp{p(n+k)})$, which represents the $p$-fold cross product by \cref{prop:CrossProduct_BrownRep}. That is, $F_k^n \circ i$ represents $\iota_n^{\otimes p}$.

    Now consider the corresponding commutative diagram for $X = K(\ZZ_p,n)$, induced by a map $\iota_n : K(\ZZ_p,n) \to \Z{k}(\Sp{n+k})$ that vanishes at the basepoint:
    \begin{equation*}
    \begin{tikzcd}
        K(\ZZ_p,n)^p \arrow[d,"j"'] \arrow[r,"\iota_n^{\times p}"] & \Z{k}(\Sp{n+k})^p \arrow[d,"i"]
        \\
        \Sp{\bar{n}} \times_{\ZZ_p} K(\ZZ_p,n)^p \arrow[r,"\Gamma^{\bar{n}}(\iota_n)"'] & \Sp{\bar{n}} \times_{\ZZ_p} \Z{k}(\Sp{n+k})^p \arrow[r,"F_k^n"'] & \Z{pk}(\Sp{p(n+k)})
    \end{tikzcd}
    \end{equation*}
    Since the top row is a weak homotopy equivalence, $j^*\tilde{\Gamma}^{\bar{n}}(\iota_n) = j^*\Gamma^{\bar{n}}(\iota_n)^*(F_k^n)^*(\iota_{pn}) = (\iota_n^{\times p})^*i^*(F_k^n)^*(\iota_{pn}) = \iota_n^{\otimes p}$, which demonstrates (1).

    (3): Let us prove that the following diagram commutes up to homotopy:
    \begin{equation}
        \label{eq:ExternalOperation_CycleSpace_CupProd}
    \begin{tikzcd}
        \Sp{\bar{n}} \times_{\ZZ_p} (\Z{j}(\Sp{m+j}) \times \Z{k}(\Sp{n+k}))^p \arrow[d,"R"'] \arrow[r,"\id \times_{\ZZ_p} \chi^p"] & \Sp{\bar{n}} \times_{\ZZ_p} \Z{j+k}(\Sp{m+n+j+k})^p \arrow[dd,"F_{j+k}^{m+n}"]
        \\
        (\Sp{\bar{n}} \times_{\ZZ_p} \Z{j}(\Sp{m+j})^p) \times (\Sp{\bar{n}} \times_{\ZZ_p} \times \Z{k}(\Sp{n+k})^p) \arrow[d,"F_j^m \times F_k^n"']
        \\
        \Z{pj}(\Sp{p(m+j)}) \times \Z{pk}(\Sp{p(n+k)}) \arrow[r,"\chi"'] & \Z{p(j+k)}(\Sp{p(m+n+j+k)})
    \end{tikzcd}
    \end{equation}
    
    Given $x \in \Sp{b}$ and $y \in \Sp{d}$, let $x \wedge y$ denote the image of $(x,y)$ under the smash product map $\Sp{b} \times \Sp{d} \to \Sp{b+d}$. Given $X \in \Z{a}(\Sp{b})$ and $Y \in \Z{c}(\Sp{d})$, let $X \wedge Y \in \Z{a+c}(\Sp{b+d})$ denote the image of $X \times Y$ under the smash product map. Starting with $(t,((X_i,Y_i))_i)$ in the top left corner, if we go clockwise then we get
    \begin{align}
        & F_{j+k}^{m+n} \circ (\id \times_{\ZZ_p} \chi^p)(t,((X_i,Y_i))_i)
        \notag\\
        ={}& F_{j+k}^{m+n}(t, (X_i \wedge Y_i)_i) \notag\\
        ={}& \Phi_{m+n+j+k}^{p(j+k)}(\{t\} \times X_1 \wedge Y_1 \times \dotsb \times X_p \wedge Y_p). \label{eq:ExternalOperation_CupProd_Clockwise}
    \end{align}
    Consider the map $\psi : \Sp{\bar{n}} \times_{\ZZ_p} (\Z{j}(\Sp{m+j}) \times \Z{k}(\Sp{n+k}))^p \to \Z{p(j+k)}(\Sp{\bar{n}} \times_{\ZZ_p} (\Sp{m+j} \times \Sp{n+k})^p)$ defined by $\psi(t,((X_i,Y_i))_i) = \{t\} \times X_1 \times Y_1 \times \dotsb \times X_p \times Y_p$. Then the clockwise direction is equal to $\Phi_{m+n+j+k}^{p(j+k)} \circ \varphi_\sharp \circ \psi$, where $\varphi$ comes from \cref{eq:ExternalOperation_Sphere_CupProd}. By the composition law in \cref{prop:GluingProperties}, $f_{m+n+j+k} \circ \varphi$ has a gluing, namely $\Phi_{m+n+j+k} \circ \varphi_\sharp$.
    
    On the other hand, going counterclockwise gives us
    \begin{align}
        & \chi \circ (F_j^m \times F_k^n) \circ R(t,((X_i,Y_i))_i)
    \notag\\
        ={}& \chi(F_j^m(t,(X_i)_i), F_k^n(t,(Y_i)_i))
    \notag\\
        ={}& \Phi_{m+j}^{pj}(\{t\} \times X_1 \times \dotsb \times X_p) \wedge \Phi_{n+k}^{pk}(\{t\} \times Y_1 \times \dotsb \times Y_p). \label{eq:ExternalOperation_CupProd_Counterclockwise}
    \end{align}
    The composition law in \cref{prop:GluingProperties} implies that $\chi \circ (f_{m+j} \times f_{n+k}) \circ R$ has a gluing, namely $\chi \circ (\Phi_{m+j} \times \Phi_{n+k}) \circ R_\sharp$.

    However, \cref{prop:ExternalOperation_Sphere} implies that the diagram in \cref{eq:ExternalOperation_Sphere_CupProd} commutes up to homotopy. By \cref{lem:HomotopicPiecewiseSmoothChainHomotopy}, this means that $\Phi_{m+n+j+k} \circ \varphi_\sharp$ is homotopic to $\chi \circ (\Phi_{m+j} \times \Phi_{n+k}) \circ R_\sharp$. Therefore the diagram in \cref{eq:ExternalOperation_CycleSpace_CupProd} commutes up to homotopy.
    
    Now suppose that two cohomology classes of $X$ are represented by maps $\alpha : X \to \Z{j}(\Sp{m+j})$ and $\beta : X \to \Z{k}(\Sp{n+k})$. The following diagram also commutes:
    \begin{equation*}
    \begin{tikzcd}
        \Sp{\bar{n}} \times_{\ZZ_p} X^p \arrow[d,equal] \arrow[r,"\id \times_{\ZZ_p} \Delta^p"] & \Sp{\bar{n}} \times_{\ZZ_p} (X \times X)^p \arrow[d,"R"] \arrow[r,"\Gamma(\alpha \times \beta)"] & \Sp{\bar{n}} \times_{\ZZ_p} (\Z{j}(\Sp{m+j}) \times \Z{k}(\Sp{n+k}))^p \arrow[d,"R"]
        \\
        \Sp{\bar{n}} \times_{\ZZ_p} X^p \arrow[r,"\Delta"'] & (\Sp{\bar{n}} \times_{\ZZ_p} X^p) \times (\Sp{\bar{n}} \times_{\ZZ_p} X^p) \arrow[r,"\Gamma^{\bar{n}}(\alpha) \times \Gamma(\beta)"'] & (\Sp{\bar{n}} \times_{\ZZ_p} \Z{j}(\Sp{m+j})^p) \times (\Sp{\bar{n}} \times_{\ZZ_p} \Z{k}(\Sp{n+k})^p)
    \end{tikzcd}
    \end{equation*}
    Together, the last two diagrams imply that $\tilde{\Gamma}^{\bar{n}}(\alpha \smile \beta) = \tilde{\Gamma}^{\bar{n}}(\alpha) \smile \tilde{\Gamma}^{\bar{n}}(\beta)$. This proves (3).
\end{proof}

\begin{theorem}
    \label{thm:CyclicFactorInExternalOperation}
    Let $\iota_n$ denote a generator of $H^n(\Z{m}(\Sp{n+m}))$. Then for all integers $k$ such that $1 \leq 2k \leq n$, $P^k(\iota_n) = \cyc^*(\alpha_{2k(p-1)})$ and $\beta P^k(\iota_n) = \cyc^*(\alpha_{2k(p-1)+1})$ for some nonzero classes $\alpha_j \in H^{n+j}(\Z{pm}(L_{n+m} \times \D^{n+m+1}, L_{n+m} \times \partial\D^{n+m+1}))$ where $L_{n+m}$ is the lens space from \cref{eq:SphereQuotientLensBundle}.

    In fact, $\alpha_j$ has a Brown representative $\Psi_j$ which is a gluing of a map $h_j : L_{n+m} \times \D^{n+m+1} \to \Z{pn-n-j}(\Sp{p(n+m)})$ that vanishes over all $(x_1,\dotsc,x_p)$ for which some $x_i = *$.
\end{theorem}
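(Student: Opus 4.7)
The strategy combines \cref{prop:ExternalOperation_CycleSpace} with a factorization through $\cyc$ and an application of the geometric K\"unneth formula \cref{cor:KunnethFormula}. First I would invoke \cref{prop:ExternalOperation_CycleSpace} to obtain the Brown representative $F_m^n : \Sp{\bar{n}} \times_{\ZZ_p} \Z{m}(\Sp{n+m})^{\smsh p} \to \Z{pm}(\Sp{p(n+m)})$ of $\Gamma^{\bar{n}}(\iota_n)$. The diagonal inclusion $\Delta : \Lens_p^{\bar{n}} \times \Z{m}(\Sp{n+m}) \hookrightarrow \Sp{\bar{n}} \times_{\ZZ_p} \Z{m}(\Sp{n+m})^{\smsh p}$ sending $(t, T) \mapsto [\tilde{t}, T, \dotsc, T]$ pulls $\Gamma^{\bar{n}}(\iota_n)$ back to $\sum_j \omega_{pn-n-j} \otimes \theta_j(\iota_n) \in H^{pn}(\Lens_p^{\bar{n}} \times \Z{m}(\Sp{n+m}))$ by \cref{thm:ExternalOperation_Cyclic} (applied to $\Gamma^{\bar{n}}$ through the restriction isomorphism). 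Among these summands, $\theta_{2k(p-1)}(\iota_n)$ and $\theta_{2k(p-1)+1}(\iota_n)$ recover $P^k(\iota_n)$ and $\beta P^k(\iota_n)$ up to the nonzero constants in \cref{thm:ExternalOperation_Cyclic}, while all other $\theta_j(\iota_n)$ vanish.

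The central step is to factor $F_m^n \circ \Delta$ through $\cyc$: I would construct a continuous map
\[
\Xi : \Lens_p^{\bar{n}} \times \Z{pm}(L_{n+m} \times \D^{n+m+1}, L_{n+m} \times \partial\D^{n+m+1}) \to \Z{pm}(\Sp{p(n+m)})
\]
so that $F_m^n \circ \Delta = \Xi \circ (\id \times \cyc)$. Given $Z$ in the domain, use $h^{-1}$ to lift $Z$ to a $\ZZ_p$-invariant $pm$-cycle $\widetilde Z$ in $\Sp{p(n+m+1)-1} \setminus \Delta$ supported (up to boundary contributions) in $(\Sp{n+m})^p$; combining with $\tilde{t} \in \Sp{\bar{n}}$ gives a cycle in $\Sp{\bar{n}} \times_{\ZZ_p} (\Sp{n+m})^{\smsh p}$, to which $\Phi_{n+m}^{pm}$ is applied. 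Continuity of $\Xi$ follows from the mass estimates in \cref{sec:CyclicProductMap} combined with continuity of $\Phi_{n+m}^{pm}$ on mass-bounded subsets and the Compactness Theorem. The identity $F_m^n \circ \Delta = \Xi \circ (\id \times \cyc)$ is checked on polyhedral cycles using that $\cyc(T)$ recovers $(T^p \setminus \Delta)/\ZZ_p$ through $h$ while $T^{\smsh p}$ is $T^p$ with the fat wedge collapsed; the remaining discrepancy near the thin diagonal is absorbed by the vanishing of $f_{n+m}$ near the diagonal arranged in \cref{prop:ExternalOperation_Sphere}, and the identity extends by continuity. This is the hardest part of the proof: reconciling two distinct $\ZZ_p$-quotients---the free homotopy quotient through $\Sp{\bar{n}}$ and the (singular) quotient of $\Sp{p(n+m+1)-1}$ used by $\cyc$---while controlling boundary and diagonal contributions.

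With the factorization in hand, I extract each relevant $\theta_j(\iota_n)$ via \cref{cor:KunnethFormula}. For $j \in \{2k(p-1), 2k(p-1)+1\}$, choose a closed submanifold $A_j \subset \Lens_p^{\bar{n}}$ of dimension $pn-n-j$ with $\omega_{pn-n-j}([A_j]) = 1$ (a standard lens subspace works). Define $h_j : L_{n+m} \times \D^{n+m+1} \to \Z{pn-n-j}(\Sp{p(n+m)})$ by gluing the family $t \mapsto \Xi(t, \delta_x)$ over $A_j$ (using \cref{thm:PiecewiseSmoothApprox} for a piecewise-smooth approximation), where $\delta_x$ denotes the $0$-cycle at $x$; the vanishing of $h_j$ when some $x_i = *$ is inherited from the vanishing of $f_{n+m}$ near the diagonal established in \cref{prop:ExternalOperation_Sphere}. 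Let $\Psi_j$ be the standard gluing of $h_j$ from \cref{thm:PiecewiseSmoothGluing_Existence}, and let $\alpha_j \in H^{n+j}(\Z{pm}(L_{n+m} \times \D^{n+m+1}, L_{n+m} \times \partial\D^{n+m+1}))$ be the class represented by $\Psi_j$ (via a direct analog of \cref{prop:GluingAsBrownRep}). Applying \cref{cor:KunnethFormula} to the factored map then yields $\cyc^*(\alpha_j) = \theta_j(\iota_n)$, and after absorbing the nonzero scalars from \cref{thm:ExternalOperation_Cyclic} into $\alpha_j$ one obtains the required identities $P^k(\iota_n) = \cyc^*(\alpha_{2k(p-1)})$ and $\beta P^k(\iota_n) = \cyc^*(\alpha_{2k(p-1)+1})$. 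Nonvanishing of $\alpha_j$ follows because $P^k(\iota_n)$ and $\beta P^k(\iota_n)$ are nonzero in the range $1 \leq 2k \leq n$, so $\cyc^*(\alpha_j) \neq 0$ forces $\alpha_j \neq 0$.
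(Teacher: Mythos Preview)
Your overall strategy matches the paper's: pull back $\Gamma^{\bar n}(\iota_n)$ along the diagonal, extract $\theta_j(\iota_n)$ via the geometric K\"unneth formula, and factor through $\cyc$ using the vanishing of $f_{n+m}$ near the diagonal from \cref{prop:ExternalOperation_Sphere}. However, your construction of $\Xi$ has a genuine gap. You claim that for an arbitrary $Z \in \Z{pm}(L_{n+m} \times \D^{n+m+1}, L_{n+m} \times \partial\D^{n+m+1})$, the lift $\widetilde Z = \gamma^{-1}(h^{-1}(Z))$ is ``supported (up to boundary contributions) in $(\Sp{n+m})^p$''. This is false: $(\Sp{n+m})^p$ sits inside $\Sp{p(n+m+1)-1}$ as a submanifold of codimension $p-1$, and a generic cycle $Z$ lifts to something spread throughout $\Sp{p(n+m+1)-1} \setminus \Delta$, not confined to $(\Sp{n+m})^p$. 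Only cycles in the image of $\cyc$ enjoy this special support. Since $\Phi_{n+m}^{pm}$ is a gluing of a map defined on $\Sp{\bar n} \times_{\ZZ_p} (\Sp{n+m})^{\smsh p}$, you cannot feed it $\{\tilde t\} \times_{\ZZ_p} \widetilde Z$ for general $Z$, so $\Xi$ is not well-defined on its stated domain and \cref{cor:KunnethFormula} cannot be applied to it.

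The paper circumvents this by reversing your order of operations: it first applies the K\"unneth step, gluing over a lift $\tilde C \subset \Sp{\bar n}$ of a cellular $(pn-n-j)$-cycle in $\Lens_p^{\bar n}$, to obtain $\Phi_{n+m}(\tilde C \times_{\ZZ_p} T^p)$ as a Brown representative for $\theta_j(\iota_n)$. Because $T^p$ is already supported in $(\Sp{n+m})^p$ and the diagonal contribution vanishes (since $j \geq 1$ forces $\tilde C$ into the low skeleton), one writes $T^p \setminus \Delta = \sum_{a \in \ZZ_p} a \cdot D$ for a fundamental domain $D$ and then performs the $\ZZ_p$-equivariance swap
\[
\tilde C \times_{\ZZ_p} \sum_{a \in \ZZ_p} (a \cdot D) \;=\; \Big(\sum_{a \in \ZZ_p} a \cdot \tilde C\Big) \times_{\ZZ_p} D \;=\; \Sp{pn-n-j} \times_{\ZZ_p} D.
\]
This directly exhibits the factorization as $\Psi_j(\cyc(T))$ with $h_j(x) = \Phi_{n+m}(\Sp{pn-n-j} \times_{\ZZ_p} \{y\})$, without ever needing a globally-defined $\Xi$. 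Your argument can be salvaged by adopting this order: glue over $A_j$ first, then factor. A minor additional point: the vanishing of $h_j$ when some $x_i = *$ is not the diagonal-vanishing clause of \cref{prop:ExternalOperation_Sphere} but rather the fact that $f_{n+m}$ lives on the smash product and vanishes at its basepoint.
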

\begin{proof}
    Consider the maps $f_n$ and $F_k^n$ from \cref{prop:ExternalOperation_Sphere,prop:ExternalOperation_CycleSpace}.
    
    Let $j \geq 1$. By \cref{prop:ExternalOperation_CycleSpace}, the class $\theta_j(\iota_n)$ from \cref{thm:ExternalOperation_Cyclic} can be derived by applying the K\"unneth formula to the following composite map,
    \begin{equation}
        \Lens_p^{\bar{n}} \times \Z{m}(\Sp{n+m}) \hookrightarrow \Sp{\bar{n}} \times_{\ZZ_p} \Z{m}(\Sp{n+m})^{\smsh p} \xrightarrow{F_m^n} \Z{pm}(\Sp{p(n+m)}).
    \end{equation}

    \begin{lemma}
        $\theta_j$ has a Brown representative which is the map $\Z{m}(\Sp{n+m}) \to \Z{p(m+n) - n - j}(\Sp{p(n+m)})$ defined by $T \mapsto F_m^n(\tilde{C} \times \{(T,\dotsc,T)\})$ where $\tilde{C}$ is a lift, over the covering space $\Sp{\bar{n}} \to \Lens_p^{\bar{n}}$, of a cellular $(pn - n - j)$-cycle $C$ in $\Lens_p^{\bar{n}}$ that represents the appropriate generator in $H_{pn - n - j}(\Lens_p^{\bar{n}})$.
    \end{lemma}
    \begin{proof}
        Note that $F_m^n(\tilde{C} \times \{(T,\dotsc,T)\}) = \Phi_{n+m}(\tilde{C} \times_{\ZZ_p} T^{\smsh p})$. Now find a map $K \to \Z{m}(\Sp{n+m})$ where $K$ is a high skeleton of $K(\ZZ_p,n)$, and extend it to a map $Y \to \Z{m}(\Sp{n+m})$ where $Y$ is a tubular neighbourhood of $K$ embedded in a Euclidean space of large dimension. Homotope this map $Y \to \Z{m}(\Sp{n+m})$ until it is piecewise smooth using \cref{thm:PiecewiseSmoothApprox}. Now apply \cref{cor:KunnethFormula}.
    \end{proof}
    Since $j \geq 1$, we have $pn - n - j \leq (p-1)(n+m)-1$, that is, $\tilde{C} \subset (\Sp{\bar{n}})^{(p-1)(n+m)-1}$. Together with \cref{prop:ExternalOperation_Sphere}, this implies that for all $x \in \Delta$, we have $\Phi_{n+m}(\tilde{C} \times \{x\}) = 0$. We will show that this map $\Z{m}(\Sp{n+m}) \to \Z{p(n+m) - n - j}(\Sp{p(n+m)})$ factors into some composite map
    \begin{equation*}
        \Z{m}(\Sp{n+m}) \xrightarrow{\cyc} \Z{pm}(L_{n+m} \times \D^{n+m+1}, L_{n+m} \times \partial\D^{n+m+1}) \to \Z{p(n+m) - n - j}(\Sp{p(n+m)}).
    \end{equation*}

    We are looking at the map
    \begin{equation}
        \tilde{C} \times \{(T,\dotsc,T)\} \hookrightarrow \Sp{\bar{n}} \times_{\ZZ_p} \Z{m}(\Sp{n+m})^{\smsh p} \xrightarrow{F_m^n} \Z{pm}(\Sp{p(n+m)}).
    \end{equation}

    Recall the homeomorphism $h : (\Sp{p(n+m+1)-1} \setminus \Delta)/\ZZ_p \to L_{n+m} \times \itr \D^{n+m+1}$ from \cref{eq:SphereQuotientLensBundle}. Applying \cref{cor:KunnethFormula} to the above map gives
    \begin{align*}
        & F_m^n\left(\tilde{C} \times \left \{(T, \dotsc, T)\right\}\right)
        \\
        ={}& \Phi_{n+m}(\tilde{C} \times_{\ZZ_p} T^p)
        \\
        \eqnote{\cref{prop:ExternalOperation_Sphere}} ={}& \Phi_{n+m}(\tilde{C} \times_{\ZZ_p} (T^p \setminus \Delta)),
        \intertext{and observing using \cref{eq:CycProdMap_Intuitive} that $T^p \setminus \Delta = \gamma^{-1}(h^{-1}(\cyc(T)))$, where $\gamma : \Sp{p(n+m+1)-1} \setminus \Delta \to (\Sp{p(n+m+1)-1} \setminus \Delta)/\ZZ_p$ is the covering map, and thus $T^p \setminus \Delta$ is the sum of translates $a \cdot D$ of a fundamental domain $D$ by all $a \in \ZZ_p$,}
        ={}& 
        \Phi_{n+m} \left(\tilde{C} \times_{\ZZ_p} \sum_{a \in \ZZ_p} (a \cdot D) \right)
        \\
        ={}& 
        \Phi_{n+m} \left(\sum_{a \in \ZZ_p} (a \cdot \tilde{C}) \times_{\ZZ_p} D \right)
        \\
        ={}& 
        \Phi_{n+m} (\Sp{pn-n-j} \times_{\ZZ_p} D).
    \end{align*}
    
    Therefore for $1 \leq j \leq (p-1)n$, $\theta_j(\iota_n)$ has a Brown representative which is the composite map
    \begin{equation*}
        \Z{m}(\Sp{n+m}) \xrightarrow{\cyc} \Z{pm}(L_{n+m} \times \D^{n+m+1}, L_{n+m} \times \partial\D^{n+m+1}) \xrightarrow{\Psi_j} \Z{p(n+m)-n-j}(\Sp{p(n+m)}),
    \end{equation*}
    where $\Psi_j$ is a gluing of the desired map $h_j : L_{n+m} \times \D^{n+m+1} \to \Z{pn-n-j}(\Sp{p(n+m)})$ defined by $h_j(x) = \Phi_{n+m}(\Sp{pn-n-j} \times_{\ZZ_p} \{y\})$, where $y$ is any one of the $p$ points in $\gamma^{-1}(h^{-1}(x))$. $h_j$ does not depend on the choice of $y$ because of the equivalence relation involved in $\times_{\ZZ_p}$, and because the $\ZZ_p$ action on $\Sp{\bar{n}}$ is orientation-preserving when $p$ is odd and the orientation is irrelevant when $p = 2$.

    Finally, for all $x = (x_1,\dotsc,x_p)$ such that some $x_i = *$, $f_{n+m}(-,x) = 0$ by \cref{prop:ExternalOperation_Sphere}, so $h_j(x) = 0$.
\end{proof}

Note that we could also have defines $h_j$ in terms of the map $\Lambda^{n+m}_{pn-n-j}$ from the following proposition.

\begin{proposition}
    \label{prop:ExternalOperation_Sphere_OrbitSpaceMap}
    Consider the map $f_m : \Sp\infty \times_{\ZZ_p} (\Sp{m})^{\smsh p} \to \Z0(\Sp{pm})$ from \cref{prop:ExternalOperation_Sphere}. Then for each $1 \leq k \leq (p-1)m - 1$ there is a map $\Lambda^m_k : (\Sp{pm}/\hat\Delta)/\ZZ_p \to \Z{k}(\Sp{pm})$, where $\Sp{pm} = (\Sp{m})^{\wedge p}$, defined by $\Lambda^m_k(x_1,\dotsc,x_p) = \Phi_m(\Sp{k} \times_{\ZZ_p} \{(x_1,\dotsc,x_p)\})$. 
    
    Furthermore, given that $(\Sp{pm}/\hat\Delta)/\ZZ_p \he \Th(L \times \RR^{m+1})$ where $L$ is a lens space by \cref{lem:SphereQuotientSuspLens_Real}, $\Lambda^m_k$ represents the image of $\omega_{(p-1)m-1-k} \in H^{(p-1)m-1-k}(L)$ under the Thom isomorphism.
\end{proposition}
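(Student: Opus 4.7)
The plan is to establish three things: (i) $\Lambda^m_k$ descends to a well-defined continuous pointed map on $(\Sp{pm}/\hat\Delta)/\ZZ_p$, (ii) via Brown representability the cohomology class it represents lies in a cyclic group of order $p$ generated by the Thom image of $\omega_{(p-1)m-1-k}$, and (iii) this class is nontrivial.

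For (i), well-definedness on $\ZZ_p$-orbits is immediate from the definition of $\times_{\ZZ_p}$. Vanishing at the basepoint (the image of $\hat\Delta$) uses the hypothesis $k \leq (p-1)m-1$: choosing the embedded $\Sp{k} \subset \Sp{\bar m}$ inside the $(p-1)m-1$-skeleton, \cref{prop:ExternalOperation_Sphere} forces $f_m$ to vanish on $\Sp{k} \times \Delta_\varepsilon$, and the defining formula of the standard gluing in \cref{thm:PiecewiseSmoothGluing_Existence} then makes $\Phi_m^k$ vanish on cycles supported there. Continuity on the complement of $\hat\Delta$ follows because the cycle-producing assignment $(x_1,\dotsc,x_p) \mapsto \Sp{k} \times_{\ZZ_p} \{(x_1,\dotsc,x_p)\}$ is piecewise smooth (a twisted-product variant of \cref{eg:FiberBundle_PiecewiseSmooth}) composed with the continuous $\Phi_m^k$; combined with the vanishing near the basepoint, this gives a continuous pointed map.

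For (ii), \cref{thm:BrownRepSpacesOfCycles} translates $\Lambda^m_k$ into a class $[\Lambda^m_k] \in H^{pm-k}((\Sp{pm}/\hat\Delta)/\ZZ_p)$. Applying the homotopy equivalence $(\Sp{pm}/\hat\Delta)/\ZZ_p \he \Th(L \times \RR^{m+1})$ from \cref{lem:SphereQuotientSuspLens_Real} together with the Thom isomorphism identifies this group with $H^{(p-1)m-1-k}(L) \iso \ZZ_p$, which is generated by $\omega_{(p-1)m-1-k}$. Hence it suffices to show $[\Lambda^m_k]$ is nonzero.

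For (iii), I would identify $\Lambda^m_k$ with essentially the map $h_{(p-1)m-k}$ appearing in the proof of \cref{thm:CyclicFactorInExternalOperation}: both apply $\Phi_m$ to cycles of the form $\Sp{k} \times_{\ZZ_p} \{(x_1,\dotsc,x_p)\}$ under appropriate identifications. That proof establishes, by composition with $\cyc$ and \cref{cor:KunnethFormula}, that the resulting cohomology class pulls back under $\cyc$ to the Steenrod power $\theta_{(p-1)m-k}(\iota_m)$, a nonzero mod-$p$ cohomology operation by the second half of \cref{thm:ExternalOperation_Cyclic}. Hence $[\Lambda^m_k]$ must itself be nontrivial, and so is a generator of the cyclic group in (ii). The main obstacle is reconciling the domain $(\Sp{pm}/\hat\Delta)/\ZZ_p$ here with the Thom-space structure $L \times \D^{m+1}/L \times \partial\D^{m+1}$ used in \cref{thm:CyclicFactorInExternalOperation}, and tracking orientation conventions through \cref{lem:SphereQuotientSuspLens_Real} so that the resulting generator is the Thom image of $\omega_{(p-1)m-1-k}$ itself (any nonzero rescaling being absorbed into the choice of $\omega$).
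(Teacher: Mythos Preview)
Your steps (i) and (ii) are sound and parallel the paper's opening moves. The crucial gap is in (iii): you assert that $\theta_{(p-1)m-k}(\iota_m)$ is nonzero for every $k$ in the range $1 \le k \le (p-1)m-1$, citing \cref{thm:ExternalOperation_Cyclic}. But for odd $p$ that theorem says precisely the opposite for most $j$: $\theta_j(\alpha)$ vanishes unless $j$ has the form $2i(p-1)$ or $2i(p-1)+1$. So for example when $p=3$, $m=2$, $k=2$ (hence $j=2$), one has $\theta_2(\iota_2)=0$, and your nontriviality argument yields nothing---yet the proposition still claims $\Lambda^2_2$ represents the Thom image of $\omega_1 \ne 0$. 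Varying the auxiliary cycle-dimension to try to hit a different $\theta_j$ does not help either, because the relation $(p-1)n-j=k$ forces $k \bmod (p-1)$ to be $0$ or $-1$, leaving other residues unreachable. (For $p=2$ your argument does go through, since every $\Sq^j(\iota_m)$ with $0\le j\le m$ is nonzero.)

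The paper avoids this entirely by \emph{computing} the class rather than merely testing it for nontriviality. It recognizes $\Lambda^m_k$ as the cohomology pushforward $\pi_*f$ along the fiber bundle $\Sp{k} \to \Sp{k}\times_{\ZZ_p} N \xrightarrow{\pi} N/\ZZ_p$ (via \cref{prop:GeomRep_PushforwardCohom}), and then evaluates this pushforward explicitly using Poincar\'e duality and the multiplicative structure of the Serre spectral sequence of $\Sp{k}\times_{\ZZ_p}\Sp{(p-1)m-1}\to \Lens_p^k$. This produces the specific generator $\omega_{(p-1)m-1-k}$, not just a nonzero class, and works uniformly for all $k$ in the stated range. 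Your remark about ``absorbing rescalings into the choice of $\omega$'' is also a bit loose, since the $\omega_k$ are fixed once and for all in \cref{sec:LensSpaceCohomConvention}; the paper's direct computation pins down the class on the nose.
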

\begin{proof}
    Let us check that $\Lambda_k^m$ is well-defined and continuous. Let $\Delta_\varepsilon$ denote the open tubular neighbourhood of radius $\varepsilon$ around the diagonal $\Delta \subset (\Sp{m})^{\smsh p}$. Write $(\Sp{m})^{\smsh p}_\varepsilon = (\Sp{m})^{\smsh p} \setminus \Delta_\varepsilon$. We can define a fiber bundle $\xi : \Sp{k} \times_{\ZZ_p} (\Sp{m})^{\smsh p}_\varepsilon \to \Sp\infty \times_{\ZZ_p} (\Sp{m})^{\smsh p}_\varepsilon \to \Sp{pm}_\varepsilon/\ZZ_p$ by projection onto the second factor. Observe that $\Lambda_k^m$ vanishes over $\Delta_\varepsilon/\ZZ_p$ and otherwise is equal to $\xi_*f_m|_{\Sp{k} \times_{\ZZ_p} (\Sp{m})^{\smsh p}_\varepsilon}$. Thus $\Lambda_k^m$ is continuous.

    Consider the map $f : \Sp{k} \times_{\ZZ_p} \Sp{pm} \hookrightarrow \Sp\infty \times_{\ZZ_p} \Sp{pm} \xrightarrow{f_m} \Z0(\Sp{pm})$. Since $k \leq (p-1)m - 1$, $f$ vanishes over $\Lens_p^k \times \Delta$. Then $f$ represents a cohomology class in $H^{pm}(\Sp{k} \times_{\ZZ_p
    } \Sp{pm}_\varepsilon, \Sp{k} \times_{\ZZ_p
    } \partial\Sp{pm}_\varepsilon)$.
    
    Now consider the fiber bundle $\Sp{k} \to \Sp{k} \times_{\ZZ_p
    } N \xrightarrow{\pi} N/\ZZ_p$ where $\pi$ is induced by projection onto the second coordinate. (This is a fiber bundle because $\ZZ_p$ acts freely on $N$.) $\pi$ is also a map of pairs $\pi : (\Sp{k} \times_{\ZZ_p
    } N, \Sp{k} \times_{\ZZ_p
    } \partial N) \to (N/\ZZ_p, \partial N/\ZZ_p)$.

    By excision, $\Lambda^m_k$ represents the same relative cohomology class in $H^{pm-k}(N/\ZZ_p, \partial N/\ZZ_p)$ as the cohomology pushforward $\pi_*f$ as defined in \cref{prop:GeomRep_PushforwardCohom}. Now we have to compute (relative) Poincar\'e duality in $\Sp{k} \times_{\ZZ_p
    } N$ and $N/\ZZ_p$, and the homology pushforward $\pi_*$. Thus we need to compute this pushforward map.

    We can exploit the nice geometry around the fixed points of the action. The proof of \cref{lem:SphereQuotientSuspLens_Real} shows that $N$ is homeomorphic to $\Sp{(p-1)m - 1} \times \D^{m+1}$, where $\ZZ_p$ acts freely on $\Sp{(p-1)m - 1}$ with quotient $L = \Lens_p(1^m, \dotsc, (\frac{p-1}2)^m)$. Moreover, $\ZZ_p$ acts trivially on the fibers, so in fact $\Sp{k} \times_{\ZZ_p
    } N \he (\Sp{k} \times_{\ZZ_p} \Sp{(p-1)m - 1}) \times \D^{m+1}$. The action preserves the boundary so $\Sp{k} \times_{\ZZ_p
    } \partial N \he (\Sp{k} \times_{\ZZ_p} \Sp{(p-1)m - 1}) \times \partial\D^{m+1}$. Now consider the following commutative diagram:
    \begin{equation*}
    \begin{tikzcd}
        H^{pm}((\Sp{k} \times_{\ZZ_p} \Sp{(p-1)m - 1}) \times \D^{m+1}, (\Sp{k} \times_{\ZZ_p} \Sp{(p-1)m - 1}) \times \partial\D^{m+1}) \rar{\pi_*} \arrow[d,"\text{Thom}"'] & H^{pm-k}(L \times \D^{m+1}, L \times \partial\D^{m+1}) \arrow[d,"\text{Thom}"]
        \\
        H^{(p-1)m-1}(\Sp{k} \times_{\ZZ_p} \Sp{(p-1)m - 1}) \arrow[r,"\text{pushforward}"',"(\pr_2)_*"] & H^{(p-1)m-1-k}(L)
    \end{tikzcd}
    \end{equation*}
    To be more specific, a map $h : (\Sp{k} \times_{\ZZ_p} \Sp{(p-1)m - 1}) \times \D^{m+1} \to \Z{a}(\Sp{b})$ that vanishes on the boundary of its domain corresponds under the Thom isomorphism to a map $h' : (\Sp{k} \times_{\ZZ_p} \Sp{(p-1)m - 1}) \to \Z{a+m+1}(\Sp{b})$ defined by $h'(x) = h(x \times \D^{m+1})$. Therefore restricting $h'$ via the inclusion $\{*\} \times \Sp{(p-1)m-1} \hookrightarrow (\Sp{k} \times_{\ZZ_p} \Sp{(p-1)m - 1})$ gives the fiber integral of the restriction of $h$ via the inclusion $(\{*\} \times \Sp{(p-1)m-1} \times \D^{m+1}, \{*\} \times \Sp{(p-1)m-1} \times \partial\D^{m+1}) \hookrightarrow  (\Sp{k} \times_{\ZZ_p} \Sp{(p-1)m - 1}) \times \D^{m+1}, (\Sp{k} \times_{\ZZ_p} \Sp{(p-1)m - 1}) \times \partial\D^{m+1})$. This induces a map $H^{b-a}(\Sp{(p-1)m-1} \times \D^{m+1}, \Sp{(p-1)m-1} \times \partial\D^{m+1}) \to H^{b-a-(m+1)}(\Sp{(p-1)m-1})$ that is the inverse of the suspension isomorphism. Since $f$ is supposed to restrict to the fundamental class over each copy of $\Sp{pm}$, the Thom isomorphism sends it to cohomology class $\phi \in H^{(p-1)m-1}(\Sp{k} \times_{\ZZ_p} \Sp{(p-1)m - 1})$ that restricts to the fundamental class over each copy of $\Sp{(p-1)m-1}$.
    
    Consider now the Serre spectral sequence of the fiber bundle $\Sp{k} \times_{\ZZ_p} \Sp{(p-1)m - 1} \to \Sp{k}$, whose $E_2$ page looks like this:
    \begin{equation*}
    \begin{tikzcd}
        (p-1)m-1 & \ZZ_p & \ZZ_p & \cdots & \ZZ_p
        \\
        \\
        \\
        0 & \ZZ_p & \ZZ_p & \cdots & \ZZ_p
        \\
          & 0 & 1 & \cdots & k
    \end{tikzcd}
    \end{equation*}

    All differentials on this and future pages are zero because their domain or codomain is 0. Thus the $\phi$ must correspond to the only class in $E_2 = E_\infty$ that restricts to the fundamental class on a copy of $\Sp{(p-1)m - 1}$, namely $1 \in E_2^{0,(p-1)m-1}$.

    To compute the pushforward of $\phi$ along the map $(\pr_2)_* : H^{(p-1)m-1}(\Sp{k} \times_{\ZZ_p} \Sp{(p-1)m - 1}) \to H^{(p-1)m-1-k}(L)$, we evaluate the pushforward on the generator $\alpha \in H_{(p-1)m-1-k}(L)$ that corresponds to $1 \in \ZZ_p$. By the computations in the proof of \cref{prop:GeomRep_PushforwardCohom}, if $\gamma \in H^k{L}$ is the Poincar\'e dual of that homology generator $\alpha$, then $((\pr_2)_*)\phi(\alpha) = (\phi \smile \pr_2^*\gamma)[\Sp{k} \times_{\ZZ_p} \Sp{(p-1)m - 1}]$. We have $\gamma = 1 \in \ZZ_p$; then we have to identify where $\pr_2^*\gamma$ is on the $E_2$ page of the above spectral sequence.

    Consider the ``diagonal inclusion'' $\delta : \Lens_p^k \hookrightarrow \Sp{k} \times_{\ZZ_p} \Sp{(p-1)m - 1}$ defined by $[x] \mapsto [(x,x)]$. Note that ${\pr_2} \circ \delta$ is the inclusion of the $k$-skeleton so $\delta^*\pr_2^*\gamma = 1 \in \ZZ_p$. Moreover, ${\pr_1} \circ \delta$ is the identity map so $1 \in \ZZ_p \iso H^k(\Lens_p^k)$ pulls back over $\pr_1$ to $1 \in \ZZ_p \iso E_2^{k,0}$. Thus $\pr_2^*\gamma = 1 \in \ZZ_p \iso E_2^{k,0}$.

    Therefore the cup product of $1 \in E_2^{0,(p-1)m-1}$ and $1 \in E_2^{k,0}$ is $1 \in E_2^{k,(p-1)m-1}$. Thus $((\pr_2)_*\phi)(\alpha) = (\phi \smile \pr_2^*\gamma)[\Sp{k} \times_{\ZZ_p} \Sp{(p-1)m - 1}] = 1$. Thus $(\pr_2)_*\phi = \omega_{(p-1)m-1-k}$.

    Hence, $\pi_*f$ is the Thom isomorphism of $\omega_{(p-1)m-1-k}$, which is the same class represented by $\Lambda_k^m$.
\end{proof}

This brings us to a proof of the formal version of \cref{thm:SteenrodPowers}.

\begin{proof}[Proof of \cref{thm:SteenrodPowers_Formal}]
    Consider the classes $\alpha_j$ from \cref{thm:CyclicFactorInExternalOperation} and their Brown representatives $\Psi_j$. From \cref{Thm:AlmgrenIsoFMetric} and the Thom Isomorphism Theorem we know that $\Z{k}(L_n \times \D^{n+1}, L_n \times \partial\D^{n+1}) \whe \prod_{j = n+1-k}^{p(n+1)-1-k} K(\ZZ_p, j)$. Since each $\alpha_j$ has a Brown representative that is a gluing, \cref{prop:GluingAsBrownRep} implies that actually $\alpha_j = a\iota_j$ for some $a \in \ZZ_p$. \Cref{prop:ExternalOperation_Sphere_OrbitSpaceMap} implies that $a \neq 0$. Thus the Steenrod powers and Bockstein homomorphisms have Brown representatives as given in the theorem statement.
\end{proof}

\subsection{Brown representatives for the Steenrod squares}

\begin{proof}[Proof of \cref{thm:SteenrodSquares}]
    From \cref{thm:SteenrodPowers} we know that $\Sq^i$ has a Brown representative which is the composition of $\cyc$ with the standard gluing of a Brown representative $g_q$ of a class in $H^*(\RP^n \times \D^{n+1}, \RP^n \times \partial\D^{n+1})$, where $q = n+k+i$.
    
    To write $\cyc$ down explicitly, we will use the formula from \cref{eq:CycProdHomeoLensBundle_DFT} for the homeomorphism $h$ from \cref{eq:SphereQuotientLensBundle}. Since $p = 2$, $F = \frac1{\sqrt2}\left[ \begin{smallmatrix}
        1 &  1 \\
        1 & -1
    \end{smallmatrix} \right]$. The formula for $\cyc$ in \cref{eq:CycProdMap_Intuitive} says that to evaluate $\cyc(V \cap \Sp{m})$, we look at the we take unordered pairs of distinct points $x,y \in V \cap \Sp{m}$, and look at the locus of the pairs $(\vspan\{x - y\}, \frac{x+y}2)$ in $\RP^n \times \D^{n+1}$.
    
    $g_q$ can be constructed using \cref{lem:ProjLensCohomGens,lem:ThomSpace_BrownRep,cor:CupProd_BrownRep}, starting from the observation that the class represented by $g_q$ in the Thom space $(\RP^n \times \D^{n+1})/(\RP^n \times \partial\D^{n+1})$ corresponds, under the Thom isomorphism, to the generator of $H^{k+i-1}(\RP^n)$. Thus we may choose $g_q$ to be the map
    \begin{align*}
        & g_q : \RP^n \times \D^{n+1} \to \Z{n(k+i-1)}(\D^{(n+1)(k+i)}, \partial\D^{(n+1)(k+i)})
        \\
        & g_q(\ell,v) = ((\ell^\perp)^{k+i-1} \times \{v\}) \cap \D^{(n+1)(k+i)}.
    \end{align*}
    Combining the formulas for $\cyc$ and $g_q$ gives us the formula for $\sq^i$ in \cref{eq:SteenrodSquare}.

    Now suppose that $V \cap \Sp{n}$ is a planar cycle, where $V$ is a $(k+1)$-dimensional affine subspace of $\RR^{n+1}$. Let $\mathcal{L}$ be the set of lines in $\RR^{n+1}$ through the origin that are parallel to $V$. Then it suffices to prove that
    \begin{equation*}
        \cyc(V \cap \Sp{n}) = \bigcup_{\ell \in \mathcal{L}} \{\ell\} \times (\ell^\perp \cap V \cap \D^{n+1}).
    \end{equation*}

    ($\subset$): For any unordered pair $x, y \in V \cap \Sp{m}$ such that $x \neq y$, let $\ell = \vspan\{x - y\} \in \RP(V_0)$. Then since $V \cap \D^{m+1}$ is a convex set containing $x$ and $y$, it also contains $\frac{x+y}2$. Observe that $\ell^\perp$ is orthogonal to the chord $xy$. The Pythagorean theorem implies that $\ell^\perp$ must bisect the chord, thus $\frac{x+y}2 \in \ell^\perp$.
    
    ($\supset$): Choose any $\ell \in \RP(V_0)$ and $z \in \ell^\perp \cap V \cap \D^{m+1}$, so that $\norm{z} < 1$. Let $\ell'$ be the line parallel to $\ell$ that passes through $z$. $\ell'$ should intersect $V \cap \Sp{m}$ in two distinct points, $x$ and $y$. Thus $\ell = \vspan\{x - y\}$. Since $\ell' \perp \ell^\perp \ni z$, the Pythagorean theorem implies that $\norm{x - z}^2 + \norm{z}^2 = \norm{y - z}^2 + \norm{z}^2 = 1$, so in fact $z = \frac{x+y}2$.
\end{proof}

\subsection*{Acknowledgements}

The author is grateful to their academic advisor Alexander Nabutovsky for suggesting this project, and to Bruno Staffa for many enlightening conversations about $\delta$-localized families of cycles. The author has also appreciated many useful conversations with Alexander Nabutovsky, Regina Rotman, Yevgeny Liokumovich, Bruno Staffa, and Fedor Manin. The author would like to thank Fedor Manin for hosting them at the University of California, Santa Barbara in 2024, and the Hausdorff Research Institute for Mathematics in the University of Bonn for its hospitality during the 2025 Metric Analysis Trimester. Part of this work was completed within those stimulating research environments.

This research was enabled by the financial support from the 2021--2022 Vanier Canada Graduate Scholarship, the 2025--2026 University of Toronto Faculty of Arts and Science Doctoral Excellence Scholarship, and the 2025--2026 Margaret Isobel Elliott Graduate Scholarship.

\appendix

\section{Geometric Measure Theory}
\label{sec:Appendix_GMT}

\subsection{The Standard Definitions of Mod $p$ Integral Currents and Cycles}
\label{sec:StandardIntegralCurrents}

We will show that our definitions from \cref{sec:Definitions} are compatible with the standard definitions of the space of mod $p$ integral currents.

Let $\M^\ext$, $\Fl^\ext$, $\I[ext]k(M;G)$, and $\Z[ext]k(M,N;G)$ denote respectively the extrinsically defined mass functional, flat metric, spaces of integral currents in $M$ and integral relative cycles with coefficients in $G$. These are defined by fixing some Riemannian embedding $e : M \hookrightarrow \RR^n$ which is always possible due to the Nash embedding theorem. This embedding induces an embedding $\tilde\Gr_k(M) \hookrightarrow M \times \Gr_k(\RR^n)$. The standard definition of a varifold is that is a Radon measure on $M \times \Gr_k(\RR^n)$. When $R$ is a rectifiable current, the standard definition of the varifold $\var[ext]{R}$ associated with $R$ is so that for all $A \subset M \times \Gr_k(\RR^n)$,
\begin{equation}
    \label{eq:RectVarifold_Ext_Def}
    \begin{gathered}
        \var[ext]{R}(A) 
        = \int_{\{x \in U ~:~ (x, \Tan^k(\Haus^k \res U,x)) \in A\}} \Theta^k(\meas{R}, -) \,d\Haus^k,
        \\
        \text{where } U 
        = \{x \in \RR^n ~:~ \Theta^k(\meas{R}, x) \geq 1 \}.
    \end{gathered}
\end{equation}

It is clear that our definitions from \cref{sec:Definitions} could have used the coefficient group $\ZZ$ instead of $\ZZ_p$, with $\abs{a}$ being the usual absolute value for $a \in \ZZ$. When we use coefficient group $G$, we denote the sets and groups defined by $I_k(M;G)$, $I_k(M, N; G)$, $\I{k}(M,N;G)$, and $\Z{k}(M,N;G)$.

In what follows, let $G$ be $\ZZ$ or $\ZZ_p$ for any prime $p$. For each Lipschitz singular chain $T = \sum_i a_i\sigma_i$ with coefficients $a_i \in G$, the \emph{current parametrized by $T$} is $\Curr(T) = \sum_i a_i (e \circ \sigma_i)_\sharp (\Delta^k)$, where $\Delta^k$ is treated as a $k$-dimensional current with its standard orientation. When $G = \ZZ$, $\Curr(T)$ is rectifiable due to \cite[4.1.28(3)]{Federer_GMT}, and clearly $\spt \Curr(T) \subset \bigcup_i \im\sigma_i \subset M$. $\partial \Curr(T)$ must also be rectifiable because it is the current parametrized by $\partial T$, thus $\Curr(T)$ is actually an integral current. This gives a map $\Curr : I_k(M; \ZZ) \to \tilde{\mathcal{I}}_k(M; \ZZ)$. When $G = \ZZ_p$, considering congruence classes shows that we have an analogous map $\Curr : I_k(M; \ZZ_p) \to \tilde{\mathcal{I}}_k(M; \ZZ_p)$.

For any $T \in I_k(M,N; \ZZ)$, define $\Curr(T) = \Curr(\tilde{T}) \res (M \setminus N)$ for any $\tilde{T} \in T$.

\begin{lemma}
    \label{lem:Varifold_CompareIntExt}
    For any $T \in I_k(M,N;\ZZ)$ and $A \subset \tilde\Gr_k(M)$, $\var{T}_N(A) = \var[ext]{\Curr(T)}(A)$.
\end{lemma}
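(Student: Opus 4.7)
The plan is to unwind both sides of the claimed identity into explicit integrals over the images of the singular simplices, and then use the area formula together with Rademacher's theorem to match them pointwise almost everywhere.

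First I would fix a representative $\tilde T = \sum_i a_i \sigma_i \in T$ that is non-overlapping with respect to $N$, so that the images $\im \sigma_i$ overlap on a set of $\Haus^k$-measure zero and $\Haus^k(\bigcup_i \im \sigma_i \cap \partial(M \setminus N)) = 0$. The intrinsic varifold $\var{T}_N$ is defined by summing the pushforwards of the tangent planes of each $\sigma_i$ weighted by $|a_i|$ over $M \setminus N$. Concretely, for each $i$, Rademacher's theorem gives a Borel subset $E_i \subset \Delta^k$ of full $\Haus^k$-measure on which $\sigma_i$ is differentiable and $D\sigma_i$ has rank $k$ wherever $\Haus^k(\im \sigma_i|_{E_i}) > 0$. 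Then for any Borel $A \subset \tilde\Gr_k(M)$,
\begin{equation*}
    \var{T}_N(A) = \sum_i |a_i| \int_{E_i \cap \sigma_i^{-1}(M \setminus N)} \mathbf{1}_A\bigl(\sigma_i(u), D\sigma_i(u)(\RR^k)\bigr) \, J\sigma_i(u) \, du.
\end{equation*}
By the area formula applied to each $\sigma_i$, this rewrites as an integral over $\im \sigma_i \cap (M \setminus N)$ of the characteristic function of $A$ composed with the approximate tangent plane, weighted by the multiplicity $|a_i|$.

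On the extrinsic side, \cref{eq:RectVarifold_Ext_Def} defines $\var[ext]{\Curr(T)}$ via the set $U$ where the density of $\|\Curr(T)\|$ is at least 1 and the approximate tangent planes of $\Haus^k \res U$. The key step is to identify, $\Haus^k$-almost everywhere on $\bigcup_i \im \sigma_i \cap (M \setminus N)$, both the approximate tangent plane of $\Curr(T)$ and the density $\Theta^k(\|\Curr(T)\|,x)$. Since $\Curr(T) = \sum_i a_i (e \circ \sigma_i)_\sharp[\![\Delta^k]\!]$ restricted to $M \setminus N$ and each $(e \circ \sigma_i)_\sharp[\![\Delta^k]\!]$ is rectifiable with $\Haus^k$-density equal to the local degree of $\sigma_i$, the non-overlapping hypothesis implies that at $\Haus^k$-a.e.\ $x$ in $\im \sigma_i$ with a unique preimage $u \in E_i$, the approximate tangent plane of $\Curr(T)$ at $x$ is $D\sigma_i(u)(\RR^k)$ and the density equals $|a_i|$. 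The points with multiple preimages or where $J\sigma_i = 0$ form a $\Haus^k$-null set by the non-overlapping condition and the area formula.

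The main obstacle will be carefully handling the density function on the overlap: \emph{a priori} cancellations between simplices with opposite orientations could cause the density $\Theta^k(\|\Curr(T)\|,x)$ to drop below $\sum_i |a_i|$ at points where several $\sigma_i$ cover $x$. However, the non-overlapping condition in $I_k(M,N;\ZZ)$ ensures precisely that this overlap has measure zero, so only a single simplex contributes at $\Haus^k$-a.e.\ point, and the densities match. Once this is established, substituting $U$ into \cref{eq:RectVarifold_Ext_Def} and recognizing the resulting integral as the sum of area-formula integrals computed above gives $\var[ext]{\Curr(T)}(A) = \var{T}_N(A)$. Finally, I would check independence of the representative $\tilde T$: two non-overlapping representatives differ only by chains supported in $N$, which contribute nothing after restriction to $M \setminus N$ on both sides.
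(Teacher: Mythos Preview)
Your proposal is correct and follows essentially the same approach as the paper: both unwind the two varifold definitions, use the non-overlapping hypothesis to see that at $\Haus^k$-a.e.\ image point a unique $\sigma_i$ contributes with density $|a_i|$ and approximate tangent plane $D\sigma_i(\RR^k)$, and then match the resulting integrals. The paper organizes these steps by citing Federer \cite[4.1.25, 4.1.28]{Federer_GMT} to obtain the representation $\Curr(T) = (\Haus^k \res \bigcup_i \im\sigma_i)\wedge\eta$ with $\|\eta(y)\| = |a_i|$, whereas you phrase the same content through the area formula and Rademacher directly; the substance is the same.
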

\begin{proof}
    Throughout this proof, we will abbreviate ``$\Haus^k$-almost'' to ``almost.'' Let $\sum_i a_i \sigma_i \in T$. Choose a standard simple $k$-vector field $\beta$ on $\Delta^k$ that is the wedge of a positively oriented orthonormal basis. By the proof of \cite[4.1.28(4)]{Federer_GMT}, we have $\Curr(T) = (\Haus^k \res B) \wedge \eta$ where \cite[4.1.25]{Federer_GMT} implies that for almost all $y \in \RR^n$,
    \begin{align*}
        \eta(y) = \sum_i a_i\sum_{x \in \sigma_i^{-1}(y)} \frac{(\bigwedge_k D\sigma_i)(\beta(x))}{J_k\sigma_i} && \text{and} && B = \{y \in \RR^n : \eta(y) \neq 0\}.
    \end{align*}
    
    Let us show that we can replace $B$ with $\bigcup_i \im\sigma_i$. The proof of \cite[4.1.28(4)]{Federer_GMT} implies that $B$ is almost contained in $\bigcup_i \im \sigma_i$. In fact, $\bigcup_i \im \sigma_i$ is also almost contained in $B$: since $T$ is non-overlapping, for almost all $y \in \bigcup_i \im\sigma_i$ there is a unique $x \in \Delta^k$ and a unique $\sigma_i$ such that $\sigma_i(x) = y$. Moreover, for almost all such $y$, the proof of \cite[4.1.25]{Federer_GMT} implies that $\sigma_i$ is differentiable at $x$ and $\im D\sigma_i = \Tan^m(\Haus^k \res \bigcup_i\im\sigma_i, y)$, which by \cite[3.2.19]{Federer_GMT} is a $k$-dimensional vector space for almost all such $y$. Therefore $\eta(y) = a_i(\bigwedge_k D\sigma_i)(\beta(x))/J_k\sigma_i \neq 0$, which by \cite[4.1.25]{Federer_GMT} implies that $\norm{\eta(y)} = \abs{a_i}$. Thus we may write $R = (\Haus^k \res \bigcup_i \im\sigma_i) \wedge \eta$.
    
    For almost all $y \in \bigcup_i\im\sigma_i$, $y$ lies in the image of some unique $\sigma_i$, in which case the proof of \cite[4.1.28]{Federer_GMT} implies that $\Theta^k(\meas{\Curr(T)}, y) = \norm{\eta(y)} = \abs{a_i}$. Therefore \cref{eq:RectVarifold_Ext_Def} simplifies to
    \begin{align*}
        \var[ext]{\Curr(T)}(A \setminus \tilde\Gr_k(N)) 
        &= \int_{\{x \in \bigcup_i \im\sigma_i \setminus N ~:~ (x, \Tan^k(\Haus^k \res \bigcup_i \im\sigma_i,x)) \in A\}} \Theta^k(\meas{\Curr(T)}, -) \,d\Haus^k
        \\
        \eqnote{$T$ is non-overlapping} &= \sum_i \int_{\{x \in \im\sigma_i \setminus N ~:~ (x, \Tan^k(\Haus^k \res  \im\sigma_i,x)) \in A\}} \Theta^k(\meas{\Curr(T)}, -) \,d\Haus^k
        \\
        &= \sum_i \abs{a_i} \underbrace{\Haus^k(\{x \in \im\sigma_i \setminus N ~:~ (x, \Tan^k(\Haus^k \res  \im\sigma_i,x)) \in A\})}_{\var{\sigma_i}_N(A)}
        \\
        &= \var{T}_N(A).
    \end{align*}
\end{proof}

\begin{lemma}
    \label{lem:RepModp_Intrinsic}
    For any $T = \sum_i a_i\sigma_i \in I_k(M;\ZZ_p)$, let $\tilde{T} = \sum_i \abs{a_i}\sigma_i \in I_k(M; \ZZ)$. Then $\M(T) = \M(\tilde{T})$, $\var{T}_N = \var{\tilde{T}}_N$, and $\Curr(\tilde{T})$ is representative mod $p$.
\end{lemma}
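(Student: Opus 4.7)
My plan is to verify the three assertions separately. The first two are essentially bookkeeping from the definitions; the third requires a brief appeal to the standard characterization of mass-minimizing representatives mod~$p$.

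For $\M(T)=\M(\tilde T)$, I would just unwind definitions. The $\ZZ_p$-absolute value in the mass formula is $\abs{a_i}=\min\{a_i,p-a_i\}\in\{0,1,\dots,\lfloor p/2\rfloor\}$, and the usual integer absolute value applied to such nonnegative integers is the identity. Thus
$$\M(T)=\sum_i\abs{a_i}\Haus^k(\im\sigma_i)=\M(\tilde T)$$
on the nose. For $\var{T}_N=\var{\tilde T}_N$, the argument is the same: these varifolds were defined in \cref{sec:Definitions} by weighting the tangent-plane contribution of each $\sigma_i$ by the integer absolute value of its coefficient, and by construction the coefficient of $\sigma_i$ in $\tilde T$ is precisely $\abs{a_i}\geq 0$, so the two weighted sums coincide.

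The substantive content lies in showing that $\Curr(\tilde T)$ is a representative mod~$p$. I would invoke the standard criterion (Federer 4.2.26) that an integral current $R$ is a representative mod~$p$ if and only if its multiplicity function satisfies $\Theta^k(\meas R,\cdot)\leq\lfloor p/2\rfloor$ almost everywhere. Using that $T$ is non-overlapping, at $\Haus^k$-almost every $y\in\bigcup_i\im\sigma_i$ there is a unique index $i$ with $y\in\im\sigma_i$, and the computation of the multiplicity function given in the proof of \cref{lem:Varifold_CompareIntExt} yields $\Theta^k(\meas{\Curr(\tilde T)},y)=\abs{a_i}$. Since by definition $\abs{a_i}\in\{0,1,\dots,\lfloor p/2\rfloor\}$, the multiplicity bound holds, and $\Curr(\tilde T)$ is indeed a representative mod~$p$.

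The only mildly delicate step is the third, where the characterization in terms of pointwise multiplicity is used. This is a standard fact about integer-coefficient currents, but it needs the non-overlapping hypothesis to upgrade the \emph{a priori} coefficient-wise choice of representative to a pointwise one; this upgrade is exactly what non-overlapping provides, via the simplex-by-simplex formula for $\Theta^k(\meas{\Curr(\tilde T)},y)$ inherited from the preceding lemma. No further approximation or deformation is required.
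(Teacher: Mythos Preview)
Your proposal is correct and follows essentially the same approach as the paper: the first two claims are dispatched directly from the definitions, and the third uses the multiplicity computation from \cref{lem:Varifold_CompareIntExt} together with Federer's characterization of representatives mod~$p$ via the bound $\Theta^k\leq p/2$. Your write-up is somewhat more explicit about unwinding the definitions for the first two claims, but the substance is identical.
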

\begin{proof}
    The first two claims follow directly from the respective definitions. For the third claim, recall from the proof of \cref{lem:Varifold_CompareIntExt} that for $\Haus^k$-almost all $x \in \bigcup_i \im\sigma_i$, $\Theta^k(\meas{\Curr(\tilde{T})}, x) = \abs{a_i} \leq p/2$ for some $i$. The third claim then follows from \cite[p.~130]{Federer_GMT}.
\end{proof}

\begin{lemma}
    \label{lem:Mass_CompareIntExt_Absolute}
    For any $T \in I_k(M; G)$, $\M(T) = \M^\ext(\Curr(T))$.
\end{lemma}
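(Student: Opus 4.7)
The plan is to treat the two cases $G = \ZZ$ and $G = \ZZ_p$ separately, with the latter reducing to the former via \cref{lem:RepModp_Intrinsic}.

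First, for $G = \ZZ$, I would unpack the representation $\Curr(T) = (\Haus^k \res B) \wedge \eta$ derived in the proof of \cref{lem:Varifold_CompareIntExt}, where $B = \bigcup_i \im\sigma_i$ up to an $\Haus^k$-null set and $\eta(y) = \sum_i a_i \sum_{x \in \sigma_i^{-1}(y)} (\bigwedge_k D\sigma_i)(\beta(x))/J_k\sigma_i$ on a set of full measure. The non-overlapping hypothesis ensures that for $\Haus^k$-almost every $y \in B$ there is a unique pair $(i,x)$ with $\sigma_i(x) = y$, and \cite[4.1.25]{Federer_GMT} gives $\norm{\eta(y)} = \abs{a_i}$ for almost every $y \in \im\sigma_i$. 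Then
\begin{equation*}
    \M^\ext(\Curr(T)) = \int_B \norm{\eta} \,d\Haus^k = \sum_i \int_{\im\sigma_i} \abs{a_i} \,d\Haus^k = \sum_i \abs{a_i} \Haus^k(\im\sigma_i) = \M(T),
\end{equation*}
where the splitting of the integral again uses the non-overlapping condition to avoid double counting.

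For $G = \ZZ_p$, I would invoke \cref{lem:RepModp_Intrinsic} to obtain the $\ZZ$-coefficient lift $\tilde{T} = \sum_i \abs{a_i}\sigma_i$ satisfying $\M(\tilde{T}) = \M(T)$ and such that $\Curr(\tilde{T})$ is representative mod $p$. By the standard definition of mass for mod $p$ currents \cite[p.~430]{Federer_GMT}, the mod $p$ mass of $\Curr(T)$ equals the integral mass of any representative mod $p$, hence $\M^\ext(\Curr(T)) = \M^\ext(\Curr(\tilde{T}))$. Applying the $\ZZ$ case to $\tilde{T}$ then yields $\M^\ext(\Curr(\tilde{T})) = \M(\tilde{T}) = \M(T)$.

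The main obstacle I expect is bookkeeping rather than substance: verifying that the almost-everywhere identifications from the proof of \cref{lem:Varifold_CompareIntExt} are compatible with the splitting $\int_B = \sum_i \int_{\im\sigma_i}$ (which requires confirming that the pairwise intersections $\im\sigma_i \cap \im\sigma_j$ are $\Haus^k$-null by non-overlapping), and that the mass convention for $\Curr(T)$ as a mod $p$ current agrees with the integral mass of its canonical mod $p$ representative $\Curr(\tilde{T})$. Once these compatibilities are in hand, the equality $\M = \M^\ext$ is immediate in both cases.
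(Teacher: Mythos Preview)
Your proposal is correct and follows essentially the same strategy as the paper. For $G = \ZZ_p$ the reduction via \cref{lem:RepModp_Intrinsic} is identical; for $G = \ZZ$ the paper routes the computation through the already-proved varifold identity \cref{lem:Varifold_CompareIntExt} (using $\M^\ext(\Curr(T)) = \var[ext]{\Curr(T)}(\tilde\Gr_k(M)) = \var{T}_\emptyset(\tilde\Gr_k(M)) = \sum_i \abs{a_i}\Haus^k(\im\sigma_i)$), whereas you re-open that lemma's proof and integrate $\norm{\eta}$ directly---a slightly less economical packaging of the same content, but no substantive difference.
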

\begin{proof}
    Let $T = \sum_i a_i\sigma_i$. When $G = \ZZ$,
    \begin{multline*}
        \M(T) = \sum_i \abs{a_i} \Haus^k(\im \sigma_i) = \sum_i \abs{a_i}\var{\sigma_i}_\emptyset(\tilde\Gr_k(M)) = \var{T}_\emptyset (\tilde\Gr_k(M))
        \\
        \eqnote{\cref{lem:Varifold_CompareIntExt}}
        = \var[ext]{\Curr(T)}(\tilde\Gr_k(M)) = \var[ext]{\Curr(T)}(\tilde\Gr_k(\RR^n)) = \M^\ext(T).
    \end{multline*}
    When $G = \ZZ_p$, let $\tilde{T} = \sum_i \abs{a_i}\sigma_i$. Then by \cref{lem:RepModp_Intrinsic} and the $G = \ZZ$ case, $\M(T) = \M(\tilde{T}) = \M^\ext(\Curr(\tilde{T}))$. Therefore, $\Curr(\tilde{T}) \in \Curr(T)$, and thus $\M^\ext(\Curr(T)) = \M^\ext(\Curr(\tilde{T}))$.
\end{proof}

\begin{lemma}
    \label{lem:Mass_CompareIntExt_RelativeNonoverlapping}
    For any $T \in I_k(M,N;G)$, $\M(T) = \var{T}_N(\tilde\Gr_k(M)) = \M^\ext(\Curr(T))$.
\end{lemma}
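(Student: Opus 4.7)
The plan is to reduce both equalities to absolute (non-relative) statements that are already handled. The equality $\var{T}_N(\tilde\Gr_k(M)) = \M^\ext(\Curr(T))$ is the easier of the two: since $\Curr(T) = \Curr(\tilde T) \res (M \setminus N)$ is a current supported in $M \setminus N$, its extrinsic varifold vanishes on $\tilde\Gr_k(N)$, so \cref{lem:Varifold_CompareIntExt} applied with $A = \tilde\Gr_k(M)$ gives $\var{T}_N(\tilde\Gr_k(M)) = \var[ext]{\Curr(T)}(\tilde\Gr_k(M)) = \var[ext]{\Curr(T)}(\tilde\Gr_k(\RR^n)) = \M^\ext(\Curr(T))$, the last equality being the definition of the extrinsic mass of a current as its associated varifold's total mass.

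The more substantive equality is $\M(T) = \var{T}_N(\tilde\Gr_k(M))$. For the $\geq$ direction, fix any $\tilde T = \sum_i a_i\sigma_i \in T$ and split it as $\tilde T = \tilde T_N + \tilde T'$, where $\tilde T_N$ collects the simplices whose image lies in $N$. The non-overlapping condition $\Haus^k(\bigcup_i \im \sigma_i \cap \partial(M \setminus N)) = 0$ forces the simplices in $\tilde T'$ to have images almost entirely contained in $M \setminus N$; hence the tangent-plane contributions they make to $\var{\tilde T}_N$ are all captured by $\tilde\Gr_k(M)$, giving $\var{\tilde T}_N(\tilde\Gr_k(M)) = \M(\tilde T')$. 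Combined with the additivity $\M(\tilde T) = \M(\tilde T_N) + \M(\tilde T')$, this yields $\M(\tilde T) \geq \var{T}_N(\tilde\Gr_k(M))$, and taking the infimum over representatives gives $\M(T) \geq \var{T}_N(\tilde\Gr_k(M))$. For the $\leq$ direction, I would invoke the remark in \cref{sec:Definitions} that within every equivalence class one can find representatives in which the simplices with image in $N$ have arbitrarily small total volume; along such a sequence $\M(\tilde T_N) \to 0$, forcing $\M(\tilde T) \to \var{T}_N(\tilde\Gr_k(M))$.

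The main obstacle is that this argument implicitly uses the fact that $\var{T}_N$ is well-defined on equivalence classes modulo $\sim$, i.e.~that two representatives of $T$ yield the same varifold over $\tilde\Gr_k(M)$. This is plausible because two representatives differ by a chain supported in $N$, whose simplices contribute only to $\var{\cdot}_N(\tilde\Gr_k(N))$, but justifying it rigorously requires the same tangent-space analysis as in the proof of \cref{lem:Varifold_CompareIntExt}, applied to the two competing simplicial decompositions of the common portion outside $N$. If $\var{T}_N$ has not already been established as well-defined in an earlier section, one could alternatively define it via the right-hand side $\var[ext]{\Curr(T)}$ and deduce well-definedness from \cref{lem:Varifold_CompareIntExt}.

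Finally, the $G = \ZZ_p$ case reduces immediately to the integer case: given $T = \sum_i a_i \sigma_i \in I_k(M,N;\ZZ_p)$, set $\tilde T = \sum_i \abs{a_i}\sigma_i \in I_k(M,N;\ZZ)$. By \cref{lem:RepModp_Intrinsic} we have $\M(T) = \M(\tilde T)$ and $\var{T}_N = \var{\tilde T}_N$, so the already-proved $\ZZ$-case settles the first equality. For the second, the same lemma shows that $\Curr(\tilde T)$ is representative mod $p$ for $\Curr(T)$, so $\M^\ext(\Curr(T)) = \M^\ext(\Curr(\tilde T))$, reducing to the integer case as well.
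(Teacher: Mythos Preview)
Your argument is correct and follows essentially the same route as the paper, which simply cites \cref{lem:Mass_CompareIntExt_Absolute} together with the infimum definition of relative mass; your version just unpacks that one-line reference into the explicit $\geq/\leq$ split. Regarding your hesitation about well-definedness of $\var{T}_N$: note that \cref{lem:Varifold_CompareIntExt} is already stated for $T \in I_k(M,N;\ZZ)$ and identifies $\var{T}_N(A)$ with $\var[ext]{\Curr(T)}(A)$, and since the latter depends only on $\Curr(T)$ (hence only on the $\sim$-class), well-definedness of $\var{T}_N$ is automatic from that lemma rather than something you need to establish separately.
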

\begin{proof}
    This follows from \cref{lem:Mass_CompareIntExt_Absolute} and the definition of $\M$ for a relative current.
\end{proof}

\begin{lemma}
    \label{lem:ZeroFlatDistSameCurrent}
    Suppose that $S, T \in I_k(M, N; G)$ satisfy $\Fl(S,T) = 0$. Then $\Curr(S) = \Curr(T)$, $\M(S) = \M(T)$, and $\var{S}_N = \var{T}_N$.
\end{lemma}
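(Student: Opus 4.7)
The plan is to prove $\Curr(S) = \Curr(T)$ first, after which the mass equality follows from \cref{lem:Mass_CompareIntExt_RelativeNonoverlapping} ($\M(S) = \M^\ext(\Curr(S)) = \M^\ext(\Curr(T)) = \M(T)$) and the varifold equality from \cref{lem:Varifold_CompareIntExt} ($\var{S}_N = \var[ext]{\Curr(S)} = \var[ext]{\Curr(T)} = \var{T}_N$). So the real content is the identity of currents.

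First I would unwind $\Fl(S,T) = 0$: for each integer $n \geq 1$ pick $P_n \in I_{k+1}(M;G)$ and $Q_n \in I_k(M;G)$ with $\partial P_n + Q_n$ in the class of $S - T$ and $\M(P_n) + \M(Q_n) < 1/n$. Fixing once and for all representatives $\tilde S \in S$ and $\tilde T \in T$, the equivalence $\sim$ supplies a chain $A_n \in I_k(M;G)$ whose singular simplices all have image in $N$ with $\partial P_n + Q_n = \tilde S - \tilde T + A_n$. Applying the homomorphism $\Curr$ through the Nash embedding $e : M \hookrightarrow \RR^n$ yields
\begin{equation*}
    \partial \Curr(P_n) + \Curr(Q_n) = \Curr(\tilde S) - \Curr(\tilde T) + \Curr(A_n),
\end{equation*}
in which $\Curr(A_n)$ is supported in $e(N)$. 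Set $R := \Curr(S) - \Curr(T) = \Curr(\tilde S) \res e(U) - \Curr(\tilde T) \res e(U)$ where $U = M \setminus N$; I claim $R = 0$. For any smooth compactly supported $k$-form $\omega$ on $\RR^n$ with $\spt \omega \cap e(N) = \emptyset$, disjointness of supports forces $\Curr(A_n)(\omega) = 0$, and also $(\Curr(\tilde S) \res e(U))(\omega) = \Curr(\tilde S)(\omega)$ and likewise for $\tilde T$; so
\begin{equation*}
    R(\omega) = \bigl(\partial \Curr(P_n) + \Curr(Q_n)\bigr)(\omega) = \Curr(P_n)(d\omega) + \Curr(Q_n)(\omega),
\end{equation*}
whose magnitude is at most $(\M(P_n) + \M(Q_n))(\|\omega\|_\infty + \|d\omega\|_\infty) \to 0$. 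Hence $R(\omega) = 0$ for every such $\omega$.

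Next I would extend this vanishing to all smooth compactly supported $k$-forms $\omega$ on $\RR^n$ by a cutoff argument. For $\delta > 0$, let $\chi_\delta$ be a smooth function on $\RR^n$ equal to $1$ outside the $\delta$-neighborhood $V_\delta$ of $e(N)$ and equal to $0$ on the $(\delta/2)$-neighborhood. Write $\omega = \chi_\delta \omega + (1 - \chi_\delta)\omega$: the first summand is supported away from $e(N)$, so $R(\chi_\delta \omega) = 0$ by the preceding step; for the second, $|R((1-\chi_\delta)\omega)| \leq \|\omega\|_\infty \M(R \res V_\delta)$. Because $R$ is, by construction, a difference of two restrictions to $e(U)$, its mass measure assigns zero to $e(N)$, so by continuity of measure $\M(R \res V_\delta) \to 0$ as $\delta \to 0$. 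As $R(\omega)$ does not depend on $\delta$, we conclude $R(\omega) = 0$, hence $\Curr(S) = \Curr(T)$, and the remaining two equalities follow from \cref{lem:Varifold_CompareIntExt,lem:Mass_CompareIntExt_RelativeNonoverlapping}.

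The main delicate point is the last step, upgrading vanishing on forms supported away from $e(N)$ to vanishing on all test forms. This is where the definition of $\Curr$ on $I_k(M,N;G)$ as a \emph{restriction} to $e(U)$, rather than only a difference modulo chains in $N$, is essential: it guarantees $\meas{R}(e(N)) = 0$, which is what makes the cutoff error tend to zero. Absent that observation, $R$ might a priori pick up a contribution supported on $e(N)$ even though the flat distance vanishes.
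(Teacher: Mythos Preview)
Your overall architecture is sound and in fact handles the relative case $N \neq \emptyset$ more carefully than the paper does. The paper's own proof is shorter and different: it simply observes that the intrinsic fillings $P,Q$ push forward under $\Curr$ to extrinsic fillings of the same mass (via \cref{lem:Mass_CompareIntExt_Absolute}), so $\Fl^\ext(\Curr(S),\Curr(T)) \leq \varepsilon$ for every $\varepsilon$, and then invokes the fact that $\Fl^\ext$ is a genuine metric on integral currents (mod $p$ or not) to conclude. That argument is uniform in $G$ but is only written out for $N=\emptyset$; your cutoff argument and the observation that $\meas{R}(e(N))=0$ because $\Curr$ is defined as a restriction are exactly what is needed to close the relative case.

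There is, however, one genuine gap in your write-up: the step ``$R(\omega) = \Curr(P_n)(d\omega)+\Curr(Q_n)(\omega)$'' treats $R$ as a functional on differential forms, which is only meaningful when $G=\ZZ$. When $G=\ZZ_p$, $\Curr(S)$ is a mod~$p$ congruence class, not a current, so pairing with $\omega$ is undefined; and lifting to integer representatives introduces an uncontrolled correction $pC_n$ in the identity $\partial P_n + Q_n = \tilde S - \tilde T + A_n$. The fix keeps your localization idea but replaces form evaluation by slicing in the mod~$p$ flat norm: with $f = \mathrm{dist}(\cdot,e(N))$, for each $\delta>0$ choose $r_n\in[\delta/2,\delta]$ by coarea so that the slice $\langle \Curr(P_n),f,r_n\rangle$ has mass at most $\tfrac{2}{\delta}\M(P_n)$; since $\Curr(A_n)$ is supported in $\{f=0\}$ one gets $\Fl^\ext\bigl(R\res\{f>r_n\}\bigr)\to 0$, hence $R\res\{f>\delta\}=0$ after comparing with $R\res\{f>r_n\}$ via the mass of $R$ in the shell $\{\delta/2<f\leq\delta\}$. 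Then your observation $\meas{R}(e(N))=0$ lets $\delta\to 0$ exactly as before. With that amendment the proof goes through for both coefficient groups; your deductions of $\M(S)=\M(T)$ and $\var{S}_N=\var{T}_N$ from $\Curr(S)=\Curr(T)$ are correct (for $\ZZ_p$ route the varifold statement through \cref{lem:RepModp_Intrinsic}).
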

\begin{proof}
    First we consider the case where $N = \emptyset$.

    For any $\varepsilon > 0$, choose any $P \in I_{k+1}(M; G)$ and $Q \in I_k(M; G)$ such that $\partial P + Q = S - T$ and $\M(P) + \M(Q) < \varepsilon$. Then $\partial \Curr(P) + \Curr(Q) = \Curr(S) - \Curr(T)$, and by \cref{lem:Mass_CompareIntExt_Absolute}, $\M^\ext(\Curr(P)) + \M^\ext(\Curr(Q)) < \varepsilon$. Thus $\Fl^\ext(\Curr(S), \Curr(T)) \leq \varepsilon$. By letting $\varepsilon \to 0$, we see that $\Fl^\ext(\Curr(S), \Curr(T)) = 0$. Since $\Fl^\ext$ is a metric on the space of rectifiable currents with coefficients in $G$, we conclude that $\Curr(S) = \Curr(T)$.

    Finally, \cref{lem:Mass_CompareIntExt_Absolute} implies that $\M(S) = \M^\ext(\Curr(S)) = \M^\ext(\Curr(T)) = \M(T)$.
\end{proof}

\begin{lemma}
    \label{lem:IntExtEquiv}
    \begin{enumerate}
        \item\label{enum:IntExtEquiv_IntegralCurrentsBilipschitz} $\I{k}(M; G)$ is bilipschitz to $\I[ext]{k}(M; G)$.

        \item When $M$ is closed, $\Z{k}(M;G)$ is bilipschitz to $\Z[ext]{k}(M;G)$.
    \end{enumerate}
\end{lemma}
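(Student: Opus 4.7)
My plan is to extend the map $\Curr$ defined on Lipschitz singular chains to a bilipschitz bijection between the completions. First, by \cref{lem:ZeroFlatDistSameCurrent}, $\Curr$ descends to a well-defined map on $I_k(M;G)/\Fl^{-1}(0)$, and by \cref{lem:Mass_CompareIntExt_Absolute} this map preserves mass. The easy direction of the Lipschitz bound follows from the observation that any intrinsic filling $\partial P + Q = S - T$ pushes forward to an extrinsic filling $\partial \Curr(P) + \Curr(Q) = \Curr(S) - \Curr(T)$ of equal total mass; thus $\Fl^\ext(\Curr(S),\Curr(T)) \leq \Fl(S,T)$, and $\Curr$ extends by uniform continuity to a 1-Lipschitz map $\hat\Curr : \I{k}(M;G) \to \I[ext]{k}(M;G)$.

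For the reverse inequality and surjectivity, I would invoke the Federer-Fleming polyhedral approximation theorem \cite[Sec.~4.2]{Federer_GMT}: every integral current in $\I[ext]{k}(M;G)$ is approximable in the flat norm by polyhedral currents with approximate equality of mass and boundary mass. Each such polyhedral chain lifts to an element of $I_k(M;G)$ by choosing a positively-oriented affine parametrization from $\Delta^k$ for each top-dimensional facet; after subdivision these parametrizations may be arranged to be non-overlapping, and the lift has the same mass by \cref{lem:Mass_CompareIntExt_Absolute} (for $G = \ZZ_p$, \cref{lem:RepModp_Intrinsic} handles the coefficient reduction). Applying this to any extrinsic filling $\partial P' + Q' = \Curr(S) - \Curr(T)$ produces intrinsic fillings of $S - T$ with $\M(P) + \M(Q)$ arbitrarily close to $\M^\ext(P') + \M^\ext(Q')$; passing to infima gives $\Fl(S,T) \leq \Fl^\ext(\Curr(S),\Curr(T))$. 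The same approximation, applied to any $R \in \I[ext]{k}(M;G)$, realizes $R$ as a Cauchy limit of currents in the image of $\Curr$, placing $R$ in the image of $\hat\Curr$. This establishes part (1).

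For part (2), I would note that $\Curr$ intertwines the boundary operators on Lipschitz chains and on rectifiable currents, so $\hat\Curr$ restricts to a bilipschitz bijection $\Z{k}(M;G) \to \Z[ext]{k}(M;G)$; when $M$ is closed, completeness of both cycle spaces follows from the Federer-Fleming compactness theorem \cite[p.~432]{Federer_GMT}. The hard part will be the bookkeeping in the second paragraph: verifying that a polyhedral integral current, viewed extrinsically via the ambient embedding $e: M \hookrightarrow \RR^n$, is genuinely realizable as a non-overlapping Lipschitz singular chain in $M$ with matching mass and $\ZZ_p$-coefficients. This reduces to subdividing facets to restore the non-overlapping condition without changing mass, handling cells that run close to the image of $e$ via the compatibility of $\var{T}$ with $\var[ext]{\Curr(T)}$ from \cref{lem:Varifold_CompareIntExt}, and invoking the identification of subdivided chains in $I_k(M;G)$.
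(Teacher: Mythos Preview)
Your approach has a genuine gap in the second paragraph. The polyhedral currents produced by the Federer--Fleming approximation theorem are chains in the ambient $\RR^n$; their facets are affine pieces of Euclidean space and do \emph{not} lie in $M$ (which is curved inside $\RR^n$). Hence an ``affine parametrization from $\Delta^k$ for each top-dimensional facet'' is a map $\Delta^k\to\RR^n$, not $\Delta^k\to M$, and does not give an element of $I_k(M;G)$. Your final paragraph flags exactly this issue (``cells that run close to the image of $e$'') but supplies no mechanism to transport them into $M$; \cref{lem:Varifold_CompareIntExt} only compares varifolds associated to chains that are \emph{already} in $M$ and cannot move anything from $\RR^n$ back there. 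The same problem obstructs your reverse Lipschitz inequality: an extrinsic filling $\partial P'+Q'=\Curr(S)-\Curr(T)$ lives in $\RR^n$, and approximating $P',Q'$ by polyhedral chains still leaves you outside $M$.

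The paper closes this gap with one geometric device: a tubular neighbourhood $\N$ of $e(M)$ in $\RR^n$ carrying a $2$-Lipschitz nearest-point projection $\pi:\N\to M$. Any near-optimal extrinsic filling (which may be taken to lie in $\N$) pushes forward along $\pi$ to a filling supported in $M$ with mass increased by at most a fixed factor, giving $\Fl(S,T)\leq C\,\Fl^\ext(\Curr(S),\Curr(T))$ directly. If you want to realise the projected currents as Lipschitz singular chains, you can then compose $\pi$ with your affine parametrizations of polyhedral facets to obtain Lipschitz simplices $\Delta^k\to M$; but the projection is the essential step you are missing, and once you have it most of the bookkeeping you anticipated disappears.
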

\begin{proof}
    The embedding $e : M \hookrightarrow \RR^n$ is $L$-bilipschitz for some $L \geq 1$. Let $\N$ be a tubular neighbourhood of $M$, so that the projection map onto $M$ is 2-Lipschitz. (1) and (2) follow from projecting chains and fillings of chains in $\N$ onto $M$.
\end{proof}

\begin{lemma}
    \label{lem:Mass_CompareIntExt_Relative}
    For any $T \in \I{k}(M,N)$, $\M(T) = \M^\ext(\Curr(T))$.
\end{lemma}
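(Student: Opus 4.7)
The plan is to extend the equality from \cref{lem:Mass_CompareIntExt_RelativeNonoverlapping}, which handles $T \in I_k(M,N)$, to the completion $\I{k}(M,N) \subset F_k(M,N)$ by a sandwich argument built on the density of $I_k(M,N)$ and the lower semi-continuity of both mass functionals. The already-established case tells us $\M^\ext \circ \Curr$ and $\M$ agree on the dense subspace of non-overlapping Lipschitz chains, so it only remains to check that both functionals transport correctly to the completion.

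As a preliminary step, I would extend $\Curr$ continuously from $I_k(M,N)$ to a map $\I{k}(M,N) \to \I[ext]{k}(M,N)$ by essentially repeating the argument of \cref{lem:ZeroFlatDistSameCurrent}: given $S, S' \in I_k(M,N)$ with an almost-optimal filling $\partial P + Q = S - S'$ of total mass less than $\Fl(S,S') + \varepsilon$, the images $\Curr(P), \Curr(Q)$ fill $\Curr(S) - \Curr(S')$ and, by \cref{lem:Mass_CompareIntExt_Absolute}, have total extrinsic mass less than $\Fl(S,S') + \varepsilon$, so $\Curr$ is 1-Lipschitz in the flat metrics (after the bilipschitz identification from \cref{lem:IntExtEquiv}(\ref{enum:IntExtEquiv_IntegralCurrentsBilipschitz})) and extends uniquely.

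For the inequality $\M^\ext(\Curr(T)) \leq \M(T)$, I would use the defining liminf formula \eqref{eq:MassLowerSemiCtsExtension} to pick a sequence $S_i \in I_k(M,N)$ with $\Fl(S_i, T) \to 0$ and $\M(S_i) \to \M(T)$. Continuity of the extended $\Curr$ then gives $\Curr(S_i) \to \Curr(T)$ in the extrinsic flat topology; \cref{lem:Mass_CompareIntExt_RelativeNonoverlapping} converts $\M(S_i)$ to $\M^\ext(\Curr(S_i))$; and the lower semi-continuity of $\M^\ext$ on integral currents in the flat topology (Federer 4.2.17) yields $\M^\ext(\Curr(T)) \leq \liminf_i \M^\ext(\Curr(S_i)) = \M(T)$.

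For the reverse inequality $\M(T) \leq \M^\ext(\Curr(T))$, I would apply Federer's strong approximation theorem (or the deformation theorem, 4.2.9) to the extrinsic integral current $\Curr(T)$ to produce polyhedral integral chains $P_i$ in a tubular neighbourhood of $M$ in $\RR^n$ with $\Fl^\ext(P_i, \Curr(T)) \to 0$ and $\M^\ext(P_i) \to \M^\ext(\Curr(T))$. Composing each $P_i$ with the nearest-point retraction $\pi$ from a tubular neighbourhood of $M$ onto $M$ (with bilipschitz constant arbitrarily close to $1$ as the neighbourhood shrinks, since the retraction is smooth), and perturbing generically so that the image of each simplex is transverse to $\partial(M \setminus N) = \partial N$, yields non-overlapping Lipschitz chains $S_i \in I_k(M,N)$ with $\M(S_i) \leq (1 + o(1))\M^\ext(P_i)$ and $\Fl(S_i, T) \to 0$. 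The liminf formula then delivers $\M(T) \leq \liminf_i \M(S_i) = \M^\ext(\Curr(T))$, closing the sandwich.

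The main technical obstacle is this last step: arranging the polyhedral approximants so that after retraction to $M$ they are genuinely non-overlapping with respect to $N$ in the intrinsic sense, while preserving mass convergence. A generic transversal perturbation of the polyhedral chains before retraction, combined with the bilipschitz mass estimates for $\pi$, should handle both requirements without essential difficulty.
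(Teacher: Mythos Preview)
Your proposal is correct and follows essentially the same approach as the paper: both rely on the lower semicontinuity of $\M^\ext$ with respect to $\Fl^\ext$, the bilipschitz equivalence of $\Fl$ and $\Fl^\ext$ from \cref{lem:IntExtEquiv}, and the liminf definition \eqref{eq:MassLowerSemiCtsExtension} of $\M$. The paper compresses the argument into a one-sentence sketch, whereas you spell out both inequalities explicitly; in particular your use of Federer's approximation theorem for the direction $\M(T) \leq \M^\ext(\Curr(T))$ makes precise what the paper's sketch leaves implicit.
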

\begin{proof}
    This essentially follows from the fact that $\M^\ext$ is lower semicontinuous with respect to $\Fl^\ext$, $\Fl^\ext$ is bilipschitz to $\Fl$ by \cref{lem:IntExtEquiv}(\ref{enum:IntExtEquiv_IntegralCurrentsBilipschitz}), and we defined $\M$ to be lower semicontinuous with respect to $\Fl$ using \cref{eq:MassLowerSemiCtsExtension}.
\end{proof}

\LemCInfinityChainsDense
\begin{proof}
    Choose some embedding $X \hookrightarrow \RR^n$. The key ingredient in this proof is the Approximation Theorem from \cite[4.2.20]{Federer_GMT}, which implies that for any $T \in \I{k}(X)$ and $\varepsilon > 0$, there exists a $k$-chain $S$ in $\RR^n$ that is a linear combination of convex polyhedra so that $S$ is supported within the $\varepsilon$-neighbourhood of $X$, and for some $(1+\varepsilon)$-bilipschitz $C^1$ diffeomorphism $f : \RR^n \to \RR^n$, we have $\M(T - f_\sharp(S)) < \varepsilon$ and $\M(\partial T - \partial f_\sharp(S)) < \varepsilon$. Thus $\Fl(T, f_\sharp(S)) < \varepsilon$. We may smoothen $f$ via a convolution with a $C^\infty$ bump function to get a $C^\infty$ map $f' : \RR^n \to \RR^n$ such that $\Fl(T, f'_\sharp(S)) < \varepsilon$, $\abs{\M(T) - \M(f'_\sharp(S))} < \varepsilon$, and $\abs{\M(\partial T) - \M(\partial f'_\sharp(S))} < \varepsilon$. We may even assume that $f'_\sharp(S)$ intersects each of the given submanifolds of $X$ transversally by performing a small perturbation on $f'$. Now simply let $T_i = f'_\sharp(S)$ for $\varepsilon < \frac1i$.
\end{proof}

\subsection{The inductive limit topology}

\LemContinuityIndLimTop
\begin{proof}
    We will only need to prove the statements for $\Z{}$, as the proof works verbatim for $\I{}$ as well.

    Suppose that $g$ is continuous in the inductive limit topology. Then for any $\mu > 0$ and open set $U \subset Y$, $g^{-1}(U)$ is open in the inductive limit topology which implies that $g^{-1}(U) \cap \Z{k}(M,N)^\mu$ is open in the flat topology on $\Z{k}(M,N)^\mu$. Thus $g$ is continuous in the flat topology when restricted to $\Z{k}(M,N)^\mu$.

    Suppose that $g$ is continuous in the flat topology when restricted to $\Z{k}(M,N)^\mu$ for every $\mu > 0$. Then for any open set $U \in Y$ and $\mu > 0$, $g^{-1}(U) \cap \Z{k}(M,N)^\mu$ is open in the subspace topology of $\Z{k}(M,N)^\mu$. That is, $g^{-1}(U) \cap \Z{k}(M,N)^\mu = V_\mu \cap \Z{k}(M,N)^\mu$ for some $V_\mu \subset \Z{k}(M,N)$ which is open in the flat topology. Thus for any subset $A \subset \Z{k}(M,N)$ of cycles whose masses are less than some $\mu > 0$, we have
    \begin{equation*}
        g^{-1}(U) \cap A = g^{-1}(U) \cap \Z{k}(M,N)^\mu \cap A = V_\mu \cap \Z{k}(M,N)^\mu \cap A = V_\mu \cap A,
    \end{equation*}
    which is open in the subspace topology of $A$. Thus $g^{-1}(U)$ is open in the inductive limit topology. Therefore $g$ is continuous in the inductive limit topology.

    Now we prove the statement about $f$. Assume that $f$ satisfies the stated property. Then for each $\mu > 0$ and the corresponding $\mu'$, $f_\mu$ is continuous with respect to the flat topology in the domain and the inductive limit topology in the codomain, so $f_\mu : \Z{k}(M,N)^\mu \to \Z{k'}(M',N')$ is also continuous in the flat topology in the domain and the inductive limit topology on the codomain. Then it remains to apply the first part of the lemma.
\end{proof}

\subsection{$\delta$-localized families of cycles}

\begin{restatable}[Coarea inequality with boundary mass control]{lemma}{LemCoareaCoverBdryCtrl}
    \label{lem:CoareaCoverBdryCtrl}
    Consider a family of chains $\tau_j \in \I{d_j}(M)$, for $j = 1, \dotsc, k$. Let $B$ be a generalized ball of radius $r$ in $M$. Then for any $0 < s \leq r$ and parameter $0 < \lambda < 1$, $B$ can be replaced by a concentric generalized ball $B_\lambda$ of radius $r_\lambda \in [r, r + s]$ such that
    \begin{align}
        \label{eq:CoareaCoverBdryCtrl}
        \M( \tau_j\res \partial B_\lambda ) &\leq \frac{2k^2}{s}  \M(\tau_j) &\text{and}&&
        \M(\tau \res \partial_{s\lambda/4}B_\lambda) &\leq \lambda k\M(\tau_j).
    \end{align}
\end{restatable}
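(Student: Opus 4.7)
The plan is to apply Chebyshev's inequality (a.k.a.\ Markov's inequality) to two coarea-type estimates in order to bound the Lebesgue measure of the set of ``bad'' radii $t \in [r, r+s]$ on which either inequality in \cref{eq:CoareaCoverBdryCtrl} fails for some $j$. Once the union of all these bad sets is shown to have measure strictly less than $s$, a choice of $r_\lambda$ in the complement proves the lemma.

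Let $B_t$ denote the concentric generalized ball of radius $t$, and note that $B_t = \phi^{-1}([0,t))$ for a 1-Lipschitz function $\phi$ on $M$: in the interior case $\phi$ is geodesic distance from the center of $B$, while in the boundary case $\phi = \max(d_{\partial \bar M}(\cdot, x_0), h)$ in the collar coordinates $(y,h) = E^{-1}$ (up to the small Lipschitz distortion of $E$). The standard coarea inequality then gives $\int_r^{r+s} \M(\tau_j \res \partial B_t) \, dt \leq \M(\tau_j)$, so Chebyshev bounds the ``first bad set'' for $j$ by
\begin{equation*}
    \bigl| \{ t \in [r, r+s] : \M(\tau_j \res \partial B_t) > 2k^2\M(\tau_j)/s \} \bigr| \leq \frac{s}{2k^2},
\end{equation*}
and summing over $j = 1, \dotsc, k$ yields total measure at most $s/(2k)$.

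For the second inequality, let $\sigma_j = \phi_\sharp \meas{\tau_j}$ so that $\sigma_j([a,b]) = \M(\tau_j \res (B_b \setminus B_a))$ and $\sigma_j(\RR) \leq \M(\tau_j)$. Then, up to a $\sigma_j$-null set, $\M(\tau_j \res \partial_{s\lambda/4} B_t) = \sigma_j([t - s\lambda/4, t + s\lambda/4])$, and the Fubini exchange
\begin{equation*}
    \int_r^{r+s} \sigma_j\bigl([t - s\lambda/4, t + s\lambda/4]\bigr) \, dt = \int \bigl| [u - s\lambda/4, u + s\lambda/4] \cap [r, r+s] \bigr| \, d\sigma_j(u) \leq \tfrac{s\lambda}{2} \M(\tau_j),
\end{equation*}
combined with Chebyshev, bounds the ``second bad set'' for $j$ by $s/(2k)$. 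Summing over $j$ gives total measure at most $s/2$.

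The combined bad set therefore has measure at most $s/(2k) + s/2$, which is strictly less than $s$ whenever $k \geq 2$, so any $r_\lambda$ in the complement satisfies both bounds. The borderline case $k = 1$ (where the estimate degenerates to $\leq s$) is handled by enlarging each target constant by an arbitrarily small $\varepsilon > 0$, which strictly shrinks the bad sets, and then passing to a limit. The only genuine computation is the Fubini exchange in the second estimate, which crucially uses that the moving window $[t - s\lambda/4, t + s\lambda/4]$ has constant length $s\lambda/2$; everything else is bookkeeping around standard coarea slicing.
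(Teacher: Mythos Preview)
Your argument is correct for $k\ge 2$ and runs parallel to the paper's, though structured differently. The paper first partitions $[r,r+s]$ into $L=\lceil 1/\lambda\rceil$ sub-annuli, pigeonholes to find one annulus $A$ with $\sum_j \M(\tau_j\res A)/\M(\tau_j)\le k/L\le\lambda k$ (this secures the second bound for any radius whose $s\lambda/4$-neighbourhood lies in $A$), and only then runs coarea inside the middle half of $A$ to locate a good slice. You instead treat both inequalities as continuous Chebyshev bounds over $t$ and intersect the good sets. Both are averaging arguments; yours is more symmetric and avoids the nested-annulus bookkeeping, while the paper's two-stage structure avoids any special pleading at $k=1$.

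Your $k=1$ patch, however, does not work as written. Relaxing the thresholds by $(1+\varepsilon)$ and passing to a subsequential limit $r_\varepsilon\to r_*$ is fine for the second inequality, since $t\mapsto\sigma_j([t-a,t+a])$ is upper semicontinuous. But the slice-mass function $t\mapsto\M(\tau_j\res\partial B_t)$ is merely in $L^1$ and is not upper semicontinuous in general, so the first bound at $r_*$ does not follow from the bounds at $r_\varepsilon$. The fix is simpler than a limit: Chebyshev with a strict threshold is automatically strict, since whenever $|\{f>c\}|>0$ one has $c\,|\{f>c\}|<\int_{\{f>c\}}f\le\int f$. Hence each individual bad set has measure strictly less than $s/(2k^2)$ (resp.\ $s/(2k)$), the union has measure strictly less than $s/(2k)+s/2\le s$ for every $k\ge 1$, and a good $r_\lambda$ exists with no limiting needed. (A minor aside: from $\phi$ being $1$-Lipschitz you only get $N_{s\lambda/4}(\phi^{-1}(t))\subseteq\phi^{-1}([t-s\lambda/4,t+s\lambda/4])$, hence $\M(\tau_j\res\partial_{s\lambda/4}B_t)\le\sigma_j([t-s\lambda/4,t+s\lambda/4])$ rather than equality; but the inequality is all your argument uses.)
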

\begin{proof}
    Let $\alpha = s/r$ and $L = \lceil 1/\lambda \rceil$, and partition the region $(1 + \alpha)B \setminus B$ into $L$ ``annuli'' $A_l = (1 + \frac{l}L\alpha)B \setminus (1 + \frac{l-1}L\alpha)B$. Thus we have
    \begin{equation*}
        \sum_l \M(\tau_j \res A_l) \leq \M(\tau_j)
        \implies \sum_l \sum_j \frac{\M(\tau_j \res A_l)}{\M(\tau_j)}\leq k,
    \end{equation*}
    and by the Pigeonhole Principle, for some $A = A_l$,
    \begin{equation*}
        \sum_j\frac{\M(\tau_j \res A)}{\M(\tau_j)} \leq \frac{k}L \leq \lambda k
        \implies \M(\tau_{j_0} \res A) \leq \sum_j\frac{\M(\tau_j \res A)\M(\tau_{j_0})}{\M(\tau_j)} \leq \lambda k \M(\tau_{j_0}) \text{ for all } j_0.
    \end{equation*}
    Let $A' = (1 + \frac{l - 1/4}L\alpha)B \setminus (1 + \frac{l-3/4}L\alpha)B$. Now for each $j$, we have the coarea inequality
    \begin{align*}
        \int_{r + \frac{l - 3/4}Ls}^{r + \frac{l - 1/4}Ls} \M(\tau_j \res \partial (\tfrac{t}{r}B)) \,dt &\leq \M(\tau_j \res A') 
        \leq \M(\tau_j \res A)
        \leq \frac{k}L\M(\tau_j)
        \\
        \implies \int_{r + \frac{l - 3/4}Ls}^{r + \frac{l - 1/4}Ls} \sum_j \frac{L\M(\tau_j \res \partial (\tfrac{r}{r}B))}{k\M(\tau_j)} \,dt &\leq k.
    \end{align*}
    Therefore for some $r + \frac{l - 3/4}Ls \leq r_L \leq r + \frac{l - 1/4}Ls$, if we choose $B_\lambda = \frac{r_L}{r} B$, then
    \begin{align}
        \sum_j \frac{L\M(\tau_j \res \partial B_\lambda)}{k\M(\tau_j)} &\leq \frac{2Lk}{s}
        \notag\\
        \implies \M(\tau_{j_0} \res \partial B_\lambda)
        \leq \sum_j \frac{\M(\tau_j \res \partial B_\lambda)\M(\tau_{j_0})}{\M(\tau_j)}
        &\leq \frac{2k^2}{s}\M(\tau_{j_0}) \text{ for all } j_0. \label{eq:CoareaCoverBdryCtrl_BallBdry}
    \end{align}
    It remains to observe that $\partial_{s\lambda/4}B_\lambda \subset A$.
\end{proof}

\LemDeltaAdmissibleDoubling
\begin{proof}
    Without loss of generality (a small cost in the constant $c$) we may assume that the elements of $\mathcal{U}_C$ are generalized balls. We may also assume that $\max_C \#\mathcal{U}_C \leq c$.
    
    We will induct on $\dim X$. When $\dim X = 0$ there is nothing to prove. Now assume that the lemma is true for $\dim X = d$. Now let $\dim X = d + 1$, and suppose we are given the initial $\delta$-admissible family $\mathcal{U}_C$. Applying the induction hypothesis to $X^d$ gives us a $c\delta$-admissible family $\mathcal{V}_C$ indexed by cells of dimension at most $d$, which satisfies properties (1)--(3) for those cells, and such that $\#\mathcal{V}_C \leq c$. Now, consider a $(d+1)$-cell $C$ in $X$. We wish to define $\mathcal{V}_C$. For every $d$-face $C'$ of $C$ and $B \in \mathcal{V}_{C'}$, add $2B$ to $\mathcal{U}_C$ to get a family $\mathcal{W}_C$ of generalized balls whose diameters sum to less than $(c + 4dc)\delta < c\delta$. Thus $\max_C \#\mathcal{W}_C \leq c + 2dc \leq c$.

    For each $B \in \mathcal{W}_C$, let $\bar{B} = 2B$. Replace every $B$ with $\bar{B}$ to get families $\bar{\mathcal{W}}_C$, whose elements have diameters summing up to less than $2c\delta \leq c\delta$.  
    
    Consider the following \emph{merging  procedure} for $\bar{\mathcal{W}}_C$. For any $\overline{B_1}, \overline{B_2} \in \bar{\mathcal{W}}_C$ that touch, replace them with $2B$, where $B$ is the smallest generalized ball containing both of them. Keep repeating within $\bar{\mathcal{W}}_C$ until it cannot be repeated. We are left with a family $\bar{\mathcal{V}}_C$ of generalized balls, perhaps fewer in number, whose diameters sum to less than $2^{\#\bar{\mathcal{W}}_C}c\delta \leq c\delta$. Now replace each generalized ball in $\bar{\mathcal{V}}_C$ with the concentric generalized ball of half the radius. The result is the desired family $\mathcal{V}_C$.

    After carrying out the above process for each $(d+1)$-cell $C$, we have defined the collections $\mathcal{V}_C$ for every cell $C$ of positive dimension. The family $\{\mathcal{V}_C\}_{C \subset X^{d+1}}$ is doubling due to the radius-halving step at the end, and the elements of $\mathcal{V}_C$ are added (with radius doubled) to $\mathcal{U}_{C'}$ when $C$ is a codimension-1 face of $C'$. Finally, each element $\mathcal{U}_C$ is contained in some element of $\mathcal{V}_C$ because the sets involved only grow larger or are merged into larger sets. We also have $\max_C \#\mathcal{V}_C \leq c + \max_C \#\mathcal{W}_C \leq c$.
\end{proof}

\PropDiscreteDeltaLocExtension
\begin{proof}
    This follows from the proof of \cite[Proposition~2.7]{GuthLiokumovich_ParamIneq}. That proof inductively constructs a sequence of $c_1\delta$-localized maps $F_j : X^j(q_j)^0 \to \Z{k}(M,N)$ for $0 = q_0 < q_1 < \dotsb < q_n = q_n(n,M,\delta)$, where $F_0 = F$ and $F_j$ extends $F_{j-1}$, and for $N = \partial M$. It can be adapted for the case where $(M,N)$ is a collar pair. We will prove by induction that each $F_j$ satisfies the properties (\ref{enum:DiscreteDeltaLocExtension_MassBound}) and (\ref{enum:DiscreteDeltaLocExtension_Inductive}) with $X^j$ instead of $X^j$ and $X(q_j)$ instead of $\tilde{X}$, and $\chi_{F_j}(r) \leq c_3(r/\delta)(\chi_f(r) + \varepsilon)$ for all $r > 0$. Then $F_n$ will be our desired $F$.

    This is vacuously true for $j = 0$. Assume by induction that it is true for $j-1$. $F_j$ is first defined over the vertices of $X^{j-1}(q_j)$ by $F_j(x) = F_{j-1}(x')$, where $x'$ is a ``closest vertex of $X^{j-1}(q_{j-1})$'' to $x$, in a sense defined formally in the proof. This ensures that (\ref{enum:DiscreteDeltaLocExtension_Inductive}) holds for each cell of positive dimension in $X^{j-1}$.
    
    For each $j$-cell $C$ of $X$, $F_j$ is then extended over the vertices of $C(q_j)$ using a discrete analogue of the method to extend a map $g : \partial C \to \Z{k}(M,N)$ over $C$ by finding a nullhomotopy $\partial C \times I \to \Z{k}(M,\Fl)$ of $g$. More precisely, a map $G : \partial C(q_j)^0 \times \{0,1,\dotsc,3^{q_j-1}\} \to \Z{k}(M,N)$ is constructed so that $G(-,0) = F_j$ and $G(-,3^{q_j})$ is a constant map. A vertex $v$ of $C$ is fixed. To define $G$, \cite[Lemma~2.8]{GuthLiokumovich_ParamIneq} is used to find a family of ``small'' chains $\tau : \partial C(q_j)^0 \to \I{k+1}(M,N)$ such that $\partial\tau(x) = f(v) - F_j(x)$ and $\M(\tau(x)) \leq c_2(n,M,\delta)\varepsilon$. Some coarea family $\{U_1,\dotsc,U_N\}$ for $N \leq 3^{q_j-1}$ is found for $\tau$, which are defined from generalized balls of diameter at most $\delta$. Apply \cref{lem:CoareaCoverBdryCtrl} to $\{F_j(x), \tau(x)\}_{x \in \partial C(q_j)^0}$ (and $\lambda_i$) to replace those generalized balls with new ones, to get some new family which we also call $\{U_i\}$, and which satisfies \cref{eq:DiscreteDeltaLocExtension_Coarea,eq:DiscreteDeltaLocExtension_BdryControl} for $F_j(x)$ and $\tau$ instead of $F(x)$. For each $U_i$, let $y_i$ be a vertex $y$ of $\partial C(q_j)$ that minimizes $\M(F_j(y) \res U_i)$. Then it follows from \cite[(14)]{GuthLiokumovich_ParamIneq} that $G$ has the following formula,
    \begin{align*}
        G(x,t)
        &= f(v) - \partial\left( \tau(x) \res \bigcup_{i = 1}^{N-t} U_i + \sum_{i = N-t+1}^N \tau(y_i) \res U_i \right)
        \\
        &= F_j(x) \res \bigcup_{i = 1}^{N-t} U_i + \sum_{i = N-t+1}^N F_j(y_i) \res U_i - \underbrace{ \tau(x) \res \partial \bigcup_{i = 1}^{N-t} U_i - \sum_{i = N-t+1}^N \tau(y_i) \res \partial U_i}_B.
    \end{align*}

    By \cite[(5)]{GuthLiokumovich_ParamIneq},
    \begin{align*}
        \M(B)
        &\leq \frac{2N \#(\partial C(q_j)^0)}{r_0} \max\{\M(\tau(x)), \M(\tau(y_1),\dotsc, \M(\tau(y_N))\}
        \\
        &\leq c(n,M,\delta)\varepsilon,
    \end{align*}
    which gives \cref{eq:DiscreteDeltaLocExtension}.

    Let us prove that $F_j$ satisfies \cref{eq:DiscreteDeltaLocExtension_Coarea,eq:DiscreteDeltaLocExtension_BdryControl}. Suppose that $F_j(z) = G(x,t)$. Then
    \begin{align*}
        \M(F_j(z) \res \partial U_l) 
        \leq{}& \M(F_j(x) \res \partial U_l) + \sum_{i = N - t + 1}^N \M(F_j(y_i) \res \partial U_l)\\
        +{}& \M(\tau(x) \res \partial U_l) + \sum_{i = N - t + 1}^N \M(\tau(y_i) \res \partial U_l),
    \end{align*}
    and it suffices to recall that $\M(\tau(x)) \leq c_2\varepsilon$ and note that when $x$ is a vertex of $\partial C(q_j)$, the $F_j(x)$'s and $\tau(x)$'s satisfy \cref{eq:DiscreteDeltaLocExtension_Coarea,eq:DiscreteDeltaLocExtension_BdryControl}.

    To prove that $F_j$ satisfies (\ref{enum:DiscreteDeltaLocExtension_MassBound}), for any $F_j(z) = G(x,t)$,
    \begin{align*}
        \M(F_j(z)) 
        &\leq \M\left( F_j(x) \res \bigcup_{i = 1}^{N-1} U_i \right) + \sum_{i = N-t+1}^N \M(F_j(y_i) \res U_i) + \M(B)
        \\
        &\leq \M\left( F_j(x) \res \bigcup_{i = 1}^{N-1} U_i \right) + \sum_{i = N-t+1}^N \M(F_j(x) \res U_i) + c\varepsilon
        \\
        &\leq \M(F_j(x)) + c\varepsilon
        \\
        &\leq \max_{x \in X^{j-1}(q_{j-1})^0} \M(F_{j-1}(x)) + c\varepsilon
        \\
        (\text{induction hypothesis}) &\leq \max_{x \in X^0} \M(f(x)) + c\varepsilon.
    \end{align*}

    After defining $F$ over $\tilde{X}^0$ as above, let us show that it satisfies  (\ref{enum:DiscreteDeltaLocExtension_FlatBound}). Consider any cell $C$ of $X$, and vertex $v$ of $C$, and any $C \cap \tilde{X}^0$. If $x$ is also a vertex of $C$, then the property holds because $f$ is $\varepsilon$-fine. Otherwise, let $C$ be the cell of lowest dimension in $X$ that contains $x$. Then $F(x) = G(x',t)$ is $f(v')$ minus the boundary minus a chain of mass at most $Nc_2\varepsilon \leq c\varepsilon$, where $v'$ is a vertex of $C$. The property holds because $\Fl(f(v), f(v')) < \varepsilon$.
\end{proof}

\subsection{The gluing chain homomorphism}

\PropPiecewiseSmoothGluing
\begin{proof}

    We will prove the composition law, and the others follow from elementary geometric constructions. For any $x \in L$, $\Phi^c \circ \Psi^0(x) = \Phi^c(g(x))$. In addition, every $\Phi^{l+c} \circ \Psi^l$ is continuous. And we have $\partial \circ \Phi^{l+c} \circ \Psi^l = \Phi^{l+c-1} \circ \partial \circ \Psi^l = \Phi^{l+c-1} \circ \Psi^{l-1} \circ \partial$.
\end{proof}

\section{Topological Lemmas about Lens Spaces}
\label{sec:Appendix_TopLemmasLensSpaces}

\subsection{The conventional choice of cohomology generators for lens spaces}
\label{sec:LensSpaceCohomConvention}

In this paper we consider all cohomology to be singular cohomology. On the other hand, for spaces like $E\ZZ_p$ it is convenient to define classes using cellular cohomology, which only makes sense after we have chosen a certain isomorphism $C_n^{cell}(X) \xrightarrow{\iso} H_n(X^n,X^{n-1})$, from which the cellular boundary maps are defined, and which induces an isomorphism $C^n_{cell}(X;G) = Hom(C_n^{cell}(X), G) \xrightarrow{\iso} Hom(H_n(X^n,X^{n-1}), G) \iso H^n(X^n,X^{n-1};G)$. (The last isomorphism comes from the Universal coefficient theorem.)

We could orient each cell using the standard orientation of a disk $\D^n$, but then we have to choose a fundamental class in the singular cohomology $H^n(\D^n,\partial\D^n)$.

Thus to compute the cellular homology and cohomology of $\Lens_p^\infty$, it would be best for us to define a $\ZZ_p$-simplicial complex structure on $\Sp\infty$. Do this inductively, modifying the construction in \cite[p.~145]{Hatcher_AlgTop}. Give $\Sp1$ the simplicial complex structure of a counterclockwise regular $p$-gon. That is, the vertices are the roots of unity $1, \zeta_p, \dotsc, \zeta_p^{p-1}$, where $\zeta_p = e^{2\pi i/p}$, and the edges are $[\zeta_p^j,\zeta_p^{j+1}]$ where $1 \leq j \leq p$. Inductively, assume that $\Sp{2n-1}$ has been given a $\ZZ_p$-simplicial complex structure. Then the structure on $\Sp{2n+1}$ is defined as follows: take the $\Sp1$ in the $(n+1)^\text{th}$ $\C$ factor and give it the counter-clockwise regular $p$-gon simplicial complex structure. Then for any $k$-simplex $[v_0,\dotsc,v_k]$ in $\Sp{2n-1}$, add $p$ $(k+1)$-simplices $[\zeta_p^j, v_0,\dotsc,v_k]$ and $p$ $(k+2)$-simplices $[\zeta_p^j, \zeta_p^{j+1} v_0,\dotsc,v_k]$ for $1 \leq j \leq p$. This structure is a triangulation of each cell, and it is compatible with the (relative) fundamental classes of each cell taken to be simply the sum of the top-dimensional simplices in the cell that have been mentioned so far. In this way, the cellular boundary maps are really multiplication by $p$.

With orientations for each cell chosen in this way, let $\omega_k$ denote the dual to the $k$-cell.

\begin{lemma}
    \label{lem:SphereQuotientSuspLens_Real}
    Let $n \geq 1$. Then thinking of $\Sp{p(n+1)-1}$ as the unit sphere in $(\RR^{n+1})^p$, and considering the $\ZZ_p$ action on $(\RR^{n+1})^p$ by cyclic permutations, $(\Sp{p(n+1)-1}/\tilde\Delta)/\ZZ_p$ is homeomorphic to the Thom space $(L_n \times \D^{n+1})/(L_n \times \partial\D^{n+1})$, where $\tilde\Delta = \Sp{p(n+1)-1} \cap \{(x,\dotsc,x) \in \RR^{p(n+1)}~|~x \in \RR^{n+1}\}$.
\end{lemma}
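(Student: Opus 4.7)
The plan is to promote the homeomorphism $h$ of \cref{eq:SphereQuotientLensBundle}, which is defined on the complement of the diagonal, to a homeomorphism between the full quotients by collapsing $\tilde\Delta$ to the basepoint of the Thom space. Everything reduces to continuity of the extension together with compactness.

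First I would reuse the explicit formula in \cref{eq:CycProdHomeoLensBundle_DFT} to set up the lift $f : \Sp{p(n+1)-1}\setminus\tilde\Delta \to \Sp{(p-1)(n+1)-1}\times \itr D^{n+1}$ (rescaling $\D^{n+1}$ by $1/\sqrt{p}$ to match the unit-sphere convention of the lemma), and record the key geometric fact: writing $(x_1,\dotsc,x_p) = v+w$ with $v\in\tilde\Delta$, $w\in\tilde\Delta^{\perp}$, one has $\|v\|^2 + \|w\|^2 = 1$, so the second component $\tfrac{1}{p}(x_1+\dotsb+x_p)$ reaches its maximum norm precisely when $w=0$, i.e.\ on $\tilde\Delta$. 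Consequently, any sequence in $\Sp{p(n+1)-1}\setminus\tilde\Delta$ converging to a point of $\tilde\Delta$ has the $\itr D^{n+1}$-component of $f$ approaching $\partial D^{n+1}$.

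Next, I would define
\[
\bar h : \Sp{p(n+1)-1} \longrightarrow (L_n\times D^{n+1})/(L_n\times \partial D^{n+1})
\]
by composing $h$ with the quotient map off the diagonal, and by sending every point of $\tilde\Delta$ to the basepoint $*$. The preceding convergence statement shows that $\bar h$ is continuous at every point of $\tilde\Delta$, while continuity elsewhere is immediate from the continuity of $h$. Since $\bar h$ is constant on $\tilde\Delta$ and $\ZZ_p$-invariant (the $\ZZ_p$ action is already quotiented out on the codomain, and acts trivially on $\tilde\Delta$), it factors uniquely through a continuous map
\[
\tilde h : (\Sp{p(n+1)-1}/\tilde\Delta)/\ZZ_p \longrightarrow (L_n\times D^{n+1})/(L_n\times \partial D^{n+1}).
\]
Bijectivity of $\tilde h$ then follows by combining the bijectivity of $h$ on the complement of the diagonal with the fact that both $\tilde\Delta/\ZZ_p$ and $L_n\times \partial D^{n+1}$ are collapsed to single points.

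Finally, I would conclude by the standard compact-Hausdorff argument: the domain of $\tilde h$ is a quotient of the compact space $\Sp{p(n+1)-1}$, hence compact; the Thom space on the right is Hausdorff because $L_n\times \partial D^{n+1}$ is closed in the compact Hausdorff space $L_n\times D^{n+1}$; so the continuous bijection $\tilde h$ is automatically a homeomorphism. The only mildly delicate step is the continuity of $\bar h$ at $\tilde\Delta$, but this is controlled directly by the Pythagorean identity $p\|\bar x\|^2 + \|w\|^2 = 1$, so I do not expect a serious obstacle.
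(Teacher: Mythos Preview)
Your proposal is correct and follows essentially the same route as the paper: both use the real DFT matrix to split $(\RR^{n+1})^p$ orthogonally into the diagonal subspace (giving the $\D^{n+1}$ factor) and its complement (giving the sphere that descends to $L_n$), and both send the diagonal sphere to the basepoint of the Thom space. The paper works in the diagonalized coordinates, builds the map $u+v\mapsto(u/\|u\|,v)$ there, and then quotients by the conjugated $\ZZ_p$ action; you instead extend the already-defined $h$ over the collapsed diagonal and invoke compact--Hausdorff. These are the same argument in slightly different packaging.

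One point to tighten: you cite ``the bijectivity of $h$'' from \cref{eq:SphereQuotientLensBundle}, but in the paper's logical structure this lemma \emph{is} where that assertion is justified---the appendix supplies the proof of what the introduction merely stated. This is not a genuine gap, since the bijectivity of $f$ follows immediately from the explicit formula together with the Pythagorean identity you already recorded: given $(y,z)$ in the target one recovers the diagonal part as $(z,\dotsc,z)$ and then the off-diagonal part from its direction $y$ and its norm $\sqrt{1-p\|z\|^2}$. But you should state this verification rather than treat \cref{eq:SphereQuotientLensBundle} as an established black box.
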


(Recall that $L_n$ was defined in \cref{eq:SphereQuotientLensBundle}.)

\begin{proof}
    Let us prove this for odd primes $p$ as the case where $p = 2$ is similar. We will show that $\Sp{p(n+1)-1} - \tilde\Delta$ is homeomorphic to a trivial rank $n+1$ real vector bundle over $\Lens_p((1,\dotsc,\frac{p-1}2)^{n+1})$.
    
    $\ZZ_p$ acts on $\Sp{p(n+1)-1} \subset \RR^{p(n+1)}$ by $M_{n+1} = M \otimes I_{n+1}$, where $M$ is the $p \times p$ circulant matrix. We have $M_{n+1} = F_{n+1}D_{n+1}F_{n+1}^{-1}$, where $F_{n+1} = F_\RR \otimes I_{n+1}$ and $D_{n+1} = D_\RR \otimes I_{n+1}$. Since $F_{n+1}$ is orthogonal, $F_{n+1}^{-1}$ restricts to a homeomorphism $\Sp{p(n+1)-1} \to \Sp{p(n+1)-1} \subset \RR^{p(n+1)}$, where both the domain and target spheres are the unit sphere. Moreover, $F_{n+1}^{-1}$ sends $\tilde\Delta$ to $\Sp{p(n+1)-1} \cap V$, where $V = \vspan\{v_{0m}\}_{m = 1}^{n+1}$.
    
    Note that $\dim V = n+1$ and thus $\dim V^\perp = (p-1)(n+1)$. Since every vector subspace of $\RR^{p(n+1)}$ intersects transversally with $\Sp{p(n+1)-1}$, $\Sp{p(n+1)-1} \cap V$ is homeomorphic to $\Sp{n}$ and $\Sp{p(n+1)-1} \cap V^\perp$ is homeomorphic to $\Sp{(p-1)(n+1)-1} = \Sp{pn-n+p-2}$.

    Therefore there is a homeomorphism from $\Sp{p(n+1)-1}$ to the Thom space of the trivial rank $n+1$ real bundle over $\Sp{(p-1)(n+1)-1}$, which takes a point in $\Sp{p(n+1)-1}$, expresses it uniquely as $u + v$ for $u \in V^\perp$ and $v \in V$, then outputs the basepoint if $\norm{v} = 1$, and outputs $(u/\norm{u}, v)$ otherwise.

    $\ZZ_p$ acts via $D_{n+1}$ on the target $\Sp{p(n+1)-1}$, which is the identity on $V$ and rotates each $W_{jm}$ by angle $j\theta$, for $\theta = 2\pi/p$. Hence if we write the coordinates of $V^\perp$ as $(z_{jm})$, where $z_{jm} = x_{jm} + iy_{jm}$ has real and imaginary parts being the real coordinates associated with the respective basis vectors $u_{jm}$ and $w_{jm}$, then $D_{n+1}$ acts on $\Sp{(p-1)(n+1)-1}$ by the map $z_{jm} \mapsto \zeta_p^j z_{jm}$; thus $\Sp{(p-1)(n+1)-1}/\ZZ_p \he \Lens_p((1,\dotsc,\frac{p-1}2)^{n+1}) = L$. $D_{n+1}$ also sends the fibers of the trivial complex bundle to each other while preserving the $v_{0m}$-coordinates, so the quotient of the bundle by the action of $D_{n+1}$ is the trivial rank-$(n+1)$ real vector bundle over $L$, which we can write as $L \times \RR^{n+1}$.

    Finally, $(\Sp{p(n+1)-1}/\tilde\Delta)/\ZZ_p$ is the one-point compactification of $(\Sp{p(n+1)-1} - \tilde\Delta)/\ZZ_p \he L \times \RR^{n+1}$, which is the Thom space of $L \times \RR^{n+1}$.
\end{proof}

\bibliography{References}

\end{document}